\documentclass[11pt,twoside]{article}
\usepackage{amsmath}
\usepackage{amssymb}
\usepackage{amsthm}
\usepackage{mathrsfs}
\usepackage{simplewick}
\usepackage{times}
\usepackage{color}
\usepackage{hep}
\usepackage{leftidx}
\usepackage{graphicx}
\usepackage{float}
\usepackage{enumerate}
\usepackage{titletoc}
\usepackage{epstopdf}
\usepackage{ulem}
\usepackage{appendix}
\usepackage{enumitem}
\usepackage{hyperref}

\allowdisplaybreaks

\def\cQ{\mathcal Q}

\def\cX{\mathcal X}
\def\cY{\mathcal Y}

\def\N{\mathop{\mathbb N\kern 0pt}\nolimits}
\def\Z{\mathop{\mathbb Z\kern 0pt}\nolimits}
\def\Q{\mathop{\mathbb Q\kern 0pt}\nolimits}
\def\R{\mathop{\mathbb R\kern 0pt}\nolimits}
\def\T{\mathop{\mathbb T\kern 0pt}\nolimits}
\def\C{\mathop{\mathbb C\kern 0pt}\nolimits}

\def\ds{\displaystyle}
\def\f{\frac}

\def\p{\partial}
\def\eps{\epsilon}
\def\ve{\varepsilon}

\def\ls{\lesssim}

\newcommand{\w}[1]{\langle {#1} \rangle}

\newcommand{\pk}{\dot{P}_k}
\newcommand{\Pk}{P_k}
\newcommand{\q}{\dot{Q}}
\newcommand{\abs}[1]{\left|#1\right|}

\newcommand{\dS}{\mathrm{d}\sigma_y}
\newcommand{\Id}{\mathrm{Id}}

\newcommand{\Ltw}[1]{\|#1\|_{L^2(w)}}
\newcommand{\Atwo}[1]{[#1]_{A_2}}
\newcommand{\Rj}[1]{R_{#1}}

\DeclareMathOperator{\Div}{div}

\DeclareMathOperator{\supp}{supp}
\newcommand{\Sph}{\mathbb{S}^2}
\hypersetup{colorlinks=true,linkcolor=blue,citecolor=red,urlcolor=cyan}

\theoremstyle{plain}
\newtheorem{theorem}{Theorem}[section]

\newtheorem{lemma}[theorem]{Lemma}
\newtheorem{cor}[theorem]{Corollary}

\theoremstyle{definition}
\newtheorem{definition}[theorem]{Definition}

\newtheorem{remark}{Remark}[section]

\numberwithin{equation}{section}

\title{Long time smooth solutions of 3D cubic quasilinear wave systems with small weakly decaying initial data}

\begin{document}
\author{
  Gao Mu$^{1}$ \quad Li Jun$^{1}$ \quad Yin Huicheng$^{2,}$
   \footnote{Gao Mu (\texttt{876117149@qq.com}) and Li Jun (\texttt{lijun@nju.edu.cn})
    are supported by the National key research and development program of China (No.2024YFA1013301).
   Yin Huicheng (\texttt{huicheng@nju.edu.cn}, \texttt{05407@njnu.edu.cn}) are supported by the NSFC (No.12331007, No.12101304).}
    \\[0.5cm]
  \small
    $^{1}$ School of Mathematics, Nanjing University, Nanjing 210009, China\\
  \small
   $^{2}$ School of Mathematical Sciences, Nanjing Normal University, Nanjing 210023, China
}
\date{}
\maketitle

\thispagestyle{empty}
\begin{abstract}
For the 3D cubic quasilinear wave system $\square_{c_i} u^i=G^i(u,\p u,\p^2u)
=\ds\sum_{\substack{0\le|\alpha|,|\beta|,|\gamma|\le1 \\ 1\le j,k,l \le m}} g_{\alpha\beta\gamma}^{ijkl}\p^{\alpha}\p u^j$ $\p^{\beta}u^k\p^{\gamma}u^l$,
it is well known that global solution $u$ exists when the small smooth initial data
$(u,\p_tu)|_{t=0}$ $=(u_0(x), u_1(x))$ are compactly supported
or decay rapidly at spatial infinity. However, when $(u_0, u_1)\in (H^{s+1}, H^s)$ with $s>\f{5}{2}$ are small, it remains unknown
whether $u$ exists globally or not. In this paper, we show that if $\|u_{0}\|_{H^{N+1}}+\|u_{1}\|_{H^N}\le\ve$
($N\ge 6$) is small, then the almost global solution $u$ exists in $[0, T_{\ve}]$ with $T_{\ve}\ge e^{C\ve^{-1}}$
for the general $G(u,\p u,\p^2u)$ depending on $u$ and $T_{\ve}\ge e^{C\ve^{-2}}$ for the nonlinearity $G(\p u,\p^2u)$ independent of $u$, respectively.
In addition, if $\ds\sum_{|a|\le 5}\|\w{x}^{\mu}\p^a_x(u _0,u_1)\|_{L^2}\le\ve$ holds for any fixed constant $\mu\in (0,1)$, then
the solution $u$ exists globally and meanwhile the scattering property of $u$ is derived. Our main ingredients consist in establishing a series of new weighted $L^\infty-L^2$ estimates and Strichartz estimates based on the strong Huygens' principle for 3D linear wave equations.
	\vskip 0.2 true cm
	\noindent
	\textbf{Keywords.} Cubic quasilinear wave equation, Strichartz estimate, lifespan, global existence,

\qquad \quad  Huygens' principle, scattering
	\vskip 0.2 true cm
	\noindent
	\textbf{2020 Mathematical Subject Classification.}  35L05, 35L72.
\end{abstract}

\vskip 0.2 true cm

\addtocontents{toc}{\protect\thispagestyle{empty}}
\tableofcontents

\section{Introduction}

\subsection{Main results and remarks}

In this paper, we are concerned with the Cauchy problem of the cubic quasilinear wave system with multiple speeds
\begin{equation}\label{eq:wave}
\left\{
\begin{aligned}
\square_{c_i} u^i = &G^i(u,\p u,\p^2 u), \qquad\quad (t,x)\in [0,\infty)\times\R^3,\ i = 1,\cdots,m, \\
(u,\p_tu)&(0,x)=(u_{0},u_{1})(x),\qquad x\in\R^3,
\end{aligned}
\right.
\end{equation}
where $t=x_0$, $x=(x_1, x_2, x_3)\in\mathbb{R}^3$, $m\in\Bbb Z_+$, $\partial=(\partial_0, \partial_1, \p_2, \partial_3)
=(\partial_t, \partial_{x_1}, \p_{x_2}, \partial_{x_3})$, $\partial_x=\nabla=(\partial_1, \p_2 , \partial_3)$,
$\square_{c_i}=\p_t^2-c_i^2\Delta$ with $c_i>0$ for $1\le i\le m$ and $\Delta=\p_1^2+\p_2^2+\p_3^2$, $G=(G^1, \cdots, G^m)$,
$u=(u^1, \cdots, u^m)$
and $u_{r}=(u^1_{r},\cdots,u^m_{r})\in H^{N+1-r}(\Bbb R^3)$ for integers $N\ge 4$ and $r \in \{0,1\}$.
In addition, the nonlinearities
\begin{equation}\label{eq:c}
\begin{split}
G^i(u,\p u,\p^2 u)=&
\sum_{j,k,l=1}^m\big[\sum_{\alpha,\beta,\gamma,\delta=0}^3 Q^{\alpha\beta\gamma\delta}_{1ijkl}\partial_{\alpha\beta}^2 u^j\partial_\gamma u^k \partial_\delta u^l+\sum_{\alpha,\beta,\gamma=0}^3 Q^{\alpha\beta\gamma}_{2ijkl}\partial_{\alpha\beta}^2 u^j\partial_\gamma u^k u^l\\
&+\sum_{\alpha,\beta=0}^3 Q^{\alpha\beta}_{3ijkl} \partial_{\alpha\beta}^2 u^ju^ku^l
+\sum_{\alpha,\beta,\gamma=0}^3 S^{\alpha\beta\gamma}_{1ijkl} \partial_{\alpha} u^j\partial_\beta u^k\partial_\gamma u^l\\
&+\sum_{\alpha,\beta=0}^3 S^{\alpha\beta}_{2ijkl} \partial_{\alpha} u^j\partial_\beta u^k u^l
+\sum_{\alpha=0}^3 S^{\alpha}_{3ijkl} \partial_{\alpha} u^j u^k u^l\big]
\end{split}
\end{equation}
with $Q^{\alpha\beta\gamma\delta}_{1ijkl},\ Q^{\alpha\beta\gamma}_{2ijkl},\ Q^{\alpha\beta}_{3ijkl},\ S^{\alpha\beta\gamma}_{1ijkl},\ S^{\alpha\beta}_{2ijkl}$ and $S^{\alpha}_{3ijkl}$ being constants. Meanwhile, the following symmetric condition is imposed $\text{for any } \alpha, \beta, \gamma, \delta = 0, 1, \cdots, 3$ and $i,j,k,l =1,\cdots, m,$
\begin{equation}\label{eq:symmetric condition}
Q^{\alpha\beta\gamma\delta}_{1ijkl} = Q^{\beta\alpha\gamma\delta}_{1ijkl} = Q^{\alpha\beta\gamma\delta}_{1jikl},
\quad Q^{\alpha\beta\gamma}_{2ijkl} = Q^{\beta\alpha\gamma}_{2ijkl} = Q^{\alpha\beta\gamma}_{2jikl},
\quad Q^{\alpha\beta}_{3ijkl} = Q^{\beta\alpha}_{3ijkl} = Q^{\alpha\beta}_{3jikl}.
\end{equation}
Our main results can be stated as follows.
\begin{theorem}\label{thm:1}
Let integer $N \ge 4$. There is a constant $\kappa_0>0$ such that if $\ve>0$ is small and
\begin{equation}\label{initial:data}
\|u_{0}\|_{H^{N+1}(\R^3)}+\|u_{1}\|_{H^N(\R^3)}\le\ve,
\end{equation}
then \eqref{eq:wave} admits an almost global solution $u\in C([0,T_\ve];H^{N+1}(\R^3))\cap C^1([0,T_\ve];H^N(\R^3))$ with $T_\ve \ge e^{\kappa_0\ve^{-1}}-e$. In particular, if $G^i(u,\p u,\p^2 u)$ ($1\le i\le m$) are independent of $u$,
that is,
\begin{equation}\label{eq:C independent u}
G^i(u,\p u,\p^2 u)=\ds\sum_{j,k,l=1}^m \big[\ds\sum_{\alpha,\beta,\gamma,\delta=0}^3 Q^{\alpha\beta\gamma\delta}_{1ijkl} \partial_{\alpha\beta}^2 u^j\partial_\gamma u^k\partial_\delta u^l+\ds\sum_{\alpha,\beta,\gamma=0}^3 S^{\alpha\beta\gamma}_{1ijkl}\partial_{\alpha} u^j \partial_\beta u^k\partial_\gamma u^l\big],
\end{equation}
then $T_\varepsilon$ can be improved to $T_\varepsilon\ge e^{\kappa_0 \varepsilon^{-2}}-e$.
\end{theorem}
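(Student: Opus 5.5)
We argue by continuity (bootstrap): local existence in $C([0,T];H^{N+1})\cap C^1([0,T];H^N)$ is standard for the symmetric quasilinear system, so it suffices to close a priori estimates on $[0,T]$ with $T\le e^{\kappa_0\ve^{-1}}-e$ (resp. $e^{\kappa_0\ve^{-2}}-e$). Since the data carry no spatial weights, Klainerman's Lorentz fields $\Omega_{ij}$, $L_i=x_i\p_t+t\p_i$, $S=t\p_t+x\cdot\nabla$ are unavailable. We therefore only use $\p^a$, $|a|\le N$, which commute with $\square_{c_i}$. We define the energy $E_N(t)=\sum_{|a|\le N}\|\p\p^a u(t)\|_{L^2}$ together with a low-order $L^2$ norm of $u$ itself. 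We also define a space-time decay quantity: a local-energy/Strichartz-type norm such as
\[
\sum_{|a|\le N-2}\Big(\|\p^a \p u\|_{L^2_tL^\infty_x([0,t]\times\R^3)}+\sup_{R}R^{-1/2}\|\p^a\p u\|_{L^2([0,t]\times\{|x|\le R\})}\Big),
\]
weighted by $\log(e+t)^{-1/2}$, since the 3D endpoint $L^2_tL^\infty_x$ Strichartz estimate fails only logarithmically. The bootstrap hypothesis is $E_N(t)\le 2C\ve$ and, for the decay norm, $X(t)\le 2C\ve\,(\log(e+t))^{1/2}$.

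\textbf{Step 1 (energy).} The symmetry condition \eqref{eq:symmetric condition} allows the standard quasilinear energy identity for $\p^a u$, $|a|\le N$, with the top-order term $Q\p^2\p^a u\,\p u\,\p u$ integrated by parts. This yields
\[
E_N(t)^2\le C\ve^2+C\int_0^t\big(\|\p u\|_{W^{[N/2]+1,\infty}}^2+\|u\|_{L^\infty}\|\p u\|_{W^{[N/2]+1,\infty}}+\|u\|^2_{W^{[N/2],\infty}}\big)E_N^2\,ds.
\]
The cubic structure means only a \emph{square} of $L^\infty$ norms appears in the $u$-independent case \eqref{eq:C independent u}, which is exactly $L^2_t$-integrable quantity. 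Controlling it by the logarithmic Strichartz bound gives $\int_0^t\|\p u\|_{W^{k,\infty}}^2ds\lesssim \ve^2\log(e+t)$. Gronwall then gives $E_N(t)\le C\ve\exp(C\ve^2\log(e+t))$, so the bootstrap closes as long as $\ve^2\log(e+t)\le\kappa_0$, i.e. $T_\ve\sim e^{\kappa_0\ve^{-2}}$. For general $G$, terms with undifferentiated $u$ appear linearly, e.g. $\|u\|_{L^\infty}\|\p u\|_{L^\infty}$, and $u$ itself has weaker decay. We plan to estimate $\int_0^t\|u\|_{L^\infty}\|\p u\|_{L^\infty}ds$ and $\int\|u\|_{L^\infty}^2$ by $\ve^2\log(e+t)$ times something of size $\ve^{-1}$ at worst (from the poorly controlled $\|u\|_{L^2}$, which grows in time when estimated only by energy). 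This produces the threshold $\ve\log t\lesssim1$, hence $e^{\kappa_0\ve^{-1}}$.

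\textbf{Step 2 (decay/Strichartz).} This is the main obstacle. We write $\p^au=$ free wave $+$ Duhamel integral and use the strong Huygens principle: the kernel of $\square_c^{-1}$ in 3D is $\frac{1}{4\pi c^2 (t-s)}\delta(|x-y|-c(t-s))$. This gives pointwise bounds of the form
\[
|w(t,x)|\lesssim\int_0^t\!\!\int_{\Sph}(t-s)\,|F(s,x+c(t-s)\omega)|\,d\omega\,ds.
\]
Combining these with Sobolev on spheres and trace lemmas, we aim to prove weighted $L^\infty$–$L^2$ estimates such as $\|\w{x}^{-1/2}\cdots\|$ and the near-endpoint Strichartz estimate $\|\p w\|_{L^2_tL^\infty_x([0,T])}\lesssim(\log(e+T))^{1/2}\big(\|\p w(0)\|_{H^{1+}}+\|F\|_{L^1_tH^{1+}}\big)$ for $w$ with $\square_cw=F$. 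The free part follows from Klainerman–Machedon/Keel–Tao-type bounds with the logarithmic loss obtained by frequency summation over $\log T$ dyadic time-scales. The Duhamel part would then be handled by Minkowski's inequality.

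\textbf{Step 3 (nonlinear estimate and closing).} We bound the cubic forcing by $\|F\|_{L^1_tH^{N-1}}\lesssim\int_0^t\|\p u\|_{W^{k,\infty}}^2E_N\,ds$ (plus $u$-terms). This is $\lesssim\ve^3\log(e+t)$ (resp. $\ve^2\log(e+t)$ in the general case), so the decay norm is recovered with constant $C\ve$ under the stated lifespan restriction. Multiple speeds cause no extra difficulty, since no null or vector-field structure is used; each component is treated by its own kernel. Finally, we choose $\kappa_0$ small to close both bootstrap inequalities and conclude by continuation.

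I expect the delicate point to be getting exactly a $\log$ (not a power) loss in the $L^2_tL^\infty_x$ estimate from only $H^{N+1}$ data. For this I would first try dyadic localization in $|x|$ combined with the Huygens representation on each annulus.
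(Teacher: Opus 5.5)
Your treatment of the $u$-independent case is essentially the paper's argument. You combine the energy estimate with the frequency-localized $L^2_tL^\infty_x$ Strichartz bound with $\ln^{1/2}(e+2^kt)$ loss, apply Minkowski to the Duhamel term, and run a bootstrap that closes when $\varepsilon^2\ln(e+t)\ll1$. You do not need Huygens' principle, spatial localization or local-energy norms for this. The paper uses those only for Theorem~\ref{thm:2}. The ``delicate point'' you flag at the end is just the $TT^*$ computation with the dispersive bound $2^{3k}(1+2^k|t|)^{-1}$, since $\int_{-T}^{T}(1+2^k|r|)^{-1}\,dr\approx 2^{-k}\ln(1+2^kT)$. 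The loss is per dyadic frequency, not from summing over time scales, and $\ln(e+2^kT)\lesssim 2^{\delta k}\ln(e+T)$ is absorbed by a $\delta$ of extra regularity.

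\textbf{The genuine gap is the general case}, which is the main assertion $T_\varepsilon\ge e^{\kappa_0\varepsilon^{-1}}-e$.
\begin{itemize}
\item Your energy inequality contains $\int_0^t\|u\|_{L^\infty}^2\,ds$, coming from the $Q_3\,\partial^2u\,u\,u$ and $S_3\,\partial u\,u\,u$ terms. Your bootstrap norm $X$ only involves $\partial^a\partial u$, so the only available bound is Sobolev, $\|u\|_{L^\infty}\lesssim\|\nabla u\|_{H^1}\lesssim\varepsilon$. That gives $\varepsilon^2t$ and a lifespan of order $\varepsilon^{-2}$.
\item Your proposed factor $\varepsilon^{-1}$ ``from $\|u\|_{L^2}$'' has no basis. $\|u(t)\|_{L^2}$ grows like $\varepsilon(1+t)$, not like $\varepsilon^{-1}$, and it does not control $\|u\|_{L^\infty}$ anyway.
\item The missing ingredient is an $L^2_tL^\infty_x$ estimate for $u$ itself, i.e. for the low-frequency propagator $P_{-1}|\nabla|^{-1}e^{\pm it|\nabla|}$. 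By $TT^*$ this reduces to the $L^1\to L^\infty$ bound for $P_{-1}|\nabla|^{-2}e^{\pm it|\nabla|}$. That bound is only $(1+t)^{-1}\ln(e+t)$, because the $|\xi|^{-2}$ singularity costs an extra logarithm.
\item Integrating in time gives $\int_0^T(1+r)^{-1}\ln(e+r)\,dr\approx\ln^2(e+T)$. Hence $\|P_{-1}|\nabla|^{-1}e^{\pm is|\nabla|}f\|_{L^2([0,t];L^\infty)}\lesssim\ln(e+t)\|P_{-1}f\|_{L^2}$, a full logarithm rather than $\ln^{1/2}$.
\item Put $P_k\partial^{\le1}u$, including $u$, into the bootstrap at level $\varepsilon\ln(e+t)$. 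Then the cubic terms contribute $\varepsilon^3\ln^2(e+t)$ to both the energy and the Strichartz bound, not $\varepsilon^2\ln(e+t)$ as your Step 3 says. The argument closes exactly when $\varepsilon\ln(e+t)\ll1$, which is where $e^{\kappa_0\varepsilon^{-1}}$ comes from.
\end{itemize}

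A secondary issue is derivative bookkeeping. With the naive Leibniz count $\|\partial u\|_{W^{[N/2]+1,\infty}}$ in the energy estimate, you cannot reach $N=4$. Strichartz for $\partial^a\partial u$ costs one derivative plus $\delta$, and $F$ contains $\partial^2u$. You need a tame commutator estimate involving only $\|\partial^{\le1}u\|_{B^{1+\delta}_{\infty,1}}$, as in Lemma~\ref{lem:Energy estimate}. Then the Strichartz step only requires $\partial u(0)\in H^{2+3\delta}$ and $F\in L^1_tH^{2+3\delta}$, which costs $\partial u\in L^\infty_tH^{3+3\delta}\subset L^\infty_tH^N$.
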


\vskip 0.1cm
\begin{theorem}\label{thm:2}
Let $\mu\in(0,1)$ be any fixed number and integer $N\ge \lceil4+2\mu\rceil$. Then there is a constant $\ve_0>0$ such that
for $\ve\in(0,\ve_0)$,
when $(u_{0},u_{1})$ satisfies
\begin{equation}\label{initial:data2}
\|u_{0}\|_{H^{N+1}(\R^3)}+\|u_1\|_{H^N(\R^3)}+\sum_{|a|\le 5}\|(1+|x|^2)^{\mu/2}\p^a_x(\nabla u _0,u_1)\|_{L^2(\R^3)}\le\ve,
\end{equation}
\eqref{eq:wave} admits a global solution $u\in C([0, \infty);H^{N+1}(\R^3))\cap C^1([0, \infty);H^N(\R^3))$
with the following estimates
\begin{equation}\label{YHCCC-1}
\|\p u\|_{L^\infty([0,\infty);H^N(\R^3))}\le C\varepsilon,
\end{equation}
\begin{equation}\label{YHCCC-2}
(1+t)^{\mu^-}\big(\|u(t,\cdot)\|_{L^\infty(\R^3)}+\|\p u(t,\cdot)\|_{L^\infty(\R^3)}\big)\le C\varepsilon,
\end{equation}
\begin{equation}\label{YHCCC-3}
\|(1+t+|x|)^{\f{1}{2}\mu^-}u\|_{L^{2}([0,\infty); L^\infty(\R^3))}
+\sum_{|b|\le 1}\|(1+t+|x|)^{\f{1}{2}\mu^-}\p_x^b\p u\|_{L^{2}([0,\infty); L^\infty(\R^3))}\le C\varepsilon,
\end{equation}
where $\mu^-$ denotes any fixed positive constant less than $\mu$.
Moreover, the solution $u$ scatters to a free solution: there exist functions $u_{r}^\infty = (u^{\infty,1}_{r},\cdots,u^{\infty,m}_{r})$
for $r = 0,1$ with $(\nabla u_0^\infty, u_1^\infty) \in H^{N-1}(\R^3)$ such that the solution $u^\infty=(u^{\infty,1},\cdots,u^{\infty,m})$ of the linear wave system $\square_{c_i} u^{\infty,i }= 0$ for $i=1,\cdots, m$ with the initial data $(u_0^\infty, u_1^\infty)$ fulfills
	\begin{equation}\label{scatter to a free solution}
	\lim_{t \to +\infty} \|\partial (u(t,\cdot) - u^\infty(t,\cdot))\|_{H^{N-1}(\R^3)} = 0.	
	\end{equation}
\end{theorem}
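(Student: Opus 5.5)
We argue by bootstrap (continuity), using only translation-invariant derivatives $\p$ and no Lorentz/rotation vector fields except on the linear side, because the data of Theorem \ref{thm:2} carry only the weight $\w{x}^\mu$ on finitely many derivatives. Let $T^*$ be the maximal time up to which the following hold:
\begin{align*}
&\sup_{0\le t\le T}\|\p u(t)\|_{H^N}\le 2A\ve,\\
&\sup_{0\le t\le T}(1+t)^{\mu^-}\bigl(\|u(t)\|_{L^\infty}+\|\p u(t)\|_{L^\infty}\bigr)\le 2A\ve,\\
&\|(1+t+|x|)^{\f12\mu^-}u\|_{L^2_tL^\infty_x}+\sum_{|b|\le1}\|(1+t+|x|)^{\f12\mu^-}\p_x^b\p u\|_{L^2_tL^\infty_x}\le 2A\ve,
\end{align*}
with the time integrals taken over $[0,T]$. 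We then show that for $\ve$ small all three hold with $A\ve$ in place of $2A\ve$. Local well-posedness in $H^{N+1}\times H^N$ and a continuation criterion then give $T^*=\infty$ and \eqref{YHCCC-1}--\eqref{YHCCC-3}.

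\textbf{Step 1 (energy).} Use the quasilinear energy for $\square_{c_i}u^i-\sum Q\,\p^2u\cdots$, which is symmetric by \eqref{eq:symmetric condition}. Commute with $\p_x^a$ for $|a|\le N$. The energy increment is bounded by
\[
\int_0^T\bigl(\|\p u\|_{W^{1,\infty}}+\|u\|_{L^\infty}\bigr)^2\,\|\p u\|_{H^N}\,dt .
\]
The bad terms involving undifferentiated $u$ are handled by putting $u$ in $L^\infty$ and using the $L^2_t$ bound; commutators with $\p^a u$ in $L^2$ are handled the same way. By Cauchy--Schwarz in time, this is at most $C(A\ve)^2\cdot A\ve$ through the third bootstrap quantity, since $(1+t+|x|)^{\mu^-}\ge1$. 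We need $\|u\|_{L^2}$ only when $u$ appears undifferentiated in a product; there we place all factors except one in $L^\infty$, so $\|u\|_{L^2}$ itself is not needed.

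\textbf{Step 2 (pointwise decay).} Split $u=u_{\rm lin}+u_{\rm nl}$. For the free part, use the strong Huygens' principle: the Kirchhoff formula $u(t,x)=t\,\fint_{|y-x|=c t}u_1+\p_t(\cdots)$. Combined with $\w{x}^\mu$-weighted $L^2$ control of up to five derivatives and Sobolev on spheres (trace type), this gives $|\p u_{\rm lin}|+|u_{\rm lin}|\ls\ve(1+t)^{-\mu}$. The spherical mean of a function with $\w{y}^{\mu}\p^a f\in L^2$, $|a|\le 5$, decays like $t^{-1-\mu}$ times $t$. These are the weighted $L^\infty$--$L^2$ estimates announced in the abstract, and we take $\mu^-<\mu$ to absorb logarithms. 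For the nonlinear part, apply Duhamel together with the weighted $L^\infty$--$L^2$ estimate for the inhomogeneous problem, of the form
\[
\w{t}^{\mu^-}|v(t)|\ls\int_0^t\sum_{|a|\le 2}\|\w{s+|y|}^{\mu}\p^a F(s)\|_{L^2\cap L^1\text{-type}}\,ds .
\]
Since $F$ is cubic, $\|\w{\cdot}^\mu F\|\ls\|\w{\cdot}^{\mu/2}w\|_{L^\infty}^2\|\p u\|_{H^N}$ for $w\in\{u,\p u,\p^2u\}$. Its time integral is again controlled by the $L^2_tL^\infty_x$ bootstrap quantity squared times the energy, giving $C(A\ve)^3$.

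\textbf{Step 3 (weighted Strichartz; the main obstacle).} We must recover the third quantity. For the free evolution we need a weighted endpoint-type estimate in 3D:
\[
\|\w{t+|x|}^{\f12\mu^-}e^{itc|D|}f\|_{L^2_tL^\infty_x}\ls\|\w{x}^\mu\p^{\le 2}f\|_{L^2}.
\]
The unweighted $L^2_tL^\infty_x$ estimate fails in 3D, so this is genuinely delicate. I would prove it in two steps. First, localize dyadically in $t$, using Huygens' principle so that only data on the shell $|y|\sim|t\pm|x||$ contributes. Then estimate each piece by the pointwise spherical-mean bound squared and summed, gaining integrability from the weight $\w{x}^\mu$ with $\mu>\f12\mu^-\cdot 2$ fraction; the margin $\mu^-<\mu$ makes the dyadic sum converge. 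The inhomogeneous part follows by Minkowski/Duhamel from the homogeneous one applied to $F(s)$, giving a bound $\int_0^\infty\|\w{y}^\mu\p^{\le2}F(s)\|_{L^2}ds\ls(A\ve)^3$. The key subtlety is the weight of $F(s)$ translated to time $s$, which is handled using $\w{s+|y|}$ weights. This estimate is the step I expect to require the most care.

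\textbf{Step 4 (scattering).} Define
\[
(u_0^\infty,u_1^\infty)=(u_0,u_1)-\int_0^\infty S(-s)(0,G(s))\,ds .
\]
The integral converges in $\dot H^{N}\times H^{N-1}$ because
\[
\|G(s)\|_{H^{N-1}}\ls\bigl(\|u\|_{L^\infty}+\|\p u\|_{W^{1,\infty}}\bigr)^2\|\p u\|_{H^N}
\]
is in $L^1_s$ by \eqref{YHCCC-3}. The energy identity for the linear group then yields \eqref{scatter to a free solution}; one derivative is lost because $G$ contains $\p^2u$.
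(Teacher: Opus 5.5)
Steps 1, 2 and 4 are essentially sound, but Step 3 has a genuine gap. With the three quantities you carry, the bootstrap for the weighted $L^2_tL^\infty_x$ norm cannot close.

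\textbf{The bootstrap loses weight.} Your linear estimate loses weight: the output carries $(1+t+|x|)^{\f12\mu^-}$ while the input needs $\w{x}^{\mu}$ with $\mu>\mu^-$. The paper's Theorem \ref{thm:Weighted Strichartz estimate} has the same loss ($\beta_2>\beta_1$). Fed into Duhamel, it requires $\int_0^\infty\|\w{y}^{\mu'}\p^{\le 2}F(s)\|_{L^2}\,ds$ for some $\mu'>\mu^-$. For generic data $u(s)$ concentrates near $|y|\approx c_is$, so this is in effect a weight $(1+s+|y|)^{\mu'}$ on $F(s)$. The top-order factor of the cubic term must go into $L^\infty_tH^N$ without weight. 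So what you actually control is $\|(1+s+|x|)^{\f12\mu^-}w\|_{L^2_tL^\infty_x}^2\|\p u\|_{L^\infty_tH^N}$, i.e.\ total weight only $\mu^-$. The missing factor $(1+s+|y|)^{\mu'-\mu^-}$ has nowhere to go:
\begin{itemize}
\item It cannot sit on the energy factor, since $\|(1+s+|x|)^{\epsilon}\p u(s)\|_{L^2}\ge(1+s)^{\epsilon}\|\p u(s)\|_{L^2}$ grows.
\item It cannot be recovered from (b), since (b) and (c) together do not control $\int_0^\infty(1+s)^{\mu'}\|w(s)\|_{L^\infty}^2\,ds$.
\end{itemize}
In Step 2 you already wrote $\|\w{\cdot}^{\mu/2}w\|_{L^\infty}^2$, which (c) does not bound. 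There this is repairable, because the pointwise estimate is lossless in the weight. In Step 3 the loss recurs at every iteration.

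\textbf{How the paper closes the loop.} It adds two weighted non-endpoint norms with staggered weights, \eqref{bootstrap5}--\eqref{bootstrap6}. The Duhamel term for the $L^2_tL^\infty_x$ norm with weight $\f{\mu}{2}-10\delta$ needs $A_{\mu-19.8\delta}G$. H\"older splits this into two factors $A_{\f{\mu}{2}-11\delta}\p^{\le1}u\in L^{2+8\delta}_tL^{2+\f{1}{2\delta}}_x$ and one factor $A_{2.2\delta}\p u\in L^{1+\f{1}{4\delta}}_tL^{\f{2+8\delta}{1-4\delta}}_x$. The last is a nearly-$L^\infty_tL^2_x$ norm which, unlike the energy, can carry a small space-time weight. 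The auxiliary norms have slightly smaller weights, so they are bounded back by the $L^2_tL^\infty_x$ norm squared times the energy (Lemmas \ref{lem:Estimate of Besov norm 2}--\ref{lem:Estimate of Besov norm 4}).

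\textbf{The sketched linear estimate is too weak.} Separately, the argument you sketch for the linear estimate would not give it. For data $Q_jf$ supported in $|y|\sim 2^j$, the Huygens-based pointwise bound is $\sup_x|e^{it|\nabla|}Q_jf|\ls\min\{1,2^j/t\}\|Q_jf\|_{H^4}$ (Lemma \ref{lem:Linfty-L2 estimate}). Squaring and integrating against $(1+t)^{\mu^-}$ gives only $\|(1+t)^{\f12\mu^-}e^{it|\nabla|}Q_jf\|_{L^2_tL^\infty_x}\ls 2^{\f{1+\mu^-}{2}j}\|Q_jf\|_{H^4}$. This is summable against $\w{x}^{\mu}$ only if $\mu^-<2\mu-1$, which never holds when $\mu\le\f12$. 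Pointwise decay alone ignores the time-orthogonality behind Strichartz estimates. The paper brings the weight cost down to $\beta_2$ just above $\beta_1$ by interpolating the localized decay estimate (Corollary \ref{cor:2.10}, Lemma \ref{lem:Technical lemma 2}) with the unweighted non-endpoint Strichartz estimate in $L^p_tL^\infty_x$, $p>2$ (Lemma \ref{lem:strichartz L2Linfty}), which costs no spatial weight. It then returns to $L^2_t$ by H\"older in time (Lemmas \ref{lem:Localized Strichartz estimate}--\ref{lem:Localized Strichartz estimate 2}).
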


\begin{remark}
	Consider the following more general 3D fully nonlinear wave equation system
	\[
	\begin{aligned}
	\square_{c_i} u^i =& G^i(u,\partial u, \partial^2 u) \\
	=&\sum_{j,k,l=1}^m\big[\sum_{\alpha,\beta,\gamma,\delta,\mu,\nu=0}^3 F_{1ijkl}^{\alpha\beta\gamma\delta\mu\nu} \partial_{\alpha\beta}^2 u^j \partial_{\gamma\delta}^2 u^k\partial_{\mu\nu}^2 u^l+
	 \sum_{\alpha,\beta,\gamma,\delta,\mu=0}^3 F_{2ijkl}^{\alpha\beta\gamma\delta\mu} \partial_{\alpha\beta}^2 u^j\partial_{\gamma\delta}^2 u^k \partial_\mu u^l\\
	&+\sum_{\alpha,\beta,\gamma,\delta=0}^3 Q^{\alpha\beta\gamma\delta}_{1ijkl} \partial_{\alpha\beta}^2 u^j\partial_\gamma u^k\partial_\delta u^l
	+\sum_{\alpha,\beta,\gamma=0}^3 Q^{\alpha\beta\gamma}_{2ijkl} \partial_{\alpha\beta}^2 u^j\partial_\gamma u^ku^l\\
	&+\sum_{\alpha,\beta=0}^3 Q^{\alpha\beta}_{3ijkl} \partial_{\alpha\beta}^2 u^j u^k  u^l
	+\sum_{\alpha,\beta,\gamma=0}^3 S^{\alpha\beta\gamma}_{1ijkl} \partial_{\alpha} u^j\partial_\beta u^k\partial_\gamma u^l\\
	&+\sum_{\alpha,\beta=0}^3 S^{\alpha\beta}_{2ijkl} \partial_{\alpha} u^j\partial_\beta u^k  u^l
	+\sum_{\alpha=0}^3 S^{\alpha}_{3ijkl} \partial_{\alpha} u^j  u^k u^l \big],
	\end{aligned}
	\]
where $F_{1ijkl}^{\alpha\beta\gamma\delta\mu\nu}$, $F_{2ijkl}^{\alpha\beta\gamma\delta\mu}$, $Q^{\alpha\beta\gamma\delta}_{1ijkl},\ Q^{\alpha\beta\gamma}_{2ijkl},\ Q^{\alpha\beta}_{3ijkl},\ S^{\alpha\beta\gamma}_{1ijkl},\ S^{\alpha\beta}_{2ijkl}$
and $S^{\alpha}_{3ijkl}$ are constants; meanwhile, the analogous symmetric conditions in \eqref{eq:symmetric condition} are imposed.
Then by the quasilinearization of the fully nonlinear wave equations (see Remark 4 on page 116
of \cite{Hormander}), the similar conclusions as in Theorems \ref{thm:1}--\ref{thm:2} still hold.
\end{remark}

\begin{remark}\label{rmk:1.4}
For $(u_0, u_1)(x)\in (H^{N+1}, H^N)(\R^3)$
with  $N\in\Bbb \Z_+$ being sufficiently large, when
\begin{equation}\label{YHCC-0}
\begin{split}
\||x|(|\nabla| u_0, u_1)\|_{H^2}+\||\nabla|(|x|^2 (|\nabla|u_0, u_1))\|_{H^1} +\|u_0\|_{H^{N+1}}+
\|u_1\|_{H^{N}}\le\ve
\end{split}
\end{equation}
is small, the authors in \cite{Pusateri} have established the global solution $u$ for the quasilinear wave system in \eqref{eq:wave} with the quadratic null form nonlinearities or cubic nonlinearities by the space-time resonance method. It is pointed out that
the weights $|x|$ and $|x|^2$ in \eqref{YHCC-0} play essential roles in the proof procedure on the global existence of
$u$ (see the norm in (3.6) of \cite{Pusateri}). However, for the lower weighted Sobolev space in \eqref{initial:data2}
with the weight $\w{x}^{\mu}$ ($\mu>0$ is any fixed constant), it seems difficult for us to apply the corresponding methods and results
in \cite{Pusateri} directly. In the present paper, we establish a series of new weighted Strichartz estimates instead of the
obtained $L^{\infty}$ and weighted $H^k$ estimates in (3.5)--(3.6) of \cite{Pusateri} to derive the uniform
estimates \eqref{YHCCC-1}--\eqref{YHCCC-3}.
\end{remark}

\begin{remark}\label{rmk:1.4-1}
Let $u$ solve the Cauchy problem of the quasilinear Klein-Gordon equation
\begin{equation}\label{KG-01}
\left\{
\begin{aligned}
&\Box u+u=F(u,\p u,\p^2u),\quad(t,x)\in[0,\infty)\times\R^d,\\
&(u,\p_tu)(0,x)=\ve(u_0,u_1)(x),
\end{aligned}
\right.
\end{equation}
where $x=(x_1,\cdots,x_d)$, $d\ge1$, $\ve>0$ is small, $(u_0,u_1)\in (H^{s+1}, H^s)$ with $s>0$ being a suitably large constant,
and the smooth nonlinearity $F(u,\p u,\p^2u)$ is quadratic
and is linear in $\p^2u$. When $F(u,\p u,\p^2u)=F(u,\p u)$ satisfies the null condition, the authors in \cite{DelortFang}
proved that the solution $u\in C([0,T_\varepsilon); H^{s+1})\cap  C^1([0,T_\varepsilon); H^{s})$
exists, where $T_\varepsilon\ge Ce^{C\varepsilon^{-\mu}}$ for $\mu=1$ if $d\ge3$ and $\mu=2/3$ if $d=2$.
For $d=1$, $T_{\ve}\ge \frac{C}{\ve^4 |\ln\ve|^6}$ was shown in \cite{Delort97}.
Recently, without the restriction of null condition for the general $F(u,\p u, \p^2 u)$ in \eqref{KG-01}, the authors
in \cite{HouYin2} have established $u\in C([0,T_\varepsilon); H^{s+1}(\Bbb R^d))
\cap  C^1([0,T_\varepsilon); H^{s}(\Bbb R^d))$ with $T_\varepsilon=+\infty$ if $d\ge3$,
$T_\varepsilon\ge e^{C\varepsilon^{-2}}$ if $d=2$ and $T_\varepsilon\ge \frac{C}{\ve^4}$ if $d=1$.
Meanwhile, for $d=2$ and any fixed number $\mu>0$, if
$\|u_0\|_{H^{s+1}(\R^2)}+\|u_1\|_{H^{s}(\R^2)}
+\|\w{x}^\mu u_0\|_{L^2(\R^2)}+\|\w{x}^\mu u_1\|_{L^2(\R^2)}\le 1$,
then it was proved in \cite{HouYin2} that \eqref{KG-01} admits a  global solution
$u\in C([0,\infty);H^{s+1}(\R^2))\cap C^1([0,\infty);H^{s}(\R^2))$.
In addition, for $d=1$ and the quadratic nonlinearity $F(u,\p u,\p^2u)=F(u,\p u)$,
when $\|u_0\|_{H^{s+1}(\R)}+\|u_1\|_{H^s(\R)}
+\ds\sum_{k=0}^{14}\|\w{x}^\mu\p^k(u_0,u_1)\|_{L^2(\R)}\le 1$ for any $\mu>\f12$,
then the authors in \cite{HouTaoYin} proved $u\in C([0,T_\varepsilon); H^{s+1}(\Bbb R))
\cap  C^1([0,T_\varepsilon); H^{s}(\Bbb R))$ with the optimal $T_\varepsilon\ge e^{C\varepsilon^{-2}}$.
Note that the symbols  $\pm\sqrt{1+|\xi|^2}$ ($\xi=(\xi_1,\xi_2,\xi_3)\in\Bbb R^3$) coming from the
spatial Fourier transformation on the Klein-Gordon operator $\p_t^2-\Delta +1$ in \eqref{KG-01} are obviously smooth
near $\xi=0$ while the symbols $\pm|\xi|$ coming from the Laplacian operator $-\Delta$
in the wave equation of \eqref{eq:wave} are singular near $\xi=0$. This implies that the phase
of the Fourier integral operator from the solution expression of $\square w+w=f$ can be analyzed more delicately  near $\xi=0$
and some kinds of better space-time decay estimates than those for the wave equation $\square\tilde w=\tilde f$ are obtained (see \cite{DengPusateri,Zheng1,Zheng2,HouYin2,HouTaoYin} and Chapters 6--7 of \cite{Hormander}). Based on this, the almost global or global solutions to problem \eqref{KG-01} with weakly decaying initial data
mentioned above are established. However, these analyses for the Klein-Gordon equation are not suitable for
the wave equation due to different phase functions near $\xi=0$,
so far there are few corresponding long time existence results for the nonlinear wave equations with the small initial data
in the lower weighted Sobolev spaces.
\end{remark}

\subsection{Applications of main results}

The quasilinear wave systems in \eqref{eq:wave} include several interesting physical or geometric models.
\begin{itemize}
	\item \textit{3D relativistic membrane equation}
	\begin{equation}\label{eq:1.6}
	\partial_t\big( \frac{\partial_t u}{\sqrt{1-(\partial_t u)^2 + |\nabla u|^2}} \big)
	- \Div\big( \frac{\nabla u}{\sqrt{1-(\partial_t u)^2 + |\nabla u|^2}} \big) = 0,
	\end{equation}
	where $\nabla u = (\partial_1 u,\p_2 u,\partial_3 u)$. Note that \eqref{eq:1.6} is actually the Euler-Lagrange equation
	of the volume functional $\int_{\mathbb{R}\times\mathbb{R}^3} \sqrt{1-(\partial_t u)^2 + |\nabla u|^2}dtdx$ for the embedding of
	$(t,x) \mapsto (t,x,u(t,x))$ in the Minkowski spacetime.
	It is easy to check that \eqref{eq:1.6} is equivalent to the following nonlinear equation for the $C^2$ solution $u$:
	\begin{equation*}
	\Box u=(|\partial_t u|^2-|\nabla u|^2)(\partial_t^2 u-\Delta u)-(\partial_t u)^2 \partial_t^2 u
+ 2 \partial_t u\nabla u \cdot  \partial_t\nabla u - \f12\nabla u \cdot \nabla(|\nabla u|^2).
	\end{equation*}
	This implies that the nonlinear equation \eqref{eq:1.6} admits a cubic nonlinearity in \eqref{eq:wave}.
	\item \textit{3D nonlinear membrane equation}
	\begin{equation}\label{eq:1.7}
	\partial_t^2 u - \Div\big(\frac{\nabla u}{\sqrt{1+|\nabla u|^2}}\big) = 0,
	\end{equation}
	where $u(t,x)$ stands for the position of the membrane at the point $(t,x)$. It follows from \eqref{eq:1.7} and direct computation that
	for $u\in C^2$,
\begin{equation*}\label{eq:1.7-1}
\Box u =|\nabla u|^2\Delta u-\ds\sum_{i,j=1}^3\p_iu\p_ju\p_{ij}^2u-\f{3}{2}|\nabla u|^2\p_t^2u+O(|\nabla u|^4)\p_t^2u.
\end{equation*}
Obviously, \eqref{eq:1.7} is a second order quasilinear wave equation with cubic nonlinearity.

\item \textit{3D nonlinear wave equation}
\begin{equation}\label{eq:1.8}
-\left(1+(\partial_t u)^2\right)\partial_t^2 u + \Delta u = 0,
\end{equation}
which can be regarded as a model equation from the nonlinear version of
Maxwell's equations in the electromagnetic theory (see \cite{MiaoYu}, Section 1.3), where $u$ represents the scalar potential
function. It is easy to check that \eqref{eq:1.8} is a cubic quasilinear wave equation included in \eqref{eq:wave}.

\item \textit{3D scalar nematic liquid crystal equation}
\begin{equation}\label{eq:1.6-0}
\partial_tu^2-c(u)\Div(c(u)\nabla u)=0,
\end{equation}
where $c(u)=\alpha \cos^2u+\beta \sin^2 u$, the constants $\alpha>0$ and $\beta>0$ with $\alpha \neq \beta$.
With respect to the derivation of equation \eqref{eq:1.6-0}, one can see \cite{Crit-1}, \cite{Crit-2} and \cite{Crit-3}.
In this case, $c(u)=\alpha+(\beta-\alpha)u^2+O(u^3)$ for small $|u|$. It follows from \eqref{eq:1.6-0} and direct computation that
for $u\in C^2$,
\begin{equation*}
	\Box_\alpha u = 2\alpha(\beta-\alpha)(u^2\Delta u+ u|\nabla u|^2)+O(u^3)(\Delta u +|\nabla u|^2).
\end{equation*}
Then \eqref{eq:1.6-0} admits the cubic nonlinearity
imposed in \eqref{eq:wave}.

	\item \textit{3D wave maps system}

	Let $(\mathcal{M},g)$ be a $m$-dimensional compact Riemannian manifold without boundary. A wave map is a continuous
function from the Minkowski space $\mathbb{R}^{1+3}$ into $\mathcal{M}$:
	$$
	u=(u^1, ..., u^m) : \mathbb{R} \times \mathbb{R}^3 \longrightarrow \mathcal{M},
	$$
	which corresponds to a critical point of the following functional
	$F(u) = \int_{\mathbb{R}^{1+3}} \langle \eta^{\alpha\beta} \partial_\beta u, \partial_\alpha u \rangle_g dt dx$
	with $\eta=\mathrm{diag}(-1,1,1,1)$ being the standard Minkowski metric on $\mathbb{R}^{1+3}$ in rectangular coordinates. When the local coordinates on $\mathcal{M}$ are introduced, the equations of $u^i$ ($1\leq i\leq m$) can be written as
\begin{equation}\label{eq:wave maps}
\Box u^i
=\sum_{j,k=1}^3\Gamma_{jk}^i(u)\big(\partial_t u^j\partial_t u^k-\sum_{l=1}^3\partial_{x_l}u^j\partial_{x_l}u^k\big), \quad 1\leq i\leq m,
\end{equation}
where $\Gamma_{jk}^i(u)$'s are the Christoffel symbols of the metric $g$. For more detailed derivation on the wave maps system \eqref{eq:wave maps}, one may refer to 2.1.2 of Chapter 2 in \cite{Shatah}. When we consider the small data solution of \eqref{eq:wave maps}, by carrying out the Taylor expansion on the function $\Gamma_{jk}^i(u)$ under Riemann normal coordinates, the long time existence of \eqref{eq:wave maps} can be attributed to the following system of semilinear wave equations
\begin{equation}\label{eq:wave maps2}
\Box u^i = \sum_{j,k=1}^3 \sum_{l=1}^3 C_{jkl}^i \big(\partial_t u^j \partial_t u^k - \sum_{p=1}^3 \partial_{x_p} u^j \partial_{x_p} u^k \big)u^l, \quad 1 \leq i \leq m.
\end{equation}
It is easy to know that the 3D wave maps system \eqref{eq:wave maps} or \eqref{eq:wave maps2}
admits the cubic nonlinearity as in problem \eqref{eq:wave}.
\end{itemize}

In terms of Theorems \ref{thm:1}--\ref{thm:2}, we have
\begin{cor}
\begin{itemize}
\item[(A)]  Under the conditions of Theorem \ref{thm:1},
the equations \eqref{eq:1.6}, \eqref{eq:1.7} and  \eqref{eq:1.8} have almost global solutions
$u\in C([0,T_\ve];H^{N+1}(\R^3))\cap C^1([0,T_\ve];H^N(\R^3))$ with $T_\ve\ge e^{C\ve^{-2}}$,
while the equation \eqref{eq:1.6-0} or \eqref{eq:wave maps} has a solution $u\in C([0,T_\ve];H^{N+1}(\R^3))\cap C^1([0,T_\ve];H^N(\R^3))$ with $T_\ve\ge e^{C\ve^{-1}}$.
\item[(B)]  Under the conditions of Theorem \ref{thm:2},
the equations \eqref{eq:1.6}--\eqref{eq:wave maps}  have the global solutions
$u\in C([0,\infty);H^{N+1}(\R^3))\cap C^1([0,\infty);H^N(\R^3))$.
\end{itemize}
\end{cor}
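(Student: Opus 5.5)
The corollary follows by checking that each model equation is an instance of \eqref{eq:wave} with nonlinearity of the form \eqref{eq:c} and the symmetry condition \eqref{eq:symmetric condition}, up to higher order terms. We then read off which of the two lifespan regimes of Theorem \ref{thm:1} applies, and invoke Theorem \ref{thm:2} for part (B).

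\textbf{Step 1: expansion of each model.} For \eqref{eq:1.6}, \eqref{eq:1.7}, \eqref{eq:1.8}, the computations displayed after each equation give $\Box u$ equal to a combination of terms $\p^2_{\alpha\beta}u\,\p_\gamma u\,\p_\delta u$ and $\p u\,\p u\,\p u$. For \eqref{eq:1.7} there is an additional remainder $O(|\nabla u|^4)\p_t^2u$. None of these terms involve $u$ undifferentiated, so they fall under \eqref{eq:C independent u}. The quasilinear coefficients can be symmetrized in $(\alpha,\beta)$ because $\p^2_{\alpha\beta}=\p^2_{\beta\alpha}$. Symmetry in $(i,j)$ is trivial for a scalar equation. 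Hence Theorem \ref{thm:1} gives $T_\ve\ge e^{\kappa_0\ve^{-2}}-e$, i.e.\ $T_\ve\ge e^{C\ve^{-2}}$ after shrinking the constant, for small $\ve$.

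For \eqref{eq:1.6-0} the expansion contains $u^2\Delta u$ and $u|\nabla u|^2$. For \eqref{eq:wave maps2} it contains $(\p u\cdot\p u)u$. These terms depend on $u$ itself, so only the general bound $T_\ve\ge e^{C\ve^{-1}}$ of Theorem \ref{thm:1} is available. For the wave map system \eqref{eq:wave maps}, we first pass to normal coordinates centred at the point near which the small data live and Taylor expand $\Gamma^i_{jk}(u)=\Gamma^i_{jk}(0)+\p\Gamma\cdot u+O(u^2)$. In normal coordinates $\Gamma(0)=0$, so the quadratic part vanishes. Equation \eqref{eq:1.6-0} has the form $\p_t^2u-c(u)^2\Delta u-c(u)c'(u)|\nabla u|^2=0$. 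Dividing by $\alpha^2$ after a time rescaling puts it in the form $\square_{c}u=\dots$ with $c=\alpha$.

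\textbf{Step 2: higher order remainders.} This is the only point needing care. The remainders $O(|\nabla u|^4)\p_t^2u$, $O(u^3)(\Delta u+|\nabla u|^2)$ and $O(u^2)(\p u)^2$ are not literally of the cubic form \eqref{eq:c}, which has constant coefficients. I would argue that the proofs of Theorems \ref{thm:1}--\ref{thm:2} go through unchanged for nonlinearities $G=G_3+G_{\ge4}$, where $G_{\ge4}$ is smooth, vanishes to fourth order, and is linear in $\p^2u$ with symmetric quasilinear coefficients. Every estimate applied to a cubic term yields an extra factor $\|u\|_{L^\infty}+\|\p u\|_{L^\infty}\ls\ve$ when one more factor is present. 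The energy estimates for the quasilinear part only use the symmetry of the coefficient matrix and bounds on its derivatives, and both hold for smooth $g^{\alpha\beta}(u,\p u)$. This matches the reference to H\"ormander's quasilinearization already used in the remark after Theorem \ref{thm:2}.

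\textbf{Step 3: part (B).} Under \eqref{initial:data2}, Theorem \ref{thm:2} applies to every model above, since it carries no restriction on whether $G$ depends on $u$. Together with Step 2 for the quartic remainders, this yields global solutions in $C([0,\infty);H^{N+1})\cap C^1([0,\infty);H^N)$.

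The main obstacle is justifying Step 2, namely that quartic and higher smooth remainders are harmless in the bootstrap. I expect this to be routine given the $L^\infty$ decay and $L^2_tL^\infty_x$ bounds \eqref{YHCCC-2}--\eqref{YHCCC-3}, but it is not literally covered by the theorems as stated.
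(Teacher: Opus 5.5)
Your proposal is correct and follows the paper's own argument. The paper gives no separate proof: it reads off from the expansions displayed after each model equation that each model is a cubic system of the form \eqref{eq:wave}. The quasilinear models are scalar, so \eqref{eq:symmetric condition} is automatic, and passing to normal coordinates removes the quadratic part of \eqref{eq:wave maps}. It then takes the $e^{C\ve^{-2}}$ lifespan of Theorem \ref{thm:1} when $G$ does not involve undifferentiated $u$ and the $e^{C\ve^{-1}}$ lifespan otherwise, and applies Theorem \ref{thm:2} for (B).

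Your Step 2 on the smooth quartic-and-higher remainders (e.g.\ $O(|\nabla u|^4)\p_t^2u$ in \eqref{eq:1.7}, $O(u^3)(\Delta u+|\nabla u|^2)$ in \eqref{eq:1.6-0}) addresses a point the paper passes over silently. Your justification, that each such term only contributes an extra factor of size $\ve$ in the bootstrap, is the right one.
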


\begin{remark}\label{Rem-5A}
{\it It is pointed out that there have been remarkable results
for the local and global existence of low regularity solutions to the $d$-dimensional wave
maps system in  $\R^{1+d}$
($d\ge 2$) due to the null forms of  \eqref{eq:wave maps}. For example,
when the initial data $(u, \p_tu)(0,x)\in H^{s-1}(\Bbb R^d)\times H^{s-1}(\Bbb R^d)$ with $s>\f{d}{2}$,
the local solution $u\in C([0, T]; H^{s}(\Bbb R^d))\cap C^1([0, T]; H^{s-1}(\Bbb R^d))$ exists (see \cite{K-01,K-02});
when $(u, \p_tu)(0,x)$ is small in the homogeneous Besov space $\dot{B}_{2,1}^{d/2}(\Bbb R^d)\times\dot{B}_{2,1}^{d/2-1}(\Bbb R^d)$, the author in \cite{Tataru98,Tataru01} has proved the global
solution of \eqref{eq:wave maps} in $C([0, \infty); \dot{B}_{2,1}^{d/2}(\Bbb R^d))\cap C^1([0, \infty); \dot{B}_{2,1}^{d/2-1}(\Bbb R^d))$;
when $(u, \p_t u)(0,x)$ is small in the critical Sobolev space $\dot{H}^{d/2}(\Bbb R^d)\times\dot{H}^{d/2-1}(\Bbb R^d)$,
the global solution of \eqref{eq:wave maps} in $C([0, \infty); \dot{H}^{d/2}(\Bbb R^d))
\cap C^1([0, \infty); \dot{H}^{d/2-1}(\Bbb R^d))$
is established in \cite{TaoIMRN,TaoCMP}.}
\end{remark}

\subsection{Previous results}

We now briefly recall some basic results related to our work. Consider the
Cauchy problem of the second order nonlinear wave equation
\begin{equation}\label{eq:1.9}
\begin{cases}
\square u = F(u,\partial u,\partial^2 u), \quad (t,x) \in [0,\infty)\times\mathbb{R}^d,\\
(u,\partial_t u)(0,x) = (\varepsilon u_0,\varepsilon u_1)(x),
\end{cases}
\end{equation}
where $F(u,\partial u,\partial^2 u)=O(|u^2|+|\partial u|^2+|\partial^2 u|^2)$, $d\geq 2$, $\varepsilon>0$ is sufficiently small.
If the smooth functions $(u_0,u_1)(x)$ are compactly supported or decay rapidly at infinity, there have been
systematic global existence results as follows.
\vskip 0.1 true cm
{\bf $\bullet$ When $d \geq 4$},  \eqref{eq:1.9}
has a global small data smooth solution $u$ (see \cite{Hormander}, \cite{KlainermanPonce}, \cite{LiChen}).
\vskip 0.1 true cm
{\bf $\bullet$ When $d = 3$}, if  $F(u,\partial u,\partial^2 u)=F(\partial u,\partial^2 u)$ satisfies the related null condition,
then the global existence of small solution $u$ to \eqref{eq:1.9} has been shown in \cite{Christodoulou} and \cite{Klainerman2};
if $F(u,\partial u,\partial^2 u)$ fulfills the weak null condition, it is also shown that the global solution of \eqref{eq:1.9}
still exists (see \cite{Alinhac2}, \cite{DingLiuYin} and \cite{Lindblad}).
\vskip 0.1 true cm
{\bf $\bullet$ When $d=2$}, if $F(u,\partial u,\partial^2 u)=F(\partial u,\partial^2 u)$ satisfies both the first and second null conditions, the author in \cite{Alinhac1} proved that \eqref{eq:1.9} has a global smooth solution $u$.
\vskip 0.1 true cm
On the other hand, when $(u_0,u_1)(x)$ are not compactly supported or decay at infinity with the higher order weight
$\w{x}^{\mu}$ for $\mu\ge 1$ in \eqref{initial:data2}, there are numerous global existence results of small solution
$u$ to the general nonlinearity in \eqref{eq:1.9}, one can see \cite{Asa}, \cite{CaiLei}, \cite{HouYin1} and \cite{Pusateri}.
Analogous long time existence results in higher order space-weighted Sobolev spaces have been obtained for the 3D multi-speed wave systems, see \cite{Kat, Kji,Sideris,Pusateri}. However, so far there are few long time existence results for the problem \eqref{eq:wave}
with general cubic nonlinearities in lower-order space-weighted Sobolev spaces except the result in \cite{HouYin3} for 4D cubic quasilinear wave equations with the initial data in the Sobolev space $H^s(\Bbb R^4)$ ($s>1$ is suitably large).

\subsection{Sketch of the proofs}

We now give an outline of the proofs on Theorems \ref{thm:1}--\ref{thm:2}. When
$(u_0,u_1)(x)$ decay suitably fast at infinity, the global existence of smooth solution $u$ to \eqref{eq:wave} with $d = 3$ and $m=1$ has been established in \cite{Klainerman3}. The key points of the related analyses in \cite{Klainerman3} are based on the standard energy estimates for wave
equations, the smallness of $\|u(t,\cdot)\|_{H^{\lfloor \frac{d}{2} \rfloor+2}(\mathbb{R}^d)}$ and the following Klainerman-Sobolev inequality:
\begin{equation}\label{eq:1.14}
|\partial u(t,x)| \leq \frac{C}{(1+|t-r|)^{\frac{1}{2}}(1+t)^{\frac{d-1}{2}}}
\sum_{|I| \leq \lfloor \frac{d}{2} \rfloor+1} \|Z^I \partial u(t,x)\|_{L_x^2(\mathbb{R}^d)},
\end{equation}
where $Z \in \{\partial_t,x_i\partial_j - x_j\partial_i,x_i\partial_t + t\partial_i,1 \leq i,j \leq d,t\partial_t + \sum\limits_{k=1}^d x_k\partial_k\}$ and
$r = |x| = \sqrt{(x_1)^2+\cdots+(x_d)^2}$.
In order to apply the vector field method and \eqref{eq:1.14} to study problem \eqref{eq:wave}, the weighted norm
\begin{equation}\label{YHCCC-20}
\begin{split}
\|(1+|x|)^2(\nabla u_0,u_1)\|_{L^2(\mathbb{R}^3)}<\infty
\end{split}
\end{equation}
should be satisfied. However, for the initial data $(u_0,u_1)(x)$ in \eqref{initial:data2} with $\mu\in (0,1)$,
\eqref{YHCCC-20} may be violated in general. Meanwhile,
these  vector fields in \eqref{eq:1.14} are not suitable for the wave systems with different wave speeds.

On the other hand, the following KSS-type estimates (see \cite[Prop. 2.1]{KSS02jam} or (1.7)--(1.8) on page 190 of \cite{MetcalfeSogge06})
\begin{equation}\label{KSSestimate-1}
\begin{split}
&(\ln(2+t))^{-1/2}\big(\|\w{x}^{-1/2}\p v\|_{L_t^2L_x^2([0,t]\times\R^3)}+\|\w{x}^{-3/2}v\|_{L_t^2L_x^2([0,t]\times\R^3)}\big)\\
&\ls\|\p v(0,x)\|_{L^2(\R^3)}+\int_0^t\|\Box v(s,x)\|_{L^2(\R^3)}ds
\end{split}
\end{equation}
or
\begin{equation}\label{KSSestimate-2}
\begin{split}
&\|\w{x}^{-1/2^-}\p v\|_{L_t^2L_x^2([0,t]\times\R^3)}+\|\w{x}^{-3/2^-}v\|_{L_t^2L_x^2([0,t]\times\R^3)}\\
&\ls\|\p v(0,x)\|_{L^2(\R^3)}+\int_0^t\|\Box v(s,x)\|_{L^2(\R^3)}ds
\end{split}
\end{equation}
cannot be utilized to study problem \eqref{eq:wave} directly, where $\w{x}=(1+|x|^2)^{1/2}$.
Indeed,  by virtue of \eqref{KSSestimate-2} and Theorem \ref{thm:Weighted Strichartz estimate} below, when the weight $\mu$
in \eqref{initial:data2} is less than $1/2$ or $\mu=0$,
note that the terms in the nonlinearity of \eqref{eq:wave}
\begin{equation*}\label{YHCC-21}
\begin{split}
\p^{\le 1} u\p^{\le 1} u\p u
=
\underbrace{\w{x}^{1/4^+}\p^{\le 1} u}_{\substack{\notin L^{4}([0,t];L^{\infty})\\ \text{due to Theorem \ref{thm:Weighted Strichartz estimate}}}}
\underbrace{\w{x}^{1/4^+}\p^{\le 1} u}_{\substack{\notin L^{4}([0,t];L^{\infty})\\ \text{due to Theorem \ref{thm:Weighted Strichartz estimate}}}}
\underbrace{\w{x}^{-1/2^-}\p u}_{\in L^2([0,t];L^2)}
\notin  L^1([0,t];L^2(\R^3))\quad\text{hold},
\end{split}
\end{equation*}
then it seems difficult for us to obtain the uniform energy estimate of the solution $u$ to \eqref{eq:wave} when $\mu$ in \eqref{initial:data2} is sufficiently small or $\mu=0$.

To overcome those difficulties mentioned above, instead of the usual $L^\infty-L^2$ type estimate in \eqref{eq:1.14} and the KSS estimates in \eqref{KSSestimate-1}--\eqref{KSSestimate-2}, we establish a series of new $L^\infty-L^2$ estimates (see Subsection \ref{sec:Linfty-L^2 estimate})
and weakly space-time weighted Strichartz estimates (see Theorem \ref{thm:Weighted Strichartz estimate})
based on the strong Huygens' principle for the 3D linear wave equation.
Without loss of generality, in order to illustrate the proofs of Theorems \ref{thm:1}--\ref{thm:2} more explicitly, it suffices to
take the cubic semilinear form $\Box u = \partial u \cdot \partial u \cdot \partial u $ in \eqref{eq:wave} for instance.
At first, we start with the proof of Theorem \ref{thm:1}. To obtain the long time $H^N$ norm
of solution derivatives $\partial u$, by the standard energy estimates, the following condition is required
\begin{equation}\label{YHCC-22}
	\int_0^t \|\partial u(s)\cdot \partial u(s)\cdot \partial u (s)\|_{H_x^N} ds<\infty.
\end{equation}
In this case, one expects that
\begin{equation}\label{YHCC-23}
	\underbrace{\partial u}_{L_t^2L_x^{\infty}}\underbrace{\partial u}_{L_t^2L_x^{\infty}}
	\underbrace{\partial u}_{L_t^\infty H_x^N}\in  L_t^1H_x^N.
\end{equation}
However, the endpoint $L^2_t L^\infty_x$ Strichartz estimate fails for the 3D linear wave equation. This means that \eqref{YHCC-23} may not hold for all $t\in [0,\infty)$, and only almost global solutions may be obtained for the  initial data \eqref{initial:data}
with small Sobolev norms.

Secondly, we turn to the proof of Theorem \ref{thm:2}. In this process, as mentioned above, it is essential to verify condition \eqref{YHCC-22}.
Although the classical endpoint $L^2_t L^\infty_x$ Strichartz estimate is absent, inspired by \cite[Lemma 5.3]{HouYin2},
one may derive some weighted Strichartz estimates instead (see Theorem \ref{thm:Weighted Strichartz estimate} below).
In this case, condition \eqref{YHCC-22} follows from
\begin{equation*}
	\begin{split}
		|\partial u\partial u\partial u|\le
		\underbrace{|A_{\frac{1}{2}\mu^-} \partial u|}_{L^{2}_t L^{\infty}_x}
		\underbrace{|A_{\frac{1}{2}\mu^-} \partial u|}_{L^{2}_t L^{\infty}_x}
		\underbrace{|\partial u|}_{L^{\infty}_t H_x^{N}}
		\in  L_t^1H_x^N,
	\end{split}
\end{equation*}
where the space-time weight $A_{\nu}(t,x)= (1+t+|x|)^\nu$ for $\nu>0$.
Therefore, the key step is to show that $A_{\frac{1}{2}\mu^-} \partial u \in L^2_t L^\infty_x$. In this procedure,
it is essential to establish (see the details in Sections \ref{sec:3}--\ref{Section 4})
\begin{equation}\label{YHCC-26}
	\int_0^t \|A_{\mu}(s,x)\partial u\cdot \partial u\cdot \partial u\|_{L_x^2} ds<\infty.
\end{equation}
This will be achieved by
\begin{equation*}
A_{\mu}\partial u\partial u\partial u
=\underbrace{A_{\frac{1}{2}\mu^-}\partial u}_{\in L_t^{2^+}L_x^{\infty^-}}
\underbrace{A_{\frac{1}{2}\mu^-}\partial u}_{\in L_t^{2^+}L_x^{\infty^-}}
\underbrace{A_{0^+}\partial u}_{\in L_t^{\infty^-} L_x^{2^+}}\in  L^1([0,t];L^2(\mathbb{R}^3))
\end{equation*}
and the conclusions of $A_{\frac{1}{2}\mu^-} \partial u \in L_t^{2^+} L_x^{\infty^-}$ and $A_{0^+} \partial u \in L_t^{\infty^-} L_x^{2^+}$
derived from Theorem \ref{thm:Weighted Strichartz estimate}.

Finally, for the cubic quasilinear problem \eqref{eq:wave}, we can obtain such estimates (see Section \ref{Section 4}):
\begin{equation}\label{YHCC-30}
\|\partial u\|_{L^\infty_tH^{N}_x} \lesssim \|\partial u(0)\|_{H^{N}_x}
+\| \partial^{\le1} u\|_{A_0\tilde{L}^2_t B^{1+\delta}_{\infty,1} } ^2 \|\partial u\|_{L^\infty_tH^{N}_x},
\end{equation}
\begin{equation}\label{YHCC-31}
\begin{split}
\|\partial^{\le1} u\|_{A_{\f12\mu^-}\tilde{L}^2_t B^{1+\delta}_{\infty,1} }
\ls
\|\w{x}^{\mu}\p^{\le3}_x\p u(0)\|_{L^2_x}
+\| \partial u\|_{A_{0^+}\tilde{L}^{\infty^-}_t B^{3+\mu^-}_{2^+,1} }
\| \partial^{\le1} u\|_{A_{\f12\mu^-}\tilde{L}^{2^+}_t B^{2+\mu^-}_{\infty^-,1} }^2,
\end{split}
\end{equation}
\begin{equation}\label{YHCC-32}
	\begin{split}
		\| \partial u\|_{A_{0^+}\tilde{L}^{\infty^-}_t B^{3+\mu^-}_{2^+,1} }
		\ls
		\|\w{x}^{\mu}\p^{\le4}_x\p u(0)\|_{L^2_x}
		+
		\| \partial^{\le1} u\|_{A_{0^+}\tilde{L}^2_t B^{1+\delta}_{\infty,1} }^2
		\|\p u\|_{L^\infty_tH^{N}_x},
	\end{split}
\end{equation}
\begin{equation}\label{YHCC-33}
	\begin{split}
		\| \partial^{\le1} u\|_{A_{\f12\mu^-}\tilde{L}^{2^+}_t B^{2+\mu^-}_{\infty^-,1} }
		\ls
		\|\w{x}^{\mu}\p^{\le5}_x\p u(0)\|_{L^2_x}
		+
		\| \partial^{\le1} u\|_{A_{\f12\mu^-}\tilde{L}^2_t B^{1+\delta}_{\infty,1} }^2
		\|\p u\|_{L^\infty_tH^{N}_x},
	\end{split}
\end{equation}
where $\mu\in(0,1)$, $N\ge 4+2\mu$, $\delta\in(0,10^{-4}\mu)$, and
$\|f\|_{A_\nu \tilde{L}^p_t B^s_{r,1}}=\ds\sum_{k\ge-1}2^{sk}\|A_\nu(s,x)P_k f\|_{L^p([0,t];L^r)}$.
It follows from \eqref{YHCC-30}--\eqref{YHCC-33}, $\|\partial^{\le1} u\|_{A_0\tilde{L}^2_t B^{1+\delta}_{\infty,1}}\le \|\partial^{\le1} u\|_{A_{\f12\mu^-}\tilde{L}^2_t B^{1+\delta}_{\infty,1}}$  and the initial data \eqref{initial:data2} that
the global uniform smallness of $\|\partial^{\le1} u\|_{A_{\f12\mu^-}\tilde{L}^2_t B^{1+\delta}_{\infty,1}}$, $\|\p^{\le1} u\|_{A_0\tilde{L}^2_t B^{1+\delta}_{\infty,1}}$ and $\|\partial u\|_{L^\infty_tH^{N}_x}$ is obtained.
From this, the global existence result in Theorem \ref{thm:2} can be shown by the continuity argument.

The paper is organized as follows. In Section \ref{Section 2}, some preliminaries including the notations for Littlewood-Paley decomposition, the strong Huygens' principle for 3D linear wave equation, classical linear dispersive estimates and Strichartz estimates are presented.
In Section \ref{sec:3}, we establish some crucial space-time weighted Strichartz estimates, localized $L^\infty-L^2$ estimates and weighted $L^2-L^2$ estimates.  The uniform energy estimate on the solution $u$ to problem \eqref{eq:wave} is derived in Section \ref{Section 4}.
Based on the estimates in Sections \ref{Section 2}--\ref{Section 4}, the proofs of Theorem \ref{thm:1} and Theorem \ref{thm:2}
are shown in Section \ref{Section 5} and Section \ref{Section 6}, respectively.
In addition, some basic results and related  techniques on the $A_2$ weight are collected in Appendix A for the reader's convenience.

\subsection{Notations}
\begin{description}
	\item [$\blacktriangleright $]$\p = (\p_t,\nabla) = (\p_t,\p_1,\p_2,\p_3)$ .
	\item [$\blacktriangleright $]$\p_x^a =\p_1^{a_1}\p_2^{a_2}\p_3^{a_3}$ for $a\in \N_0^3$.
	\item [$\blacktriangleright $]For $f = (f^1,\cdots,f^m)$, $\p f=   (\p f^1,\cdots,\p f^m)$, $\ds|\p^{\le j}f|=\big(\sum_{0\le|a|
           \le j}|\p^af|^2\big)^\frac12$.
	\item [$\blacktriangleright $]$\p u(0) = (u_1,\nabla u_0) = (u_1^1,\cdots,u_1^m,\nabla u_0^1,\cdots,\nabla u_0^m)$ .
	\item [$\blacktriangleright $]If $\|\cdot\|$ is a norm and $f = (f^1,\cdots,f^m)$, then $\|f\|=\ds\sum_{i=1}^{m}\|f^i\|$.
	\item [$\blacktriangleright $] $\|f\|_{L^p(\omega)}=\|f\|_{L^p(\R^d,\omega(x)dx)} = (\int_{\R^d} |f(x)|^p\omega(x)dx)^{1/p}$ for $p \in [1,\infty)$ and $\omega(x)\ge 0$.
	\item [$\blacktriangleright $]$\w{x}= (1+|x|^2)^{1/2}$.
	\item [$\blacktriangleright $]$A_{\nu}(t,x)=(1+t+|x|)^\nu$ for $\nu>0$.
	\item [$\blacktriangleright $]$\mu^-$ denotes any constant slightly less than $\mu$, $\mu^+$ denotes any constant slightly larger than $\mu$.
	\item [$\blacktriangleright $]$k_+=\max\{0,k\},\ k_-=\min\{0,k\}$.
	\item [$\blacktriangleright $]$\Pk$ (resp. $\pk$) is the nonhomogeneous (resp. homogeneous) Littlewood-Paley projection onto frequency $2^k$ (see the details in \eqref{YHCCC-001}).
	\item [$\blacktriangleright $] \begin{equation*}
		\begin{split}
			\cY_k&=\cY_k^1\cup\cY_k^2,\\
			\cY_k^1&=\{(k_1,k_2,k_3)\in\Z^3: |\max_{i=1,2,3}\{k_i\}-k|\le4,k_1,k_2,k_3\ge-1\},\\
			\cY_k^2&=\{(k_1,k_2,k_3)\in\Z^3: \max_{i=1,2,3}\{k_i\}\ge k+4,
			\max_{i=1,2,3}\{k_i\}-{\rm med}_{i=1,2,3}\{k_i\}\le4,k_1,k_2,k_3\ge-1\}.
		\end{split}
	\end{equation*}
	\item [$\blacktriangleright $]$Q_j$ (resp. $\q_j$) is the dyadic decomposition in the physical space $\R^3$ (see \eqref{def:Q_j}
below).
	\item [$\blacktriangleright $]For $f = (f^1,\cdots,f^m)$, $P_k f = (P_k f^1,\cdots,P_k f^m)$, $\pk f = (\pk f^1,\cdots,\pk f^m)$ and $Q_j f = (Q_j f^1,\cdots,Q_j f^m)$.
	\item [$\blacktriangleright $]$R=(R_1,R_2,R_3)=\ds\f{\nabla}{|\nabla|}$ is the Riesz transformation.
	\item [$\blacktriangleright $]$S(x,t) = \{y\in \R^3: |x-y|=t\}$.
	\item [$\blacktriangleright $]$B(x,R) = \{y\in \R^3: |x-y|\le R\} $.
	\item [$\blacktriangleright $] $C(x,t,\eps) = \{y\in \R^3: t-\eps\le|x-y|\le t+\eps\}$.
	\item [$\blacktriangleright $] For $t\ge 1$, the cutoff function $\Xi_{S(x,t)}$ is defined as
	\begin{equation*}
		\Xi_{S(x,t)} (y) = 	\left\{
		\begin{aligned}
			&1,\quad\quad\quad\quad\quad\quad\quad t-0.01\le|y-x|\le t+0.01,\\
			&\in[0,1], \quad\text{smooth, \qquad otherwise},\\
			&0,\quad\quad\quad\quad\quad\quad\quad |y-x|\le t-0.02\quad\text{or}\ |y-x|\ge t+0.02;
		\end{aligned}
		\right.
	\end{equation*}
for $t<1$, $\Xi_{S(x,t)}$ is defined as
	\begin{equation*}
		\Xi_{S(x,t)} (y) = 	\left\{
		\begin{aligned}
			&1,\quad\quad\quad\quad\quad\quad\quad |y-x|\le t+0.01,\\
			&\in[0,1],\quad\text{smooth, \quad otherwise,}\\
			&0,\quad\quad\quad\quad\quad\quad\quad |y-x|\ge t+0.02.
		\end{aligned}
		\right.
	\end{equation*}
\item [$\blacktriangleright $] For the non-negative quantities $f$ and $g$,
	$f\ls g$ means $f\le C_1g$ and $f\approx g$ means $C_1g\le f\le C_2g$ with $C_1$ and $C_2$ being generic positive constants.
\end{description}

\section{Preliminaries}\label{Section 2}
	\subsection{Notations for dyadic decomposition}\label{section 2.1}
	For the function $f$ on $\mathbb{R}^3$, define its Fourier transformation as
	\begin{align*}
		\hat{f}(\xi)=\mathscr{F}f(\xi)=\int_{\mathbb{R}^3} e^{-i x\cdot\xi}f(x)dx\quad\text{with $i=\sqrt{-1}$ and $\xi\in\Bbb R^3$}.
	\end{align*}
	Choosing a smooth cutoff function $\psi\colon\mathbb{R}\rightarrow[0,1]$, which equals 1 on $[-5/4,5/4]$ and vanishes outside
	$[-8/5,8/5]$. Set
	\begin{align*}
		&\dot{\psi}_{k}(x)=\psi(|x|/2^{k})-\psi(|x|/2^{k-1}),\quad k\in\mathbb{Z},\\
		&\psi_{k}(x)=\dot{\psi}_k(x),\quad k\in\mathbb{N}_0,\\
		&\psi_{-1}(x)=1-\sum_{k\geq 0}\psi_{k}(x)=\psi(2|x|),\\
		&\dot{\psi}_{I}=\sum_{k\in I\cap\mathbb{Z}}\dot{\psi}_{k}, \quad {\psi}_{I}=\sum_{k\in I\cap\mathbb{Z}\cap[-1,+\infty)}{\psi}_{k},
	\end{align*}
	where $I$ is any subset of $\mathbb{R}$. Note that $\psi_0=1$ on $\{x\in\R^3:|x|\in[4/5,5/4]\}$ and $\supp\psi_0\subseteq\{x\in\R^3:|x|\in [5/8,8/5]\}$. Let $\pk$ be the homogeneous
	Littlewood-Paley projection onto frequency $2^k$:
	\begin{equation*}
	\mathscr{F}(\pk f)(\xi)=\dot{\psi}_{k}(\xi)\mathscr{F}f(\xi),\quad k\in\mathbb{Z}.
	\end{equation*}
	In addition, for any subset $I$, define
	\begin{align*}
		\dot{P_I}f&=\sum_{k\in I\cap\mathbb{Z}}\pk f,\\
		\dot{P}_{[[k]]}f&=\sum_{l\in[k-1,k+1]\cap\mathbb{Z}}\dot{P}_l f.
	\end{align*}
	Let $\Pk$ be the nonhomogeneous
	Littlewood-Paley projection onto frequency $2^k$:
	\begin{equation}\label{YHCCC-001}
		\begin{split}
		\mathscr{F}(\Pk f)(\xi)={\psi}_{k}(\xi)\mathscr{F}f(\xi),\quad k\in\mathbb{Z}\cap[-1,+\infty).
	\end{split}
	\end{equation}
For any subset $I$, set
	\begin{align*}
		&P_I f=\sum_{k\in I\cap\mathbb{Z}\cap\lbrack-1,+\infty)}\Pk f,\\
		&P_{[[k]]}f= \dot{P}_{[[k]]}f,\ \ \ k\geq 0,\\
		&P_{[[-1]]}f= (P_{-1}+P_{0})f.
	\end{align*}
	It is easy to check that $P_{[[k]]}P_k=P_k$ and $\dot{P}_{[[k]]}\dot{P}_k=\dot{P}_k$.
	Let $\psi_{[[k]]}$ and $\dot{\psi}_{[[k]]}$ denote the symbols of $P_{[[k]]}$ and $\dot{P}_{[[k]]}$, respectively. Note that $\psi_{[[k]]} = \dot{\psi}_{[[k]]}$ for $k\in\N_0$.

	\begin{definition}
		Denote by $\mathcal{S}_h'(\mathbb{R}^3)$ the space of tempered distributions $u\in\mathcal{S}'(\R^3)$ such that
		\[
		\lim_{\lambda \to \infty} \left\| \theta(\lambda D) u \right\|_{L^\infty} = 0 \quad \text{for any } \theta\in  C_0^{\infty}(\mathbb{R}^3).
		\]
	\end{definition}
For $f\in\mathcal{S}'(\R^3)$, $f=\ds\sum_{k\ge-1}P_kf$ in $\mathcal{S}'(\R^3)$ holds;
for $f\in\mathcal{S}'_h(\R^3)$, $f=\ds\sum_{k\in\Z}\pk f$ in $\mathcal{S}'(\R^3)$ holds.
	\begin{lemma}
		Let $u \in \mathcal{S}'(\R^3)$. If $P_{-1}u\in L^p(\R^3)$ for $p\in[1,\infty)$ or $\hat{u}\in L^1_{loc}(\R^3)$, then $u \in \mathcal{S}'_h(\R^3)$.
	\end{lemma}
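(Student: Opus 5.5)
The plan is to check the definition of $\mathcal{S}'_h$ directly: for every $\theta\in C_0^\infty(\R^3)$ we must show $\|\theta(\lambda D)u\|_{L^\infty}\to0$ as $\lambda\to\infty$. Fix such a $\theta$ and choose $R>0$ with $\supp\theta\subseteq B(0,R)$. For $\lambda\ge 2R$, the symbol $\theta(\lambda\xi)$ is supported in $|\xi|\le R/\lambda\le 1/2$. On that ball $\psi_{-1}(\xi)=\psi(2|\xi|)\equiv 1$, because $2|\xi|\le1\le5/4$. Hence $\theta(\lambda D)u=\theta(\lambda D)P_{-1}u$ for all large $\lambda$. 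This reduction is harmless: $\theta(\lambda D)$ is convolution with a Schwartz function, so it is well defined on $\mathcal{S}'$.

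\emph{Case $P_{-1}u\in L^p$, $1\le p<\infty$.} Set $f=P_{-1}u$. Then $\theta(\lambda D)f=\check\theta_\lambda * f$ with $\check\theta_\lambda(x)=\lambda^{-3}\check\theta(x/\lambda)$, where $\check\theta=\mathscr F^{-1}\theta$ is Schwartz. By Young's inequality with $1/p+1/p'=1$,
$$\|\check\theta_\lambda*f\|_{L^\infty}\le\|\check\theta_\lambda\|_{L^{p'}}\|f\|_{L^p}=\lambda^{-3/p}\|\check\theta\|_{L^{p'}}\|f\|_{L^p}.$$
The right side tends to $0$ because $p<\infty$. This is the one place where excluding $p=\infty$ matters; constants show the exclusion is necessary.

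\emph{Case $\hat u\in L^1_{loc}$.} For $\lambda\ge 2R$ the function $\theta(\lambda\cdot)\hat u$ is supported in the fixed compact ball $\overline{B(0,1/2)}$, so it lies in $L^1$. Its inverse Fourier transform is therefore a bounded continuous function, and
$$\|\theta(\lambda D)u\|_{L^\infty}\le(2\pi)^{-3}\int_{|\xi|\le R/\lambda}\|\theta\|_{L^\infty}|\hat u(\xi)|\,d\xi.$$
This tends to $0$ as $\lambda\to\infty$ by absolute continuity of the integral of $|\hat u|\in L^1(B(0,1/2))$, since the domain of integration shrinks to $\{0\}$. One must make sure the distributional identity $\theta(\lambda D)u=\mathscr F^{-1}(\theta(\lambda\cdot)\hat u)$ holds pointwise. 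This is standard: $\theta(\lambda\cdot)\hat u$ is a compactly supported $L^1$ function, so its distributional inverse Fourier transform coincides with the classical integral.

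There is no serious obstacle; the only points needing care are the reduction to low frequencies, i.e.\ that $\psi_{-1}\equiv1$ near the origin, and the role of $p<\infty$ in the Young inequality scaling.
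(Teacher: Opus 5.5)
Your proof is correct: the low-frequency reduction (since $\psi_{-1}(\xi)=\psi(2|\xi|)\equiv 1$ for $|\xi|\le 5/8$, one has $\theta(\lambda D)u=\theta(\lambda D)P_{-1}u$ once $\supp\theta(\lambda\cdot)\subseteq B(0,1/2)$), the scaling bound $\|\theta(\lambda D)P_{-1}u\|_{L^\infty}\le\lambda^{-3/p}\|\mathscr{F}^{-1}\theta\|_{L^{p'}}\|P_{-1}u\|_{L^p}$, and the absolute-continuity argument for $\int_{|\xi|\le R/\lambda}|\hat u|\,d\xi$ are all sound. The paper gives no argument of its own, only a pointer to Chapter~1 of the monograph it cites, where these are the standard examples of elements of $\mathcal{S}'_h$; your write-up is the routine verification behind that reference.
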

	\begin{proof}
	Since its proof can be found in \cite[Chapter 1]{Chemin}, we omit the details here.
	\end{proof}
\begin{definition}
Let \( s \in \mathbb{R} \) and \( 1 \leq p, r \leq \infty \). The nonhomogeneous Besov space \( B^s_{p,r}(\R^3) \) consists of all  \( u\in \mathcal{S}' \) with $\|u\|_{B^s_{p,r}(\R^3)}= \big\| \big( 2^{ks} \|P_k u\|_{L^p(\R^3)} \big)_{k \in \mathbb{Z}\cap[- 1,+\infty)} \big\|_{\ell^r(\mathbb{Z}\cap[- 1,+\infty))} < \infty$.  The homogeneous Besov space \( \dot{B}^s_{p,r}(\R^3)\) consists of \( u\in \mathcal{S}'_h \) with $\|u\|_{\dot{B}^s_{p,r}(\R^3)}= \big\|\big( 2^{ks} \|\pk  u\|_{L^p(\R^3)}\big)_{k \in \mathbb{Z}}\big\|_{\ell^r(\mathbb{Z})}<\infty$.
\end{definition}
Note that $H^s(\R^3)=B^s_{2,2}(\R^3)$. For $s>0,\ p\in[1,\infty),\ r\in[1,\infty]$, one has $B^s_{p,r}(\R^3) = \dot{B}^s_{p,r}(\R^3)\cap L^p(\R^3)$.
In addition, we set
	\begin{equation*}
		\begin{split}
			\cX_k&=\cX_k^1\cup\cX_k^2,\\
			\cX_k^1&=\{(k_1,k_2)\in\Z^2: |\max_{i=1,2}\{k_i\}-k|\le4,k_1,k_2\ge-1\},\\
			\cX_k^2&=\{(k_1,k_2)\in\Z^2: \max_{i=1,2}\{k_i\}\ge k+4,
			\max_{i=1,2}\{k_i\}-\min_{i=1,2}\{k_i\}\le4,k_1,k_2\ge-1\};\\
					\cY_k&=\cY_k^1\cup\cY_k^2,\\
			\cY_k^1&=\{(k_1,k_2,k_3)\in\Z^3: |\max_{i=1,2,3}\{k_i\}-k|\le4,k_1,k_2,k_3\ge-1\},\\
			\cY_k^2&=\{(k_1,k_2,k_3)\in\Z^3: \max_{i=1,2,3}\{k_i\}\ge k+4,
			\max_{i=1,2,3}\{k_i\}-{\rm med}_{i=1,2,3}\{k_i\}\le4,k_1,k_2,k_3\ge-1\}.
		\end{split}
	\end{equation*}
As in \cite[page 799]{IP13}, if $P_k\big(\prod_{\iota=1}^2 P_{k_\iota}f_\iota\big)\neq0$,
then $(k_1,k_2)\in\cX_k$; if $P_k\big(\prod_{\iota=1}^3 P_{k_\iota}f_\iota\big)\neq0$, then $(k_1,k_2,k_3)\in\cY_k$.
Obviously, one has $2^k\ls 2^{\max\{k_1,k_2,k_3\}}$ for $(k_1,k_2,k_3)\in\cY_k$.
\vskip 0.1 true cm
We also introduce the dyadic decomposition in the physical space $\R^3$ as follows:
	\begin{equation}\label{def:Q_j}
		\begin{split}
			\q_jf(x)=&\dot{\psi}_j(x)f(x),\quad j\in\Z,\\
			Q_jf(x)=&\psi_j(x)f(x), \quad j\in\Z\cap[-1,+\infty),\\
		\end{split}
	\end{equation}
	Throughout this article, we denote $\w{x} = (1+|x|^2)^{1/2}$. Then one has the following lemma.
	\begin{lemma}\label{lem:Qj prop}
		For $\beta\in(-3/2,3/2)$, it holds
		\begin{equation}\label{ineq:Znorm local}
			2^{j\beta}\|Q_jP_kf\|_{L^2(\R^3)}\ls\|\w{x}^{\beta}f\|_{L^2(\R^3)}
			\ls\big\|2^{j\beta}\|Q_jP_kf\|_{L^2(\R^3)}\big\|_{\ell^1_k\ell^1_j}.
		\end{equation}
	\end{lemma}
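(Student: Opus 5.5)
The plan is to prove the two inequalities separately. The left one rests on the boundedness of the Littlewood--Paley projections $P_k$ on the weighted space $L^2(\w{x}^{2\beta}dx)$, uniformly in $k$. The right one is just the triangle inequality applied to the double decomposition $f=\sum_{k}\sum_j Q_jP_kf$.

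\emph{Left inequality.} First observe that $\w{x}^{\beta}\approx 2^{j\beta}$ on $\supp\psi_j$.
\begin{itemize}
\item For $j\ge 0$ this holds because $|x|\approx 2^j$ there.
\item For $j=-1$ we have $|x|\le 4/5$, so $\w{x}^{\beta}\approx 1\approx 2^{-\beta}$.
\end{itemize}
Since $0\le\psi_j\le 1$, this gives
\[
2^{j\beta}\|Q_jP_kf\|_{L^2}\ls \|\w{x}^{\beta}Q_jP_kf\|_{L^2}\le \|\w{x}^{\beta}P_kf\|_{L^2}.
\]
It therefore suffices to show $\|P_kf\|_{L^2(w)}\ls \|f\|_{L^2(w)}$ with $w=\w{x}^{2\beta}$, uniformly in $k\ge -1$.

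Two facts give this bound.
\begin{itemize}
\item The weight $w=\w{x}^{2\beta}$ is an $A_2$ weight on $\R^3$ precisely when $-3<2\beta<3$, i.e.\ $\beta\in(-3/2,3/2)$. This is the standard power-weight criterion: $|x|^{a}\in A_2(\R^3)$ iff $-3<a<3$, and $\w{x}^a$ behaves the same way. This is where the restriction on $\beta$ enters.
\item $P_k$ is convolution with $2^{3k}\check\psi_0(2^k\cdot)$ for $k\ge0$, and with $2^{-3}\check\psi(2^{-1}\cdot)$ for $k=-1$. These kernels have integrable radially decreasing Schwartz majorants with $k$-independent $L^1$ norms. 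Hence $|P_kf|\ls Mf$ pointwise, where $M$ is the Hardy--Littlewood maximal function.
\end{itemize}
Muckenhoupt's theorem (as collected in Appendix A) gives $\|Mf\|_{L^2(w)}\ls_{[w]_{A_2}}\|f\|_{L^2(w)}$, which yields the uniform bound and hence the first inequality.

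\emph{Right inequality.} Assume the right-hand side is finite. Since $\sum_{j\ge-1}\psi_j\equiv1$ and $f=\sum_{k\ge-1}P_kf$ in $\mathcal S'$, we have $f=\sum_k\sum_jQ_jP_kf$. The finiteness of the right-hand side, together with $\w{x}^\beta\approx2^{j\beta}$ on $\supp\psi_j$, shows that this double series converges absolutely in $L^2(\w{x}^{2\beta}dx)$, hence also in $\mathcal S'$ (note $\beta>-3/2$, so $L^2(\w{x}^{2\beta})\subset \mathcal S'$). Therefore its limit coincides with $f$. The triangle inequality then gives
\[
\|\w{x}^\beta f\|_{L^2}\le\sum_{k,j}\|\w{x}^\beta Q_jP_kf\|_{L^2}\ls\sum_{k,j}2^{j\beta}\|Q_jP_kf\|_{L^2}.
\]

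\emph{Main obstacle.} The only nontrivial step is the uniform-in-$k$ weighted boundedness of $P_k$, which reduces to the $A_2$ property of $\w{x}^{2\beta}$. This is a direct computation over balls $B$ of the averages $\big(\frac1{|B|}\int_B w\big)\big(\frac1{|B|}\int_B w^{-1}\big)$, split into two cases:
\begin{itemize}
\item balls far from the origin relative to their radius, where $w$ is essentially constant on $B$;
\item balls containing or near the origin, where the averages are computed in polar coordinates and converge exactly because $|2\beta|<3$.
\end{itemize}
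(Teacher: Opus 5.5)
Your proof is correct and follows the paper's argument. For the left inequality, both use weighted $L^2$ boundedness of $P_k$ (from $\w{x}^{2\beta}\in A_2$) together with $\w{x}^{\beta}\approx 2^{j\beta}$ on $\supp\psi_j$; for the right inequality, both apply Minkowski to the double decomposition $f=\sum_k\sum_j Q_jP_kf$. Your pointwise domination $|P_kf|\ls Mf$, uniform in $k$, makes explicit the step the paper leaves implicit. The paper's Appendix A actually records the weighted Calder\'on--Zygmund bound rather than the maximal theorem, but either works, and the inclusion $L^2(\w{x}^{2\beta}dx)\subset\mathcal S'$ holds for every real $\beta$, not only $\beta>-3/2$.
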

	\begin{proof}
It is obvious that $P_k$ is a bounded operator from $L^2(\R^3)$ to $L^2(\R^3)$ with $\|P_kf\|_{L^2(\R^3)}\ls\|f\|_{L^2(\R^3)}$.
Since $\w{x}^{2\beta}$ belongs to the $A_2(\R^3)$ class (see Lemma \ref{lem:w{x} in A2}),
one achieves
 \begin{equation*}
 \|\w{x}^{\beta}P_kf\|_{L^2(\R^3)}\ls\|\w{x}^{\beta}f\|_{L^2(\R^3)}.
 \end{equation*}
This yields
		\begin{equation*}
			2^{j\beta}\|Q_jP_kf\|_{L^2(\R^3)}\ls\|\w{x}^{\beta}P_kf\|_{L^2(\R^3)}
			\ls\|\w{x}^{\beta}f\|_{L^2(\R^3)}.
		\end{equation*}
Thus, the first inequality in \eqref{ineq:Znorm local} is proved.
The second inequality in \eqref{ineq:Znorm local} is obtained by Minkowski's inequality
\begin{equation*}
\|\w{x}^{\beta}f\|_{L^2(\R^3)}\ls\|\sum_{k\ge-1}\w{x}^{\beta}P_kf\|_{L^2(\R^3)}
\ls\sum_{k\ge-1}\|\w{x}^{\beta}P_kf\|_{L^2(\R^3)}
\ls\sum_{k\ge-1}\sum_{j\ge-1}2^{j\beta}\|Q_jP_kf\|_{L^2(\R^3)}.
\end{equation*}
\end{proof}

	\subsection{Linear dispersive estimates and Strichartz estimates}\label{sec:Linear dispersive estimate and Strichartz estimate}

	In this subsection, we present some dispersive estimates and Strichartz estimates for the 3D wave equation. Denote
	\begin{equation}
		\hat{\sigma}(x) = \widehat{\sigma_{\mathbb{S}^2}}(x)=\int_{\mathbb{S}^2}e^{i x\cdot\xi}\;\sigma_{\mathbb{S}^2}\left(d\xi\right).
	\end{equation}
	Then one can write
	\begin{equation*}
	\hat{\sigma}(x)=\int_{\mathbb{S}^2}e^{i |x|\xi_3}\;\sigma_{\mathbb{S}^2}\left(d\xi\right)=e^{i|x|}\omega_{+}(|x|)+e^{-i|x|}\omega_{-}(|x|),
	\end{equation*}
	where $\omega_{\pm}(r)=\int_{\mathbb{S}^2}e^{i r(\xi_3\mp 1)}\chi_\pm(\xi)\sigma_{\mathbb{S}^2}\left(d\xi\right)$,
and $\chi_\pm$ is a partition of unity on $\Sph$ such that $(0,0,\pm1)\in\supp\chi_\pm$.
	\begin{lemma}\label{lem:unit sphere measure}
		$\omega_{\pm}$ are smooth and satisfy
		\begin{equation}\label{CYC-1}
			|\partial_{r}^{k}\omega_{\pm}(r)|\,\le\,{ C}_{k}\,\,(1+r)^{-1-k}\quad\text{for $r\ge 0$}.
		\end{equation}
	\end{lemma}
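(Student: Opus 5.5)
The plan is a stationary-phase analysis near the two poles $(0,0,\pm1)$, where $\xi_3\mp1$ vanishes, combined with non-stationary phase (integration by parts) on the rest of the sphere. We treat $\omega_+$; $\omega_-$ is symmetric under $\xi\mapsto-\xi$.

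\emph{Smoothness and small $r$.} Differentiating under the integral gives
\[
\partial_r^k\omega_+(r)=\int_{\Sph}\big(i(\xi_3-1)\big)^k e^{ir(\xi_3-1)}\chi_+(\xi)\,\sigma_{\Sph}(d\xi).
\]
This is smooth in $r$ and bounded by a constant $C_k$. So the estimate \eqref{CYC-1} is trivial on $0\le r\le1$, and it remains to treat $r\ge1$.

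\emph{Localization for $r\ge1$.} Choose the partition so that $\chi_+$ vanishes near the south pole $(0,0,-1)$; this is the standard choice, e.g. $\chi_+$ supported in $\{\xi_3>-1/2\}$. Split $\chi_+=\chi_+\eta+\chi_+(1-\eta)$, where $\eta$ is a cutoff equal to $1$ near the north pole and supported in $\{\xi_3>1/2\}$.

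\emph{Away from the poles.} On the support of $\chi_+(1-\eta)$, the function $\xi_3$ restricted to $\Sph$ has no critical points. Its tangential gradient $\nabla_{\Sph}\xi_3=e_3-\xi_3\xi$ has norm $\sqrt{1-\xi_3^2}$, which is bounded below there. The integrand of $\partial_r^k\omega_+$ is a smooth compactly supported amplitude times $e^{ir\xi_3}$, so repeated integration by parts along the vector field $\nabla_{\Sph}\xi_3/|\nabla_{\Sph}\xi_3|^2$ gives $O(r^{-M})$ for every $M$.

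\emph{Near the north pole.} Use the graph coordinates $\xi'=(\xi_1,\xi_2)$, $|\xi'|<1$, with $\xi_3=\sqrt{1-|\xi'|^2}$. Then
\[
\xi_3-1=-\frac{|\xi'|^2}{1+\sqrt{1-|\xi'|^2}}=:-\phi(\xi').
\]
The function $\phi$ is smooth, $\phi(\xi')\approx|\xi'|^2$, and it has a nondegenerate minimum at $0$. The $k$-th derivative becomes
\[
\int_{\R^2}(-i\phi(\xi'))^k e^{-ir\phi(\xi')}a(\xi')\,d\xi'
\]
with $a$ smooth and compactly supported. Change variables $\xi'\mapsto\zeta$ with $|\zeta|^2=\phi(\xi')$, which is possible by the Morse lemma; here it is explicit and radial, so one can also pass to polar coordinates with $s=\phi$. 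The integral then takes the form
\[
\int_0^{s_0}s^k e^{-irs}b(s)\,ds
\]
with $b$ smooth up to $s=0$ and compactly supported in $[0,s_0)$. Integrating by parts $k+1$ times against $e^{-irs}$, all boundary terms at $s=0$ vanish except the last one, because $s^k$ vanishes to order $k$ there. Each integration gains a factor $r^{-1}$, so the result is $O(r^{-1-k})$.

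The main obstacle is this radial reduction: making sure the polar change of variables yields an amplitude $b$ that is genuinely smooth up to $s=0$. In polar coordinates, $d\xi'=\rho\,d\rho\,d\theta$ and $s=\phi(\rho)$ with $\phi'(\rho)\approx\rho$, so $\rho\,d\rho=\tilde b(s)\,ds$ where $\tilde b$ is smooth and nonvanishing at $s=0$. This is what produces exactly one power of $r^{-1}$ in place of the generic $r^{-d/2}$ decay, and it gives \eqref{CYC-1}.
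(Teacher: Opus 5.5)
Your proof takes a different route from the paper. For $r\ge1$ the paper simply cites Corollary~2.37 of Schlag's notes, a general stationary-phase result for the Fourier transform of surface measure, and it treats $r<1$ as immediate. Your argument is self-contained: non-stationary phase away from the poles, and near the north pole a reduction to $\int_0^{s_0}s^ke^{-irs}b(s)\,ds$ with $s=1-\xi_3$, whose single endpoint produces exactly the decay $r^{-1-k}$. The citation gives brevity and generality in the dimension. Your route shows where the factor $r^{-1}$ and the extra $r^{-k}$ come from.

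You are right to require that $\chi_+$ vanish near $(0,0,-1)$. The paper's phrase ``$(0,0,\pm1)\in\supp\chi_\pm$'' does not ensure this. If $\chi_+(0,0,-1)\neq0$, the south pole contributes $c\,e^{-2ir}r^{-1}+O(r^{-2})$ with $c\neq0$ to $\omega_+$. Its $r$-derivative is not $O(r^{-2})$, so \eqref{CYC-1} would fail for $k\ge1$.

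The step that is not yet justified is the one you flag yourself: smoothness of $b$ at $s=0$. You account only for the Jacobian $\rho\,d\rho=\tilde b(s)\,ds$. With arbitrary cutoffs the amplitude $a(\xi')$ need not be radial, and then $b(s)=\tilde b(s)\int_0^{2\pi}a(\rho(s)\cos\theta,\rho(s)\sin\theta)\,d\theta$ with $\rho(s)=\sqrt{2s-s^2}$. For fixed $\theta$ this is not smooth in $s$: a term $\xi_1$ in $a$ gives $\sqrt{2s-s^2}\cos\theta$. Regularity returns only after the $\theta$-integration cancels the odd terms, and that needs an argument, for instance Whitney's theorem that a smooth even function of $\rho$ is a smooth function of $\rho^2$.

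The simplest repair is as follows.
\begin{itemize}
\item Apply your south-pole argument to $\omega_-$. It forces $\chi_-$ to vanish near $(0,0,1)$, so $\chi_+\equiv1$ there.
\item Choose $\eta=h(\xi_3)$ supported in $\{\chi_+=1\}$. Then $\chi_+\eta=h(\xi_3)$ is radial.
\item Use $\sqrt{1-\rho^2}=1-s$, $\rho\,d\rho=(1-s)\,ds$ and the surface density $1/\sqrt{1-\rho^2}$. The pole piece becomes exactly $2\pi\int_0^{s_0}(-is)^ke^{-irs}h(1-s)\,ds$, so $b(s)=2\pi h(1-s)$.
\end{itemize}
Your $(k+1)$-fold integration by parts then closes.

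There is an even shorter route via Archimedes' formula $d\sigma=d\theta\,d\xi_3$. It gives $\partial_r^k\omega_+(r)=\int_{-1}^1(i(t-1))^ke^{ir(t-1)}m(t)\,dt$, where $m$ is the axial average of $\chi_+$. This $m$ is smooth on $[-1,1]$, since it equals $2\pi$ near $t=1$ and $0$ near $t=-1$. A single one-dimensional integration by parts then handles all $r\ge1$, with no separate non-stationary-phase step.
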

	\begin{proof}
		For $r\ge 1$, \eqref{CYC-1} is proved in \cite[Corollary 2.37]{Schlag}; for $r<1$, the bound
in \eqref{CYC-1} follows directly from the definitions of $\omega_{\pm}$.
	\end{proof}

	Note that for $\iota\in\{0,\pm1\}$ and $M\in \{0,\cdots,50\}$,
	\begin{equation}\label{def:KklM}
		\begin{split}
			\pk e^{\pm it|\nabla|}|\nabla|^{-\iota}(\Id-\Delta)^{-M}f(x)&=(2\pi)^{-3}\int_{\R^3}K_k^{\iota,M}(t,x-y)P_kf(y)dy,\\
			K_k^{\iota,M}(t,x)&=\int_{\R^3}e^{i(x\cdot\xi\pm t|\xi|)}|\xi|^{-\iota}(1+|\xi|^2)^{-M}\dot{\psi}_{[[k]]}(\xi)d\xi.\\
		\end{split}
	\end{equation}

	\begin{lemma}\label{lem:estimate of KklM}
		For \( k \in \Z \), it holds that for $x\in \R^3$,
		\begin{itemize}
			\item[(1)] when $t\ge 0$, $|K_k^{\iota,M}(t,x)|\ls 2^{(3-\iota)k-2Mk_+}$;
			\item[(2)] when $t>0$, $|x|\le |t|/2$,  $N\in\Bbb N_0$,
			$|K_k^{\iota,M}(t,x)|\ls_N 2^{(3-\iota)k-2Mk_+}(2^kt)^{-N}\ls_N 2^{(3-\iota)k-2Mk_+}(2^k|x|)^{-N}$;
			\item[(3)] when $t>0$, $|t|/2\le |x|\le 2|t|$, $|K_k^{\iota,M}(t,x)|\ls 2^{(3-\iota)k-2Mk_+}(2^kt)^{-1}\approx 2^{(3-\iota)k-2Mk_+}(2^k|x|)^{-1}$;
			\item[(4)] when $t>0$, $|x|\ge 2|t|$, $N\in\Bbb N_0$,
			$|K_k^{\iota,M}(t,x)|\ls_N 2^{(3-\iota)k-2Mk_+}(2^k|x|)^{-N}\ls_N 2^{(3-\iota)k-2Mk_+}(2^kt)^{-N}$,
		\end{itemize}
		where $k_+=\max\{0,k\}$. Therefore, for $t\ge0$ and $x\in\R^3$, one has
		\begin{equation}\label{ineq:estimate of KklM}
			|K_k^{\iota,M}(t,x)|\ls 2^{(3-\iota)k-2Mk_+}(1+2^kt)^{-1}.
		\end{equation}
	\end{lemma}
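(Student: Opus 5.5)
The plan is to first reduce everything to the frequency scale $2^k$ by rescaling, and then to treat the three regions $|x|\le t/2$, $t/2\le |x|\le 2t$ and $|x|\ge 2t$ separately. The rescaling will be $\xi=2^k\eta$, which gives
\[
K_k^{\iota,M}(t,x)=2^{(3-\iota)k}\int_{\R^3}e^{i(2^kx\cdot\eta\pm 2^kt|\eta|)}|\eta|^{-\iota}(1+2^{2k}|\eta|^2)^{-M}\dot\psi_{[[0]]}(\eta)\,d\eta .
\]
The amplitude $a_k(\eta)=|\eta|^{-\iota}(1+2^{2k}|\eta|^2)^{-M}\dot\psi_{[[0]]}(\eta)$ is supported in an annulus $|\eta|\approx1$. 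It satisfies $|\p_\eta^\alpha a_k|\ls_\alpha 2^{-2Mk_+}$ uniformly in $k$: for $k\le 0$ the factor $(1+2^{2k}|\eta|^2)^{-M}$ has bounded derivatives, and for $k>0$ each derivative of it gains no growth, since $2^{2k}|\eta|/(1+2^{2k}|\eta|^2)\ls1$. Write $X=2^kx$ and $T=2^kt$. Item (1) then follows by taking absolute values inside the integral.

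For items (2) and (4) I will use the phase $\Phi(\eta)=X\cdot\eta\pm T|\eta|$, whose gradient is $\nabla\Phi=X\pm T\eta/|\eta|$. This gives $|\nabla\Phi|\ge |T|-|X|\ge T/2$ when $|x|\le t/2$, and $|\nabla\Phi|\ge|X|-T\ge|X|/2$ when $|x|\ge2t$. On the support, all higher derivatives of $\Phi$ are $O(T)$, which is in turn bounded by $O(|\nabla\Phi|)$ in both regimes. So I will integrate by parts $N$ times with the operator $L=\frac{\nabla\Phi\cdot\nabla_\eta}{i|\nabla\Phi|^2}$. Each application of its adjoint produces a factor $|\nabla\Phi|^{-1}$, with the derivatives of $\nabla\Phi/|\nabla\Phi|^2$ controlled by the bound just noted. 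This yields the decay $(2^kt)^{-N}$, respectively $(2^k|x|)^{-N}$. The two decay rates are interchangeable in each region because $|x|\le t/2$ or $|x|\ge 2t$ (in (2) we have $2^k|x|\le 2^kt$, so $(2^kt)^{-N}\le(2^k|x|)^{-N}$, and symmetrically in (4)).

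For item (3), where $|x|\approx t$, I will use polar coordinates $\eta=\rho\omega$:
\[
\int_0^\infty e^{\pm iT\rho}\rho^2\tilde a_k(\rho)\,\hat\sigma(\rho X)\,d\rho ,
\]
with the radial symbol $\tilde a_k$, which is legitimate because $a_k$ is radial. I then write $\hat\sigma(\rho X)=e^{i\rho|X|}\omega_+(\rho|X|)+e^{-i\rho|X|}\omega_-(\rho|X|)$. By Lemma \ref{lem:unit sphere measure}, $|\omega_\pm(\rho|X|)|\ls(1+|X|)^{-1}$ for $\rho\approx1$. Bounding the $\rho$-integral trivially then gives $|K|\ls 2^{(3-\iota)k-2Mk_+}(1+2^k|x|)^{-1}$, which is bounded by $(2^kt)^{-1}$ since $|x|\approx t$. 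Alternatively, the direct integral bound $\int_{\Sph}$ combined with stationary phase would give the same result; the radial route avoids any explicit stationary phase. This is the step I expect to require the most care, namely justifying that the decay $(1+r)^{-1}$ of $\omega_\pm$ alone suffices without exploiting oscillation in $\rho$. It does, because only a single power of decay is claimed.

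Finally, \eqref{ineq:estimate of KklM} follows by cases. If $2^kt\le1$, it follows from (1). If $2^kt\ge1$, it follows from (2)–(4) with $N=1$, using $(2^kt)^{-1}\approx(1+2^kt)^{-1}$. The case $t=0$ is covered by (1).
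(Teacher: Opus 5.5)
Your proposal is correct and follows essentially the same route as the paper: rescale to frequency $\approx 1$, use non-stationary phase (integration by parts) in the regions $|x|\le t/2$ and $|x|\ge 2t$, use polar coordinates with the $(1+r)^{-1}$ decay of $\hat\sigma$ in the region $|x|\approx t$, and combine the cases for \eqref{ineq:estimate of KklM}. The only cosmetic difference is in how the phase is handled: the paper normalizes it as $\frac{x}{t}\cdot\xi\pm|\xi|$ (respectively $\frac{x}{|x|}\cdot\xi\pm\frac{t}{|x|}|\xi|$) with large parameter $2^kt$ (respectively $2^k|x|$) and cites standard stationary phase, whereas you keep the unnormalized phase and write out the integration-by-parts operator explicitly.
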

	\begin{proof}
		Note that
		\begin{equation}\label{2.4}
			K_k^{\iota,M}(t,x)=2^{(3-\iota)k}\int_{\R^3}e^{i2^k(x\cdot\xi\pm t|\xi|)}|\xi|^{-\iota}(1+2^{2k}|\xi|^2)^{-M}\dot{\psi}_{[[0]]}(\xi)d\xi.
		\end{equation}

		(1) comes from \eqref{2.4} directly.\\

		(2) We write  \eqref{2.4} as
		\begin{equation*}
			\begin{split}
				K_k^{\iota,M}(t,x)&=2^{(3-\iota)k}\int_{\R^3}e^{i2^kt\Phi_1(\xi;t,x)}|\xi|^{-\iota}(1+2^{2k}|\xi|^2)^{-M}\dot{\psi}_{[[0]]}(\xi)d\xi,\\
				\Phi_1(\xi;t,x)&=\f{x}{t}\cdot\xi \pm |\xi|.
			\end{split}	
		\end{equation*}
		For $t>0$ and $|x|\le |t|/2$, one arrives at
		\begin{equation*}
			\begin{split}
				&\nabla_\xi \Phi_1(\xi;t,x)=\f{x}{t} \pm \f{\xi}{|\xi|},\qquad |\nabla_\xi \Phi_1(\xi;t,x)|\ge 1-\f{|x|}{t}\ge \f{1}{2},\\
				&|\dot{\psi}_{[[0]](\xi)}\nabla_\xi^n \Phi_1(\xi;t,x)|\ls \dot{\psi}_{[[0]]}(\xi)|\xi|^{1-n}\ls C_n,\qquad n\ge 2,\\
				&|\dot{\psi}_{[[0]]}(\xi)\nabla_\xi^n (1+2^{2k}|\xi|^2)^{-M}|\ls 2^{-2Mk_+},\qquad\qquad n\ge 0,
			\end{split}
		\end{equation*}
		where $C_n$ is a positive constant depending on $n$.

Then $|K_k^{\iota,M}(t,x)|\ls_N 2^{(3-\iota)k-2Mk_+}(2^kt)^{-N}\ls_N 2^{(3-\iota)k-2Mk_+}(2^k|x|)^{-N}$ follows by
a standard stationary phase method (see \cite[Chapter 8]{Chemin}).\\

		(3) For $t>0$ and $|t|/2\le |x|\le 2|t|$, it follows from \eqref{2.4} that
		\begin{equation*}
			K_k^{\iota,M}(t,x)=2^{(3-\iota)k}\int_{0}^{+\infty}\big(\int_{\mathbb{S}^{2}}e^{i2^kx\cdot\rho\omega}d\omega\big)e^{\pm i 2^k t\rho}\dot{\psi}_{[[0]]}(\rho)(1+2^{2k}\rho)^{-M} \rho^{2-\iota}d\rho,
		\end{equation*}
		\begin{equation*}
			\begin{split}
				|K_k^{\iota,M}(t,x)|&\ls 2^{(3-\iota)k}\int_{0}^{+\infty}|\hat{\sigma}(2^k\rho x)|\dot{\psi}_{[[0]]}(\rho) (1+2^{2k}\rho)^{-M} \rho^{2-\iota}d\rho\\
				&\ls  2^{(3-\iota)k}\int_{0}^{+\infty}|2^k\rho x|^{-1}\dot{\psi}_{[[0]]}(\rho) (1+2^{2k}\rho)^{-M} \rho^{2-\iota}d\rho\\
				&\ls 2^{(3-\iota)k-2Mk_+}(2^k|x|)^{-1}\approx 2^{(3-\iota)k-2Mk_+}(2^kt)^{-1}.
			\end{split}
		\end{equation*}

		(4) For $t>0$ and $|x|\ge 2|t|$, \eqref{2.4} can be rewritten as
		\begin{equation*}
			\begin{split}
				K_k^{\iota,M}(t,x)&=2^{(3-\iota)k}\int_{\R^3}e^{i2^k|x|\Phi_2(\xi;t,x)}|\xi|^{-\iota}(1+2^{2k}|\xi|^2)^{-M}\dot{\psi}_{[[0]]}(\xi)d\xi,\\
				\Phi_2(\xi;t,x)&=\f{x}{|x|}\cdot\xi \pm \f{t}{|x|}|\xi|.
			\end{split}	
		\end{equation*}
		Note that
		\begin{equation*}
			\begin{split}
				&\nabla_\xi \Phi_2(\xi;t,x)=\f{x}{|x|} \pm \f{t}{|x|}\f{\xi}{|\xi|},\qquad |\nabla_\xi \Phi_2(\xi;t,x)|\ge 1-\f{t}{|x|}\ge \f{1}{2},\\
				&|\dot{\psi}_{[[0]]}(\xi)\nabla_\xi^n \Phi_2(\xi;t,x)|\ls \f{t}{|x|}|\xi|^{1-n}\dot{\psi}_{[[0]]}(\xi)\ls C_n,\qquad n\ge 2.
			\end{split}
		\end{equation*}
		Then $|K_k^{\iota,M}(t,x)|\ls_N 2^{(3-\iota)k-2Mk_+}(2^k|x|)^{-N}\ls_N 2^{(3-\iota)k-2Mk_+}(2^kt)^{-N}$
is derived by the standard stationary phase method.

 Collecting (1)--(4) yields \eqref{ineq:estimate of KklM}.
 		\end{proof}
On the other hand, one has
	\begin{equation}\label{def:K-1}
		\begin{split}
			P_{-1}e^{\pm it|\nabla|}f(x)&=(2\pi)^{-3}\int_{\R^3}K_{-1}(t,x-y)P_{-1}f(y)dy,\\
			K_{-1}(t,x)&=\int_{\R^3}e^{i(x\cdot\xi\pm t|\xi|)}\psi_{[[-1]]}(\xi)d\xi.
		\end{split}
	\end{equation}

	\begin{lemma}\label{lem:estimate of K-1}
		It holds that for $x\in \R^3$,
		\begin{itemize}
			\item[(1)] when $t\ge 0$, $|K_{-1}(t,x)|\ls 1$;
			\item[(2)] when $t>0$ and $|x|\le |t|/2$, $|K_{-1}(t,x)|\ls t^{-3}\ls |x|^{-3}$;
			\item[(3)] when $t>0$ and $|t|/2\le |x|\le 2|t|$,  $|K_{-1}(t,x)|\ls t^{-1}\approx |x|^{-1}$;
			\item[(4)] when $t>0$ and $|x|\ge 2|t|$,  $|K_{-1}(t,x)|\ls |x|^{-3}\ls t^{-3}$.
		\end{itemize}
		Therefore, for $t\ge 0$ and $x\in\R^3$, one has
		\begin{equation}\label{ineq:estimate of K-1}
			|K_{-1}(t,x)|\ls (1+t)^{-1}.
		\end{equation}
	\end{lemma}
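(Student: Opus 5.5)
The plan is to follow the proof of Lemma \ref{lem:estimate of KklM}, with one difference: the symbol $\psi_{[[-1]]}(\xi)=\psi_{-1}(\xi)+\psi_0(\xi)$ is smooth and compactly supported but does not vanish near $\xi=0$. So there is no dyadic rescaling, and the singularity of $|\xi|$ at the origin has to be handled directly.

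Part (1) is immediate, since $|K_{-1}(t,x)|\le\|\psi_{[[-1]]}\|_{L^1}\ls1$.

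For (3), when $t/2\le|x|\le2t$, pass to polar coordinates $\xi=\rho\omega$:
\[
K_{-1}(t,x)=\int_0^\infty \hat\sigma(\rho x)\,e^{\pm it\rho}\,\psi_{[[-1]]}(\rho)\,\rho^2\,d\rho .
\]
Lemma \ref{lem:unit sphere measure} gives $|\hat\sigma(\rho x)|\ls(1+\rho|x|)^{-1}\le(\rho|x|)^{-1}$. Hence $|K_{-1}|\ls|x|^{-1}\int_0^{8/5}\rho\,d\rho\ls|x|^{-1}\approx t^{-1}$.

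For (2) and (4) the spatial phase is nonstationary: $|\nabla_\xi\Phi_1|\ge\tfrac12$ and $|\nabla_\xi\Phi_2|\ge\tfrac12$ as before. The obstacle is that $|\xi|$ is not smooth at $0$, so we can integrate by parts only a limited number of times, which caps the decay at $|x|^{-3}$ (or $t^{-3}$).

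To get this, split the integral as $\psi_{[[-1]]}=\sum_{l\le 0}\dot\psi_l\,\psi_{[[-1]]}$ into homogeneous dyadic pieces $|\xi|\approx 2^l$. Each piece is exactly of the form treated in Lemma \ref{lem:estimate of KklM} with $\iota=M=0$ and frequency $2^l$. Items (2)/(4) there give, for the piece at scale $2^l$, the bound $2^{3l}\min\{1,(2^l\lambda)^{-N}\}$, where $\lambda=t$ in case (2) and $\lambda=|x|$ in case (4).

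Summing over $l\le0$ with $N=4$ uses the convergent geometric sums over $2^l\le\lambda^{-1}$ and $2^l>\lambda^{-1}$, which gives $\sum_l 2^{3l}\min\{1,(2^l\lambda)^{-4}\}\ls\lambda^{-3}$. In (2) the region $|x|\le t/2$ also gives $t^{-3}\ls|x|^{-3}$, and in (4) $|x|^{-3}\ls t^{-3}$.

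Finally, combining (1)–(4), every regime is bounded by $\min\{1,t^{-1}\}\approx(1+t)^{-1}$. This yields \eqref{ineq:estimate of K-1}.
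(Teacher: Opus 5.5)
Your treatment of (1) and (3) is the same as the paper's. Your argument for (2) and (4) is also correct, but it takes a different route.

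The paper never decomposes dyadically. It inserts $\hat\sigma(\rho x)=e^{i\rho|x|}\omega_+(\rho|x|)+e^{-i\rho|x|}\omega_-(\rho|x|)$, so that $K_{-1}$ becomes a sum of one-dimensional integrals $\int_0^\infty e^{i\rho(\pm t+\mu|x|)}\omega_\mu(\rho|x|)\psi_{[[-1]]}(\rho)\rho^2\,d\rho$. It then integrates by parts three times in $\rho$:
\begin{itemize}
\item the factor $\rho^2$ kills the first two boundary terms at $\rho=0$;
\item the third boundary term is $-2\omega_\mu(0)$;
\item the remaining integral is bounded uniformly in $x$ thanks to $|\partial_\rho^k(\omega_\mu(\rho|x|))|\lesssim_k|x|^k(1+\rho|x|)^{-1-k}$.
\end{itemize}
This gives the single estimate $|K_{-1}(t,x)|\lesssim\bigl||x|-t\bigr|^{-3}$, from which (2) and (4) follow at once. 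That estimate is also sharper than (3) wherever $\bigl||x|-t\bigr|\gtrsim t$.

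In your version, the exponent $3$ comes instead from the volume $2^{3l}$ of the low-frequency annuli. The decay is imported scale by scale from the non-stationary phase bounds of Lemma \ref{lem:estimate of KklM}. This recycles existing machinery and avoids the $\omega_\pm$ computation, at the price of a uniformity check and a geometric summation.

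Two details need fixing.

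First, the summation range is off by one. For $\xi\neq0$ one has $\sum_{l\le0}\dot\psi_l(\xi)=\psi(|\xi|)=\psi_{[[-1]]}(\xi)$. So your identity $\psi_{[[-1]]}=\sum_{l\le0}\dot\psi_l\psi_{[[-1]]}$ really says $\psi=\psi^2$, which fails for $5/4<|\xi|<8/5$. Sum over $l\le1$ instead.

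Second, the pieces are not literally the kernels $K_l^{0,0}$: their cutoff is $\dot\psi_l\psi_{[[-1]]}$, not $\dot\psi_{[[l]]}$. What you actually use is that the proof of Lemma \ref{lem:estimate of KklM} works for any cutoff whose rescaling $\dot\psi_0(\eta)\psi_{[[-1]]}(2^l\eta)$ is smooth, supported in a fixed annulus, with derivative bounds uniform in $l\le1$. That holds here.
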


	\begin{proof}
		$K_{-1}(t,x)$ can be rewritten as
		\begin{equation*}
			\begin{split}
		K_{-1}(t,x)=\int_{0}^{+\infty}\big(\int_{\mathbb{S}^{2}}e^{ix\cdot\rho\omega}d\omega\big)e^{\pm it\rho}\psi_{[[-1]]}(\rho) \rho^2d\rho,
			\end{split}
		\end{equation*}
which yields (1). In addition, it follows from Lemma \ref{lem:unit sphere measure} that
		\begin{equation}\label{YHCCC-45}
			K_{-1}(t,x)=\sum_{\mu\in\{\pm\}}\int_{0}^{+\infty}e^{i\rho(\pm t+\mu|x|)}\omega_{\mu}(\rho|x|)\psi_{[[-1]]}(\rho) \rho^2 d\rho.
		\end{equation}
		By applying integration by parts three times to \eqref{YHCCC-45}, we arrive at
		\begin{equation}\label{2.200}
			\begin{split}
				K_{-1}(t,x)=\sum_{\mu\in\{\pm\}}\big(\f{1}{i(\pm t+\mu|x|)}\big)^3\big[-2\omega_{\mu}(0)-
				\int_{0}^{+\infty}e^{i\rho(\pm t+\mu|x|)} \p_\rho^3\big(\omega_{\mu}(\rho|x|)\psi_{[[-1]]}(\rho) \rho^2\big)d\rho\big].
			\end{split}
		\end{equation}
		Note that $0\notin\supp\p_\rho\psi_{[[-1]]}$ and
		\begin{equation*}
			\omega_{+}(0)+\omega_{-}(0)=4\pi,\quad |\omega_{\mu}(\rho |x|)|\le 1,
\quad |\p_\rho^k(\omega_{\mu}(\rho |x|))|\ls_k|x|^k(1+\rho|x|)^{-1-k}.
		\end{equation*}
		Then one has
		\begin{equation*}
			\begin{split}
				&\sum_{k=1}^{3}\int_{0}^{+\infty}\p_\rho^k\left(\omega_{\mu}(\rho|x|)\right)\p_\rho^{3-k}(\rho^2)\psi_{[[-1]]}(\rho) d\rho\\
				&\ls
				\sum_{k=1}^{3}\int_{0}^{+\infty}|x|^k(1+\rho|x|)^{-1-k} \rho^{k-1} \psi_{[[-1]]}(\rho) d\rho\\
				&\ls
				\int_{0}^{+\infty}(1+\rho|x|)^{-2} \psi_{[[-1]]}(\rho) d(\rho|x|)<\infty,\\
			\end{split}
		\end{equation*}
		which implies
		\begin{equation*}
			|K_{-1}(t,x)|\ls\f{1}{|t-|x||^3}.
		\end{equation*}
	Therefore, for $|x|\le |t|/2$ or $|x|\ge 2|t|$, (2) and (4) in Lemma \ref{lem:estimate of K-1} are obtained.
		For $|t|/2\le |x|\le 2|t|$, as in the proof for (3) of Lemma \ref{lem:estimate of KklM} with $l=M=0$,
(3) can be shown. Collecting (1)--(4) yields \eqref{ineq:estimate of K-1}.
	\end{proof}
	Note that
	\begin{equation}\label{def:T-1}
		\begin{split}
			P_{-1}|\nabla|^{-2}e^{\pm it|\nabla|}f(x)&=(2\pi)^{-3}\int_{\R^3}T_{-1}(t,x-y)P_{-1}f(y)dy,\\
			T_{-1}(t,x)&=\int_{\R^3}e^{i(x\cdot\xi\pm t|\xi|)}|\xi|^{-2}\psi_{[[-1]]}(\xi)d\xi.
		\end{split}
	\end{equation}
	\begin{lemma}\label{lem:estimate of T-1}
		For $t\ge 0$ and $x\in\R^3$, it holds that
		\begin{equation}\label{ineq:estimate of T-1}
			|T_{-1}(t,x)|\ls (1+t)^{-1}\ln(e+t).
		\end{equation}
	\end{lemma}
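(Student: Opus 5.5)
We would follow the scheme used for $K_{-1}$ in Lemma \ref{lem:estimate of K-1}: pass to polar coordinates and write $T_{-1}$ as a radial oscillatory integral. Since $|\xi|^{-2}$ cancels the Jacobian $\rho^2$, one gets
\begin{equation*}
T_{-1}(t,x)=\int_0^{+\infty}\hat{\sigma}(\rho x)e^{\pm it\rho}\psi_{[[-1]]}(\rho)\,d\rho .
\end{equation*}
The integrand is bounded and compactly supported in $\rho$, so $|T_{-1}(t,x)|\ls 1$ for all $t\ge0$ and $x$. This settles $t\le e$, and from now on we assume $t\ge e$. We split into the same three regions as in Lemma \ref{lem:estimate of K-1}.

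\emph{Region $|t|/2\le|x|\le2|t|$.} By Lemma \ref{lem:unit sphere measure}, $|\hat\sigma(\rho x)|\ls(1+\rho|x|)^{-1}$, hence
\begin{equation*}
|T_{-1}(t,x)|\ls\int_0^{8/5}(1+\rho|x|)^{-1}\,d\rho\ls|x|^{-1}\ln(e+|x|)\approx t^{-1}\ln(e+t).
\end{equation*}
This region is the source of the logarithm.

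\emph{Regions $|x|\le t/2$ and $|x|\ge2t$.} Here $|\pm t+\mu|x||\ge\frac12\max\{t,|x|\}$, and we use the decomposition $\hat\sigma(\rho x)=\sum_{\mu=\pm}e^{i\mu\rho|x|}\omega_\mu(\rho|x|)$ from \eqref{YHCCC-45}. We integrate by parts once in $\rho$, using $e^{i\rho(\pm t+\mu|x|)}=\frac{1}{i(\pm t+\mu|x|)}\p_\rho e^{i\rho(\pm t+\mu|x|)}$. This produces:
\begin{enumerate}[label=(\roman*)]
\item a boundary term at $\rho=0$, of size $|\omega_\mu(0)|/|\pm t+\mu|x||\ls \max\{t,|x|\}^{-1}\ls t^{-1}$;
\item the term with $\p_\rho\psi_{[[-1]]}$, which is supported away from $0$ and bounded by $|\pm t+\mu|x||^{-1}\ls t^{-1}$;
\item the term with $\p_\rho[\omega_\mu(\rho|x|)]=|x|\omega_\mu'(\rho|x|)$. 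It satisfies $|x||\omega_\mu'(\rho|x|)|\ls|x|(1+\rho|x|)^{-2}$, whose $\rho$-integral is $\ls1$, so this term is also $\ls t^{-1}$.
\end{enumerate}
Hence $|T_{-1}|\ls t^{-1}$ in these regions. In the region $|x|\ge 2t$ this bound is in fact $\ls|x|^{-1}$, which is even better. Note that we do not need three integrations by parts here: the boundary terms from further integrations would no longer cancel, because of the missing factor $\rho^2$.

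Collecting the three regions with the trivial bound gives \eqref{ineq:estimate of T-1}.

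The main obstacle is to check that no stronger singularity arises at $\rho=0$. Because $|\xi|^{-2}$ removes the $\rho^2$ factor, the integrand no longer vanishes at the origin, so the boundary terms are only of order $1/|t\pm|x||$ rather than $|t\pm |x||^{-3}$. Only one integration by parts is therefore allowed; we must confirm that the $\omega_\mu'$ term stays integrable uniformly in $|x|$, which holds by \eqref{CYC-1} with $k=1$.
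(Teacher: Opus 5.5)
Your proposal is correct and follows the paper's proof essentially step for step. Both arguments use the polar-coordinate form with the $\omega_\pm$ splitting. Both perform a single integration by parts in $\rho$ to get $|T_{-1}|\ls |t-|x||^{-1}$ away from the light cone. Near the cone, the decay $|\hat\sigma(\rho x)|\ls(1+\rho|x|)^{-1}$ produces the $t^{-1}\ln(e+t)$ bound, and this is combined with the trivial bound $|T_{-1}|\ls 1$.
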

	\begin{proof}
		$T_{-1}(t,x)$ can be rewritten as
		\begin{equation*}
			\begin{split}
				T_{-1}(t,x)=\int_{0}^{+\infty}\big(\int_{\mathbb{S}^{2}}e^{ix\cdot\rho\omega}d\omega\big)e^{\pm it\rho}\psi_{[[-1]]}(\rho) d\rho.
			\end{split}
		\end{equation*}
		As in Lemma \ref{lem:unit sphere measure}, we have
		\begin{equation*}
			T_{-1}(t,x)=\sum_{\mu\in\{\pm\}}\int_{0}^{+\infty}e^{i\rho(\pm t+\mu|x|)}\omega_{\mu}(\rho|x|)\psi_{[[-1]]}(\rho)  d\rho.
		\end{equation*}
		It follows from the integration by parts that
		\begin{equation*}
			\begin{split}
				T_{-1}(t,x)=\sum_{\mu\in\{\pm\}}\f{1}{i(\pm t+\mu|x|)}\big[-\omega_{\mu}(0)-
				\int_{0}^{+\infty}e^{i\rho(\pm t+\mu|x|)}\p_\rho\big(\omega_{\mu}(\rho|x|)\psi_{[[-1]]}(\rho)\big)d\rho\big].
			\end{split}
		\end{equation*}
		Due to $\omega_{+}(0)+\omega_{-}(0)=4\pi$, $|\omega_{\mu}(\rho |x|)|\le 1$ and $|\p_\rho^k(\omega_{\mu}(\rho |x|))|\ls_k|x|^k(1+\rho|x|)^{-1-k}$, then
		\begin{equation}\label{2.201}
			|T_{-1}(t,x)|\ls\f{1}{|t-|x||}.
		\end{equation}
		Therefore, for $|x|\le |t|/2$ or $|x|\ge 2|t|$, \eqref{ineq:estimate of T-1} is proved.
		For $|t|/2\le |x|\le 2|t|$, by $|\omega_{\mu}(\rho |x|)|\ls(1+\rho|x|)^{-1}$, one has
		\begin{equation}\label{2.202}
			\begin{split}
				|T_{-1}(t,x)|&\ls\int_{0}^{+\infty}  (1+\rho|x|)^{-1}\psi_{[[-1]]}(\rho)   d\rho\ls |x|^{-1}\ln(1+|x|)\ls t^{-1}\ln(e+t).
			\end{split}
		\end{equation}
		Combining $|T_{-1}(t,x)|\ls1$ with \eqref{2.201} and \eqref{2.202} yields \eqref{ineq:estimate of T-1}.
	\end{proof}
	We now give the following dispersion estimates for the 3D linear wave equation.
	\begin{lemma}[Linear dispersive estimates]\label{lem:linear dispersive}
		For $t\ge0$ and $k\in \Z$, it holds that
		\begin{equation}\label{ineq:linear dispersive}
			\|\pk e^{\pm it|\nabla|}f\|_{L^\infty(\R^3)} \lesssim 2^{3k}(1+2^kt)^{-1}\|\pk f\|_{L^1(\R^3)},
		\end{equation}
		\begin{equation}\label{ineq:linear dispersive 2}
			\|P_{-1}e^{\pm it|\nabla|}f\|_{L^\infty(\R^3)} \lesssim (1+t)^{-1}\|P_{-1} f\|_{L^1(\R^3)},
		\end{equation}
		\begin{equation}\label{ineq:linear dispersive 3}
			\|P_{-1}|\nabla|^{-2}e^{\pm it|\nabla|}f\|_{L^\infty(\R^3)} \lesssim (1+t)^{-1}\ln(e+t)\|P_{-1} f\|_{L^1(\R^3)}.
		\end{equation}
	\end{lemma}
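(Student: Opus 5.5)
The three estimates follow from the kernel representations \eqref{def:KklM}, \eqref{def:K-1} and \eqref{def:T-1} together with Young's inequality $\|K*g\|_{L^\infty}\le\|K\|_{L^\infty}\|g\|_{L^1}$. For \eqref{ineq:linear dispersive}, take $\iota=0$ and $M=0$ in \eqref{def:KklM}. Since $\dot{\psi}_{[[k]]}\dot{\psi}_k=\dot{\psi}_k$, we can write
\begin{equation*}
\pk e^{\pm it|\nabla|}f=(2\pi)^{-3}K_k^{0,0}(t,\cdot)*\pk f .
\end{equation*}
The kernel representation in \eqref{def:KklM} is written with $P_kf$ on the right, but the same identity holds verbatim with $\pk f$, because only the symbol $\dot{\psi}_{[[k]]}$ enters the kernel. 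Applying \eqref{ineq:estimate of KklM} with $\iota=M=0$ gives $\sup_x|K_k^{0,0}(t,x)|\ls 2^{3k}(1+2^kt)^{-1}$, and Young's inequality then yields \eqref{ineq:linear dispersive}. This bound is uniform in $k\in\Z$, including negative $k$, since Lemma \ref{lem:estimate of KklM} holds for every $k\in\Z$.

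For \eqref{ineq:linear dispersive 2}, we use $\psi_{[[-1]]}\psi_{-1}=\psi_{-1}$ to write $P_{-1}e^{\pm it|\nabla|}f=(2\pi)^{-3}K_{-1}(t,\cdot)*P_{-1}f$. Combining \eqref{ineq:estimate of K-1} with Young's inequality gives the bound $(1+t)^{-1}\|P_{-1}f\|_{L^1}$. In the same way, \eqref{ineq:linear dispersive 3} follows from \eqref{def:T-1} and \eqref{ineq:estimate of T-1}, which produce the factor $(1+t)^{-1}\ln(e+t)$. The operator $|\nabla|^{-2}$ causes no trouble here, because its singular symbol is already absorbed into $T_{-1}$, and $T_{-1}$ is a bounded function by Lemma \ref{lem:estimate of T-1}.

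No step is a genuine obstacle, since all the oscillatory analysis was done in Lemmas \ref{lem:estimate of KklM}--\ref{lem:estimate of T-1}. The only point needing care is justifying the convolution identities, in particular that the symbols $\dot{\psi}_{[[k]]}$ and $\psi_{[[-1]]}$ act as the identity on the supports of $\dot{\psi}_k$ and $\psi_{-1}$. This is immediate from the support properties recorded in Subsection \ref{section 2.1}, namely $P_{[[k]]}P_k=P_k$ and $\dot{P}_{[[k]]}\dot{P}_k=\dot{P}_k$.
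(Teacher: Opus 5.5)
Your proposal is correct and is essentially the paper's own proof. You write each operator as a convolution of the localized data ($\pk f$ or $P_{-1}f$) with $K_k^{0,0}$, $K_{-1}$ or $T_{-1}$, bound the kernel in sup norm by Lemmas \ref{lem:estimate of KklM}--\ref{lem:estimate of T-1}, and finish with Young's inequality; you also correctly note that the representation \eqref{def:KklM} should read $\pk f$, which is how the paper uses it in its proof.
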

	\begin{proof}
		Note that
		\begin{equation*}
			\pk e^{\pm it|\nabla|}f(x)=(2\pi)^{-3}\int_{\R^3}K_k^{0,0}(t,x-y) \pk f(y)dy,
		\end{equation*}	
		\begin{equation*}
			P_{-1} e^{\pm it|\nabla|}f(x)=(2\pi)^{-3}\int_{\R^3}K_{-1}(t,x-y) P_{-1} f(y)dy
		\end{equation*}		
		and
		\begin{equation*}
			P_{-1} |\nabla|^{-2}e^{\pm it|\nabla|}f(x)=(2\pi)^{-3}\int_{\R^3}T_{-1}(t,x-y) P_{-1} f(y)dy,
		\end{equation*}		
		where $K^{0,0}_k$, $K_{-1}$ and  $T_{-1}$ are defined in \eqref{def:KklM}, \eqref{def:K-1} and \eqref{def:T-1}, respectively.
		Therefore, by Lemmas \ref{lem:estimate of KklM}--\ref{lem:estimate of T-1}, we have
		\begin{equation*}
			|\pk e^{\pm it|\nabla|}f(x)|
			\ls\int_{\R^3}|K_k^{0,0}(t,x-y)||\pk f(y)|dy
			\lesssim 2^{3k}(1+2^kt)^{-1}\|\pk f\|_{L^1(\R^3)},
		\end{equation*}
		\begin{equation*}
			|P_{-1}e^{\pm it|\nabla|}f(x)|
			\ls\int_{\R^3}|K_{-1}(t,x-y)||P_{-1}f(y)|dy
			\lesssim (1+t)^{-1}\|P_{-1} f\|_{L^1(\R^3)}
		\end{equation*}
		and
		\begin{equation*}
			|P_{-1}|\nabla|^{-2}e^{\pm it|\nabla|}f(x)|
			\ls\int_{\R^3}|T_{-1}(t,x-y)||P_{-1}f(y)|dy
			\lesssim (1+t)^{-1}\ln(e+t)\|P_{-1} f\|_{L^1(\R^3)}.
		\end{equation*}
		Then Lemma \ref{lem:linear dispersive} is proved.
	\end{proof}
	The following Strichartz estimates for the 3D wave equation can be established.
	\begin{lemma}[Linear Strichartz estimates I]\label{lem:strichartz L2Linfty}
		For $p,\ r\in [2,\infty]$ with $\ \f1p+\f1r\le \f12$, \( t \geq t_0 \geq 0 \) and integer \( k \geq -1 \), when $(p,r) \neq (2,\infty)$, it holds that
		\begin{equation}\label{ineq:Strichartz'}
			\|P_ke^{\pm is|\nabla|}f\|_{L^p([t_0,t];L^r(\R^3))}
			\ls{2^{(\f32-\f1p-\f3r) k}}\|P_kf\|_{L^2(\R^3)}.
		\end{equation}
		Moreover, when $(p,r) = (2,\infty)$, one has
		\begin{equation}\label{ineq:strichartz L2Linfty}
			\|P_k e^{\pm is|\nabla|} f\|_{L^2([t_0,t];L^\infty(\mathbb{R}^3))} \lesssim 2^{k}\ln^{\f{1}{2}}(1+2^kt) \|P_k f\|_{L^2(\mathbb{R}^3)}.
		\end{equation}
		In addition, \eqref{ineq:Strichartz'}--\eqref{ineq:strichartz L2Linfty} remain valid when $P_k$ is replaced by $\pk$ for $k\in\Z$.
	\end{lemma}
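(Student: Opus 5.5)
The plan is the standard $TT^*$ argument, applied frequency by frequency. By scaling it suffices to treat a single dyadic block, with the dispersive bounds of Lemma \ref{lem:linear dispersive} as the only input. Fix $k\ge0$ (or $k\in\Z$ for $\pk$) and set $T f=P_k e^{\pm is|\nabla|}f$, viewed as a map from $L^2(\R^3)$ to $L^p([t_0,t];L^r)$. Since $e^{\pm is|\nabla|}$ is unitary on $L^2$, the operator $TT^*F(s)=\int P_k^2 e^{\pm i(s-s')|\nabla|}F(s')ds'$ has two bounds. It is trivially bounded $L^2\to L^2$ at each time. By \eqref{ineq:linear dispersive}, applied with $P_{[[k]]}$ absorbing the cutoff, it satisfies $\|P_k^2e^{\pm i\tau|\nabla|}g\|_{L^\infty}\ls 2^{3k}(1+2^k|\tau|)^{-1}\|g\|_{L^1}$. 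Riesz--Thorin interpolation between these gives
\[
\|P_k^2 e^{\pm i\tau|\nabla|}g\|_{L^r}\ls 2^{3k(1-\frac2r)}(1+2^k|\tau|)^{-(1-\frac2r)}\|g\|_{L^{r'}}.
\]

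For $(p,r)\neq(2,\infty)$ I would first handle the sharp admissible line $\frac1p+\frac1r=\frac12$ with $p>2$. There the decay exponent is $1-\frac2r=\frac2p$, and the kernel $(1+2^k|\tau|)^{-2/p}\le (2^k|\tau|)^{-2/p}$ is integrable in the Hardy--Littlewood--Sobolev sense from $L^{p'}_s$ to $L^p_s$. This yields $\|TT^*F\|_{L^pL^r}\ls 2^{3k(1-\frac2r)}2^{-\frac{2k}{p}}\|F\|_{L^{p'}L^{r'}}$. Taking square roots gives the constant $2^{k(\frac32-\frac3r-\frac1p)}$, which is \eqref{ineq:Strichartz'}. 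The range $\frac1p+\frac1r<\frac12$ is then obtained without new dispersive input. For fixed $p$ and a smaller target $r$-exponent, Bernstein's inequality $\|P_kg\|_{L^{r}}\ls 2^{3k(\frac1{r_0}-\frac1r)}\|P_kg\|_{L^{r_0}}$ on a frequency-localized function passes from the admissible pair $(p,r_0)$ to $(p,r)$, and the exponent bookkeeping matches exactly. The case $p=\infty$ is just energy conservation combined with Bernstein. Replacing $P_k$ by $\pk$ changes nothing, since the kernel bounds of Lemma \ref{lem:estimate of KklM} hold for all $k\in\Z$. For $k=-1$ one uses \eqref{ineq:linear dispersive 2} instead.

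The endpoint $(2,\infty)$ is the one genuine obstacle, because the kernel $(1+2^k|\tau|)^{-1}$ is not integrable on $\R$. The finite interval saves us: I would run $TT^*$ directly with the $L^1_sL^\infty$ to $L^\infty$ bound of Young type. For $|s|,|s'|\le t$,
\[
\|TT^*F\|_{L^2_sL^\infty}\ls 2^{3k}\big\|\textstyle\int_{t_0}^t(1+2^k|s-s'|)^{-1}\|F(s')\|_{L^1}ds'\big\|_{L^2_s}.
\]
Schur's test then applies on $[t_0,t]$, since $\sup_s\int_{t_0}^t(1+2^k|s-s'|)^{-1}ds'\ls 2^{-k}\ln(1+2^kt)$. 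This gives $\|TT^*\|_{L^2L^1\to L^2L^\infty}\ls 2^{2k}\ln(1+2^kt)$, and hence $\|T\|\ls 2^k\ln^{1/2}(1+2^kt)$, which is \eqref{ineq:strichartz L2Linfty}. The one technical point to check is that the passage from $L^2_sL^{1}$ duality to $T^*$ is legitimate at the endpoint. This is a standard duality step, handled by working with $L^1$ functions that are frequency localized, so $P_k$ may be duplicated via $P_{[[k]]}$.
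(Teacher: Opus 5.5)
Your proposal is correct and is essentially the paper's proof. The paper also argues by duality/$TT^*$, interpolates the dispersive bound of Lemma~\ref{lem:linear dispersive} against $L^2$ unitarity via Riesz--Thorin, and obtains the endpoint from $\|(1+2^k|\cdot|)^{-1}\|_{L^1[-t,t]}\ls 2^{-k}\ln(1+2^kt)$. The one minor difference is in how the range $\frac1p+\frac1r\le\frac12$ is covered: the paper handles it in a single step, using the refined Young inequality with the weak-$L^{p/2}$ norm of $(1+2^k|\cdot|)^{\frac2r-1}$, whereas you treat the sharp line by Hardy--Littlewood--Sobolev (plus energy for $p=\infty$) and reach the remaining pairs by Bernstein, which works equally well.
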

	\begin{proof}
		For $p\ge 2$, $r\in[1,\infty]$, set
		\begin{equation}\label{2.18}
			\mathcal{B}=\{\phi\in C_0^\infty((t_0,t)\times\R^3): \|\phi\|_{L^{p'}_tL^{r'}_x}\le 1\},
		\end{equation}
		where $1/p+1/p'=1$ and $1/r+1/r'=1$. Then
		\begin{equation}\label{2.19}
			\begin{split}
				\|P_k e^{\pm is|\nabla|} f\|_{L^p([t_0,t];L^r)}
				=&\sup_{\phi\in \mathcal{B}}\int_{t_0}^{t}\big(P_k e^{\pm is|\nabla|} f,\phi(s)\big)_{L^2(\R^3)}ds\\
				=&\sup_{\phi\in \mathcal{B}}\big( P_kf,\int_{t_0}^{t}P_{[[k]]} e^{\mp is|\nabla|}\phi(s)ds\big)_{L^2(\R^3)}\\
				\le& \sup_{\phi\in \mathcal{B}}\|\int_{t_0}^{t}P_{[[k]]} e^{\mp is|\nabla|}\phi(s)ds\|_{L^2(\mathbb{R}^3)}\|P_k f\|_{L^2(\mathbb{R}^3)}.
			\end{split}
		\end{equation}
		We next treat $\|\int_{t_0}^{t}P_{[[k]]} e^{\mp is|\nabla|}\phi(s)ds\|_{L^2(\mathbb{R}^3)}$.
It follows from H\"older's inequality  that
		\begin{equation}\label{2.20}
			\begin{split}
				&\|\int_{t_0}^{t}P_{[[k]]} e^{\mp is|\nabla|}\phi(s)ds\|_{L^2(\mathbb{R}^3)}^2\\
				&=\big(
				\int_{t_0}^{t}P_{[[k]]} e^{\mp is|\nabla|}\phi(s)ds,
				\int_{t_0}^{t}P_{[[k]]} e^{\mp i\tau|\nabla|}\phi(\tau)d\tau
				\big)_{L^2(\mathbb{R}^3)}\\
				&=
				\int_{t_0}^{t}\int_{t_0}^{t}\big( \phi(s),
				P_{[[k]]}^2 e^{\pm i(s-\tau)|\nabla|}\phi(\tau)
				\big)_{L^2(\mathbb{R}^3)}d\tau ds\\
				&\le
				\|\phi(s)\|_{L^{p'}([t_0,t];L^{r'})}
				\big\|\int_{t_0}^{t}
				\|P_{[[k]]}^2 e^{\pm i(s-\tau)|\nabla|} \phi(\tau) \|_{L^{r}(\mathbb{R}^3)} d\tau\big\|_{L^p_s[t_0,t]}.
			\end{split}
		\end{equation}
By \eqref{ineq:linear dispersive}--\eqref{ineq:linear dispersive 2}, the $L^2$ boundedness of operator $P_{[[k]]}^2 e^{\pm i(s-\tau)|\nabla|}$ and the Riesz-Thorin interpolation theorem, one has
		\begin{equation}\label{2.21}
			\|P_{[[k]]}^2 e^{\pm i(s-\tau)|\nabla|} \phi(\tau) \|_{L^r(\mathbb{R}^3)}\lesssim (2^{3k}(1 + 2^k |\tau-s|)^{-1})^{1-\f 2r} \|\phi(\tau) \|_{L^{r'}(\mathbb{R}^3)}.
		\end{equation}
		Substituting \eqref{2.21} into \eqref{2.20} and applying the refined Young's inequality (see \cite[Theorem 1.5]{Chemin}) yield
		\begin{equation}\label{2.22}
			\begin{split}
				&\big\|\int_{t_0}^{t}
				\|P_{[[k]]}^2 e^{\pm i(s-\tau)|\nabla|} \phi(\tau) \|_{L^r(\mathbb{R}^3)} d\tau\big\|_{L^p_s[t_0,t]}\\
				&\ls 2^{3(1-\f 2r)k} \| \int_{\mathbb{R}}(1+2^k|\tau-s|)^{\f 2r-1}\chi_{[t_0-t\le \tau-s\le t-t_0]}\|\phi(\tau)\|_{L^{r'}{(\R^3)}}\chi_{[t_0\le\tau\le t]}d\tau\|_{L^p_s{[t_0,t]}}\\
				&\ls
				\left\{
				\begin{aligned}
				&2^{3k} \|(1 + 2^k |\cdot|)^{-1}\|_{L^{1}[-t,t]}\|\phi\|_{L^{2}([t_0,t];L^1)},\quad\quad if\ p=2,\ r = \infty,\\
				&2^{3(1-\f 2r)k} \|(1 + 2^k |\cdot|)^{\f 2r-1}\|_{L^{\f{p}2,\infty}(\R)}\|\phi\|_{L^{p'}([t_0,t];L^{r'})},\quad\quad if\ p\in (2,\infty],\ \f1p+\f1r\le \f12,
				\end{aligned}
				\right.\\
				&\ls
				 \left\{
				\begin{aligned}
					&2^{2k}\ln(1+2^kt) ,\quad\quad if\ p = 2,\ r = \infty,\\
					&2^{(3-\f6r-\f2p)k},\quad\quad if\  p\in (2,\infty],\ \f1p+\f1r\le \f12,\\
				\end{aligned}
				\right.
			\end{split}
		\end{equation}
		where $\chi_{[a,b]}$ is the characteristic function of $[a,b]$, $\|(1 + 2^k |\cdot|)^{\f 2r-1}\|_{L^{\f{p}2,\infty}(\R)} = \ds\sup_{\lambda>0} \lambda\cdot|\{x\in\R:(1 + 2^k |x|)^{\f 2r-1}>\lambda\}|^{\f2p}\ls \ds\sup_{\lambda\in(0,1)}\lambda(2^{-k}\lambda^{\f{r}{2-r}})^{\f2p}\ls 2^{-\f2p k}$. Combining \eqref{2.18}--\eqref{2.20} and \eqref{2.22} yields \eqref{ineq:Strichartz'}--\eqref{ineq:strichartz L2Linfty}.
	\end{proof}
	\begin{lemma}[Linear Strichartz estimate II]\label{lem:strichartz L2Linfty2}
		For \( t \geq t_0 \geq 0 \) and integer \( k \geq -1 \), it holds that
		\begin{equation}\label{ineq:strichartz L2Linfty 2}
			\|P_k |\nabla|^{-1}e^{\pm is|\nabla|} f\|_{L^2([t_0,t];L^\infty(\mathbb{R}^3))} \lesssim \ln(e+2^kt)\|P_k f\|_{L^2(\mathbb{R}^3)}.
		\end{equation}
	\end{lemma}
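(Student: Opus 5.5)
For $k\ge 0$ the estimate follows from Lemma \ref{lem:strichartz L2Linfty}, so the work is in the low-frequency block $k=-1$, where I would rerun the $TT^*$ argument of that lemma with the kernel $T_{-1}$ in place of $K_{-1}$.

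\emph{Case $k\ge0$.} On the support of $\psi_k$ one has $|\xi|\approx 2^k$. Hence $P_k|\nabla|^{-1}=2^{-k}\tilde P_k$, where $\tilde P_k$ has the smooth symbol $2^k|\xi|^{-1}\psi_k(\xi)$, which is of the same type as $\psi_k$. Applying \eqref{ineq:strichartz L2Linfty} to $\tilde P_k$ gives
\[
\|P_k|\nabla|^{-1}e^{\pm is|\nabla|}f\|_{L^2([t_0,t];L^\infty)}\ls 2^{-k}\,2^k\ln^{\f12}(1+2^kt)\|P_kf\|_{L^2}.
\]
The proof of Lemma \ref{lem:strichartz L2Linfty} only uses the kernel bound \eqref{ineq:estimate of KklM}, which holds for any such symbol. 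The right side is at most $\ln(e+2^kt)\|P_kf\|_{L^2}$, which settles this case.

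\emph{Case $k=-1$.} Here $|\nabla|^{-1}$ is singular at $\xi=0$, so I would redo the duality/$TT^*$ argument of \eqref{2.18}--\eqref{2.20} with $(p,r)=(2,\infty)$. Take $\phi\in C_0^\infty((t_0,t)\times\R^3)$ with $\|\phi\|_{L^2_tL^1_x}\le1$. As in \eqref{2.19}, it suffices to bound $\|\int_{t_0}^t P_{[[-1]]}|\nabla|^{-1}e^{\mp is|\nabla|}\phi(s)ds\|_{L^2}^2$. Expanding this square as in \eqref{2.20} reduces the problem to controlling
\[
\big\|P^2_{[[-1]]}|\nabla|^{-2}e^{\pm i(s-\tau)|\nabla|}\phi(\tau)\big\|_{L^\infty}.
\]
The kernel of this operator is $\int e^{i(x\cdot\xi\pm(s-\tau)|\xi|)}|\xi|^{-2}\psi_{[[-1]]}^2(\xi)d\xi$. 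This is $T_{-1}$ from \eqref{def:T-1} with $\psi_{[[-1]]}$ replaced by $\psi_{[[-1]]}^2$. That replacement is still a smooth radial cutoff equal to $1$ near $0$ with $0\notin\supp\p_\rho(\psi^2_{[[-1]]})$, so the proof of Lemma \ref{lem:estimate of T-1} goes through verbatim. This gives the dispersive bound $(1+|s-\tau|)^{-1}\ln(e+|s-\tau|)\|\phi(\tau)\|_{L^1}$, as in \eqref{ineq:linear dispersive 3}.

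Young's inequality in time, with the kernel in $L^1([-t,t])$, then gives
\[
\Big\|\int_{t_0}^t(1+|s-\tau|)^{-1}\ln(e+|s-\tau|)\|\phi(\tau)\|_{L^1}d\tau\Big\|_{L^2_s}\ls\Big(\int_{-t}^{t}\f{\ln(e+|\sigma|)}{1+|\sigma|}d\sigma\Big)\|\phi\|_{L^2_tL^1_x}\ls\ln^2(e+t).
\]
So the $L^2$ norm squared is $\ls\ln^2(e+t)$. Taking the square root yields the factor $\ln(e+t)\approx\ln(e+2^{-1}t)$, which is the claimed bound for $k=-1$.

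\emph{Main point and passage to $\dot P_k$.} The only delicate step is the low-frequency kernel bound. It is already available in Lemma \ref{lem:estimate of T-1}, and the extra logarithm there is exactly why the result carries $\ln$ rather than $\ln^{1/2}$. If the homogeneous version with $\dot P_k$ is also wanted, the scaling identity \eqref{2.4} together with the $k\ge0$ argument covers all $k\in\Z$.
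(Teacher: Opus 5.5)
Your proposal is correct and follows the paper's proof essentially step by step. For $k\ge0$ you use \eqref{ineq:strichartz L2Linfty} with the $2^{-k}$ gain from $|\nabla|^{-1}$ on the frequency shell. For $k=-1$ you run the same $TT^*$ duality argument, using the low-frequency $|\nabla|^{-2}$ dispersive bound of Lemma \ref{lem:estimate of T-1} and Young's inequality to get $\ln^2(e+t)$ before taking the square root. Your check that the kernel with $\psi_{[[-1]]}^2$ in place of $\psi_{[[-1]]}$ still obeys that bound is a point the paper uses implicitly in \eqref{2.121}.
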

	\begin{proof}
		For $k\ge 0$, \eqref{ineq:strichartz L2Linfty 2} follows directly from \eqref{ineq:strichartz L2Linfty} and Bernstein's inequality.
Next we show \eqref{ineq:strichartz L2Linfty 2} with $k = -1$. Denote
		\begin{equation}\label{2.118}
			\mathcal{B} = \{\phi \in C_0^\infty((t_0,t)\times\R^3): \|\phi\|_{L^{2}_tL^{1}_x}\le 1\}.
		\end{equation}
		Then one has
		\begin{equation}\label{2.119}
			\begin{split}
				&\|P_{-1} |\nabla|^{-1}e^{\pm is|\nabla|} f\|_{L^2([t_0,t];L^\infty)}\\
				&=\sup_{\phi\in \mathcal{B}}\int_{t_0}^{t}\big(P_{-1} |\nabla|^{-1} e^{\pm is|\nabla|} f,\phi(s)\big)_{L^2(\R^3)}ds\\
				&=\sup_{\phi\in \mathcal{B}}\big( P_{-1}f,\int_{t_0}^{t}P_{[[-1]]} |\nabla|^{-1}e^{\mp is|\nabla|}\phi(s)ds\big)_{L^2(\R^3)}\\
				&\le \sup_{\phi\in \mathcal{B}}\|\int_{t_0}^{t}P_{[[-1]]}|\nabla|^{-1} e^{\mp is|\nabla|}\phi(s)ds\|_{L^2(\mathbb{R}^3)}\|P_{-1} f\|_{L^2(\mathbb{R}^3)}.
			\end{split}
		\end{equation}
		Next we treat $\|\int_{t_0}^{t}P_{[[-1]]}|\nabla|^{-1} e^{\mp is|\nabla|}\phi(s)ds\|_{L^2(\mathbb{R}^3)}$.
Applying H\"older's inequality  yields
		\begin{equation}\label{2.120}
			\begin{split}
				&\|\int_{t_0}^{t}P_{[[-1]]}|\nabla|^{-1} e^{\mp is|\nabla|}\phi(s)ds\|_{L^2(\mathbb{R}^3)}^2\\
				&=\big(
				\int_{t_0}^{t}P_{[[-1]]}|\nabla|^{-1} e^{\mp is|\nabla|}\phi(s)ds,
				\int_{t_0}^{t}P_{[[-1]]}|\nabla|^{-1} e^{\mp i\tau|\nabla|}\phi(\tau)d\tau
				\big)_{L^2(\mathbb{R}^3)}\\
				&=
				\int_{t_0}^{t}\int_{t_0}^{t}\big( \phi(s),
				P_{[[-1]]}^2|\nabla|^{-2} e^{\pm i(s-\tau)|\nabla|}\phi(\tau)
				\big)_{L^2(\mathbb{R}^3)}d\tau ds\\
				&\le
				\|\phi(s)\|_{L^{2}([t_0,t];L^{1})}
				\big\|\int_{t_0}^{t}
				\|P_{[[-1]]}^2|\nabla|^{-2} e^{\pm i(s-\tau)|\nabla|} \phi(\tau) \|_{L^{\infty}(\mathbb{R}^3)} d\tau\big\|_{L^2_s[t_0,t]}.
			\end{split}
		\end{equation}
		Hence it follows from \eqref{ineq:linear dispersive 3} that
		\begin{equation}\label{2.121}
			\|P_{[[-1]]}^2|\nabla|^{-2} e^{\pm i(s-\tau)|\nabla|} \phi(\tau) \|_{L^\infty(\mathbb{R}^3)}\lesssim (e + |\tau-s|)^{-1} \ln(e+ |\tau-s|)\|\phi(\tau) \|_{L^{1}(\mathbb{R}^3)}.
		\end{equation}
		Substituting \eqref{2.121} into \eqref{2.120} and applying Young's inequality yield
		\begin{equation}\label{2.122}
			\begin{split}
				&\big\|\int_{t_0}^{t}
				\|P_{[[-1]]}^2 |\nabla|^{-2}e^{\pm i(s-\tau)|\nabla|} \phi(\tau) \|_{L^\infty(\mathbb{R}^3)} d\tau\big\|_{L^2_s[t_0,t]}\\
				&\ls \| \int_{\mathbb{R}}(e+|\tau-s|)^{-1}\ln(e+ |\tau-s|)\chi_{[t_0-t\le \tau-s\le t-t_0]}\|\phi(\tau)\|_{L^{1}{(\R^3)}}\chi_{[t_0\le\tau\le t]}d\tau\|_{L^2_s{[t_0,t]}}\\
				&\ls
				\|(e + |\cdot|)^{-1}\ln(e+ |\cdot|)\|_{L^{1}(-t,t)}\|\phi\|_{L^{2}([t_0,t];L^1)}\\
				&\ls
				\ln^2(e+t).
			\end{split}
		\end{equation}
		Therefore, collecting \eqref{2.118}--\eqref{2.120} and \eqref{2.122} implies \eqref{ineq:strichartz L2Linfty 2} for $k = -1$.
	\end{proof}

	\subsection{The strong Huygens' principle}
	Consider the Cauchy problem of the 3D linear wave equation
	\begin{equation}\label{eq:linear wave}
		\left\{
		\begin{aligned}
			\square u = &F(t,x),\quad (t,x)\in[0,+\infty)\times\R^3,\\
			(u,\p_t&u)(0,x)=(u_{0},u_{1})(x),\quad x\in\R^3.
		\end{aligned}
		\right.
	\end{equation}
    It follows from the spatial Fourier transformation on \eqref{eq:linear wave} and Kirchhoff's formula that
	\begin{equation*}
		\begin{aligned}
			u(t,x) &= \cos(t|\nabla|)u_0(x) + \frac{\sin(t|\nabla|)}{|\nabla|}u_1(x) + \int_0^t \frac{\sin\big((t-s)|\nabla|\big)}{|\nabla|} F(s,x)ds \\
			&=\frac{\partial}{\partial t}\big[ \frac{1}{4\pi t} \int_{\abs{x-y}=t} u_0(y)\dS\big] + \frac{1}{4\pi t} \int_{\abs{x-y}=t} u_1(y)\dS \\
			&\quad + \int_0^t \frac{1}{4\pi (t-s)} \int_{\abs{x-y}=t-s} F(s,y)\dS ds.
		\end{aligned}
	\end{equation*}
	Meanwhile,
	\begin{equation*}
		\begin{aligned}
			\partial_t u(t,x) &= -\sin(t|\nabla|)|\nabla|u_0(x) + \cos(t|\nabla|) u_1(x) + \int_0^t \cos\big((t-s)|\nabla|\big) F(s,x)ds\\
		\end{aligned}
	\end{equation*}
	and
	\begin{equation*}
		\begin{aligned}
			\nabla u(t,x) &=\cos(t|\nabla|)\nabla u_0(x) + \frac{\sin(t|\nabla|)}{|\nabla|}\nabla u_1(x) + \int_0^t \frac{\sin\big((t-s)|\nabla|\big)}{|\nabla|} \nabla F(s,x)ds. \\
		\end{aligned}
	\end{equation*}
	By virtue of the strong Huygens' principle for the 3D linear wave equation, for any fixed point $(t_0,x_0)\in \R_+\times \R^3$,
	\begin{equation}\label{eq:Huygens1}
		\begin{split}
			\frac{\sin(t_0|\nabla|)}{|\nabla|}f(x_0)
			=& \frac{1}{4\pi t_0} \int_{S(x_0,t_0)} f(y)\dS\\
			=&  \frac{1}{4\pi t_0} \int_{S(x_0,t_0)} \Xi_{S(x_0,t_0)}(y)f(y)\dS \\
			=& \frac{\sin(t_0|\nabla|)}{|\nabla|}(\Xi_{S(x_0,t_0)}f)(x_0),\\
		\end{split}
	\end{equation}
	\begin{equation}\label{eq:Huygens2}
		\begin{split}
			\cos(t_0|\nabla|)f(x_0)
			=&\frac{\partial}{\partial t}\big[ \frac{1}{4\pi t} \int_{\abs{x-y}=t} f(y)\dS \big]\big|_{(t,x) = (t_0,x_0)}\\
			=&\frac{\partial}{\partial t}\big[ \frac{t}{4\pi} \int_{\Sph} f(x+tz) \mathrm{d} \sigma_z \big]\big|_{(t,x) = (t_0,x_0)}\\
			=&\frac{1}{4\pi t_0^2} \int_{S(x_0,t_0)} f(y)\dS+\frac{1}{4\pi t_0} \int_{S(x_0,t_0)} \f{\p f}{\p n}(y)\dS\\
			=&	\cos(t_0|\nabla|)(\Xi_{S(x_0,t_0)}f)(x_0), \\
		\end{split}
	\end{equation}
	\begin{equation}\label{eq:Huygens3}
		\begin{split}
			-\sin(t_0|\nabla|)|\nabla|f(x_0)
			=& -\sin(t_0|\nabla|)|\nabla|(\Xi_{S(x_0,t_0)}f)(x_0), \\
		\end{split}
	\end{equation}
	where for $t\ge 1 $ the cutoff function $\Xi_{S(x,t)}$ is defined as
	\begin{equation*}
		\Xi_{S(x,t)} (y) = 	\left\{
		\begin{aligned}
			&1,\quad\quad\quad\quad\quad\quad\quad t-0.01\le|y-x|\le t+0.01,\\
			&\in[0,1],\quad \text{smooth, \quad otherwise},\\
			&0,\quad\quad\quad\quad\quad\quad\quad |y-x|\le t-0.02\quad\text{or}\ |y-x|\ge t+0.02,
		\end{aligned}
		\right.
	\end{equation*}
	and for $t< 1$,
	\begin{equation*}
		\Xi_{S(x,t)} (y) = 	\left\{
		\begin{aligned}
			&1,\quad\quad\quad\quad\quad\quad\quad |y-x|\le t+0.01,\\
			&\in[0,1], \quad \text{smooth, \quad otherwise},\\
			&0,\quad\quad\quad\quad\quad\quad\quad |y-x|\ge t+0.02.
		\end{aligned}
		\right.
	\end{equation*}

	\section{Weighed linear $L^\infty-L^2$ estimates and Strichartz estimates}\label{sec:3}

	In this section, we establish the following weighted Strichartz estimates.
	\begin{theorem}[Weighted Strichartz estimates]\label{thm:Weighted Strichartz estimate}
		For $\beta_1\in(0,1)$, $\beta_2\in(\beta_1,\min\{\f32\beta_1,1\})$, $t\ge t_0\ge 0$ and integer $k\ge -1$,
		\begin{itemize}
			\item[(1)] when $p\in[2,\infty),\ r\in(2,\infty]$ with $\f1p+\f1r=\f12$, one has
			\begin{equation}\label{thm:Weighted Strichartz estimate 1}
				\|(1+s+|x|)^{\f{1}{p}\beta_1}P_{k}e^{\pm is|\nabla|}f\|_{L^p([t_0,t];L^r(\R^3))}
				\ls2^{\f2p(1+\beta_2)k }\|\w{x}^{\f2p \beta_2}P_kf\|_{L^2(\R^3)};
			\end{equation}
			\item[(2)] when $p\in[2,2+2\beta_2-\beta_1),\ r\in(2+\f{4}{2\beta_2-\beta_1},\infty]$ with $\f1p+\f1r=\f12$, one has
			\begin{equation}\label{thm:Weighted Strichartz estimate 2}
				\|(1+s+|x|)^{\f{1}{p}\beta_1}P_{k}|\nabla|^{-1}e^{\pm is|\nabla|}f\|_{L^p([t_0,t];L^r(\R^3))}
				\ls2^{(\f{2+2\beta_2}{p}-1)k }\|\w{x}^{\f2p \beta_2}P_kf\|_{L^2(\R^3)}.
			\end{equation}
		\end{itemize}
	\end{theorem}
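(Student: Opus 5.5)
The plan is to interpolate between two endpoint estimates. The first endpoint is the trivial energy bound at $p=\infty$, $r=2$:
\[
\|P_k e^{\pm is|\nabla|}f\|_{L^\infty_tL^2_x}\ls\|P_kf\|_{L^2}.
\]
The second is a weighted endpoint at $p=2$, $r=\infty$:
\begin{equation*}
\|(1+s+|x|)^{\f12\beta_1}P_ke^{\pm is|\nabla|}f\|_{L^2([t_0,t];L^\infty)}\ls 2^{(1+\beta_2)k}\|\w{x}^{\beta_2}P_kf\|_{L^2}.
\end{equation*}
For $\f1p+\f1r=\f12$, I would write $\theta=\f2p$ and use complex (Stein) interpolation for the analytic family of operators $f\mapsto (1+s+|x|)^{z\beta_1/2}P_ke^{\pm is|\nabla|}(\w{x}^{-z\beta_2}f)$ on the strip $0\le\Re z\le1$. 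The weights are positive powers, so the imaginary parts only contribute unimodular factors. The $\w{x}^{iy\beta_2}$ factor commutes past $P_k$ only up to error terms, but since $\w{x}^{2\beta}$ is an $A_2$ weight (Lemma \ref{lem:Qj prop}), $P_k$ remains bounded on these weighted spaces. Interpolating at $z=\theta$ then gives \eqref{thm:Weighted Strichartz estimate 1} with weight $(1+s+|x|)^{\beta_1/p}$, right side $2^{\f2p(1+\beta_2)k}$, and initial weight $\w{x}^{\f2p\beta_2}$.

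The weighted endpoint is the main step, and it uses the strong Huygens' principle. Fix $(s,x)$. By \eqref{eq:Huygens1}--\eqref{eq:Huygens3}, $e^{\pm is|\nabla|}$ applied to $P_kf$ and evaluated at $x$ depends only on $P_kf$ near the sphere $S(x,s)$. I will decompose $P_kf=\sum_j Q_jP_kf$ in physical space. Because $\Xi_{S(x,s)}$ localizes, the piece $Q_jP_kf$ with $2^j\ll s+|x|$ contributes at $(s,x)$ only when $|x|\approx s$, i.e.\ near the light cone of the ball of radius $2^j$. So on the support of the $j$-th contribution we have $1+s+|x|\approx \max(2^j,1+s)$, and the effective time interval for the $j$-th piece has length comparable to $2^j$ once $s\gg 2^j$ and $|x|-s$ is confined to $[-2^j,2^j]$.

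On the region $s\ls 2^j$, I would apply \eqref{ineq:strichartz L2Linfty} on $[t_0,\min(t,C2^j)]$. This gives
\[
2^{j\beta_1/2}\,2^k\ln^{1/2}(1+2^{k+j})\,\|Q_jP_kf\|_{L^2},
\]
and the logarithm is absorbed by $2^{(\beta_2-\beta_1)(j+k)/2}$, since $\ln^{1/2}(1+2^{k+j})\ls 2^{(\beta_2-\beta_1)(j+k)/2}$; this cost accounts for the extra $2^{\beta_2k}$. On the region $s\gg 2^j$, the contribution lives in the shell $\{|x|-s|\ls 2^j\}$. There I would pair the dispersive decay $(2^ks)^{-1}$ from Lemma \ref{lem:linear dispersive} with the volume bound $\|Q_jP_kf\|_{L^1}\ls 2^{3j/2}\|Q_jP_kf\|_{L^2}$, using the $L^1$ over $L^\infty$ form (giving $2^{3k}(2^ks)^{-1}2^{3j/2}$), and interpolate it with the $L^2$ estimate through a $TT^*$ argument localized in dyadic time blocks $s\approx 2^l$, $l>j$. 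The weight contributes $2^{l\beta_1/2}$, and the resulting sum over $l$ should be controlled because $\beta_1<\beta_2$ and $\beta_2<\f32\beta_1$. I expect the constraint $\beta_2<\f32\beta_1$ to come exactly from balancing the tail $2^{(l-j)(\ldots)}$ against the volume factor. Finally, the sum over $j$ converges with $2^{j\beta_2}\|Q_jP_kf\|_{L^2}$, through the $\ell^1_j$ margin $2^{-(\beta_2-\beta_1)j/2}$ and Lemma \ref{lem:Qj prop}. The main obstacle is this localized large-$s$ bound, namely getting square-integrability in $s$ without losing a logarithm.

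For part (2), I would follow the same scheme with $|\nabla|^{-1}$. For $k\ge0$, Bernstein's inequality reduces it to part (1) with a factor $2^{-k}$. For $k=-1$, I would use Lemma \ref{lem:strichartz L2Linfty2} and \eqref{ineq:linear dispersive 3} in place of \eqref{ineq:strichartz L2Linfty}. The extra $\ln$ losses again eat part of the weight gap, which forces the restricted range $p<2+2\beta_2-\beta_1$ after interpolation with the $L^\infty_tL^2$ bound $\|P_{-1}|\nabla|^{-1}e^{is|\nabla|}f\|_{L^2}$. That bound in turn requires Hardy-type control $\||\nabla|^{-1}P_{-1}f\|_{L^r}\ls\|P_{-1}f\|_{L^2}$ for $r>6$ (Hardy–Littlewood–Sobolev), and this explains the restriction on $r$.
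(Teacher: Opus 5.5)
Reducing part (1) to the endpoint $p=2$ by Stein interpolation is fine. It organises things differently from the paper, which treats each $p$ directly on the pieces $Q_jf$ using $\|F\|_{L^r}\le\|F\|_{L^2}^{2/r}\|F\|_{L^\infty}^{2/p}$.

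The genuine gap is in the large-time part of your endpoint argument. There your only quantitative input is dispersion plus volume:
\[
\|P_ke^{\pm is|\nabla|}Q_jP_kf\|_{L^\infty}\lesssim 2^{2k}s^{-1}2^{3j/2}\|Q_jP_kf\|_{L^2},
\]
which is the $R^{3/2}$ bound of Lemma \ref{lem:Linfty-L2 estimate 0}. Take $k=0$ and a block $s\approx 2^l$ with $l>j$. The weighted $L^2_sL^\infty_x$ norm on the block is then at most $2^{l(\beta_1-1)/2}2^{3j/2}\|Q_jf\|_{L^2}$ from this bound, and at most $2^{l\beta_1/2}l^{1/2}\|Q_jf\|_{L^2}$ from unweighted Strichartz. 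Nothing built only from these two bounds beats their minimum on each block. A $TT^*$ argument using only $|K_k|\lesssim 2^{3k}(1+2^ks)^{-1}$ on $B(0,2^j)$ returns the same $2^{3j/2}$. Summing the minima over $l$ gives about $2^{\frac32\beta_1 j}$, with the crossover near $2^l\approx 2^{3j}$. Summing over $j$ against $\|\langle x\rangle^{\beta_2}f\|_{L^2}$ would then require $\beta_2>\frac32\beta_1$, which is exactly what the hypothesis excludes. So this step fails on the whole range of the theorem, and your expectation about where the factor $\frac32$ comes from is inverted. The range matters, since Lemma \ref{lem:Estimate of Besov norm 2} applies the theorem with $\beta_2-\beta_1=0.1\delta$.

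The missing ingredient is the quantitative Huygens gain, $R$ instead of $R^{3/2}$ (Remark \ref{rmk:3.1}). Here is how the paper gets it.
\begin{itemize}
\item Kirchhoff's formula gives strong Huygens for $\cos(t|\nabla|)$ and $\frac{\sin(t|\nabla|)}{|\nabla|}$, not for $e^{\pm it|\nabla|}$. The $\sin(t|\nabla|)$ part of the half-wave contains the nonlocal $|\nabla|$, so your claim that $e^{\pm is|\nabla|}P_kf(x)$ only sees $P_kf$ near $S(x,s)$ is false as stated.
\item For $\cos$ and $\frac{\sin}{|\nabla|}$, the bound $|S(x,t)\cap B(0,R)|\lesssim R^2$ gives Lemma \ref{lem:Linfty-L2 estimate}.
\item The paper writes $e^{\pm it|\nabla|}=\cos(t|\nabla|)\pm i|\nabla|\frac{\sin(t|\nabla|)}{|\nabla|}$, so that $|\nabla|$ lands on the frequency-localized output.
\item The loss of compact support under $\dot P_k$ is handled by splitting $\dot P_kf=\sum_jQ_j\dot P_kf$ and applying weighted Bernstein with the $A_2$ weight $\langle x\rangle^{2+2\delta}$. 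This yields \eqref{ineq:local Linfty-L2 estimate7}, with $2^{(1+\delta)j}$ in place of $2^{3j/2}$.
\item The $A_2$ constraint, used with $2\delta$ in Lemma \ref{lem:Technical lemma 2}, forces $\delta=\frac{\beta_2/\beta_1-1}{2}\le\frac14$. That is the real source of $\beta_2<\frac32\beta_1$; it is not a tail/volume balance.
\end{itemize}
With $2^{(1+\delta)j}$ in place of $2^{3j/2}$, your dyadic-block scheme would close. The crossover moves to $2^l\approx 2^{2(1+\delta)j}$ and the cost becomes roughly $2^{(1+\delta)\beta_1 j}j^{1/2}\lesssim 2^{\beta_2 j}$. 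You would still need an actual estimate for the spatial weight $|x|^{\beta_1/2}$ rather than a support claim; the paper's Lemma \ref{lem:Technical lemma 2} plays this role.

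Part (2) also has a problem. The $L^\infty_tL^2_x$ endpoint is false for $P_{-1}|\nabla|^{-1}$. Your HLS substitute $L^\infty_tL^6_x$ interpolates with the $L^2_tL^\infty_x$ endpoint onto the line $\frac1p+\frac3r=\frac12$, not $\frac1p+\frac1r=\frac12$. In the paper the restriction on $p$ comes from summing the homogeneous pieces $k\le-1$ after using $(1+s)\le 2^{-k}(1+2^ks)$. This gives $\sum_{k\le-1}2^{\frac1p(2+2\beta_2-\beta_1-p)k}$, which is finite exactly when $p<2+2\beta_2-\beta_1$ (Lemma \ref{lem:Localized Strichartz estimate 5}). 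It does not come from logarithmic losses.
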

	\vskip 0.2 true cm
	The proof procedure of Theorem \ref{thm:Weighted Strichartz estimate} will be divided into three parts, which include Subsections
\ref{sec:Linfty-L^2 estimate}-\ref{Proof-1} below.

	\subsection{Localized $L^\infty-L^2$ estimates}\label{sec:Linfty-L^2 estimate}

	In this subsection, by the strong Huygens' principle,  some useful $L^\infty-L^2$ estimates will be established.
	\begin{lemma}\label{lem:Linfty-L2 estimate 0}
	For any function $f$ with $\supp f \subseteq B(0,R)$ and $R>0$, $t\ge0$,  $k\in \Z$, integer $\iota\in\{0,1\}$,
    it holds that
		\begin{equation}\label{ineq:Linfty-L2 estimate0}
			\|\pk|\nabla|^{-\iota} e^{\pm it|\nabla|}f\|_{L^\infty(\R^3)} \lesssim 2^{(3-\iota)k}(1+2^kt)^{-1}R^{\f32}\|f\|_{L^2(\R^3)}.
		\end{equation}
	\end{lemma}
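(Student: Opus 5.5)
The plan is to write the operator as a convolution with the kernel $K_k^{\iota,0}$ from \eqref{def:KklM} and use the pointwise bound \eqref{ineq:estimate of KklM}, together with Cauchy--Schwarz on the ball $B(0,R)$.

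Since $\pk = \pk\dot P_{[[k]]}$, the definition \eqref{def:KklM} with $M=0$ gives
\[
\pk|\nabla|^{-\iota}e^{\pm it|\nabla|}f(x)=(2\pi)^{-3}\int_{\R^3}K_k^{\iota,0}(t,x-y)\,\pk f(y)\,dy .
\]
We should rather avoid applying $\pk$ to $f$ first, because $\pk f$ is no longer supported in $B(0,R)$. So instead we write the kernel with the full symbol $|\xi|^{-\iota}\dot\psi_k(\xi)e^{\pm it|\xi|}$. This is just $K_k^{\iota,0}$ with $\dot\psi_{[[k]]}$ replaced by $\dot\psi_k$. The proof of Lemma \ref{lem:estimate of KklM} only used that the cutoff is smooth and supported in $|\xi|\approx 2^k$, so it applies verbatim. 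Hence
\[
\pk|\nabla|^{-\iota}e^{\pm it|\nabla|}f(x)=(2\pi)^{-3}\int_{B(0,R)}\tilde K_k^{\iota}(t,x-y)f(y)\,dy,\qquad |\tilde K_k^\iota(t,z)|\ls 2^{(3-\iota)k}(1+2^kt)^{-1}.
\]

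Next we bound the integral pointwise in $x$:
\[
|\pk|\nabla|^{-\iota}e^{\pm it|\nabla|}f(x)|\ls 2^{(3-\iota)k}(1+2^kt)^{-1}\|f\|_{L^1(B(0,R))}.
\]
Cauchy--Schwarz then gives $\|f\|_{L^1(B(0,R))}\le |B(0,R)|^{1/2}\|f\|_{L^2}\ls R^{3/2}\|f\|_{L^2}$. Taking the supremum over $x$ yields \eqref{ineq:Linfty-L2 estimate0}.

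The only point needing care is the support issue, which is handled by keeping the Littlewood--Paley cutoff inside the kernel rather than applying it to $f$. Equivalently, one can invoke Lemma \ref{lem:linear dispersive}'s argument with $f$ itself in $L^1$. For $\iota=1$ and $k$ negative there is no singularity problem, since the symbol is localized away from $\xi=0$. So no step is a genuine obstacle.
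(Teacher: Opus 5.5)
Your proof is correct and is essentially the paper's argument, which simply combines the dispersive kernel bound (Lemma \ref{lem:linear dispersive}, resting on \eqref{ineq:estimate of KklM}) with H\"older's inequality on $B(0,R)$. Placing $\dot\psi_k$ inside the kernel is valid but not needed: the kernel of $\pk$ has uniformly bounded $L^1$ norm, so Young's inequality gives $\|\pk f\|_{L^1}\lesssim\|f\|_{L^1}$ and the support issue never arises.
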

	\begin{proof}
	\eqref{ineq:Linfty-L2 estimate0} follows directly from Lemma \ref{lem:linear dispersive} and H\"older's inequality.
	\end{proof}
	\begin{lemma}\label{lem:Linfty-L2 estimate}
		For any function $f$ with $\supp f \subseteq B(0,R)$ and $R>0$, it holds that for $t>0$,
		\begin{equation}\label{ineq:Linfty-L2 estimate1}
			\|\f{\sin(t|\nabla|)}{|\nabla|}f\|_{L^{\infty}(\R^3)} \lesssim \min\{1,(1+t)^{-1}R\}\| f\|_{H^{4}(B(0,R))},
		\end{equation}
		\begin{equation}\label{ineq:Linfty-L2 estimate2}
			\|\cos(t|\nabla|)f\|_{L^{\infty}(\R^3)} \lesssim  \min\{1,(1+t)^{-1}R\}\| f\|_{H^{4}(B(0,R))}.
		\end{equation}
	\end{lemma}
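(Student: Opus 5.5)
The plan is to prove each inequality by splitting into the regimes $t\lesssim 1+R$ (or $t\le 1$) and $t$ large compared with $R$, using Kirchhoff's formula from the strong Huygens' principle subsection. Since $f$ is supported in $B(0,R)$, $\min\{1,(1+t)^{-1}R\}$ equals $1$ when $t\lesssim R$ and $R/(1+t)$ otherwise. So the task is to show a uniform bound $\lesssim\|f\|_{H^4}$ for all $t$, and a decay bound $\lesssim R(1+t)^{-1}\|f\|_{H^4}$.

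\emph{Uniform bound.} For \eqref{ineq:Linfty-L2 estimate2} I will use Sobolev embedding $H^2\hookrightarrow L^\infty$ together with the unitarity of $\cos(t|\nabla|)$ on $H^2$, which gives $\|\cos(t|\nabla|)f\|_{L^\infty}\lesssim\|f\|_{H^2}$. For \eqref{ineq:Linfty-L2 estimate1} I will split $f=P_{-1}f+P_{\ge0}f$.
\begin{itemize}
\item On high frequencies, $\sin(t|\nabla|)|\nabla|^{-1}P_{\ge0}$ is bounded on $H^2$, so this piece is controlled the same way.
\item On low frequencies, $|\nabla|^{-1}$ is singular, so I will instead use Kirchhoff's representation $\frac{1}{4\pi t}\int_{S(x,t)}f\,d\sigma$. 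For $t\le1$ this is bounded by $t\|f\|_{L^\infty}\lesssim\|f\|_{H^2}$.
\item For $t\ge1$ the spherical mean satisfies $\frac{1}{4\pi t}\int_{S(x,t)}|f|\,d\sigma\le \frac{1}{t}\,\sigma(S(x,t)\cap B(0,R))\|f\|_{L^\infty}\lesssim \min\{t,R^2/t\}\|f\|_{L^\infty}$. This is not uniformly bounded when $R$ is large, so it does not close the uniform bound.
\end{itemize}

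Because of that last point, the $\min\{1,\cdot\}$ must come from something finer. I will write the spherical mean as an integral in polar coordinates around $x$ and use the trace inequality $\int_{S(x,t)}|f|\,d\sigma\lesssim\|f\|_{W^{1,1}}$, which holds uniformly in $t\ge1$ by the divergence theorem on $B(x,t)$ with a radial cutoff. Then Cauchy--Schwarz gives $\|f\|_{W^{1,1}}\lesssim R^{3/2}\|f\|_{H^1}$. That carries a bad power of $R$.

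The better route is a layer-cake argument. The map $s\mapsto\int_{S(x,s)}|f|^2\,d\sigma$ integrates in $s$ to $\|f\|_{L^2}^2$. Combined with a one-dimensional Sobolev bound in the radial variable, this gives
\[
\sup_s\int_{S(x,s)}|f|^2\,d\sigma\lesssim\|f\|_{H^1}^2 .
\]
Hence
\[
\frac1t\int_{S(x,t)}|f|\,d\sigma\le\frac1t\,|S(x,t)\cap B(0,R)|^{1/2}\|f\|_{H^1}\lesssim \frac1t\min\{t,R\}\|f\|_{H^1}=\min\{1,R/t\}\|f\|_{H^1},
\]
using $|S(x,t)\cap B(0,R)|\lesssim\min\{t^2,R^2\}$. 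This single estimate gives both regimes of \eqref{ineq:Linfty-L2 estimate1}, and combined with the $t\le1$ bound it yields the stated $\min\{1,(1+t)^{-1}R\}$.

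For \eqref{ineq:Linfty-L2 estimate2} I will use the formula \eqref{eq:Huygens2}:
\[
\cos(t|\nabla|)f(x)=\frac{1}{4\pi t^2}\int_{S(x,t)}f\,d\sigma+\frac{1}{4\pi t}\int_{S(x,t)}\partial_nf\,d\sigma .
\]
The second term is handled exactly as above with $f$ replaced by $\nabla f$, using the $H^2$ norm. The first term is bounded by $t^{-1}\min\{1,R/t\}\|f\|_{H^1}$ for $t\ge1$, while for $t\le1$ I use the Sobolev bound from the uniform step.

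The main obstacle is the uniform-in-radius sphere trace bound $\sup_{s}\int_{S(x,s)}|g|^2\,d\sigma\lesssim\|g\|_{H^1}^2$. I will prove it by writing
\[
\int_{S(x,s)}|g|^2\,d\sigma=s^2\int_{\Sph}|g(x+s\omega)|^2\,d\omega
\]
and applying the fundamental theorem of calculus in $s$ against a cutoff on $[s-1,s]$. Here $s\ge1$; small $s$ is trivial via $L^\infty$. The $s^2$ weight is comparable to $\rho^2$ on the cutoff interval, so the result is $\lesssim\|g\|_{L^2}^2+\|\nabla g\|_{L^2}^2$. The extra derivatives up to $H^4$ give room to spare.
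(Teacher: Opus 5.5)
Your route is genuinely different from the paper's. The paper uses Kirchhoff's formula only to justify inserting the smooth shell cutoff $\Xi_{S(x,t)}$. It then expands $e^{\pm it|\nabla|}|\nabla|^{-\iota}(\Xi_{S(x,t)}f)(x)$ in the dyadic kernels $K_k^{\iota,2}$ and uses the dispersive bound $|K_k^{\iota,M}|\lesssim 2^{(3-\iota)k-2Mk_+}(1+2^kt)^{-1}$ to reach $(1+t)^{-1}\|(\Id-\Delta)^2(\Xi_{S(x,t)}f)\|_{L^1}$. Finally it applies Cauchy--Schwarz on $A=\supp\Xi_{S(x,t)}\cap B(0,R)$, whose volume is $\lesssim\min\{(1+t)^2,R^2\}$. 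So in the paper the decay comes from the $L^1\to L^\infty$ dispersive estimate, the $\min$ comes from the volume of a thickened sphere, and $H^4$ is the price of summing the high-frequency kernels. You instead take the decay from the explicit $1/t$ in Kirchhoff's formula and the $\min$ from the area of $S(x,t)\cap B(0,R)$, with your uniform sphere trace inequality replacing the thickening. For $t\ge1$ this is correct and needs only $\|f\|_{H^1}$ (sine) and $\|f\|_{H^2}$ (cosine). In exchange, the paper's computation covers all $t$ and $R$ at once and stays inside the kernel framework reused in the next lemmas. Once your trace argument is in place, the frequency splitting in your ``uniform bound'' step is unnecessary.

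What still needs repair is the regime $t\le1$, $R<1$, where the claimed bound is $\approx R\|f\|_{H^4}$.

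\emph{Sine.} As written, the bounds you combine there are $t\|f\|_{L^\infty}$ and $\min\{1,R/t\}$ times a Sobolev norm. Together they only give $\min\{t,R/t\}$, which equals $\sqrt R$ at $t=\sqrt R$. Use instead your own area bound $\frac1t\sigma(S(x,t)\cap B(0,R))\|f\|_{L^\infty}\lesssim\min\{t,R^2/t\}\|f\|_{L^\infty}\le R\|f\|_{L^\infty}$, which holds for every $t>0$.

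\emph{Cosine.} Here the issue is more than bookkeeping. The unweighted Sobolev bound carries no factor $R$. The area bound applied to $\frac{1}{4\pi t^2}\int_{S(x,t)}f\,d\sigma$ only gives $\min\{1,R^2/t^2\}$, which equals $1$ for $t\le R$; indeed this term tends to $f(x)$ as $t\to0$. You must use once more that $f$ vanishes outside $B(0,R)$: integrating radially out to $\partial B(0,R)$ gives $\|f\|_{L^\infty}\le R\|\nabla f\|_{L^\infty}\lesssim R\|f\|_{H^3}$. The $\partial_nf$ term is then covered by the area bound applied to $\nabla f$. The paper gets this factor automatically, since $|A|^{1/2}\le|B(0,R)|^{1/2}\lesssim R^{3/2}$.

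\emph{Trace inequality.} The comparison $s^2\approx\rho^2$ on $[s-1,s]$ fails for $s\in[1,2)$, where $\rho$ can approach $0$. Treat $s\le2$ with the bound $\int_{S(x,s)}|g|^2\,d\sigma\le16\pi\|g\|_{L^\infty}^2$, as you already do for small $s$, or use the interval $[s/2,s]$ instead.
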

	\begin{proof}
		Due to $e^{\pm it|\xi|}|\xi|^{-\iota}\in L^1_{loc}(\R^3)$ for $\iota\in\{0,1\}$, then we have
		\begin{equation*}
			e^{\pm it|\nabla|}|\nabla|^{-\iota}(\Xi_{S(x,t)}f)(x)
			=
			(2\pi)^{-3}\sum_{k\in \Z}\int_{\R^3}K_k^{\iota,M}(t,x-y)(\Id-\Delta_y)^M(\Xi_{S(x,t)}f)(y)dy,
		\end{equation*}		
		where $K^{\iota,M}_k$ is defined in \eqref{def:KklM} .
		Therefore, by Lemma \ref{lem:estimate of KklM}, one can arrive at
		\begin{equation}\label{2.36}
			\begin{split}
				&|e^{\pm it|\nabla|}|\nabla|^{-\iota}(\Xi_{S(x,t)}f)(x)|\\
				&\ls
				\sum_{k\in \Z}\int_{\R^3}|K_k^{\iota,M}(t,x-y)||(\Id-\Delta_y)^M(\Xi_{S(x,t)}f)(y)|dy\\
				&\ls
				\sum_{k\in \Z}2^{(3-\iota)k-2Mk_+}(1+2^kt)^{-1}\|(\Id-\Delta_y)^M(\Xi_{S(x,t)}f)(y)\|_{L^1_y(\R^3)}.
			\end{split}
		\end{equation}
		Note that for $M=2$ and $\iota \in\{0,1\}$,
		\begin{equation*}
			\begin{split}
				\sum_{k<0}2^{(3-\iota)k-2Mk_+}(1+2^kt)^{-1}
				&=\sum_{k<0}2^{(2-\iota)k}(2^{-k}+t)^{-1}
				\ls(1+t)^{-1},\\
				\sum_{k\ge0}2^{(3-\iota)k-2Mk_+}(1+2^kt)^{-1}
				&\ls\sum_{k\ge0}2^{(3-\iota)k-4k}(1+t)^{-1}
				\ls(1+t)^{-1}.
			\end{split}
		\end{equation*}
		Together with \eqref{2.36}, this yields
		\begin{equation}\label{2.37}
			|e^{\pm it|\nabla|}|\nabla|^{-\iota}(\Xi_{S(x,t)}f)(x)|
			\ls (1+t)^{-1}\|(\Id-\Delta_y)^2(\Xi_{S(x,t)}f)(y)\|_{L^1_y(\R^3)}.
		\end{equation}
		Since $\Xi_{S(x,t)}$ is a smooth cutoff with uniformly bounded derivatives (independent of $x$ and $t$),
		it follows from Leibniz's rule and H\"older's inequality that
		\begin{equation}\label{2.38}
			\|(\Id-\Delta_y)^2(\Xi_{S(x,t)}f)(y)\|_{L^1_y(\R^3)} \ls |A(x,t,R)|^{\f12} \|f\|_{H^4(B(0,R))},
		\end{equation}
 		where we have used that the support of $\Xi_{S(x,t)}f$ is contained in $A(x,t,R)=\supp \Xi_{S(x,t)}\cap B(0,R)$.
 		Note that when $t<1$,
 		\begin{equation}\label{CYC-2}
 			|A(x,t,R)|\ls \min\{|\supp \Xi_{S(x,t)}|,|B(0,R)|\} \ls \min\{1,R^3\} \ls \min\{1,R^2\}.
 		\end{equation}
 		When $t \ge 1$, one can obtain
 		\begin{equation}\label{CYC-3}
 			|A(x,t,R)|\ls |\supp \Xi_{S(x,t)}| \ls t^2,
 		\end{equation}
 		\begin{equation}\label{CYC-4}
 			|A(x,t,R)|\ls \int_{t-0.02}^{t+0.02}\mathcal{H}^2(S(x,s)\cap B(0,R))ds \ls \int_{t-0.02}^{t+0.02}4\pi R^2ds \ls R^2,
 		\end{equation}
 		where $\mathcal{H}^2$ denotes the 2D surface measure. Collecting \eqref{CYC-2}--\eqref{CYC-4}, we have
		\begin{equation}\label{YHCCC-100}
			|A(x,t,R)|\ls \min\{(1+t)^2,R^2\}.
		\end{equation}
		Combining \eqref{2.37}--\eqref{YHCCC-100} yields
		\begin{equation}\label{2.39}
			\|e^{\pm it|\nabla|}|\nabla|^{-l}(\Xi_{S(x,t)}f)(x)\|_{L_x^\infty(\R^3)}
			\ls \min\{1,(1+t)^{-1}R\}\|f\|_{H^4(B(0,R))}.
		\end{equation}
		Therefore, by the strong Huygens' principle \eqref{eq:Huygens1}, \eqref{eq:Huygens2} and \eqref{2.39}, one has
		\begin{equation*}
			\|\f{\sin(t|\nabla|)}{|\nabla|}f\|_{L^{\infty}(\R^3)} =\|\f{\sin(t|\nabla|)}{|\nabla|}(\Xi_{S(x,t)}f)(x)\|_{L_x^\infty(\R^3)}  \lesssim \min\{1,(1+t)^{-1}R\}\| f\|_{H^{4}(B(0,R))},
		\end{equation*}
		\begin{equation*}
			\|\cos(t|\nabla|)f\|_{L^{\infty}(\R^3)} =\|\cos(t|\nabla|)(\Xi_{S(x,t)}f)(x)\|_{L_x^\infty(\R^3)}\lesssim  \min\{1,(1+t)^{-1}R\}\| f\|_{H^{4}(B(0,R))}.
		\end{equation*}
		Then the proof of Lemma \ref{lem:Linfty-L2 estimate} is finished.
	\end{proof}
\begin{remark}\label{rmk:3.1}
We emphasize that due to the strong Huygens' principle for the 3D wave equation, the coefficient factor on the right-hand sides of \eqref{ineq:Linfty-L2 estimate1} and  \eqref{ineq:Linfty-L2 estimate2} is $\min\{1, (1+t)^{-1}R\}$
rather than the usual quantity $(1+t)^{-1}R^{3/2}$ in Lemma \ref{lem:Linfty-L2 estimate 0}. This improvement is crucial
in the proof of Theorem \ref{thm:2}. Note that \eqref{ineq:Linfty-L2 estimate1} and  \eqref{ineq:Linfty-L2 estimate2} also improve
such an estimate (A.1) in \cite{Pusateri}
	\[
	\|e^{it|\nabla|}f\|_{L^\infty}\ls \f{1}{t}\big\|\w{x}^{\f{3}{2}^+}|\nabla|^2 f\big\|_{L^2(\R^3)}.
	\]
\end{remark}

		Note that
		\begin{equation*}
			\big[\pk \frac{\sin(t|\nabla|)}{|\nabla|} f\big](x) \neq \big[\pk \frac{\sin(t|\nabla|)}{|\nabla|} \big( \Xi_{S(x,t)} f \big)\big](x),
		\end{equation*}
		\begin{equation*}
			\big[\pk \cos(t|\nabla|) f\big](x) \neq \big[\pk \cos(t|\nabla|) \big( \Xi_{S(x,t)} f \big)\big](x).
		\end{equation*}
		Therefore, the estimates of $\pk \frac{\sin(t|\nabla|)}{|\nabla|} f$ and $\pk \cos(t|\nabla|) f$ should be specially treated rather than derived directly from Lemma \ref{lem:Linfty-L2 estimate}.

	\begin{lemma}\label{lem:local Linfty-L2 estimate1}
		For any function $f$ with $\supp f \subseteq B(0,R)$ and $R>0$, $t>0$, $\delta\in(0,\f12)$, $k\in \Z$, it holds that
		\begin{equation}\label{ineq:local Linfty-L2 estimate1}
			\|\pk\f{\sin(t|\nabla|)}{|\nabla|}f\|_{L^\infty(\R^3)} \ls 2^{\f12k}(1+2^kt)^{-1} \w{2^k R}^{1+\delta}\|f\|_{L^2(\R^3)},
		\end{equation}
		\begin{equation}\label{ineq:local Linfty-L2 estimate2}
			\|\pk\cos(t|\nabla|)f\|_{L^\infty(\R^3)} \ls 2^{\f32k}(1+2^kt)^{-1} \w{2^k R}^{1+\delta}\|f\|_{L^2(\R^3)}.
		\end{equation}
	\end{lemma}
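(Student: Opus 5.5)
The plan is to reduce to the unit frequency $k=0$ by scaling. Then I would split $\dot P_0 f$ into a piece essentially supported in a ball of radius comparable to $\w{R}$ and a rapidly decaying tail, and apply Lemma \ref{lem:Linfty-L2 estimate} (the Huygens-based estimate with the factor $\min\{1,(1+t)^{-1}R\}$) to each physical-space dyadic piece.

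\emph{Scaling.} Set $g(x)=f(2^{-k}x)$, so that $\supp g\subseteq B(0,2^kR)$ and $\|g\|_{L^2}=2^{3k/2}\|f\|_{L^2}$. A change of variables in the Fourier representation gives
\[
\big[\pk \tfrac{\sin(t|\nabla|)}{|\nabla|}f\big](x)=2^{-2k}\big[\dot P_0\tfrac{\sin(2^kt|\nabla|)}{|\nabla|}g\big](2^kx),
\qquad
\big[\pk \cos(t|\nabla|)f\big](x)=\big[\dot P_0\cos(2^kt|\nabla|)g\big](2^kx).
\]
It therefore suffices to prove, for all $t>0$ and $\supp g\subseteq B(0,\rho)$,
\[
\|\dot P_0\tfrac{\sin(t|\nabla|)}{|\nabla|}g\|_{L^\infty}+\|\dot P_0\cos(t|\nabla|)g\|_{L^\infty}\ls (1+t)^{-1}\w{\rho}^{1+\delta}\|g\|_{L^2}.
\]
Taking $\rho=2^kR$ and $2^kt$ in place of $t$ then yields the powers $2^{-2k}\cdot 2^{3k/2}=2^{-k/2}$ and $2^{3k/2}$. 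The first does not match the $2^{\f12 k}$ stated in \eqref{ineq:local Linfty-L2 estimate1}, so I would recheck the scaling; a likely resolution is that the normalisation intended for the sine estimate carries an extra factor $2^k$, i.e.\ an extra derivative. The argument at unit frequency is unchanged either way, so I treat that case.

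\emph{Unit frequency.} Write $h=\dot P_0 g=\check{\dot\psi}_0*g$; its kernel is Schwartz. Decompose $h=\sum_{j}Q_jh$ with $Q_j$ the physical dyadic cutoffs of \eqref{def:Q_j}. Let $j_0$ be such that $2^{j_0}\approx 2\w{\rho}$ and group all $j\le j_0$ into one piece $h_0$, supported in $B(0,4\w{\rho})$. For $j>j_0$, every $x\in\supp Q_j$ lies at distance $\gtrsim 2^j$ from $\supp g$. The Schwartz decay of the kernel and its derivatives, together with Cauchy--Schwarz over $\supp g$, gives
\[
\|Q_jh\|_{H^4}\ls_N 2^{3j/2}\,2^{-jN}\rho^{3/2}\|g\|_{L^2}\ls 2^{-j(N-3)}\|g\|_{L^2}.
\]
Bernstein gives $\|h_0\|_{H^4}\ls\|g\|_{L^2}$.

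\emph{Applying the Huygens estimate.} Apply \eqref{ineq:Linfty-L2 estimate1}--\eqref{ineq:Linfty-L2 estimate2} to each piece, with radius $4\w{\rho}$ for $h_0$ and $2^{j+1}$ for $Q_jh$. Using $\dot P_0\sin(t|\nabla|)|\nabla|^{-1}g=\sin(t|\nabla|)|\nabla|^{-1}h$, this gives
\[
\|\dot P_0\tfrac{\sin(t|\nabla|)}{|\nabla|}g\|_{L^\infty}\ls \min\{1,\tfrac{\w{\rho}}{1+t}\}\|g\|_{L^2}+\sum_{j>j_0}\tfrac{2^{j}}{1+t}2^{-j(N-3)}\|g\|_{L^2}\ls \tfrac{\w{\rho}}{1+t}\|g\|_{L^2},
\]
and the same bound for the cosine. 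This is even slightly better than needed, and the $\delta$-loss gives room for cruder tail bounds.

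\emph{Main obstacle.} The delicate point is the tail: Lemma \ref{lem:Linfty-L2 estimate} requires exact compact support, while $\dot P_0 g$ has none. The decomposition must be arranged so that the radii $2^{j}$ multiplying $(1+t)^{-1}$ are beaten by the off-support decay of the projection kernel. For small $\rho<1$, the $H^4$ bound on $h_0$ uses only $\|g\|_{L^2}$, consistent with $\w{\rho}^{1+\delta}\approx 1$. Finally, I would double-check the exponent bookkeeping in the scaling step against the stated powers of $2^k$.
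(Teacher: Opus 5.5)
Your scaling step has an arithmetic slip, and the mismatch you noticed comes from it, not from the statement. Under $\xi=2^k\eta$ one has $\hat f(\xi)\,d\xi=\hat g(\eta)\,d\eta$ and $\frac{\sin(t|\xi|)}{|\xi|}=2^{-k}\frac{\sin(2^kt|\eta|)}{|\eta|}$. Hence
\[
\big[\dot{P}_k\tfrac{\sin(t|\nabla|)}{|\nabla|}f\big](x)=2^{-k}\big[\dot{P}_0\tfrac{\sin(2^kt|\nabla|)}{|\nabla|}g\big](2^kx),
\]
with a single factor $2^{-k}$, not $2^{-2k}$; this is the identity the paper uses. Combined with $\|g\|_{L^2}=2^{\frac32k}\|f\|_{L^2}$, it gives exactly the factor $2^{\frac12k}$ in \eqref{ineq:local Linfty-L2 estimate1}, so no extra derivative or renormalisation is involved. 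In fact the $2^{-\frac12k}$ bound your identity would produce is false for large $k$. Take $f=2^{\frac32k}\phi(2^k\cdot)$ with a fixed generic $\phi\in C_c^\infty(B(0,1))$ and $R=t=2^{-k}$. Then the left-hand side equals $2^{\frac12k}\|\dot{P}_0\frac{\sin|\nabla|}{|\nabla|}\phi\|_{L^\infty}$.

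With that corrected, your proof is complete, but it takes a different route at unit frequency. The paper applies Lemma \ref{lem:Linfty-L2 estimate} to every physical dyadic piece $Q_j\dot{P}_0 f$, $j\ge-1$, with radius $2^j$, and then has to sum $\sum_j 2^j\|Q_j\dot{P}_0f\|_{H^4}$. It does so by borrowing $2^{-\delta j}$, which leads to $\|\langle x\rangle^{1+\delta}\partial_x^b\dot{P}_0f\|_{L^2}$. It then needs the weighted Bernstein inequality of Lemma \ref{lem:weighted bernstein} with $\langle x\rangle^{2+2\delta}\in A_2(\mathbb{R}^3)$ to reach $\|\langle x\rangle^{1+\delta}f\|_{L^2}\le\langle R\rangle^{1+\delta}\|f\|_{L^2}$; this is precisely where $\delta<\frac12$ is needed.

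You instead merge all pieces with $2^j\lesssim\langle\rho\rangle$ into the single smooth cutoff $\psi(2^{-j_0}|x|)\dot{P}_0g$ and apply the Huygens estimate once at radius $\sim\langle\rho\rangle$. You then handle the far pieces through the off-support Schwartz decay of the Littlewood--Paley kernel, and your tail bound is correct. This avoids the Calder\'on--Zygmund and $A_2$ machinery entirely. It is also sharper: it yields the factor $\langle 2^kR\rangle$ with no $\delta$-loss and no upper restriction on $\delta$.

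The paper's route does produce, as a by-product, the global weighted bound $\|\dot{P}_0\frac{\sin(t|\nabla|)}{|\nabla|}f\|_{L^\infty}\lesssim(1+t)^{-1}\|\langle x\rangle^{1+\delta}f\|_{L^2}$, which does not require compact support. For the lemma as stated, your argument is the cleaner one.
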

	\begin{proof}
		Owing to Lemma \ref{lem:Linfty-L2 estimate} and Lemma \ref{lem:Qj prop}, for $\delta\in(0,\f12)$, we have
		\begin{equation*}
			\begin{split}
				\|\dot{P}_0\f{\sin(t|\nabla|)}{|\nabla|}f\|_{L^\infty(\R^3)}
				&\ls \sum_{j\ge -1} \|\f{\sin(t|\nabla|)}{|\nabla|}Q_j \dot{P}_0f\|_{L^\infty(\R^3)} \\
				&\ls \sum_{j\ge -1} (1+t)^{-1} 2^j \|Q_j \dot{P}_0f\|_{H^4(\R^3)}\\
				&\ls \sum_{j\ge -1} \sum_{|a+b|\le 4}(1+t)^{-1} 2^j \|\p_x^a(\psi_j) \p_x^b\dot{P}_0f\|_{L^2(\R^3)}\\
				&\ls \sum_{j\ge -1} \sum_{|a+b|\le 4}(1+t)^{-1} 2^{(1-|a|)j} \|\p_x^b\dot{P}_0f\|_{L^2(\supp \psi_j)}\\
				&\ls \sum_{j\ge -1} 2^{-\delta j} \sum_{|b|\le 4}(1+t)^{-1} \|\w{x}^{1+\delta}\p_x^b\dot{P}_0f\|_{L^2(\R^3)}\\
				&\ls \sum_{|b|\le 4}(1+t)^{-1} \|\w{x}^{1+\delta}\p_x^b\dot{P}_0f\|_{L^2(\R^3)}.\\
			\end{split}
		\end{equation*}
		By Lemma \ref{lem:weighted bernstein} and $\w{x}^{2+2\delta}\in A_2(\R^3)$, one has
		\begin{equation*}
			\|\dot{P}_0\f{\sin(t|\nabla|)}{|\nabla|}f\|_{L^\infty(\R^3)}
			\ls
			(1+t)^{-1} \|\w{x}^{1+\delta}f\|_{L^2(\R^3)}
			\ls (1+t)^{-1} \w{R}^{1+\delta}\|f\|_{L^2(\R^3)}.
		\end{equation*}
		Similarly, it holds that
		\begin{equation*}
			\|\dot{P}_0\cos(t|\nabla|)f\|_{L^\infty(\R^3)}
			\ls (1+t)^{-1} \w{R}^{1+\delta}\|f\|_{L^2(\R^3)}.
		\end{equation*}
		Note that
		\begin{equation*}
			\begin{split}
				\|\dot{P}_k\f{\sin(t|\nabla|)}{|\nabla|}f\|_{L^\infty(\R^3)}
				&=\|\dot{P}_0\f{\sin(2^kt|\nabla|)}{2^k|\nabla|}(f(2^{-k}\cdot))\|_{L^\infty(\R^3)} \\
				&\ls 2^{-k}(1+2^kt)^{-1} \w{2^k R}^{1+\delta}\|f(2^{-k}\cdot)\|_{L^2(\R^3)}\\
				&\ls 2^{\f12k}(1+2^kt)^{-1} \w{2^k R}^{1+\delta}\|f\|_{L^2(\R^3)}.
			\end{split}
		\end{equation*}
		Analogously, one can arrive at
		\begin{equation*}
			\|\dot{P}_k\cos(t|\nabla|)f\|_{L^\infty(\R^3)}
			\ls 2^{\f32k}(1+2^kt)^{-1} \w{2^k R}^{1+\delta}\|f\|_{L^2(\R^3)}.
		\end{equation*}
		Then the proof of Lemma \ref{lem:local Linfty-L2 estimate1} is completed.
	\end{proof}
	\begin{cor}\label{cor:local Linfty-L2 estimate2}
		For any function $f$ with $\supp f \subseteq B(0,R)$ and $R>0$, $t>0$, $\delta\in(0,\f12]$, $k\in \Z$, one has
		\begin{equation}\label{YHCCC-50}
			\|\pk\f{\sin(t|\nabla|)}{|\nabla|}f\|_{L^\infty(\R^3)} \ls 2^{(\f32+\delta) k}(1+2^kt)^{-1}  R^{1+\delta}\|f\|_{L^2(\R^3)},
		\end{equation}
		\begin{equation}\label{YHCCC-51}
			\|\pk\cos(t|\nabla|)f\|_{L^\infty(\R^3)} \ls 2^{(\f52+\delta) k}(1+2^kt)^{-1} R^{1+\delta}\|f\|_{L^2(\R^3)}.
		\end{equation}
	\end{cor}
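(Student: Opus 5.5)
The plan is to deduce Corollary \ref{cor:local Linfty-L2 estimate2} from Lemma \ref{lem:local Linfty-L2 estimate1}. The idea is to bound $\w{2^kR}^{1+\delta}$ by a pure power of $2^kR$ when $2^kR\gtrsim1$, and to use the cruder Lemma \ref{lem:Linfty-L2 estimate 0} in the complementary regime $2^kR\le1$. Here $\delta\in(0,\f12]$. The endpoint $\delta=\f12$ is not covered by Lemma \ref{lem:local Linfty-L2 estimate1}, so in that case I would apply the lemma with some $\delta'<\f12$ and then replace $\delta'$ by $\delta$, as explained below.

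\emph{Case $2^kR\ge1$.} Here $\w{2^kR}^{1+\delta}\ls(2^kR)^{1+\delta}$. Inserting this into \eqref{ineq:local Linfty-L2 estimate1} gives
\begin{equation*}
\|\pk\tfrac{\sin(t|\nabla|)}{|\nabla|}f\|_{L^\infty}\ls 2^{\f12k}(1+2^kt)^{-1}2^{(1+\delta)k}R^{1+\delta}\|f\|_{L^2},
\end{equation*}
which is exactly \eqref{YHCCC-50}. The same substitution into \eqref{ineq:local Linfty-L2 estimate2} gives \eqref{YHCCC-51}, with $2^{\f32k}\cdot2^{(1+\delta)k}=2^{(\f52+\delta)k}$.

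For $\delta=\f12$, take $\delta'\in(0,\f12)$ in the lemma. Since $2^kR\ge1$, we have $(2^kR)^{1+\delta'}\le(2^kR)^{1+\delta}$, so the exponent can be raised to $\delta$ at no cost.

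\emph{Case $2^kR<1$.} Here I would invoke Lemma \ref{lem:Linfty-L2 estimate 0}. With $\iota=1$ it gives
\begin{equation*}
\|\pk|\nabla|^{-1}e^{\pm it|\nabla|}f\|_{L^\infty}\ls2^{2k}(1+2^kt)^{-1}R^{\f32}\|f\|_{L^2}.
\end{equation*}
Since $\sin(t|\nabla|)/|\nabla|$ is a combination of $|\nabla|^{-1}e^{\pm it|\nabla|}$, the same bound holds for $\pk\frac{\sin(t|\nabla|)}{|\nabla|}f$. Now write
\begin{equation*}
2^{2k}R^{\f32}=2^{(\f32+\delta)k}R^{1+\delta}\,(2^kR)^{\f12-\delta}\le 2^{(\f32+\delta)k}R^{1+\delta},
\end{equation*}
where the inequality uses $2^kR<1$ and $\f12-\delta\ge0$. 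This is \eqref{YHCCC-50}.

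For the cosine, the case $\iota=0$ of Lemma \ref{lem:Linfty-L2 estimate 0} gives the factor $2^{3k}R^{\f32}$. The same manipulation yields $2^{3k}R^{\f32}=2^{(\f52+\delta)k}R^{1+\delta}(2^kR)^{\f12-\delta}$, which gives \eqref{YHCCC-51}.

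This case is the only point that requires care. The constraint $\delta\le\f12$ is exactly what makes the leftover factor $(2^kR)^{\f12-\delta}\le1$ in this regime. For the same reason, the endpoint $\delta=\f12$ is handled directly by this case without any limiting argument. I expect no genuine obstacle beyond checking that both regimes produce the same powers of $2^k$ and $R$.
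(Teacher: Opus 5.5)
Your proposal is correct and follows essentially the same route as the paper. You split according to whether $2^kR\le 1$ or $2^kR>1$, use Lemma \ref{lem:Linfty-L2 estimate 0} with the leftover factor $(2^kR)^{\f12-\delta}\le 1$ in the first regime, and use Lemma \ref{lem:local Linfty-L2 estimate1} with $\w{2^kR}^{1+\delta}\ls(2^kR)^{1+\delta}$ in the second. Your treatment of the endpoint $\delta=\f12$ when $2^kR\ge1$ (apply the lemma with some $\delta'<\f12$, then raise the exponent) fills in a point the paper's proof passes over silently, since Lemma \ref{lem:local Linfty-L2 estimate1} is only stated for $\delta\in(0,\f12)$.
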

	\begin{proof}
		For $2^k R \le 1$, due to $(2^k R)^{\f32}\le (2^k R)^{1+\delta}$, then Lemma \ref{lem:Linfty-L2 estimate 0} can be applied;
for $2^kR>1$, due to $\w{2^k R}^{1+\delta}\ls (2^k R)^{1+\delta}$, then  Lemma \ref{lem:local Linfty-L2 estimate1} can be used.
Based on this, for $\delta\in(0,\f12]$, one obtains \eqref{YHCCC-50}--\eqref{YHCCC-51}.
\end{proof}
	\begin{cor}\label{cor:local Linfty-L2 estimate3}
		For $t>0$, $\delta\in(0,\f12]$, integer $j\ge -1$ and $k\in \Z$, it holds that
		\begin{equation}\label{ineq:local Linfty-L2 estimate4}
			\|\dot{P}_{k}\f{\sin(t|\nabla|)}{|\nabla|}Q_j f\|_{L^\infty(\R^3)} \lesssim 2^{(\f32+\delta) k}(1+2^kt)^{-1} 2^{(1+\delta)j}\| Q_j f\|_{L^2(\R^3)},
		\end{equation}
		\begin{equation}\label{ineq:local Linfty-L2 estimate5}
			\|\dot{P}_{k}\cos(t|\nabla|)Q_j f\|_{L^\infty(\R^3)} \lesssim   2^{(\f52+\delta) k}(1+2^kt)^{-1} 2^{(1+\delta)j}\| Q_j f\|_{L^2(\R^3)}.
		\end{equation}
		Meanwhile, \eqref{ineq:local Linfty-L2 estimate4} and \eqref{ineq:local Linfty-L2 estimate5} remain valid when $Q_j$ is replaced by $\q_j$ for $j\in\Z$.
	\end{cor}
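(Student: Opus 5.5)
The plan is to obtain Corollary \ref{cor:local Linfty-L2 estimate3} as a direct specialization of Corollary \ref{cor:local Linfty-L2 estimate2}. We apply that corollary to the function $Q_j f$ with a suitable radius $R$ comparable to $2^j$.

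First, identify the support. For $j\ge0$ we have $\supp \psi_j\subseteq\{|x|\le \f85 2^j\}$. For $j=-1$, $\psi_{-1}(x)=\psi(2|x|)$ is supported in $\{|x|\le \f45\}$. In every case $\supp Q_jf\subseteq B(0,R_j)$ with $R_j=2\cdot 2^j$.

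Second, invoke \eqref{YHCCC-50} and \eqref{YHCCC-51} with $f$ replaced by $Q_jf$ and $R=R_j$, for fixed $t>0$, $\delta\in(0,\f12]$ and $k\in\Z$. This gives
\begin{equation*}
\|\pk\f{\sin(t|\nabla|)}{|\nabla|}Q_jf\|_{L^\infty}\ls 2^{(\f32+\delta)k}(1+2^kt)^{-1}(2\cdot2^j)^{1+\delta}\|Q_jf\|_{L^2}.
\end{equation*}
Since $2^{1+\delta}\le 2^{3/2}$, the factor $2^{1+\delta}$ is absorbed into the implicit constant, and \eqref{ineq:local Linfty-L2 estimate4} follows. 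The cosine estimate \eqref{ineq:local Linfty-L2 estimate5} is obtained identically from \eqref{YHCCC-51}.

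For the homogeneous cutoffs $\q_j$ with $j\in\Z$, note that $\supp\dot\psi_j\subseteq\{|x|\le\f85 2^j\}\subseteq B(0,2\cdot2^j)$. This inclusion holds for negative $j$ as well, so the same application of Corollary \ref{cor:local Linfty-L2 estimate2} works verbatim. That corollary is valid for every $R>0$ with a constant independent of $R$, so no lower bound on $2^j$ is needed.

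There is no real obstacle here. The only point to check is that the implicit constants in Corollary \ref{cor:local Linfty-L2 estimate2} are uniform in $R$, $t$ and $k$. This holds because they come from Lemmas \ref{lem:Linfty-L2 estimate 0} and \ref{lem:local Linfty-L2 estimate1}, and both of those are scale-invariant or uniform in $R$.
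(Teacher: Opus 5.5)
Your proposal is correct and is the paper's argument: the paper also applies Corollary~\ref{cor:local Linfty-L2 estimate2} with $f$ replaced by $Q_jf$ (resp.\ $\q_jf$). Your explicit choice $R=2\cdot 2^j$ and the absorption of the factor $2^{1+\delta}$ into the implicit constant simply make explicit what the paper leaves unstated.
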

	\begin{proof}
	Using  $Q_j f$ or $\q_j f$ instead of $f$ in Corollary \ref{cor:local Linfty-L2 estimate2}, we immediately derive
 \eqref{ineq:local Linfty-L2 estimate4}--\eqref{ineq:local Linfty-L2 estimate5}.
	\end{proof}
	\begin{cor}\label{cor:2.10}
		For $t>0$, $\delta\in(0,\f12]$, integers $\iota\in\{0,1\}$ and $j,\ k\ge -1$, it holds that
			\begin{equation}\label{ineq:local Linfty-L2 estimate7}
				\|P_k|\nabla|^{-\iota}e^{\pm it|\nabla|}Q_j f\|_{L^\infty(\R^3)} \lesssim 2^{(\f52+\delta-\iota) k}(1+2^kt)^{-1} 2^{(1+\delta)j}\| Q_j f\|_{L^2(\R^3)}.
			\end{equation}
	In addition, \eqref{ineq:local Linfty-L2 estimate7} remains valid when $P_k$ is replaced by $\pk$ or $Q_j$ is replaced by $\q_j$ for $j,\ k\in\Z$.
	\end{cor}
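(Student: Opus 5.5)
The plan is to reduce Corollary \ref{cor:2.10} to Corollary \ref{cor:local Linfty-L2 estimate3} by writing $e^{\pm it|\nabla|}$ through $\cos(t|\nabla|)$ and $\sin(t|\nabla|)$. For $\iota=1$ we use
\[
|\nabla|^{-1}e^{\pm it|\nabla|}=|\nabla|^{-1}\cos(t|\nabla|)\pm i\frac{\sin(t|\nabla|)}{|\nabla|},
\]
and for $\iota=0$,
\[
e^{\pm it|\nabla|}=\cos(t|\nabla|)\pm i\,|\nabla|\frac{\sin(t|\nabla|)}{|\nabla|}.
\]

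First take $k\ge0$, so that $P_k=\pk$. The pure terms are handled directly.
\begin{itemize}
\item The term $\pk\frac{\sin(t|\nabla|)}{|\nabla|}Q_jf$ obeys \eqref{ineq:local Linfty-L2 estimate4}. This gives the factor $2^{(\f32+\delta)k}$, which is at most $2^{(\f52+\delta-\iota)k}$ for $\iota\in\{0,1\}$.
\item The term $\pk\cos(t|\nabla|)Q_jf$ obeys \eqref{ineq:local Linfty-L2 estimate5} with $2^{(\f52+\delta)k}$, which is exactly the claim for $\iota=0$.
\end{itemize}

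The mixed terms $\pk|\nabla|^{\mp1}(\cdots)$ need one more step. Write $\pk|\nabla|^{\mp1}=\pk\,\dot P_{[[k]]}|\nabla|^{\mp1}$. The operator $\pk|\nabla|^{\mp1}$ has kernel $2^{\mp k}2^{3k}\check{\phi}(2^k\cdot)$, where $\phi=\dot\psi_0(\xi)|\xi|^{\mp1}$ is Schwartz, so its kernel has $L^1$ norm $\lesssim2^{\mp k}$. It therefore acts on $L^\infty$ with norm $\ls2^{\mp k}$. We then apply Corollary \ref{cor:local Linfty-L2 estimate3} with $\dot P_{[[k]]}$ in place of $\pk$. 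This is legitimate because it is a finite sum of $\dot P_{l}$ with $|l-k|\le1$, and the weights $2^{(\cdot)l}(1+2^lt)^{-1}$ are comparable for these $l$. The result is
\begin{itemize}
\item $2^{-k}2^{(\f52+\delta)k}$ for $\iota=1$ with the cosine part;
\item $2^{k}2^{(\f32+\delta)k}$ for $\iota=0$ with the sine part.
\end{itemize}
Both equal $2^{(\f52+\delta-\iota)k}$, as required. The same argument gives the $\pk$ version for every $k\in\Z$, and the $\q_j$ version follows from the $\q_j$ version of Corollary \ref{cor:local Linfty-L2 estimate3}.

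The main obstacle is $k=-1$, since $P_{-1}$ is not a single dyadic block and $|\nabla|^{-1}$ is singular at $\xi=0$. Here we decompose $P_{-1}=\sum_{l\le0}\dot P_l P_{-1}$ and estimate each block by the $\pk$ result, with $f$ replaced by $Q_jf$; $P_{-1}$ commutes with the Fourier multipliers and acts boundedly on $L^\infty$ uniformly in the block. This yields
\[
\sum_{l\le0}2^{(\f52+\delta-\iota)l}(1+2^lt)^{-1}\,2^{(1+\delta)j}\|Q_jf\|_{L^2}.
\]
Since $\f52+\delta-\iota>1$ and $(1+2^lt)^{-1}\le 2^{-l}t^{-1}$, the sum is $\ls\min\{1,t^{-1}\}\approx(1+t)^{-1}$. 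The exponent margin $\f32+\delta-\iota>0$ is exactly what makes this geometric sum converge; this is the point to check carefully. This gives \eqref{ineq:local Linfty-L2 estimate7} with $2^{(\f52+\delta-\iota)(-1)}\approx1$, and completes the proof.
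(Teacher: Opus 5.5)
Your proof is correct and follows the paper's argument. Both split $e^{\pm it|\nabla|}$ into its $\cos(t|\nabla|)$ and $\sin(t|\nabla|)/|\nabla|$ parts, absorb the extra $|\nabla|^{\pm1}$ on a dyadic block by Bernstein's inequality, apply Corollary~\ref{cor:local Linfty-L2 estimate3}, and for $k=-1$ sum the homogeneous blocks using $\f32+\delta-\iota>0$. Two details are handled more carefully than in the paper: you sum over $l\le 0$ (the paper writes $k\le -1$, although $\psi_{-1}$ overlaps $\dot\psi_0$), and you spell out the $k\ge 0$ mixed terms via $\dot P_{[[k]]}$, which the paper only calls ``analogous.''
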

	\begin{proof}
		It follows from $e^{\pm it|\nabla|} = \cos(t|\nabla|)\pm i|\nabla|\f{\sin(t|\nabla|)}{|\nabla|}$,
	$e^{\pm it|\xi|}|\xi|^{-\iota}\in L^1_{loc}(\R^3)$ for $\iota\in\{0,1\}$, Bernstein's inequality and
Corollary \ref{cor:local Linfty-L2 estimate3} that
		\begin{equation*}
			\begin{split}
				&\|P_{-1}|\nabla|^{-\iota}e^{\pm it|\nabla|}Q_j  f\|_{L^\infty(\R^3)} \\
				&\ls
				\sum_{k\le-1}\|\dot{P}_{k}|\nabla|^{-\iota}(\cos(t|\nabla|)\pm i|\nabla|\f{\sin(t\nabla)}{|\nabla|})Q_j  f\|_{L^\infty(\R^3)} \\
				&\ls
				\sum_{k\le-1}2^{(\f52+\delta-\iota) k}(1+2^kt)^{-1} 2^{(1+\delta)j}\| Q_j f\|_{L^2(\R^3)}\\
				&\ls
				(1+t)^{-1}2^{(1+\delta)j}\| Q_j f\|_{L^2(\R^3)}.
			\end{split}
		\end{equation*}
		The  cases for $k\ge 0$ can be shown analogously or even more easily.
	\end{proof}
	\begin{cor}\label{cor:3.8}
		For $t>0$, $\delta\in(0,\f12]$, $\theta\in[0,1]$, integers $\iota\in\{0,1\}$ and $j,\ k\ge -1$, it holds that
		\begin{equation}\label{ineq:cor 3.8}
			\|P_{k}|\nabla|^{-\iota}e^{\pm it|\nabla|} f\|_{L^\infty(\R^3)}
			\ls
			2^{(\f32-\iota) k+\theta(1+\delta)k}(1+t)^{-\theta}\| \w{x}^{\theta(1+2\delta)} P_kf\|_{L^2(\R^3)}.
		\end{equation}
	\end{cor}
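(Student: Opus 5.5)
The plan is to prove the two endpoint cases $\theta=0$ and $\theta=1$ and then interpolate between them.

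\emph{Endpoint $\theta=0$.} The claim is
\[
\|P_k|\nabla|^{-\iota}e^{\pm it|\nabla|}f\|_{L^\infty}\ls 2^{(\f32-\iota)k}\|P_kf\|_{L^2}.
\]
Write $P_kf=P_{[[k]]}P_kf$. For $k\ge0$ this is Bernstein's inequality together with the $L^2$ unitarity of $e^{\pm it|\nabla|}$. For $k=-1$ and $\iota=1$, the symbol $|\xi|^{-1}\psi_{[[-1]]}(\xi)$ belongs to $L^2_\xi$. By Hausdorff--Young and Cauchy--Schwarz,
\[
\|P_{-1}|\nabla|^{-1}e^{\pm it|\nabla|}f\|_{L^\infty}\ls \||\xi|^{-1}\psi_{[[-1]]}\widehat{P_{-1}f}\|_{L^1_\xi}\ls\|P_{-1}f\|_{L^2}.
\]

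\emph{Endpoint $\theta=1$.} The claim is
\[
\|P_k|\nabla|^{-\iota}e^{\pm it|\nabla|}f\|_{L^\infty}\ls 2^{(\f52+\delta-\iota)k}(1+t)^{-1}\|\w{x}^{1+2\delta}P_kf\|_{L^2}.
\]
Since $P_kf=P_{[[k]]}P_kf$, decompose $g:=P_kf=\sum_{j\ge-1}Q_jg$ in physical space and apply Corollary \ref{cor:2.10} with $P_{[[k]]}$ in place of $P_k$. This is legitimate because $P_{[[k]]}$ is a finite sum of $P_l$ with $|l-k|\le1$. Each piece is then bounded by
\[
2^{(\f52+\delta-\iota)k}(1+2^kt)^{-1}\,2^{(1+\delta)j}\|Q_jg\|_{L^2}.
\]
Use $(1+2^kt)^{-1}\ls(1+t)^{-1}$ for $k\ge0$. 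For $k=-1$, Corollary \ref{cor:2.10} already gives $(1+t)^{-1}$, as shown in its proof. Then
\[
\sum_j2^{(1+\delta)j}\|Q_jg\|_{L^2}\le\sum_j2^{-\delta j}\,\|\w{x}^{1+2\delta}Q_jg\|_{L^2}\ls\|\w{x}^{1+2\delta}g\|_{L^2}.
\]
The extra $2\delta$ in the weight is exactly what makes the $j$-sum converge. This gives the $\theta=1$ bound.

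\emph{Interpolation.} For intermediate $\theta$ we need to interpolate between $L^2$ and $L^2(\w{x}^{2(1+2\delta)})$. I would use Stein--Weiss interpolation with change of measure for the linear operator $T:g\mapsto P_{[[k]]}|\nabla|^{-\iota}e^{\pm it|\nabla|}g$ from weighted $L^2$ to $L^\infty$. The norms are
\[
M_0=2^{(\f32-\iota)k},\qquad M_1=2^{(\f52+\delta-\iota)k}(1+t)^{-1},
\]
and interpolation yields the bound $M_0^{1-\theta}M_1^{\theta}$ on $L^2(\w{x}^{2\theta(1+2\delta)})$. This is precisely $2^{(\f32-\iota)k+\theta(1+\delta)k}(1+t)^{-\theta}$. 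Applying it to $g=P_kf$ finishes the proof.

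\emph{Main obstacle.} The delicate point is making the interpolation rigorous. The endpoint bounds must hold for $T$ acting on general $g$, not only on $g=P_kf$. This is ensured by building $P_{[[k]]}$ into $T$ and checking both endpoints for arbitrary $g$ in the respective weighted spaces, which the arguments above do.

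If one prefers to avoid abstract interpolation, there is a direct alternative. Apply the pointwise bounds for each dyadic piece $Q_jg$ and take a geometric mean. The $\theta=0$ bound holds for $Q_jg$ in the form $2^{(\f32-\iota)k}\|Q_jg\|_{L^2}$ by the same Bernstein or Fourier argument. Raising the two bounds to the powers $1-\theta$ and $\theta$ gives
\[
2^{(\f32-\iota)k+\theta(1+\delta)k}(1+t)^{-\theta}\,2^{\theta(1+\delta)j}\|Q_jg\|_{L^2}.
\]
The sum over $j$ is then controlled by $\|\w{x}^{\theta(1+2\delta)}g\|_{L^2}$ via the factor $2^{-\theta\delta j}$. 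This works when $\theta>0$, and $\theta=0$ is handled separately as above.
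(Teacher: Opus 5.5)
Your proof is correct and rests on the same two ingredients as the paper: the $\theta=0$ Bernstein-type bound and the localized estimate \eqref{ineq:local Linfty-L2 estimate7} of Corollary \ref{cor:2.10}. The difference is where the interpolation happens.

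The paper's proof is exactly your fallback. For each piece $Q_jf$ it takes the geometric mean of $2^{(\f32-\iota)k}\|Q_jf\|_{L^2}$ and \eqref{ineq:local Linfty-L2 estimate7}, which gives $2^{(\f32-\iota)k+\theta(1+\delta)(j+k)}(1+t)^{-\theta}\|Q_jf\|_{L^2}$. Only after that does it sum over $j$. Your main route reverses the order: you sum over $j$ at the endpoint $\theta=1$ to get an operator bound $L^2(\w{x}^{2(1+2\delta)})\to L^\infty$, and then interpolate operator norms by Stein--Weiss.

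What each route buys:
\begin{itemize}
\item The paper's route is fully elementary. However, its $j$-sum carries a constant of order $(\theta\delta)^{-1}$, which degenerates as $\theta\to0$, so $\theta=0$ has to be handled separately, as you note.
\item Your Stein--Weiss route gives a constant uniform in $\theta\in[0,1]$. The cost is quoting interpolation with change of measure, which is harmless here since the domain exponent is $2<\infty$ and the target is $L^\infty$ at both ends.
\end{itemize}

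Two smaller points favor your write-up. Writing $P_kf=P_{[[k]]}P_kf$ explicitly is what actually puts $P_kf$ on the right-hand side; the paper's proof glosses over this. Your summation uses $2^{j}\ls\w{x}$ on $\supp\psi_j$, which works for every weight exponent. The paper instead cites Lemma \ref{lem:Qj prop}, whose hypothesis $\beta<\f32$ fails when $\theta(1+2\delta)\ge\f32$. This is cosmetic, since only the trivial direction of that lemma is needed.
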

	\begin{proof}
		Due to $e^{\pm it|\xi|}|\xi|^{-\iota}\in L^1_{loc}(\R^3)$ for $\iota\in\{0,1\}$, it follows from Bernstein's inequality that
		\begin{equation*}
			\begin{split}
				\|P_{-1}|\nabla|^{-\iota}e^{\pm it|\nabla|}Q_j  f\|_{L^\infty(\R^3)}
				\lesssim&
				\sum_{k\le0}\|\dot{P}_{k}|\nabla|^{-\iota}e^{\pm it|\nabla|}Q_j  f\|_{L^\infty(\R^3)}\\
				\ls&
				\sum_{k\le0}2^{(\f32-\iota) k}\| Q_j f\|_{L^2(\R^3)}\\
				\ls&
				\| Q_j f\|_{L^2(\R^3)}.
			\end{split}
		\end{equation*}
		Therefore, for integer $k\ge -1$,
		\begin{equation}\label{YHCCC-40}
			\begin{split}
				\|P_{k}|\nabla|^{-\iota}e^{\pm it|\nabla|}Q_j  f\|_{L^\infty(\R^3)}
				\ls
				2^{(\f32-\iota) k}\| Q_j f\|_{L^2(\R^3)}.
			\end{split}
		\end{equation}
		Interpolating \eqref{YHCCC-40} with \eqref{ineq:local Linfty-L2 estimate7} yields that for $\theta\in[0,1]$ and small $\delta>0$,
		\begin{equation}\label{6.32}
			\|P_{k}|\nabla|^{-\iota}e^{\pm it|\nabla|}Q_j  f\|_{L^\infty(\R^3)}
			\ls
			2^{(\f32-\iota) k+\theta(1+\delta)(j+k)}(1+t)^{-\theta}\| Q_j f\|_{L^2(\R^3)}.
		\end{equation}
		By Lemma \ref{lem:Qj prop}, summing over $j$ in \eqref{6.32} yields \eqref{ineq:cor 3.8}.
	\end{proof}
	\begin{lemma}\label{lem:Technical lemma 2}
			For $t>0$, $\delta\in(0,\f14]$, integer $\iota\in\{0,1\}$, $j,\ k,\ l\in\Z$, one has
			\begin{equation}\label{2.72}
				2^{\f12 j}\|\q_j \dot{P}_k |\nabla|^{-\iota}e^{\pm it|\nabla|}\q_l f\|_{L^\infty(\R^3)}
				\ls
				2^{(2-\iota+\delta) k+(1+\delta)l}(1+2^kt)^{-\f12}\|\q_{l} f\|_{L^2(\R^3)}.
			\end{equation}
			Meanwhile, \eqref{2.72} remains valid when $\pk$ is replaced by $P_k$ or $\q_l$ is replaced by $Q_l$ for $k,\ l\ge -1$.
	\end{lemma}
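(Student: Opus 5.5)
By scaling, it suffices to treat $k=0$; we then split according to the relative size of $2^j$ and $1+t$. Scaling $x\mapsto 2^{-k}x$ converts $\q_j,\q_l$ into $\q_{j+k},\q_{l+k}$ and $t$ into $2^kt$. The factor $2^{\f12 j}$ on the left then produces exactly the extra $2^{\f12 k}$ relative to the $L^\infty$ bounds of Corollary \ref{cor:2.10}. So we aim to prove, at frequency $k=0$,
\begin{equation*}
2^{\f12 j}\|\q_j \dot{P}_0 |\nabla|^{-\iota}e^{\pm it|\nabla|}\q_l f\|_{L^\infty}\ls 2^{(1+\delta)l}(1+t)^{-\f12}\|\q_l f\|_{L^2},
\end{equation*}
with a harmless loss $2^{\delta k}$ absorbed when rescaling back.

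\textbf{Case $2^j\ls 1+t$.} We use Corollary \ref{cor:2.10}, i.e. \eqref{ineq:local Linfty-L2 estimate7} with $\pk$ and $\q_l$. This gives the bound $(1+t)^{-1}2^{(1+\delta)l}\|\q_lf\|_{L^2}$ (at $k=0$). Then
\begin{equation*}
2^{\f12 j}(1+t)^{-1}\ls (1+t)^{-\f12},
\end{equation*}
which is the desired estimate.

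\textbf{Case $2^j\gg 1+t$.} Here we localize in space.

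If $2^l\ll 2^j$, the point $x\in\supp\dot\psi_j$ is at distance $\approx 2^j\gg t$ from $\supp \q_l f$. We use the kernel representation \eqref{def:KklM} with $M=0$ and Lemma \ref{lem:estimate of KklM}(4): for $|x-y|\ge 2t$ the kernel decays like $|x-y|^{-N}\approx 2^{-jN}$. Cauchy–Schwarz over $|y|\le 2^{l+1}$ then gives the bound $2^{-jN}2^{\f32 l}\|\q_l f\|_{L^2}$. Multiplying by $2^{\f12 j}$, this is much better than needed, since $2^{\f32 l}2^{-jN}\le 2^{(1+\delta)l}2^{-j}$ and $2^{-j}\ls(1+t)^{-1}$.

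If $2^l\gtrsim 2^j\gg 1+t$, we use \eqref{ineq:local Linfty-L2 estimate7} again. The gain is $2^{\f12 j}(1+t)^{-1}\le 2^{\f12 l}(1+t)^{-1}$, but this costs $2^{\f12 l}$, which is not available on the right-hand side. Instead, we use the trivial Bernstein bound \eqref{YHCCC-40}, $\|\dot P_0 e^{\pm it|\nabla|}\q_lf\|_{L^\infty}\ls\|\q_l f\|_{L^2}$. Then
\begin{equation*}
2^{\f12 j}\|\q_lf\|_{L^2}\le 2^{\f12 l}\|\q_lf\|_{L^2},
\end{equation*}
and since $2^{l}\gtrsim 2^{j}\gg 1+t$ we have $2^{\f12 l}\ls 2^{(1+\delta)l}(1+t)^{-\f12}$. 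This closes the case.

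\textbf{Extensions.} The variants with $P_k$ or $Q_l$ for $k,l\ge-1$ follow the same way. For the low frequency $P_{-1}$, we sum the homogeneous bounds over $k\le 0$; the sum converges because $2-\iota+\delta>0$.

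\textbf{Main obstacle.} The delicate point is the region $|x|\approx 2^j\approx t$ with $2^l\ll t$. There the interpolation between decay and the trivial bound must give exactly the half-power $(1+2^kt)^{-1/2}$. This is handled by the first case, since there $2^{\f12 j}\approx (1+t)^{\f12}$. One must also verify that the scaling step does not lose more than $2^{\delta k}$.
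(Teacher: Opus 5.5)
Your proof is correct. It uses the same ingredients as the paper: reduction to $k=0$ by scaling, Corollary~\ref{cor:2.10}, the far-field kernel decay of Lemma~\ref{lem:estimate of KklM}(4), and Bernstein. The difference is the case split, which in your version removes interpolation entirely.

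The paper splits at $2^j\approx t$ rather than at $2^j\approx 1+t$. This has two consequences for its argument:
\begin{itemize}
\item Its Case~1 ($j\ge\log_2t+100$, $l\le j-3$) also contains scales with $2^j\lesssim 1$ when $t$ is small. There kernel decay does not produce a factor $2^{-j/2}\lesssim(1+t)^{-1/2}$. So the paper only bounds $2^j\|I\|_{L^\infty}$ via $|K_0^\iota|\lesssim|x-y|^{-3/2+\delta}$ and then interpolates with Corollary~\ref{cor:2.10}.
\item Its Case~3 ($l\ge j-3$) does not separate $2^j\lesssim 1+t$ from $2^j\gg 1+t$. It therefore interpolates Corollary~\ref{cor:2.10}, applied with $2\delta$ in place of $\delta$, against the Bernstein bound. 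This is exactly where the hypothesis $\delta\le\frac14$ is needed.
\end{itemize}

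You instead put every $j$ with $2^j\lesssim 1+t$ into your first case. Each remaining region then closes with a single estimate. In the far field, $2^{-j/2}\lesssim(1+t)^{-1/2}$ comes for free. For $2^l\gtrsim 2^j\gg 1+t$, Bernstein alone suffices, since $2^{l/2}\lesssim 2^{(1+\delta)l}(1+t)^{-1/2}$. As a by-product, your argument works for all $\delta\in(0,\frac12]$. The paper's version mirrors the three-case structure it also uses in Lemma~\ref{lem:Technical lemma}; yours is more economical.

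One small imprecision concerns the passage to $P_{-1}$. The relevant convergence condition is not $2-\iota+\delta>0$. Using $(1+2^kt)^{-1/2}\le 2^{-k/2}(1+t)^{-1/2}$ for $k\le 0$, you need $\sum_{k\le -1}2^{(\frac32-\iota+\delta)k}<\infty$. Equivalently, the exponent $2-\iota+\delta$ must exceed $\frac12$, which holds for $\iota\in\{0,1\}$.
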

	\begin{proof}
		For $j,\ k,\ l\in\Z$, one has
		\begin{equation*}
			\begin{split}
				\q_j\dot{P}_k |\nabla|^{-\iota}e^{\pm it|\nabla|}\q_l f(x)
				=&(2\pi)^{-3}\int_{\R^3}\dot{\psi}_j(x)K^\iota_k(t,x-y)\dot{\psi}_{[[l]]}(y)\q_lf(y)dy\\
				=&(2\pi)^{-3} I_{j,k,l}^\iota(t,x),
			\end{split}
		\end{equation*}
		where $K^\iota_k(t,x) = K^{\iota,0}_k(t,x)$ for $k \in \Z$, $K^{\iota,0}_k$ is defined in \eqref{def:KklM}.
We only treat the case of $k = 0$ since the general case of $k\not=0$ follows by a standard rescaling argument.
\vskip 0.1 true cm

	\noindent\textbf{Case 1.} $j\ge \log_2t+100$ and $l\le j-3$

\vskip 0.1 true cm

		In this case, for $x\in\supp \dot{\psi}_j$ and $y\in\supp \dot{\psi}_{[[l]]}$, one has
		\begin{equation*}
			|x-y|\ge|x|-|y|\ge {\f{5}{8}}\cdot2^j-\f{16}{5}\cdot2^l\ge2^{j-4}>2t,
		\end{equation*}
		where $\supp\dot{\psi}_0\subseteq [\f{5}{8},\f{8}{5}]$ and $\supp\dot{\psi}_{[[0]]}\subseteq[\f{5}{16},\f{16}{5}]$ are used.
Owing to Lemma \ref{lem:estimate of KklM} (4) with $M=0$ and interpolation method,
one can derive that for $k=0$,
		\begin{equation*}
			|K^\iota_0(t,x-y)|\ls |x-y|^{-\f32+\delta}\ls 2^{(-\f32+\delta)j}.
		\end{equation*}
		Therefore, by H\"{o}lder's inequality and the assumption of $l\le j-3$, we have
		\begin{equation}\label{2.76}
			\begin{split}
				2^{j}\|I_{j,0,l}^\iota(t,x)\|_{L^\infty(\R^3)}
				\ls&
				2^{j}\|\dot{\psi}_j(x)K^\iota_0(t,x-y)\dot{\psi}_{[[l]]}(y)\|_{L^\infty_xL^2_y}\|\q_lf\|_{L^2(\R^3)}\\
				\ls&
				2^{(-\f12+\delta)j+\f32 l}\|\q_lf\|_{L^2(\R^3)}\\
				\ls&
				2^{(1+\delta) l}\|\q_lf\|_{L^2(\R^3)}.
			\end{split}
		\end{equation}
		On the other hand, by Corollary \ref{cor:2.10}, one has
		\begin{equation}\label{2.77}
			\|I_{j,0,l}^\iota(t,x)\|_{L^\infty(\R^3)}
			\ls
			\|\dot{P}_{0}|\nabla|^{-\iota} e^{\pm it|\nabla|}\q_lf\|_{L^\infty(\R^3)}
			\ls
			2^{ (1+\delta) l}(1+t)^{-1}\|\q_lf\|_{L^2(\R^3)}.
		\end{equation}
		Interpolating \eqref{2.77} with \eqref{2.76} yields
		\begin{equation}\label{2.78}
			2^{\f12 j}\|I_{j,0,l}^\iota(t,x)\|_{L^\infty(\R^3)}\ls  2^{ (1+\delta)l}(1+t)^{-\f12}\|\q_lf\|_{L^2(\R^3)}.
		\end{equation}
\vskip 0.1 true cm
		\noindent\textbf{Case 2.} $j\le \log_2t+100$ and $l\le j-3$
\vskip 0.1 true cm
		In this case, by Corollary \ref{cor:2.10}, one can obtain
		\begin{equation}\label{2.79}
			\begin{split}
				2^{\f12 j}\|I_{j,0,l}^\iota(t,x)\|_{L^\infty(\R^3)}
				\ls&	2^{\f12 j}\|\dot{P}_0 |\nabla|^{-\iota} e^{\pm it|\nabla|} \q_lf\|_{L^\infty(\R^3)}\\
				\ls& (1+ t)^{\f12}2^{(1+\delta)l}(1+ t)^{-1}\|\q_lf\|_{L^2(\R^3)}\\
				\ls &2^{(1+\delta)l}(1+ t)^{-\f12}\|\q_lf\|_{L^2(\R^3)}.
			\end{split}
		\end{equation}
\vskip 0.1 true cm

	\noindent	\textbf{Case 3.} $l\ge j-3$

\vskip 0.1 true cm
		It follows from Corollary \ref{cor:2.10} with $2\delta$ instead of $\delta$ that
		\begin{equation}\label{2.80}
			2^{j}\|I_{j,0,l}^\iota(t,x)\|_{L^\infty(\R^3)}
			\ls	2^{j}\|\dot{P}_0 |\nabla|^{-\iota} e^{\pm it|\nabla|} \q_lf\|_{L^\infty(\R^3)}
			\ls 2^{(2+2\delta)l }(1+t)^{-1}\|\q_lf\|_{L^2(\R^3)}.
		\end{equation}
		On the other hand, it follows from Bernstein's inequality that
		\begin{equation}\label{2.81}
			\|I_{j,0,l}^\iota(t,x)\|_{L^\infty(\R^3)}\ls	\|\dot{P}_0 |\nabla|^{-\iota} e^{\pm it|\nabla|} \q_lf\|_{L^\infty(\R^3)} \ls \|\q_lf\|_{L^2(\R^3)}.
		\end{equation}
		Interpolating \eqref{2.80} with \eqref{2.81} yields
		\begin{equation}\label{2.82}
			2^{\f12 j}\|I_{j,0,l}^\iota(t,x)\|_{L^\infty(\R^3)}
			\ls
			2^{(1+\delta)l}(1+ t)^{-\f12}\|\q_lf\|_{L^2(\R^3)}.
		\end{equation}
		Collecting \eqref{2.78}, \eqref{2.79} and \eqref{2.82}, one has that for $j,\ l\in \Z$,
		\begin{equation*}
			2^{\f12 j}\|\q_j \dot{P}_0 |\nabla|^{-\iota}e^{\pm it|\nabla|}\q_l f\|_{L^\infty(\R^3)}
			\ls
			2^{(1+\delta)l}(1+t)^{-\f12}\|\q_l f\|_{L^2(\R^3)}.
		\end{equation*}
		For $k\not=0$, set $f_k(x) = f(2^{-k}x)$. Then
		\begin{equation*}
			\begin{split}
				&2^{\f12 j}\|\q_j \dot{P}_k |\nabla|^{-\iota}e^{\pm it|\nabla|}\q_l f\|_{L^\infty(\R^3)}\\
				&=(2\pi)^{-3}\cdot2^{\f12 j}\|\int_{\R^3}\dot{\psi}_j(x)K^\iota_k(t,x-y)\dot{\psi}_{l}(y)f(y)dy\|_{L^\infty(\R^3)}\\
				&=(2\pi)^{-3}\cdot2^{-\iota k+\f12 j}\|\int_{\R^3}\dot{\psi}_j(x)K^\iota_0(2^kt,2^k(x-y))\dot{\psi}_{l}(y)f(y)d(2^ky)\|_{L^\infty(\R^3)}\\
				&=2^{-(\iota+\f12) k}\cdot2^{\f12(j+k)}\|\q_{j+k}\dot{P}_0|\nabla|^{-\iota}e^{\pm i2^kt|\nabla|}\q_{l+k}f_k\|_{L^\infty(\R^3)} \\
				&\ls 2^{-(\iota+\f12) k}\cdot2^{(1+\delta)(l+k)}(1+2^kt)^{-\f12}\|\q_{l+k} f_k\|_{L^2(\R^3)}\\
				&\ls 2^{(2-\iota+\delta) k+(1+\delta)l}(1+2^kt)^{-\f12}\|\q_{l} f\|_{L^2(\R^3)}.
			\end{split}
		\end{equation*}
		Note that $e^{\pm it|\xi|}|\xi|^{-\iota}\in L^1_{loc}(\R^3)$ for $\iota\in\{0,1\}$. Summing over $k, l \le -1$ yields the corresponding inhomogeneous estimates. Then the proof of
Lemma \ref{lem:Technical lemma 2} is completed.
	\end{proof}

	\subsection{Weighted Strichartz estimates}\label{sec:Weighted Strichartz estimate}

	To prove Theorem \ref{thm:Weighted Strichartz estimate}, we now establish a series of localized and weighted Strichartz estimates
for the operators $P_{k}e^{\pm it|\nabla|}Q_jf$ and $P_{k}|\nabla|^{-1}e^{\pm it|\nabla|}Q_jf$, which include the endpoint
and non-endpoint cases, respectively.

\subsubsection{The endpoint case}

	\begin{lemma}\label{lem:Localized Strichartz estimate}
		For $\beta_1\in(0,1)$, $\beta_2\in(\beta_1,\min\{\f32\beta_1,1\})$, $t\ge t_0\ge 0$,
integers $\iota\in\{0,1\}$ and $j,\ k\ge -1$, one has
		\begin{equation}\label{local:Stric}
			\|(1+2^ks)^{\beta_1/2}P_k|\nabla|^{-\iota} e^{\pm is|\nabla|}Q_jf\|_{L^2([t_0,t];L^\infty(\R^3))}
			\ls2^{(1-\iota+\beta_2) k+\beta_2j}\|Q_jf\|_{L^2(\R^3)}.
		\end{equation}
	\end{lemma}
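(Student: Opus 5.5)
The plan is to reduce to $k=0$ by the rescaling used in the proof of Lemma \ref{lem:Technical lemma 2}, then decompose the time interval dyadically and treat short and long times separately. Under $x\mapsto 2^kx$ the weight $(1+2^ks)^{\beta_1/2}$ becomes $(1+s)^{\beta_1/2}$, and $Q_j$ becomes a cutoff at scale $2^{j+k}$ (using $\q$ in place of $Q$). So the target reduces to
\[
\|(1+s)^{\beta_1/2}\dot P_0|\nabla|^{-\iota}e^{\pm is|\nabla|}\q_jf\|_{L^2([t_0,t];L^\infty)}\ls 2^{\beta_2 j}\|\q_jf\|_{L^2},
\]
to be proved uniformly for $j\in\Z$. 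For $j\le 0$ it is harmless to replace $2^{\beta_2 j}$ by the weaker factor $\w{2^j}^{\beta_2}$ when recombining. Undoing the scaling produces $2^{(1-\iota)k}\cdot2^{\beta_2(j+k)}$, which is exactly the right-hand side of \eqref{local:Stric}. The case $P_{-1}$ is then obtained by summing the homogeneous pieces $\dot P_k$, $k\le 0$. This sum converges because the powers of $2^k$ are positive, and because $e^{\pm it|\xi|}|\xi|^{-\iota}\in L^1_{loc}$, exactly as in Corollary \ref{cor:2.10}.

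I split $[t_0,t]$ into $[0,2^{j}]$ and dyadic intervals $I_m=[2^m,2^{m+1}]$ with $m>j$. On $[0,2^{j}]$ (further split dyadically, $m\le j$) the weight is at most $2^{m\beta_1/2}$. There I apply the endpoint Strichartz estimate \eqref{ineq:strichartz L2Linfty}, or Lemma \ref{lem:strichartz L2Linfty2} when $\iota=1$; it costs only $\ln(2+2^m)$ on an interval of length $2^m$. Summing over $m\le j$ gives $\ls 2^{j\beta_1/2}\,\w{j}\,\|\q_jf\|_{L^2}$. This is $\ls 2^{\beta_2 j}\|\q_j f\|_{L^2}$ because $\beta_2>\beta_1>\beta_1/2$.

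On $I_m$ with $m>j$ I combine two bounds. The first is the same log-loss Strichartz bound, of size $\w{m}$. The second comes from the pointwise Huygens-type estimate of Corollary \ref{cor:2.10}, namely $\|\dot P_0|\nabla|^{-\iota}e^{\pm is|\nabla|}\q_jf\|_{L^\infty}\ls (1+s)^{-1}2^{(1+\delta)j}\|\q_jf\|_{L^2}$. Taking the $L^2$ norm over $I_m$ turns it into $2^{-m/2+(1+\delta)j}\|\q_jf\|_{L^2}$. Both bounds hold for the same quantity, so their geometric mean with a parameter $\theta\in(0,1)$ is also a bound. Multiplying by the weight $2^{m\beta_1/2}$ gives
\[
\w{m}^{1-\theta}\,2^{m(\beta_1-\theta)/2}\,2^{\theta(1+\delta)j}\|\q_jf\|_{L^2}.
\]
I choose $\theta$ slightly larger than $\beta_1$ and $\delta$ small. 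Then the geometric series over $m>j$ converges and is dominated by its first term, $\w{j}\,2^{j(\beta_1/2+\theta/2+\theta\delta)}$. Since $\beta_2>\beta_1$, I can take $\theta$ and $\delta$ so that $\beta_1/2+\theta/2+\theta\delta<\beta_2$, and the polynomial factor $\w{j}$ is then absorbed by the remaining gap. Finally I sum the two regimes in $\ell^1$ over $m$ (Minkowski in $L^2_t$).

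The main obstacle is this long-time regime $s\gg 2^j$. Using the pointwise decay $(1+s)^{-1}2^{(1+\delta)j}$ alone (that is, $\theta=1$) gives the exponent $(1+\beta_1)/2+\delta$ in $j$, which exceeds $\beta_2$ when $\beta_1$ is small. The interpolation with the merely logarithmically lossy endpoint Strichartz estimate on each dyadic time block is what brings the exponent down to about $\beta_1$. I therefore expect the bookkeeping of the parameters $\theta,\delta$, together with the harmless but persistent $\ln(2+2^m)$ factors, to be the delicate part.
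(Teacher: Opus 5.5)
Your argument is correct in its core, but it follows a different route from the paper. The paper neither rescales nor splits time. It uses the \emph{non-endpoint} estimate \eqref{ineq:Strichartz'} in $L^p_tL^\infty_x$ with $p>2$, which has no logarithmic loss, together with the decay bound \eqref{ineq:local Linfty-L2 estimate7} integrated in $L^p_t$ against $(1+2^ks)^{1/2}$. It interpolates these two $L^p_tL^\infty_x$ bounds to obtain the weight $(1+2^ks)^{\beta_3/2}$, with $\beta_3=\beta_2/(1+\delta)$ and $\delta=(\beta_2/\beta_1-1)/2$. It then returns to $L^2_t$ by H\"older, placing the surplus weight $(1+2^ks)^{(\beta_1-\beta_3)/2}$ in $L^{2p/(p-2)}_t$ with $p$ chosen close to $2$. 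Both inputs are stated for all $k\ge-1$, so $P_{-1}$ costs nothing extra. You instead interpolate blockwise on dyadic time intervals between the log-lossy endpoint estimate and the same decay bound, then sum a geometric series, absorbing the logarithms into the strict gap $\beta_2>\beta_1$. Your version is more transparent and needs only $\beta_2>\beta_1$ with $\delta$ free, not $\beta_2<\frac32\beta_1$ or the paper's coupling of $\delta$ to $\beta_2/\beta_1$. The price is the dyadic-time and logarithm bookkeeping, which the paper avoids.

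One step needs repair: the reduction of $P_{-1}$ to rescaled pieces $\dot P_k$, $k\le0$. The weight attached to $P_{-1}$ is $(1+2^{-1}s)^{\beta_1/2}$, while the rescaled estimate for $\dot P_k$ carries $(1+2^ks)^{\beta_1/2}$, so each piece loses $2^{-k\beta_1/2}$. If you then use the ``weaker factor'' $\w{2^{j+k}}^{\beta_2}\approx1$ for $j+k\le0$, the pieces have size $2^{(1-\iota-\beta_1/2)k}$, and the sum over $k\le0$ diverges for $\iota=1$. So that replacement is not harmless. Separately, for $j<0$ your dyadic splitting of $[0,2^j]$ with weight $2^{m\beta_1/2}$ does not make sense: the weight is $\approx1$ there and there are infinitely many blocks.

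The fix is immediate. For $j\le0$ in the normalized problem, use Corollary~\ref{cor:2.10} alone on the whole half-line. This gives $\|(1+s)^{\beta_1/2-1}\|_{L^2(\R_+)}\,2^{(1+\delta)j}\ls 2^{(1+\delta)j}$, finite since $\beta_1<1$. That is better than $2^{\beta_2 j}$ and suffices for the $k$-sum despite the weight loss. Alternatively, skip the rescaling for $k=-1$ and run your two-regime argument directly on $P_{-1}$. Lemma~\ref{lem:strichartz L2Linfty}, Lemma~\ref{lem:strichartz L2Linfty2} and Corollary~\ref{cor:2.10} are all stated for $P_{-1}$ and $j\ge-1$, and the $\ln$ (rather than $\ln^{1/2}$) loss in Lemma~\ref{lem:strichartz L2Linfty2} is absorbed in the same way.
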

	\begin{proof}
		\eqref{ineq:local Linfty-L2 estimate7} ensures that for $p\in(2,\infty)$, $\delta=\f{\beta_2/\beta_1-1}{2}\in(0,\f14]$,
integers $j,\ k\ge -1$,
		\begin{equation}\label{2.65}
			\begin{aligned}
				&\|(1+2^ks)^{\f12}P_k|\nabla|^{-\iota}e^{\pm is|\nabla|}Q_jf\|_{L^p([t_0,t];L^\infty)}\\
				&\ls
				2^{(\f52+\delta-\iota) k+(1+\delta)j}\|(1+2^ks)^{-\f12}\|_{L^p[t_0,t]}\| Q_j f\|_{L^2(\R^3)}\\
				&\ls
				2^{(\f52+\delta-\iota-\f1p) k+(1+\delta)j}\|Q_jf\|_{L^2(\R^3)}.
			\end{aligned}
		\end{equation}
		Owing to Bernstein's inequality and \eqref{ineq:Strichartz'}, we have that for $k\in\Z$,
		\begin{equation*}
			\|\pk |\nabla|^{-\iota} e^{\pm is|\nabla|}Q_jf\|_{L^p([t_0,t];L^\infty)}
			\ls{2^{(\f32-\iota-\f1p) k}}\| Q_jf\|_{L^2(\R^3)}.
		\end{equation*}
		Summing over $k\le-1$ yields that for $k\ge -1$,
		\begin{equation}\label{2.66}
			\|P_k|\nabla|^{-\iota} e^{\pm is|\nabla|}Q_jf\|_{L^p([t_0,t];L^\infty)}
			\ls{2^{(\f32-\iota-\f1p) k}}\|Q_jf\|_{L^2(\R^3)}.
		\end{equation}
Interpolating \eqref{2.66} with \eqref{2.65} yields
\begin{equation}\label{YHCCC-30}
\begin{split}
&\|(1+2^ks)^{\beta_3/2}P_k|\nabla|^{-\iota}e^{\pm is|\nabla|}Q_jf\|_{L^p([t_0,t];L^\infty)}\\
&\ls{2^{[(\f52+\delta-\iota-\f1p) k+(1+\delta)j]\beta_3 +(\f32-\iota-\f1p)k(1-\beta_3)}}\|Q_jf\|_{L^2(\R^3)},
\end{split}
\end{equation}
where $\beta_3 = \f{\beta_2}{1+\delta} = \f{2\beta_1\beta_2}{\beta_1+\beta_2}\in(\beta_1,\beta_2)\subset(0,1)$.
Choosing $p=\frac{2}{1-(\beta_3-\beta_1)/2}\in(2,\frac{2}{1-(\beta_3-\beta_1)})$ in \eqref{YHCCC-30},
by H\"{o}lder's inequality, then we have
		\begin{equation*}
			\begin{split}
				&\|(1+2^ks)^{\beta_1/2}P_k|\nabla|^{-\iota}e^{\pm is|\nabla|}Q_jf\|_{L^2([t_0,t];L^\infty)}\\
				&\ls\|(1+2^ks)^{(\beta_1-\beta_3)/2}\|_{L^\frac{2p}{p-2}([t_0,t])}
				\|(1+2^ks)^{\beta_3/2}P_k|\nabla|^{-\iota}e^{\pm is|\nabla|}Q_jf\|_{L^p([t_0,t];L^\infty)}\\
				&\ls2^{(1-\iota+(1+\delta)\beta_3) k+(1+\delta)\beta_3j}\|Q_jf\|_{L^2(\R^3)}\\
				&\ls2^{(1-\iota+\beta_2) k+\beta_2j}\|Q_jf\|_{L^2(\R^3)},
			\end{split}
		\end{equation*}
		which leads to \eqref{local:Stric}.
	\end{proof}
	\begin{lemma}\label{lem:Localized Strichartz estimate 2}
		For $\beta_1\in(0,1)$, $\beta_2\in(\beta_1,\min\{\f32\beta_1,1\})$, $t\ge t_0\ge 0$, $j\in\Z$, integers $\iota\in\{0,1\}$ and $k,\ l\ge -1$, one has
		\begin{equation}\label{local:Stric 3}
			2^{\f12 \beta_1 (j+k)}\|\q_jP_{k}|\nabla|^{-\iota}e^{\pm is|\nabla|}Q_lf\|_{L^2([t_0,t];L^\infty(\R^3))}
			\ls2^{(1-\iota+\beta_2) k+\beta_2l}\|Q_lf\|_{L^2(\R^3)}.
		\end{equation}
	\end{lemma}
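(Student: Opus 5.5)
The plan is to compare the spatial weight $2^{\frac12\beta_1(j+k)}$ with the time weight $(1+2^ks)^{\beta_1/2}$ already controlled in Lemma \ref{lem:Localized Strichartz estimate}, and to handle the region where the spatial weight wins by the pointwise localized bound of Lemma \ref{lem:Technical lemma 2}. I will split according to the size of $2^j$ relative to $s$ and to $2^l$.

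\textbf{Region $2^j\lesssim 1+s$ (and $2^{j+k}\lesssim 1+2^ks$).} Here $2^{\frac12\beta_1(j+k)}\lesssim (1+2^ks)^{\beta_1/2}$ pointwise in $s$. More precisely, I will use $2^{j+k}\le 2^{k}(1+s)2^{C}$, so the weight is bounded by $(1+2^ks)^{\beta_1/2}$ when $k\ge -1$ (for $k=-1$ the two are comparable since $2^k\approx 1$). Since $\|\q_j g\|_{L^\infty}\le\|g\|_{L^\infty}$, the estimate \eqref{local:Stric} of Lemma \ref{lem:Localized Strichartz estimate} gives the desired bound $2^{(1-\iota+\beta_2)k+\beta_2 l}\|Q_lf\|_{L^2}$ on this portion of the time interval.

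\textbf{Region $2^j\gg 1+s$.} Then $|x|\approx 2^j\gg s$ on $\supp\dot\psi_j$. If moreover $l\le j-3$, the kernel is evaluated off the light cone with $|x-y|\gtrsim 2^j\ge 2s$. Lemma \ref{lem:estimate of KklM}(4) (with arbitrary $N$, after the rescaling to $k=0$) gives rapid decay $(2^{k+j})^{-N}$. The $L^\infty$ norm is then bounded by $2^{Ck}2^{-N(j+k)}2^{\frac32 l}\|Q_lf\|_{L^2}$ uniformly in $s$. The time integral runs only over $s\lesssim 2^j$, so the $L^2_s$ norm costs $2^{j/2}$. Taking $N$ large absorbs $2^{\frac12\beta_1(j+k)}2^{j/2}2^{\frac32 l}$ into $2^{\beta_2 l}$ using $l\le j$, with a slight loss in $k$ that is absorbed for $k$ large by decay. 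If instead $l\ge j-3$, then $2^{\frac12\beta_1 j}\lesssim 2^{\frac12\beta_1 l}\le 2^{\beta_2 l}$. I will bound the sup over $x$ by Bernstein/Strichartz \eqref{ineq:Strichartz'} restricted to $s\lesssim 2^j$. Alternatively, I can apply Lemma \ref{lem:Technical lemma 2}, whose factor $2^{j/2}$ paired with $(1+2^ks)^{-1/2}$ is square-integrable up to a logarithm, and interpolate it with the trivial bound to trade $2^{j/2}$ for $2^{\beta_1 j/2}$.

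The main obstacle is the near-diagonal case $l\ge j-3$ with $2^j\gg s$. There the $k$-power must not exceed $(1-\iota+\beta_2)k$ and no logarithm in $t$ may appear. For this I would interpolate, with parameter $\beta_1$, between Lemma \ref{lem:Technical lemma 2} (which gives $2^{\frac12 j}$ and decay $(1+2^ks)^{-1/2}$) and the plain bound $\|P_k|\nabla|^{-\iota}e^{\pm is|\nabla|}Q_lf\|_{L^\infty}\lesssim 2^{(\frac32-\iota)k}\|Q_lf\|_{L^2}$. This yields decay $(1+2^ks)^{-\beta_1/2}\cdot(1+2^ks)^{0}$, and the $L^2_s$ integral is then handled as in the proof of Lemma \ref{lem:Localized Strichartz estimate}. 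That proof uses Hölder in time with the exponent $p$ built from $\beta_3\in(\beta_1,\beta_2)$, so the gap $\beta_2>\beta_1$ pays for summability in $s$ and for the extra $\delta$-losses.
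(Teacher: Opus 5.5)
Your time-splitting route can be made into a valid proof, but as written it has two defects, and the second is a genuine gap.

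\textbf{First defect: the region split.} Your first region only works if it is defined by the parenthetical condition $2^{j+k}\lesssim 1+2^ks$. That condition does not follow from $2^j\lesssim 1+s$. From $2^j\lesssim 1+s$ you only get $2^{j+k}\lesssim 2^k(1+s)$, which fails to be $\lesssim 1+2^ks$ when $s<2^{-k}$. For example, take $j=0$, $s=0$ and $k$ large: the weight is $2^{\beta_1k/2}$ while $(1+2^ks)^{\beta_1/2}=1$. So Lemma \ref{lem:Localized Strichartz estimate} does not control the region as you describe it.

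Split instead at $2^{j+k}=C(1+2^ks)$. The complementary set $I=\{s\in[t_0,t]:\ C(1+2^ks)<2^{j+k}\}$ satisfies $|I|\le 2^j/C$ and $2^{j+k}>C$. This set, not $\{2^j\gg 1+s\}$, is what your second region must be. On $I$ the off-diagonal case $l\le j-3$ works as you say, since for $C$ large $|x-y|\ge 0.4\cdot 2^j\ge 2s$. One correction: for $k=-1$ the kernel of $P_{-1}|\nabla|^{-\iota}e^{\pm is|\nabla|}$ decays only like $|x-y|^{-(3-\iota)}$, not rapidly. This still suffices, because $2^{\frac12\beta_1 j}2^{\frac12 j}2^{-2j}2^{\frac32 l}\lesssim 2^{\frac12\beta_1 l}$ for $j\ge l+3$.

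\textbf{Second defect (the gap): the near-diagonal case.} For $l\ge j-3$ on $I$, which you single out as the main obstacle, the method you commit to does not close. Any pointwise-in-time bound there, whether plain Bernstein or \eqref{2.72} interpolated with the trivial bound $2^{(\frac32-\iota)k}\|Q_lf\|_{L^2}$ at parameter $\beta_1$, effectively pays $|I|^{1/2}2^{\frac32k}$. The interpolated version leaves only the decay $(1+2^ks)^{-\beta_1/2}$, and $\|(1+2^ks)^{-\beta_1/2}\|_{L^2(I)}\approx 2^{-\frac k2}2^{\frac{1-\beta_1}{2}(j+k)}$. In either version, after using $j\le l+3$, the $k$- and $l$-exponents exceed the targets $1-\iota+\beta_2$ and $\beta_2$ by about $\frac{1+\beta_1}{2}-\beta_2$. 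This excess is positive, for instance, whenever $\beta_1\le\frac12$.

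Nor can the mechanism of Lemma \ref{lem:Localized Strichartz estimate} be reused here. It needs time decay lying in $L^p_s$ for $p$ close to $2$, which $(1+2^ks)^{-\beta_1/2}$ does not provide. This is exactly why the paper interpolates \eqref{2.72} with \eqref{ineq:local Linfty-L2 estimate7}, not with the trivial bound.

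\textbf{How to close it.} What this case needs is the time-integrated Strichartz gain, i.e. your first option, but only with Strichartz and $p$ near $2$, not with Bernstein. Apply H\"older on $I$ together with \eqref{2.66}, with $\eta=\frac12-\frac1p\in(0,\beta_2-\frac{\beta_1}{2}]$:
\[
2^{\frac12\beta_1(j+k)}\big\|\dot{Q}_jP_k|\nabla|^{-\iota}e^{\pm is|\nabla|}Q_lf\big\|_{L^2(I;L^\infty)}\lesssim 2^{\frac12\beta_1(j+k)}|I|^{\eta}\,2^{(\frac32-\iota-\frac1p)k}\|Q_lf\|_{L^2}\lesssim 2^{(\frac{\beta_1}{2}+\eta)(j+k)+(1-\iota)k}\|Q_lf\|_{L^2}.
\]
The right-hand side is $\lesssim 2^{(1-\iota+\beta_2)k+\beta_2 l}\|Q_lf\|_{L^2}$, because $j+k>0$ on $I$ and $j\le l+3$.

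\textbf{Comparison with the paper.} With these repairs your argument is a correct alternative. You split in time and bypass Lemma \ref{lem:Technical lemma 2} entirely. The paper does no splitting in this lemma; the case analysis is hidden inside Lemma \ref{lem:Technical lemma 2}. It interpolates \eqref{2.72} with \eqref{ineq:local Linfty-L2 estimate7} to get the mixed weight $2^{\frac\theta2(j+k)}(1+2^ks)^{\frac{1-\theta}{2}}$ against the decay $(1+2^ks)^{-1/2}$. It then interpolates with \eqref{2.66} at parameter $\beta_3$, and finally applies H\"older in time with $\theta=\beta_1/\beta_3$.
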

	\begin{proof}
		For $\delta=\f{\beta_2/\beta_1-1}{2}\in(0,\f14]$,  $j\in\Z$, integers $k,\ l\ge -1$, interpolating \eqref{2.72}
with \eqref{ineq:local Linfty-L2 estimate7} yields
		\begin{equation}\label{2.84}
         \begin{split}
		&2^{\f{\theta}{2}(j+k)}(1+2^ks)^{\f{1-\theta}{2}}\|\q_jP_{k}|\nabla|^{-\iota}e^{\pm is|\nabla|}Q_lf\|_{L^\infty(\R^3)}\\
		&\ls 2^{(\f52+\delta-\iota)k+(1+\delta)l}(1+2^ks)^{-\f12}\|Q_l f\|_{L^2(\R^3)},
        \end{split}
		\end{equation}
		where $\theta\in[0,1]$. This implies that for any $p\in(2,\infty)$,
		\begin{equation*}
			\|2^{\f{\theta}{2}(j+k)}(1+2^ks)^{\f{1-\theta}{2}}\q_jP_{k}|\nabla|^{-\iota}e^{\pm is|\nabla|}Q_lf\|_{L^p([t_0,t];L^\infty)}
			\ls	
			2^{(\f52+\delta-\iota-\f1p) k+(1+\delta)l}\|Q_lf\|_{L^2(\R^3)}.
		\end{equation*}
This, together with \eqref{2.66}, yields
		\begin{equation*}
			\begin{split}
				&\|2^{\f{\theta\beta_3}{2}(j+k)}(1+2^ks)^{\f{(1-\theta)\beta_3}{2}}\q_jP_{k}|\nabla|^{-\iota}e^{\pm is|\nabla|}Q_lf\|_{L^p([t_0,t];L^\infty)}\\
				&\ls 2^{[(\f52+\delta-\iota-\f1p) k+(1+\delta)l]\beta_3 +(\f32-\iota-\f1p)k(1-\beta_3)}\|Q_lf\|_{L^2(\R^3)},
			\end{split}
		\end{equation*}
		where $\beta_3 = \f{\beta_2}{1+\delta} = \f{2\beta_1\beta_2}{\beta_1+\beta_2}\in(\beta_1,\beta_2)\subset(0,1)$. Choosing $\theta = \f{\beta_1}{\beta_3}$ and $p=\frac{2}{1-(\beta_3-\beta_1)/2}\in(2,\frac{2}{1-(\beta_3-\beta_1)})$,
		then it follows from H\"{o}lder's inequality that
		\begin{equation*}
			\begin{split}
				&2^{\f{\beta_1}{2}(j+k)}\|\q_jP_{k}|\nabla|^{-\iota}e^{\pm is|\nabla|}Q_lf\|_{L^2([t_0,t];L^\infty)}\\
				&\ls
				\|(1+2^ks)^{(\beta_1-\beta_3)/2}\|_{L^\frac{2p}{p-2}[t_0,t]}
				\|2^{\f{\beta_1}{2}(j+k)}(1+2^ks)^{(\beta_3-\beta_1)/2}\q_jP_{k}|\nabla|^{-\iota}e^{\pm is|\nabla|}Q_lf\|_{L^p([t_0,t];L^\infty)}\\
				&\ls2^{(1-\iota+\beta_2) k+\beta_2l}\|Q_lf\|_{L^2(\R^3)}.
			\end{split}
		\end{equation*}
		This leads to \eqref{local:Stric 3}.
	\end{proof}
	\begin{cor}\label{cor:Localized Strichartz estimate 2}
		For $\beta_1\in(0,1)$, $\beta_2\in(\beta_1,\min\{\f32\beta_1,1\})$, $t\ge t_0\ge 0$,
integers $\iota\in\{0,1\}$ and $k\ge -1$, one has
			\begin{equation}\label{cor:local:Stric 3}
				\sum_{j\ge-1}\|[1+2^k(s+|x|)]^{\f{\beta_1}{2}}P_{k}|\nabla|^{-\iota}e^{\pm is|\nabla|}Q_jf\|_{L^2([t_0,t];L^\infty(\R^3))}
				\ls	2^{(1-\iota+\beta_2) k}\| \w{x}^{\beta_2} f\|_{L^2(\R^3)}.
		\end{equation}
	\end{cor}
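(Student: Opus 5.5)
The plan is to split the weight $[1+2^k(s+|x|)]^{\beta_1/2}$ into a temporal part and a spatial part and treat each with one of the two localized estimates already available. Since $\beta_1/2<1$, we have
\begin{equation*}
[1+2^k(s+|x|)]^{\frac{\beta_1}{2}}\ls (1+2^ks)^{\frac{\beta_1}{2}}+(2^k|x|)^{\frac{\beta_1}{2}}.
\end{equation*}
So for fixed $j\ge-1$ the left-hand side of \eqref{cor:local:Stric 3} is bounded by two terms.

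The first term is the purely temporal one,
\begin{equation*}
\|(1+2^ks)^{\beta_1/2}P_k|\nabla|^{-\iota}e^{\pm is|\nabla|}Q_jf\|_{L^2([t_0,t];L^\infty)}.
\end{equation*}
Lemma \ref{lem:Localized Strichartz estimate} bounds it directly by $2^{(1-\iota+\beta_2)k+\beta_2 j}\|Q_jf\|_{L^2}$.

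For the spatial term, decompose in physical space as $\sum_{l\in\Z}\q_l$, with the sum in $L^\infty_x$ controlled by the supremum over $l$. Since $|x|\approx 2^l$ on $\supp\dot\psi_l$, the term is bounded by
\begin{equation*}
\sup_{l\in\Z}2^{\frac{\beta_1}{2}(l+k)}\|\q_lP_k|\nabla|^{-\iota}e^{\pm is|\nabla|}Q_jf\|_{L^2_tL^\infty_x}.
\end{equation*}
Here the roles of indices in Lemma \ref{lem:Localized Strichartz estimate 2} are: output localization $\q_l$, input localization $Q_j$. Moving the supremum over $l$ outside the $L^2_t$ norm needs a little care. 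One option is to use the pointwise bound $|F|\le \sum_l|\q_lF|$ and accept summing over $l$. The cleaner option is to observe that the spatial weight is already built into the proof of Lemma \ref{lem:Localized Strichartz estimate 2}. That proof produces the pointwise estimate \eqref{2.84}, in which the weight $2^{\frac{\theta}{2}(l+k)}$ is uniform in $l$. Taking the supremum over $l$ of \eqref{2.84} before integrating in time gives the same bound for $\|(2^k|x|)^{\beta_1/2}P_k\cdots Q_jf\|_{L^2_tL^\infty_x}$. In either case the spatial term is $\ls 2^{(1-\iota+\beta_2)k+\beta_2 j}\|Q_jf\|_{L^2}$, because the constant in Lemma \ref{lem:Localized Strichartz estimate 2} does not depend on $l$.

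It remains to sum over $j\ge -1$, which requires $\sum_j 2^{\beta_2 j}\|Q_jf\|_{L^2}\ls\|\w{x}^{\beta_2}f\|_{L^2}$. This is the expected main obstacle. The sum is an $\ell^1$ sum over $j$, whereas $\|\w{x}^{\beta_2}f\|_{L^2}\approx\|2^{\beta_2j}\|Q_jf\|_{L^2}\|_{\ell^2_j}$. I would resolve it by a small loss in the weight exponent. Since $\beta_2$ ranges over an open interval, choose $\beta_2'\in(\beta_1,\beta_2)$ and apply the two lemmas with $\beta_2'$ in place of $\beta_2$; this is allowed because the hypotheses are open conditions. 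Bound $2^{(1-\iota+\beta_2')k}$ by $2^{(1-\iota+\beta_2)k}$. Then Cauchy--Schwarz gives
\begin{equation*}
\sum_j 2^{\beta_2'j}\|Q_jf\|_{L^2}\le\Big(\sum_j 2^{-2(\beta_2-\beta_2')j}\Big)^{1/2}\|2^{\beta_2 j}Q_jf\|_{\ell^2_jL^2}\ls\|\w{x}^{\beta_2}f\|_{L^2}.
\end{equation*}
If the sum over $l$ was taken in the spatial term rather than the supremum, an extra $\ell^1$ loss in $l$ appears. It can be absorbed the same way, using the slack $\beta_3>\beta_1$ that exists inside the proof of Lemma \ref{lem:Localized Strichartz estimate 2}.
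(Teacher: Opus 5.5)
Your proposal is correct and follows the paper's proof. You split $[1+2^k(s+|x|)]^{\beta_1/2}$ into $(1+2^ks)^{\beta_1/2}+(2^k|x|)^{\beta_1/2}$, bound the temporal part by Lemma~\ref{lem:Localized Strichartz estimate} and the spatial part by Lemma~\ref{lem:Localized Strichartz estimate 2}, and pay a small loss $\beta_2\to\beta_2'\in(\beta_1,\beta_2)$ to sum the input pieces $Q_jf$ in $\ell^1$ (the paper takes $\beta_2'=\frac{\beta_1+\beta_2}{2}$).

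Your ``cleaner option'' is tidier than the paper's treatment of the output index: you take the supremum over the output index in the pointwise-in-time bound \eqref{2.84} before integrating in $s$. The paper instead sums over all output pieces $\dot Q_j$, $j\in\Z$, against a bound that is uniform in $j$. That sum, like your first option, requires handling $2^{j}\lesssim 2^{-k}$ separately, for instance with the temporal estimate. This is because raising the spatial exponent above $\beta_1$ only produces decay as $j+k\to+\infty$.
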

	\begin{proof}
		It follows from Lemma \ref{lem:Localized Strichartz estimate 2} that
		\begin{equation*}
			\begin{split}
				&\sum_{l\ge-1}\||2^kx|^{\f{\beta_1}{2}}P_{k}|\nabla|^{-\iota}e^{\pm is|\nabla|}Q_lf\|_{L^2([t_0,t];L^\infty)}\\
				&\ls
				\sum_{j\in\Z,l\ge-1}2^{\f12 \beta_1 (j+k)}\|\q_jP_{k}|\nabla|^{-\iota}e^{\pm is|\nabla|}Q_lf\|_{L^2([t_0,t];L^\infty)}\\
				&\ls
				\sum_{l\ge-1}2^{(1-\iota+\f{\beta_1+\beta_2}{2}) k+\f{\beta_1+\beta_2}{2}l}\|Q_lf\|_{L^2(\R^3)}\\
				&\ls
				2^{(1-\iota+\beta_2 ) k}\| \w{x}^{\beta_2} f\|_{L^2(\R^3)}.
			\end{split}
		\end{equation*}
		Together with Lemma \ref{lem:Localized Strichartz estimate} and
$(1+2^ks)^{\f{\beta_1}{2}}+|2^kx|^{\f{\beta_1}{2}}\approx(1+2^k(s+|x|))^{\f{\beta_1}{2}}$, this yields \eqref{cor:local:Stric 3}.
	\end{proof}

	\subsubsection{The non-endpoint case for $e^{\pm it|\nabla|}$}

	\begin{lemma}\label{lem:Localized Strichartz estimate L4}
		For $\beta_1\in(0,1)$, $\beta_2\in(\beta_1,\min\{\f32\beta_1,1\})$, $p,\ r\in(2,\infty)$ with $\f1p+\f1r=\f12$,
$t\ge t_0\ge 0$ and integers $j,\ k\ge -1$, one has
		\begin{equation}\label{local:Stric L4}
			\|(1+2^ks)^{\f1p \beta_1}P_{k}e^{\pm is|\nabla|}Q_jf\|_{L^p([t_0,t];L^r(\R^3))}
			\ls 2^{\f2p(1+\beta_2)k +\f2p\beta_2 j}\|Q_jf\|_{L^2(\R^2)}.
		\end{equation}
		Moreover, \eqref{local:Stric L4} remains valid when $P_k$ is replaced by $\pk$ for $k\in\Z$.
	\end{lemma}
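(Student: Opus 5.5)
The plan is to obtain \eqref{local:Stric L4} by complex (Stein) interpolation, or more simply by Riesz--Thorin type interpolation of the underlying operator, between two endpoint estimates.

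\textbf{The weighted endpoint.} The first endpoint is the weighted $L^2_tL^\infty_x$ estimate of Lemma \ref{lem:Localized Strichartz estimate} with $\iota=0$:
\begin{equation*}
\|(1+2^ks)^{\beta_1/2}P_k e^{\pm is|\nabla|}Q_jf\|_{L^2([t_0,t];L^\infty)}\ls 2^{(1+\beta_2)k+\beta_2 j}\|Q_jf\|_{L^2}.
\end{equation*}

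\textbf{The energy endpoint.} The second endpoint is the trivial unweighted bound
\begin{equation*}
\|P_ke^{\pm is|\nabla|}Q_jf\|_{L^\infty([t_0,t];L^2)}\ls\|Q_jf\|_{L^2},
\end{equation*}
which holds by unitarity of $e^{\pm is|\nabla|}$ on $L^2$.

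\textbf{Interpolation.} Since $\f1p+\f1r=\f12$ with $p\in(2,\infty)$, set $\theta=\f2p\in(0,1)$. Then
\begin{equation*}
\f1p=\f{\theta}{2}+\f{1-\theta}{\infty},\qquad \f1r=\f{\theta}{\infty}+\f{1-\theta}{2},
\end{equation*}
and the weight exponent is $\theta\cdot\f{\beta_1}{2}=\f{\beta_1}{p}$, while the constant is $2^{\theta[(1+\beta_2)k+\beta_2j]}=2^{\f2p(1+\beta_2)k+\f2p\beta_2 j}$. This is exactly the claim.

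To make the interpolation rigorous with the varying weight, I would consider the analytic family
\begin{equation*}
T_z f=(1+2^ks)^{z\beta_1/2}P_ke^{\pm is|\nabla|}Q_jf,
\end{equation*}
acting from $L^2$ (the fixed input $Q_jf$) into mixed Lebesgue spaces $L^{p_z}_tL^{r_z}_x$. On $\Re z=0$ the factor $(1+2^ks)^{i\gamma\beta_1/2}$ has modulus one, so the energy bound applies. On $\Re z=1$ the same unimodular factor is absorbed and the first endpoint applies. Stein's interpolation theorem for mixed-norm spaces (Benedek--Panzone) then gives the bound at $z=\theta$. The growth in $\Im z$ is harmless since the bounds on the boundary lines are uniform.

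Alternatively, one can avoid complex interpolation by pointwise H\"older in $x$ and $s$. Writing $u=P_ke^{\pm is|\nabla|}Q_jf$, we have
\begin{equation*}
(1+2^ks)^{\beta_1/p}|u|=\big((1+2^ks)^{\beta_1/2}|u|\big)^{2/p}\,|u|^{1-2/p},
\end{equation*}
so
\begin{equation*}
\|(1+2^ks)^{\beta_1/p}u\|_{L^r_x}\le\|(1+2^ks)^{\beta_1/2}u\|_{L^\infty_x}^{2/p}\,\|u\|_{L^2_x}^{1-2/p}.
\end{equation*}
The exponent works out because $r(1-\f2p)=2$. Taking the $L^p_s$ norm and using H\"older in $s$ with $L^\infty_s$ on the energy factor gives
\begin{equation*}
\|(1+2^ks)^{\beta_1/2}u\|_{L^2_sL^\infty_x}^{2/p}\,\|u\|_{L^\infty_sL^2_x}^{1-2/p}.
\end{equation*}
This elementary route is what I will actually use; it is completely direct.

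\textbf{The $\pk$ version.} For the homogeneous projection with $k\in\Z$, the same argument applies once the analogue of Lemma \ref{lem:Localized Strichartz estimate} for $\pk$ is available. That analogue follows by rescaling from the $k=0$ case, using that Corollary \ref{cor:2.10} and Lemma \ref{lem:strichartz L2Linfty} hold with $\pk$. The only point requiring care, and the main obstacle, is verifying that the proof of Lemma \ref{lem:Localized Strichartz estimate} goes through for $\pk$ with negative $k$. This amounts to checking that the time integrals $\|(1+2^ks)^{-1/2}\|_{L^p}$ still scale as $2^{-k/p}$, which they do.
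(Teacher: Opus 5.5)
Your proof is correct, but it is organized differently from the paper's. You obtain \eqref{local:Stric L4} in one step by H\"older interpolation, with exponents $\f2p$ and $1-\f2p$, between two bounds: the weighted endpoint bound of Lemma \ref{lem:Localized Strichartz estimate} with $\iota=0$, and the trivial bound $\|P_ke^{\pm is|\nabla|}Q_jf\|_{L^\infty_tL^2_x}\le\|Q_jf\|_{L^2}$. Your bookkeeping ($r(1-\f2p)=2$ and $\|\,|F|^{2/p}\|_{L^p_s}=\|F\|_{L^2_s}^{2/p}$) is right and gives exactly $2^{\f2p(1+\beta_2)k+\f2p\beta_2 j}$.

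The paper does not use Lemma \ref{lem:Localized Strichartz estimate} here. Its steps are:
\begin{itemize}
\item It goes back to the pointwise-in-time decay \eqref{ineq:local Linfty-L2 estimate7}.
\item At each time it interpolates $L^r$ between $L^2$ and $L^\infty$, using an auxiliary weight $(1+2^ks)^{\alpha/p}$ with $\alpha<1$. The restriction $\alpha<1$ makes $(1+2^ks)^{(\alpha-2)/p}\in L^p_s$ with norm $\ls2^{-k/p}$. This yields the cruder loss $2^{\f2p[(2+\delta)k+(1+\delta)j]}$.
\item It then interpolates again with the unweighted non-endpoint estimate \eqref{ineq:Strichartz'}, whose loss is $2^{\f2pk}$, tuning the parameters so that $\theta\alpha=\beta_1$ and $\theta(1+\delta)=\beta_2$.
\end{itemize}

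Your route is shorter. It also makes clear that the non-endpoint family is a formal consequence of the weighted endpoint estimate, since the parameter tuning was already done once, in Lemma \ref{lem:Localized Strichartz estimate}. The paper's route keeps the non-endpoint bound independent of the endpoint lemma and of that lemma's extra H\"older-in-time step. It is also the template reused in Lemma \ref{lem:Localized Strichartz estimate 4}; your argument would work there too, by interpolating Lemma \ref{lem:Localized Strichartz estimate 2} with the energy bound.

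For the $\pk$ version, ``rescaling from $k=0$'' is not the right mechanism, because $Q_j$ (notably $Q_{-1}$) is not scale covariant. It is not needed, though. The proof of Lemma \ref{lem:Localized Strichartz estimate} runs verbatim with $\pk$, $k\in\Z$, for two reasons:
\begin{itemize}
\item Corollary \ref{cor:2.10} and \eqref{ineq:Strichartz'} are stated for $\pk$.
\item Every time norm in that proof is a power of $\int_0^\infty(1+2^ks)^{-\gamma}ds=\f{2^{-k}}{\gamma-1}$ with $\gamma>1$, so it scales correctly for every $k\in\Z$.
\end{itemize}
This is exactly the check you identified.
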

	\begin{proof}
		\eqref{ineq:local Linfty-L2 estimate7} ensures that for any $\alpha\in(0,1)$ and $\delta\in(0,\f14]$,
		\begin{equation}\label{YHCCC-131}
			\begin{split}
				&\|(1+2^ks)^{\f\alpha p}P_{k}e^{\pm is|\nabla|}Q_jf\|_{L^p([t_0,t];L^r)}\\
				&\ls
				\|(1+2^ks)^{\f\alpha p}\|P_{k}e^{\pm is|\nabla|}Q_jf\|_{L^2(\R^3)}^{\f2r}\|P_{k}e^{\pm is|\nabla|}Q_jf\|_{L^\infty(\R^3)}^{1-\f2r}\|_{L^p[t_0,t]}\\
				&\ls
				2^{\f2p[(\f52+\delta)k+(1+\delta)j]}\|(1+2^ks)^{\f{\alpha-2}{p} }\|_{L^p[t_0,t]}\|Q_jf\|_{L^2(\R^3)}\\
				&\ls
				2^{\f2p[(2+\delta)k+(1+\delta)j]}\|Q_jf\|_{L^2(\R^3)}.
			\end{split}
		\end{equation}
		Interpolating this inequality with \eqref{ineq:Strichartz'} yields that for $\theta\in[0,1]$,
	\begin{equation}\label{YHCCC-31}
        \begin{split}
			\|(1+2^ks)^{\theta\f\alpha p}P_{k}e^{\pm is|\nabla|}Q_jf\|_{L^p([t_0,t];L^r)}
			\ls 2^{\f2p(1+(1+\delta)\theta)k+\f2p\theta(1+\delta) j}\|Q_jf\|_{L^2(\R^2)}.
         \end{split}
		\end{equation}
Choosing $\delta=\f{\beta_2/\beta_1-1}{2}\in(0,\f14]$, $\theta = \f{\beta_2}{1+\delta} = \f{2\beta_1\beta_2}{\beta_1+\beta_2}\in(\beta_1,\beta_2)\subset(0,1)$ and $\alpha=\f{\beta_1}{\theta}=\f{\beta_1+\beta_2}{2\beta_2}\in(0,1)$
in \eqref{YHCCC-31}, then \eqref{local:Stric L4} is shown. Note that the estimates \eqref{YHCCC-131} and \eqref{YHCCC-31}
remain valid when $P_k$ is replaced by $\pk$ for all $k\in\mathbb{Z}$, hence the lemma holds.
	\end{proof}
	\begin{lemma}\label{lem:Localized Strichartz estimate 4}
For $\beta_1\in(0,1)$, $\beta_2\in(\beta_1,\min\{\f32\beta_1,1\})$, $p,\ r\in(2,\infty)$ with $\f1p+\f1r=\f12$,
$t\ge t_0\ge 0$,  $j\in\Z$ and integers $k,\ l\ge -1$, one has
		\begin{equation}\label{local:Stric 4}
			2^{\f{\beta_1}{p}(j+k)}\|\q_jP_{k}e^{\pm is|\nabla|}Q_lf\|_{L^p([t_0,t];L^r(\R^3))}
			\ls
			2^{\f2p(1+\beta_2)k +  \f2p\beta_2 l}\|Q_lf\|_{L^2(\R^2)}.
		\end{equation}
	\end{lemma}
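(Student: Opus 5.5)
I will prove \eqref{local:Stric 4} by mimicking the proof of Lemma \ref{lem:Localized Strichartz estimate 2}, with the $L^\infty$ bounds replaced by $L^r$ bounds obtained through the interpolation used in Lemma \ref{lem:Localized Strichartz estimate L4}. The fixed-time spatial bound comes from Hölder on the physical annulus, and the time integrability is transferred to $L^p_s$.

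\textbf{Step 1: a fixed-time bound.} Interpolating \eqref{2.72} (with $\iota=0$, $P_k$, $Q_l$) against \eqref{ineq:local Linfty-L2 estimate7} with parameter $\theta\in[0,1]$, exactly as in \eqref{2.84}, gives
\begin{equation*}
2^{\f{\theta}{2}(j+k)}(1+2^ks)^{\f{1-\theta}{2}}\|\q_jP_ke^{\pm is|\nabla|}Q_lf\|_{L^\infty}\ls 2^{(\f52+\delta)k+(1+\delta)l}(1+2^ks)^{-\f12}\|Q_lf\|_{L^2}.
\end{equation*}
The $L^2$ bound is trivial: $\|\q_jP_ke^{\pm is|\nabla|}Q_lf\|_{L^2}\le\|Q_lf\|_{L^2}$. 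Writing $\|g\|_{L^r}\le\|g\|_{L^2}^{2/r}\|g\|_{L^\infty}^{1-2/r}$ and noting $1-\f2r=\f2p$, I get
\begin{equation*}
2^{\f{\theta}{p}(j+k)}(1+2^ks)^{\f{1-\theta}{p}}\|\q_jP_ke^{\pm is|\nabla|}Q_lf\|_{L^r}\ls 2^{\f2p[(\f52+\delta)k+(1+\delta)l]}(1+2^ks)^{-\f1p}\|Q_lf\|_{L^2}.
\end{equation*}
I then multiply by $(1+2^ks)^{(\alpha-1)/p}$ with $\alpha\in(0,1)$ and take the $L^p[t_0,t]$ norm. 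Since $(1+2^ks)^{(\alpha-2)/p}\in L^p_s$ with norm $\ls 2^{-k/p}$, this yields
\begin{equation*}
\|2^{\f\theta p(j+k)}(1+2^ks)^{\f{\alpha-\theta}{p}}\q_jP_ke^{\pm is|\nabla|}Q_lf\|_{L^p_tL^r_x}\ls 2^{\f2p[(2+\delta)k+(1+\delta)l]}\|Q_lf\|_{L^2}.
\end{equation*}

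\textbf{Step 2: interpolate with the unweighted estimate and fix parameters.} I interpolate the last bound with the unweighted Strichartz estimate \eqref{ineq:Strichartz'}. That estimate gives $\ls 2^{\f2pk}\|Q_lf\|_{L^2}$, since $\f32-\f1p-\f3r=\f2p$ when $\f1p+\f1r=\f12$, and $\q_j$ only decreases the norm. Interpolating with weight $\beta_3=\f{2\beta_1\beta_2}{\beta_1+\beta_2}=\f{\beta_2}{1+\delta}$, where $\delta=\f{\beta_2/\beta_1-1}{2}$, produces:
\begin{itemize}
\item the factor $2^{\f2p(1+(1+\delta)\beta_3)k+\f2p(1+\delta)\beta_3 l}=2^{\f2p(1+\beta_2)k+\f2p\beta_2 l}$ on the right;
\item the weight $2^{\f{\theta\beta_3}{p}(j+k)}(1+2^ks)^{\f{(\alpha-\theta)\beta_3}{p}}$ on the left.
\end{itemize}
Choosing $\theta=\beta_1/\beta_3\in(0,1)$ turns the physical weight into $2^{\f{\beta_1}{p}(j+k)}$. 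The remaining time weight $(1+2^ks)^{(\alpha\beta_3-\beta_1)/p}$ can be made $\ge1$ by taking $\alpha\in[\beta_1/\beta_3,1)$, which is possible because $\beta_3>\beta_1$. It can then simply be dropped, giving \eqref{local:Stric 4}.

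\textbf{Main obstacle.} The delicate step is Step 1's $L^p_s$ integrability. The decay $(1+2^ks)^{-1/p}$ must beat the residual time weight, which requires $\alpha<1$ and hence forces $\alpha-\theta<1-\theta$. Unlike the endpoint Lemma \ref{lem:Localized Strichartz estimate 2}, no Hölder step with an auxiliary exponent is needed here, because $p>2$ already gives integrability.

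I also need the interpolation of \eqref{2.72} with \eqref{ineq:local Linfty-L2 estimate7} to be legitimate for the inhomogeneous $P_k$, $Q_l$ with $k=-1$. This is granted by the remarks after Lemma \ref{lem:Technical lemma 2} and Corollary \ref{cor:2.10}. Finally, I must ensure that the complex (Stein) interpolation of the mixed-norm weighted inequalities is justified. I would treat it as interpolation of weighted $L^p_tL^r_x$ spaces with power weights, or more simply apply Hölder pointwise in $s$ to the two fixed-time bounds before integrating, as was done in \eqref{YHCCC-31}.
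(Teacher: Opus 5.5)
Your proposal is correct and is essentially the paper's proof: the paper applies \eqref{2.84} directly with $\theta=\alpha$, converts to $L^r$ via $\|g\|_{L^r}\le\|g\|_{L^2}^{2/r}\|g\|_{L^\infty}^{2/p}$, integrates $(1+2^ks)^{(\alpha-2)/p}$ in time, and then interpolates with \eqref{ineq:Strichartz'} at parameter $\beta_3=\f{2\beta_1\beta_2}{\beta_1+\beta_2}$ with $\alpha=\beta_1/\beta_3$; this is exactly your scheme with your extra multiplier parameter set equal to $\theta$. With that choice there is no residual time weight, so the final interpolation is just the geometric mean of two bounds on the same norm with constant prefactors. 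For your nonconstant weight, the justification is H\"older in $s$, $\|w^{\beta_3}g\|_{L^p_s}\le\|wg\|_{L^p_s}^{\beta_3}\|g\|_{L^p_s}^{1-\beta_3}$, rather than a fixed-time H\"older, because \eqref{ineq:Strichartz'} is not a fixed-time bound.
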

	\begin{proof}
		\eqref{2.84} with $\theta={\alpha}$ and $\alpha\in(0,1)$ ensures that
		\begin{equation*}
			\begin{split}
				&\|2^{\f{\alpha}{p}(j+k)}\q_jP_{k}e^{\pm is|\nabla|}Q_lf\|_{L^p([t_0,t];L^r)}\\
				&\ls
				\big\|
				\|P_{k}e^{\pm is|\nabla|}Q_lf\|_{L^2(\R^3)}^{\f2r}
				\cdot \big(2^{\f{\alpha}{{2}}(j+k)}\|\q_jP_{k}e^{\pm is|\nabla|}Q_lf\|_{L^\infty(\R^3)}\big)^{\f2p}
				\big\|_{L^p[t_0,t]}\\
				&\ls
				2^{\f2p[(\f52+\delta)k+(1+\delta)j]}\|(1+2^ks)^{(-1+\f\alpha 2)\f2p}\|_{L^p[t_0,t]}\|Q_lf\|_{L^2(\R^3)}\\
				&\ls
				2^{\f2p[(2+\delta)k+(1+\delta)j]}\|Q_lf\|_{L^2(\R^3)}.
			\end{split}
		\end{equation*}
		Interpolating this inequality with \eqref{ineq:Strichartz'} yields
		\begin{equation}\label{YHCCC-32}
			2^{\f{\alpha\theta}{p}(j+k)}\|\q_jP_{k}e^{\pm is|\nabla|}Q_lf\|_{L^p([t_0,t];L^r)}
			\ls 2^{\f2p(1+(1+\delta)\theta)k +  \f2p\theta(1+\delta) l}\|Q_lf\|_{L^2(\R^2)}.
		\end{equation}
		The choices of $\delta=\f{\beta_2/\beta_1-1}{2}\in(0,\f14]$, $\theta = \f{\beta_2}{1+\delta} = \f{2\beta_1\beta_2}{\beta_1+\beta_2}\in(\beta_1,\beta_2)\subset(0,1)$ and $\alpha = \f{\beta_1}{\theta} = \f{\beta_1+\beta_2}{2\beta_2}\in(0,1)$
 in \eqref{YHCCC-32} lead to \eqref{local:Stric 4}.
	\end{proof}
	\begin{cor}\label{cor:Localized Strichartz estimate 4}
		For $\beta_1\in(0,1)$, $\beta_2\in(\beta_1,\min\{\f32\beta_1,1\})$, $p,\ r\in(2,\infty)$ with $\f1p+\f1r=\f12$,
$t\ge t_0\ge 0$ and integer $k\ge -1$, it holds that
			\begin{equation}\label{cor:local:Stric 4}
				\sum_{j\ge-1}\|[1+2^k(s+|x|)]^{\f{1}{p}\beta_1}P_{k}e^{\pm is|\nabla|}Q_jf\|_{L^p([t_0,t];L^r(\R^3))}
				\ls 2^{\f2p(1+\beta_2)k }\|\w{x}^{\f{2}{p}\beta_2}f\|_{L^2(\R^3)}.
		\end{equation}
	\end{cor}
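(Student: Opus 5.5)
The plan is to mirror the proof of Corollary \ref{cor:Localized Strichartz estimate 2}, with Lemmas \ref{lem:Localized Strichartz estimate L4} and \ref{lem:Localized Strichartz estimate 4} playing the roles of the endpoint lemmas. Since $\frac1p\beta_1\le\frac12$, we have
\[
[1+2^k(s+|x|)]^{\frac1p\beta_1}\approx(1+2^ks)^{\frac1p\beta_1}+|2^kx|^{\frac1p\beta_1}.
\]
So it suffices to bound the two pieces separately, summed over $j\ge-1$.

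\emph{Time-weight piece.} Apply Lemma \ref{lem:Localized Strichartz estimate L4} to each $Q_jf$. This gives the bound $2^{\frac2p(1+\beta_2)k}\sum_{j\ge-1}2^{\frac2p\beta_2 j}\|Q_jf\|_{L^2}$.

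To sum in $j$ without losing a logarithm, apply Lemma \ref{lem:Localized Strichartz estimate L4} with $\beta_2$ replaced by an intermediate exponent $\beta_2'\in(\beta_1,\beta_2)$. This is admissible because $\beta_2'<\beta_2<\min\{\frac32\beta_1,1\}$. Then
\[
\sum_j 2^{\frac2p\beta_2'j}\|Q_jf\|_{L^2}\le\sup_j 2^{\frac2p\beta_2 j}\|Q_jf\|_{L^2}\sum_j2^{-\frac2p(\beta_2-\beta_2')j}\ls\|\w{x}^{\frac2p\beta_2}f\|_{L^2},
\]
where the last step uses $2^{\frac2p\beta_2 j}\|Q_jf\|_{L^2}\ls\|\w{x}^{\frac2p\beta_2}f\|_{L^2}$, which holds since $\w{x}\approx 2^j$ on $\supp\psi_j$. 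The frequency factor $2^{\frac2p(1+\beta_2')k}$ is at most $2^{\frac2p(1+\beta_2)k}$ because $k\ge-1$, up to a constant.

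\emph{Space-weight piece.} Decompose $|2^kx|^{\frac1p\beta_1}$ dyadically in $x$ using $\q_i$, $i\in\Z$. On $\supp\dot\psi_i$ the weight is $\approx 2^{\frac{\beta_1}{p}(i+k)}$, so
\[
\||2^kx|^{\frac{\beta_1}p}P_ke^{\pm is|\nabla|}Q_jf\|_{L^p_tL^r_x}\ls\sum_{i\in\Z}2^{\frac{\beta_1}{p}(i+k)}\|\q_iP_ke^{\pm is|\nabla|}Q_jf\|_{L^p_tL^r_x}.
\]
Here the triangle inequality in $i$ is harmless, since it only overestimates.

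Lemma \ref{lem:Localized Strichartz estimate 4} bounds each term uniformly in $i$, but the sum over $i\in\Z$ is the main obstacle. As in Corollary \ref{cor:Localized Strichartz estimate 2}, I would first gain decay in $i$. Apply Lemma \ref{lem:Localized Strichartz estimate 4} with $(\beta_1,\beta_2)$ replaced by a slightly larger pair $(\beta_1',\beta_2')$ satisfying $\beta_1<\beta_1'<\beta_2'<\beta_2$ and $\beta_2'<\min\{\frac32\beta_1',1\}$ (for instance $\beta_1'$ close to $\beta_1$ and $\beta_2'=\frac{\beta_1'+\beta_2}{2}$ adjusted). This gives
\[
2^{\frac{\beta_1}{p}(i+k)}\|\q_iP_ke^{\pm is|\nabla|}Q_jf\|\ls 2^{-\frac{\beta_1'-\beta_1}{p}(i+k)}2^{\frac2p(1+\beta_2')k+\frac2p\beta_2'j}\|Q_jf\|_{L^2}.
\]
This sums geometrically over $i+k\ge0$.

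For $i+k<0$, use the original $\beta_1$ version together with a smaller exponent $\beta_1''<\beta_1$, applied with the same $\beta_2'$. This requires $\beta_2'<\frac32\beta_1''$, which is fine by continuity since $\beta_2<\frac32\beta_1$ strictly. It gives the factor $2^{\frac{\beta_1-\beta_1''}{p}(i+k)}$, which is summable over $i+k<0$.

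Finally, sum over $j$ exactly as in the time-weight piece, using $\beta_2'<\beta_2$ to produce $\|\w{x}^{\frac2p\beta_2}f\|_{L^2}$ and absorbing $2^{\frac2p(1+\beta_2')k}\ls2^{\frac2p(1+\beta_2)k}$. Adding the two pieces yields \eqref{cor:local:Stric 4}.
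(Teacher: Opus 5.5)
Your proposal is correct and follows essentially the same route as the paper. The paper also splits the weight into $(1+2^ks)^{\beta_1/p}$ and $|2^kx|^{\beta_1/p}$, handles the first by Lemma \ref{lem:Localized Strichartz estimate L4}, and handles the second by a dyadic decomposition in $x$ plus Lemma \ref{lem:Localized Strichartz estimate 4} with a shifted exponent pair; its intermediate exponent $\frac{\beta_1+\beta_2}{2}$ in the $l$-sum plays exactly the role of your $\beta_2'$. Your explicit treatment of the range $i+k<0$ and of the $\ell^1_j$ summation in the time-weight piece just spells out steps the paper leaves implicit.
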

	\begin{proof}
		By Lemma \ref{lem:Localized Strichartz estimate 4}, one has
		\begin{equation*}
			\begin{split}
				&\sum_{l\ge-1}\||2^kx|^{\f{\beta_1}{p}}P_{k}e^{\pm is|\nabla|}Q_lf\|_{L^p([t_0,t];L^r)}\\
				&\ls
				\sum_{j\in\Z,l\ge-1}2^{\f{\beta_1}{p} (j+k)}\|\q_jP_{k}e^{\pm is|\nabla|}Q_lf\|_{L^p([t_0,t];L^r)}\\
				&\ls
				\sum_{l\ge-1}2^{\f2p(1+\beta_2)k +  \f2p\f{\beta_1+\beta_2}{2} l}\|Q_lf\|_{L^2(\R^3)}\\
				&\ls
				2^{\f2p(1+\beta_2)k }\|\w{x}^{\f{2}{p}\beta_2}f\|_{L^2(\R^3)}.
			\end{split}
		\end{equation*}
		This, together with Lemma \ref{lem:Localized Strichartz estimate L4} and
$(1+2^ks)^{\f{\beta_1}{p}}+|2^kx|^{\f{\beta_1}{p}}\approx(1+2^k(s+|x|))^{\f{\beta_1}{p}}$,  yields \eqref{cor:local:Stric 4}.
	\end{proof}

	\subsubsection{The non-endpoint case for $|\nabla|^{-1}e^{\pm it|\nabla|}$}

	\begin{lemma}\label{lem:Localized Strichartz estimate 5}
		For $\beta_1\in(0,1)$, $\beta_2\in(\beta_1,\min\{\f32\beta_1,1\})$, $p\in(2,2+2\beta_2-\beta_1),\ r\in(2+\f{4}{2\beta_2-\beta_1},\infty)$ with $\f1p+\f1r=\f12$, $t\ge t_0\ge 0$ and integers $j,\ k\ge -1$, it holds that
		\begin{equation}\label{local:Stric 5}
			\|(1+2^ks)^{\f1p \beta_1}P_k |\nabla|^{-1}e^{\pm is|\nabla|}Q_jf\|_{L^p([t_0,t];L^r(\R^3))}
			\ls
			2^{(\f{2+2\beta_2}{p}-1)k +\f2p\beta_2 j}\|Q_jf\|_{L^2(\R^2)}.
		\end{equation}
	\end{lemma}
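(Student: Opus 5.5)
The plan is to reduce \eqref{local:Stric 5} to the already established estimate \eqref{local:Stric L4} of Lemma \ref{lem:Localized Strichartz estimate L4}, which also holds for the homogeneous projections $\pk$, $k\in\Z$. The factor $|\nabla|^{-1}$ is harmless at high frequency. At low frequency it is paid for by summing homogeneous dyadic pieces, and this summation is exactly where the restriction $p<2+2\beta_2-\beta_1$ enters.

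\textbf{Case $k\ge0$.} Write $P_k|\nabla|^{-1}=2^{-k}\,(2^k|\nabla|^{-1}P_{[[k]]})\,P_k$. The operator $2^k|\nabla|^{-1}P_{[[k]]}$ is a Fourier multiplier with a smooth symbol adapted to the annulus $|\xi|\approx2^k$. Its kernel therefore has $L^1$ norm bounded uniformly in $k$, so it is bounded on $L^r(\R^3)$ uniformly in $k$. It also commutes with $e^{\pm is|\nabla|}$ and with multiplication by the purely time-dependent weight $(1+2^ks)^{\beta_1/p}$. Applying \eqref{local:Stric L4} then gives the bound
$$2^{-k}\,2^{\f2p(1+\beta_2)k+\f2p\beta_2 j}\|Q_jf\|_{L^2}=2^{(\f{2+2\beta_2}{p}-1)k+\f2p\beta_2 j}\|Q_jf\|_{L^2},$$
which is the claim. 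Note that this case needs no restriction on $p$ beyond $p\in(2,\infty)$.

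\textbf{Case $k=-1$.} Since $e^{\pm is|\xi|}|\xi|^{-1}\in L^1_{loc}(\R^3)$, we may decompose
$$P_{-1}|\nabla|^{-1}e^{\pm is|\nabla|}Q_jf=\sum_{k'\le0}\dot P_{k'}P_{-1}|\nabla|^{-1}e^{\pm is|\nabla|}Q_jf.$$
For each $k'\le0$ the operator $2^{k'}|\nabla|^{-1}\psi_{-1}(D)\dot{\psi}_{[[k']]}(D)$ is bounded on $L^r$ uniformly in $k'$, by the same scaling argument as in the first case. The time weight is handled by the elementary pointwise bound
$$(1+s)^{\beta_1/p}\le 2^{-k'\beta_1/p}(1+2^{k'}s)^{\beta_1/p},\qquad k'\le0.$$
Combining this with the homogeneous version of \eqref{local:Stric L4} bounds the $k'$-th piece by
$$2^{-k'}\,2^{-\f{\beta_1}{p}k'}\,2^{\f2p(1+\beta_2)k'+\f2p\beta_2 j}\|Q_jf\|_{L^2}=2^{\left(\f{2+2\beta_2-\beta_1}{p}-1\right)k'}\,2^{\f2p\beta_2 j}\|Q_jf\|_{L^2}.$$

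\textbf{Main obstacle: summing over $k'\le0$.} This is the one step that is not routine. The geometric series converges precisely when
$$\f{2+2\beta_2-\beta_1}{p}-1>0,\quad\text{i.e.}\quad p<2+2\beta_2-\beta_1,$$
which is the hypothesis of the lemma. Under $\f1p+\f1r=\f12$ this is equivalent to $r>2+\f4{2\beta_2-\beta_1}$. The sum is then $\ls 2^{\f2p\beta_2 j}\|Q_jf\|_{L^2}$, which matches the right-hand side of \eqref{local:Stric 5} at $k=-1$, since $2^{(\f{2+2\beta_2}{p}-1)(-1)}\approx1$.

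Two points need care when writing this out. First, one must check that the homogeneous form of Lemma \ref{lem:Localized Strichartz estimate L4} is valid uniformly for all $k'\le0$, including the weight $(1+2^{k'}s)^{\beta_1/p}$ with $k'$ negative; the lemma explicitly asserts this. Second, the interchange of the dyadic sum with the $L^p_tL^r_x$ norm is justified by Minkowski's inequality.
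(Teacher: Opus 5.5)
Your proposal is correct and follows essentially the same route as the paper. For $k\ge0$ you use Bernstein's inequality together with the homogeneous form of \eqref{local:Stric L4}. For $k=-1$ you decompose into homogeneous pieces $\dot P_{k'}$, apply $2^{k'}(1+s)\le 1+2^{k'}s$, and sum a geometric series whose convergence is exactly the condition $p<2+2\beta_2-\beta_1$; the paper sums over $k'\le-1$ rather than $k'\le0$, which makes no difference.
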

	\begin{proof}
		By \eqref{local:Stric L4} with $\pk$ instead of $P_k$ for $k\in\Z$ and Bernstein's inequality, we have
		\begin{equation}\label{2.164}
			\begin{split}
				&\|(1+2^ks)^{\f1p \beta_1}\pk |\nabla|^{-1}e^{\pm is|\nabla|}Q_jf\|_{L^p([t_0,t];L^r)}\\
				&\ls
				2^{-k}\|(1+2^ks)^{\f1p \beta_1} \pk e^{\pm is|\nabla|}Q_jf\|_{L^p([t_0,t];L^r)}\\
				&\ls
				2^{\f2p(1+\beta_2)k-k +\f2p\beta_2 j}\|Q_jf\|_{L^2(\R^2)}.
			\end{split}
		\end{equation}
		When $k\le-1$, owing to $2^k(1+s)\le 1+2^ks$, one can arrive at
		\begin{equation*}
			\|(1+s)^{\f1p \beta_1}\pk |\nabla|^{-1}e^{\pm is|\nabla|}Q_jf\|_{L^p([t_0,t];L^r)}
			\ls
			2^{\f1p(2+2\beta_2-\beta_1-p)k+\f2p\beta_2 j}\|Q_jf\|_{L^2(\R^2)}.
		\end{equation*}
		Note that $e^{\pm it|\xi|}|\xi|^{-1}\in L^1_{loc}(\R^3)$. Summing over $k\le -1$ leads to
		\begin{equation}\label{2.165}
			\|(1+s)^{\f1p \beta_1}P_{-1}|\nabla|^{-1}e^{\pm is|\nabla|}Q_jf\|_{L^p([t_0,t];L^r)}
			\ls
			2^{\f2p\beta_2 j}\|Q_jf\|_{L^2(\R^2)}.
		\end{equation}
		Combining \eqref{2.164} and \eqref{2.165} yields Lemma \ref{lem:Localized Strichartz estimate 5}.
	\end{proof}
	\begin{lemma}\label{lem:Localized Strichartz estimate 6}
		For $\beta_1\in(0,1)$, $\beta_2\in(\beta_1,\min\{\f32\beta_1,1\})$, $p\in(2,2+2\beta_2-\beta_1),\ r\in(2+\f{4}{2\beta_2-\beta_1},\infty)$
with $\f1p+\f1r=\f12$, $t\ge t_0\ge 0$, $j\in\Z$, and integers $k,\ l\ge -1$, one has
		\begin{equation}\label{local:Stric 6}
			2^{\f{\beta_1}{p}(j+k)}\|\q_jP_{k}|\nabla|^{-1}e^{\pm is|\nabla|}Q_lf\|_{L^p([t_0,t];L^r(\R^3))}
			\ls
			2^{(\f{2+2\beta_2}{p}-1)k +\f2p\beta_2 l}\|Q_lf\|_{L^2(\R^2)}.
		\end{equation}
	\end{lemma}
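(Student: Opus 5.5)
The approach follows the proof of Lemma \ref{lem:Localized Strichartz estimate 5}, with Lemma \ref{lem:Localized Strichartz estimate 4} now playing the role that Lemma \ref{lem:Localized Strichartz estimate L4} played there. The spatial weight $2^{\f{\beta_1}{p}(j+k)}$ and the cutoff $\q_j$ commute harmlessly with the frequency analysis. I would therefore treat the high and low frequencies separately.

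\textbf{High frequencies, $k\ge0$.} First note that Lemma \ref{lem:Localized Strichartz estimate 4} still holds with $P_k$ replaced by $\pk$ for every $k\in\Z$. The reason is that its ingredients remain valid for homogeneous projections: \eqref{2.84}, which comes from \eqref{2.72} and \eqref{ineq:local Linfty-L2 estimate7} (both stated for $\pk$), and \eqref{ineq:Strichartz'}. Next write $\pk|\nabla|^{-1}=2^{-k}\tilde{P}_k$, where $\tilde P_k$ has a symbol of the same type as $\dot\psi_{[[k]]}$. Applying the $\pk$ version of \eqref{local:Stric 4} then gives, for every $k\in\Z$,
\begin{equation*}
2^{\f{\beta_1}{p}(j+k)}\|\q_j\pk|\nabla|^{-1}e^{\pm is|\nabla|}Q_lf\|_{L^p([t_0,t];L^r)}\ls 2^{\f2p(1+\beta_2)k-k+\f2p\beta_2 l}\|Q_lf\|_{L^2}.
\end{equation*}
For $k\ge0$ we have $P_k=\pk$, so this is exactly \eqref{local:Stric 6}.

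\textbf{Low frequencies, $k=-1$.} Here the left side of \eqref{local:Stric 6} carries $2^{\f{\beta_1}{p}(j-1)}$. Decompose $P_{-1}|\nabla|^{-1}=\sum_{k'\le0}\dot P_{k'}\psi_{-1}|\nabla|^{-1}$, which is legitimate since $e^{\pm it|\xi|}|\xi|^{-1}\in L^1_{loc}$. Because $2^{k'}\le1$, we have $2^{\f{\beta_1}{p}j}=2^{-\f{\beta_1}{p}k'}\,2^{\f{\beta_1}{p}(j+k')}$, so each piece is bounded by
\begin{equation*}
2^{-\f{\beta_1}{p}k'}\cdot2^{(\f{2+2\beta_2}{p}-1)k'+\f2p\beta_2 l}\|Q_lf\|_{L^2}=2^{\f1p(2+2\beta_2-\beta_1-p)k'+\f2p\beta_2 l}\|Q_lf\|_{L^2}.
\end{equation*}
The hypothesis $p<2+2\beta_2-\beta_1$ makes the exponent of $2^{k'}$ strictly positive. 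The sum over $k'\le0$ therefore converges, giving $2^{\f2p\beta_2 l}\|Q_lf\|_{L^2}$, which is the claimed bound at $k=-1$ (the right-hand exponent there is $O(1)$). The condition on $r$ is just the dual restatement of this condition on $p$ via $\f1p+\f1r=\f12$.

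\textbf{Main obstacle.} The delicate step is justifying the $\pk$ version of Lemma \ref{lem:Localized Strichartz estimate 4} uniformly for all $k\in\Z$ with the weight $2^{\f{\beta_1}{p}(j+k)}$. Rescaling handles this, as in the proof of Lemma \ref{lem:Technical lemma 2}. One should also check that the extra smooth multiplier $\psi_{-1}$ does not spoil the kernel bounds; it does not, since $\dot P_{k'}\psi_{-1}$ is again a localized multiplier with bounded symbol.
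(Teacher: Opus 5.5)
Your proposal is correct and is essentially the paper's argument. The paper also first proves the homogeneous-block bound \eqref{2.166} for every $k\in\Z$, with the extra factor $2^{-k}$, and then sums over $k\le -1$ using $p<2+2\beta_2-\beta_1$ to get \eqref{2.167}, exactly as in your low-frequency step.

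The only difference is packaging: you apply Lemma \ref{lem:Localized Strichartz estimate 4} to the rescaled multiplier $2^k|\nabla|^{-1}\pk$, whereas the paper reruns that lemma's proof with the $\iota=1$ cases of \eqref{2.72} and \eqref{ineq:local Linfty-L2 estimate7}. Those $\iota=1$ cases are precisely what justifies your ``same type of symbol'' claim. A bare Bernstein bound as in \eqref{2.164} would not suffice here, because $\q_j$ does not commute with $|\nabla|^{-1}\pk$; for the same reason, your opening remark that $\q_j$ ``commutes harmlessly with the frequency analysis'' is not literally true.
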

	\begin{proof}
		Analogously to \eqref{2.84}, interpolating \eqref{2.72} with \eqref{ineq:local Linfty-L2 estimate7} yields
that for $\delta=\f{\beta_2/\beta_1-1}{2}\in(0,\f14]$, $k\in\Z$, integers $j, l\ge -1$,
		\begin{equation*}
			2^{\f{\alpha}{2}(j+k)}(1+2^ks)^{\f{1-\alpha}{2}}\|\q_j\pk|\nabla|^{-1}e^{\pm is|\nabla|}Q_lf\|_{L^\infty(\R^3)}
			\ls
			2^{(\f32+\delta)k+(1+\delta)l}(1+2^ks)^{-\f12}\|Q_l f\|_{L^2(\R^3)},
		\end{equation*}
		where $\alpha\in(0,1)$. Following a similar argument as in the proof of Lemma \ref{lem:Localized Strichartz estimate 4}, one has
		\begin{equation}\label{2.166}
			\begin{split}
				\|2^{\f1p \beta_1(j+k)}\q_j\pk |\nabla|^{-1}e^{\pm is|\nabla|}Q_lf\|_{L^p([t_0,t];L^r)}
				\ls
				2^{\f2p(1+\beta_2)k-k +\f2p\beta_2 l}\|Q_lf\|_{L^2(\R^2)}.
			\end{split}
		\end{equation}
		Note that $e^{\pm it|\xi|}|\xi|^{-1}\in L^1_{loc}(\R^3)$. Summing over $k\le -1$ for \eqref{2.166} leads to
		\begin{equation}\label{2.167}
			2^{\f1p \beta_1j}\|\q_jP_{-1}|\nabla|^{-1}e^{\pm is|\nabla|}Q_lf\|_{L^p([t_0,t];L^r)}
			\ls
			2^{\f2p\beta_2 l}\|Q_lf\|_{L^2(\R^2)}.
		\end{equation}
		Combining \eqref{2.166} and \eqref{2.167} yields Lemma \ref{lem:Localized Strichartz estimate 6}.
	\end{proof}
	\begin{cor}\label{cor:Localized Strichartz estimate 6}
For $\beta_1\in(0,1)$, $\beta_2\in(\beta_1,\min\{\f32\beta_1,1\})$, $p\in(2,2+2\beta_2-\beta_1),\ r\in(2+\f{4}{2\beta_2-\beta_1},\infty)$ with $\f1p+\f1r=\f12$, $t\ge t_0\ge 0$ and integer  $k\ge -1$, one has
			\begin{equation}\label{cor:local:Stric 6}
				\sum_{j\ge-1}\|[1+2^k(s+|x|)]^{\f{1}{p}\beta_1}P_{k}|\nabla|^{-1}e^{\pm is|\nabla|}Q_jf\|_{L^p([t_0,t];L^r(\R^3))}
				\ls
				2^{(\f{2+2\beta_2}{p}-1)k}\|\w{x}^{\f{2}{p}\beta_2}f\|_{L^2(\R^3)}.
		\end{equation}
	\end{cor}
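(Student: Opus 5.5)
The proof will copy the arguments of Corollaries \ref{cor:Localized Strichartz estimate 2} and \ref{cor:Localized Strichartz estimate 4}, now using the $|\nabla|^{-1}$ estimates of Lemmas \ref{lem:Localized Strichartz estimate 5} and \ref{lem:Localized Strichartz estimate 6}. First we use $\bigl(1+2^k(s+|x|)\bigr)^{\f{\beta_1}{p}}\approx(1+2^ks)^{\f{\beta_1}{p}}+|2^kx|^{\f{\beta_1}{p}}$. For each $j\ge-1$ this splits the left-hand side of \eqref{cor:local:Stric 6} into a temporal-weight part and a spatial-weight part.

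\emph{Temporal-weight part.} By Lemma \ref{lem:Localized Strichartz estimate 5},
\[
\sum_{j\ge-1}\|(1+2^ks)^{\f{\beta_1}{p}}P_k|\nabla|^{-1}e^{\pm is|\nabla|}Q_jf\|_{L^p([t_0,t];L^r)}\ls 2^{(\f{2+2\beta_2}{p}-1)k}\sum_{j\ge-1}2^{\f2p\beta_2 j}\|Q_jf\|_{L^2}.
\]
The sum over $j$ cannot be closed by Lemma \ref{lem:Qj prop} alone, since it has no decay in $j$. To get a gain, we will rerun the interpolation in Lemma \ref{lem:Localized Strichartz estimate 5} with an intermediate exponent $\beta_2'\in(\beta_1,\beta_2)$, for instance $\beta_2'=\f{\beta_1+\beta_2}{2}$. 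The ranges of $p$ and $r$ must be checked to stay admissible: the constraints $p<2+2\beta_2-\beta_1$ and $r>2+\f4{2\beta_2-\beta_1}$ are open, so they survive for $\beta_2'$ close to $\beta_2$. This may require choosing $\beta_2'$ depending on $p$ rather than as the midpoint. The $k$-power produced is $2^{(\f{2+2\beta_2'}{p}-1)k}\le 2^{(\f{2+2\beta_2}{p}-1)k}$ for $k\ge-1$, up to a harmless constant when $k=-1$. Then
\[
\sum_j 2^{\f2p\beta_2'j}\|Q_jf\|_{L^2}\ls\sum_j2^{-\f2p(\beta_2-\beta_2')j}\|\w{x}^{\f2p\beta_2}f\|_{L^2}\ls\|\w{x}^{\f2p\beta_2}f\|_{L^2},
\]
using the first inequality of Lemma \ref{lem:Qj prop}, with $\f2p\beta_2<1<\f32$.

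\emph{Spatial-weight part.} We decompose $|2^kx|^{\f{\beta_1}{p}}\ls\sum_{j'\in\Z}2^{\f{\beta_1}{p}(j'+k)}\q_{j'}$ and apply Lemma \ref{lem:Localized Strichartz estimate 6} to each piece $\q_{j'}P_k|\nabla|^{-1}e^{\pm is|\nabla|}Q_lf$. The difficulty is the sum over $j'\in\Z$. We handle it as in Corollary \ref{cor:Localized Strichartz estimate 4}, where the summation over $j'$ costs an intermediate exponent $\f{\beta_1+\beta_2}{2}$ in place of $\beta_2$ in the $l$-power. Concretely, we apply Lemma \ref{lem:Localized Strichartz estimate 6} with $(\beta_1',\beta_2)$ and $(\beta_1'',\beta_2)$ for $\beta_1'<\beta_1<\beta_1''$, both in the admissible range. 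The lower exponent controls large $j'$ and the upper one controls small $j'$, which makes the $j'$-sum geometric; the interpolated cost is absorbed in the $l$-exponent as $\f2p\cdot\f{\beta_1+\beta_2}{2}$. The resulting $\sum_l 2^{\f2p\f{\beta_1+\beta_2}{2}l}\|Q_lf\|_{L^2}$ is then summed by Lemma \ref{lem:Qj prop} against $\|\w{x}^{\f2p\beta_2}f\|_{L^2}$, exactly as above.

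The main obstacle is this bookkeeping of summability in $j'$ and $l$ while keeping $(p,r)$ inside the shrinking admissible window $p<2+2\beta_2-\beta_1$. If that window becomes too tight, we would instead fall back on interpolating directly in \eqref{2.84} with a $\theta$ slightly off $\alpha$, which yields a factor $2^{\pm\epsilon j'}$ and hence a convergent sum. Adding the two parts gives \eqref{cor:local:Stric 6}.
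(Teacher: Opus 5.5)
Your route is the paper's. The paper's proof only says to argue as in Corollary \ref{cor:Localized Strichartz estimate 4} using Lemmas \ref{lem:Localized Strichartz estimate 5}--\ref{lem:Localized Strichartz estimate 6}. That means: split the weight, handle the temporal part by Lemma \ref{lem:Localized Strichartz estimate 5} and the spatial part by Lemma \ref{lem:Localized Strichartz estimate 6}, then sum in $j,l$ via Lemma \ref{lem:Qj prop}. Your temporal-weight part is correct. You are also right that $\beta_2$ must be lowered to some $\beta_2'<\beta_2$ chosen depending on $p$. The midpoint $\frac{\beta_1+\beta_2}{2}$ used in Corollary \ref{cor:Localized Strichartz estimate 4} would make Lemma \ref{lem:Localized Strichartz estimate 5} require $p<2+\beta_2$, which is stricter than the stated range.

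The spatial-weight part does not close as written, for two reasons.
\begin{enumerate}
\item The roles of the two exponents are reversed. Since $2^{\frac{\beta_1}{p}(j'+k)}=2^{\frac{\beta_1'}{p}(j'+k)}\,2^{\frac{\beta_1-\beta_1'}{p}(j'+k)}$, the lower exponent $\beta_1'<\beta_1$ gives decay as $j'+k\to-\infty$. The upper exponent $\beta_1''>\beta_1$ gives decay as $j'+k\to+\infty$.
\item More importantly, applying Lemma \ref{lem:Localized Strichartz estimate 6} with $(\beta_1',\beta_2)$ and $(\beta_1'',\beta_2)$ leaves the factor $2^{\frac2p\beta_2 l}\|Q_lf\|_{L^2}$. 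This is exactly the non-summable $l$-sum you identified in the temporal part. Nothing in that step turns the exponent into $\frac2p\cdot\frac{\beta_1+\beta_2}{2}$. Taking the midpoint as the second parameter does not help either: it would require $p<2+\beta_2-(\beta_1''-\beta_1)$, again excluding the upper part of the allowed range of $p$.
\end{enumerate}

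The fix is to apply Lemma \ref{lem:Localized Strichartz estimate 6} with $(\beta_1\pm\epsilon,\beta_2-\eta)$. Choose $\epsilon,\eta>0$, depending on $p$, so that $\beta_1+\epsilon<\beta_2-\eta<\frac32(\beta_1-\epsilon)$ and $p<2+2(\beta_2-\eta)-(\beta_1+\epsilon)$. This is possible because every constraint is open and holds strictly at $\epsilon=\eta=0$. Then:
\begin{itemize}
\item the $j'$-sum is controlled by $\sum_{j'\in\mathbb{Z}}\min\{2^{\frac{\epsilon}{p}(j'+k)},2^{-\frac{\epsilon}{p}(j'+k)}\}\lesssim 1$, uniformly in $k$;
\item the frequency factor $2^{(\frac{2+2(\beta_2-\eta)}{p}-1)k}$ is $\lesssim 2^{(\frac{2+2\beta_2}{p}-1)k}$ for $k\ge-1$;
\item $\sum_{l\ge-1}2^{\frac2p(\beta_2-\eta)l}\|Q_lf\|_{L^2}\lesssim\|\langle x\rangle^{\frac2p\beta_2}f\|_{L^2}$ by the first inequality of Lemma \ref{lem:Qj prop}.
\end{itemize}
With these choices the spatial part closes, and your argument gives the full stated range of $p$.
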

	\begin{proof}
By Lemmas \ref{lem:Localized Strichartz estimate 5}--\ref{lem:Localized Strichartz estimate 6}, following a similar argument
as in the proof of Corollary \ref{cor:Localized Strichartz estimate 4} yields Corollary \ref{cor:Localized Strichartz estimate 6}.
	\end{proof}

\subsection{Proof of Theorem \ref{thm:Weighted Strichartz estimate}}\label{Proof-1}
In this subsection, we complete the proof of Theorem \ref{thm:Weighted Strichartz estimate}.
	\begin{proof}
		Note that for $k\ge-1,\ 1+s+|x|\ls 1+2^k(s+|x|)$ and for $\iota\in\{0,1\}$,
		\begin{equation*}
			\begin{split}
				&\|(1+s+|x|)^{\f{1}{p}\beta_1}P_{k}|\nabla|^{-\iota}e^{\pm is|\nabla|}f\|_{L^p([t_0,t];L^r)} \\
				\ls&
				\sum_{j\ge -1}\|[1+2^k(s+|x|)]^{\f{1}{p}\beta_1}P_{k}|\nabla|^{-\iota}e^{\pm is|\nabla|}Q_jf\|_{L^p([t_0,t];L^r)}.
			\end{split}
		\end{equation*}
		Then \eqref{thm:Weighted Strichartz estimate 1} is proved by Corollary \ref{cor:Localized Strichartz estimate 2} and Corollary \ref{cor:Localized Strichartz estimate 4}; \eqref{thm:Weighted Strichartz estimate 2} is proved by Corollary \ref{cor:Localized Strichartz estimate 2} and Corollary \ref{cor:Localized Strichartz estimate 6}.
	\end{proof}

	\subsection{Weighted $L^2-L^2$ estimates}\label{sec:Weighted L^2-L^2 estimates}

	To prove Theorem~\ref{thm:2}, we also need to establish the following  conclusion.
	\begin{lemma}\label{lem:Technical lemma}
		For $\alpha\in(0,\f32)$, $t > 0 $, integers $j,\ k\ge -1$,
		\begin{itemize}
			\item[(1)] when $j\le \log_2t+100$, one has
			\begin{equation}
				2^{j\alpha}\|Q_jP_k e^{\pm it|\nabla|}f\|_{L^2(\R^3)}
				\ls
				(1+t)^\alpha\sum_{l< j-3}\|Q_lP_k f\|_{L^2(\R^3)}
				+
				\sum_{l\ge j-3}2^{l\alpha}\|Q_lP_k f\|_{L^2(\R^3)},
			\end{equation}
			\item[(2)] when $j\ge \log_2t+100$, one has
			\begin{equation}
				2^{j\alpha}\|Q_jP_k e^{\pm it|\nabla|}f\|_{L^2(\R^3)}
				\ls
				\sum_{l\ge-1}2^{l\alpha}\|Q_lP_k f\|_{L^2(\R^3)}.
			\end{equation}
		\end{itemize}
	\end{lemma}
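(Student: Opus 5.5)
We decompose $P_kf=\sum_{l\ge-1}Q_lP_kf$ and estimate each piece $Q_jP_ke^{\pm it|\nabla|}Q_lP_kf$ separately. Here we use that $e^{\pm it|\nabla|}$ is unitary on $L^2$ and commutes with $P_k$. To localize the kernel in frequency, we write $P_ke^{\pm it|\nabla|}Q_lP_kf = P_{[[k]]}e^{\pm it|\nabla|}\,\cdot$ applied to a frequency-localized piece. Equivalently, we write $e^{\pm it|\nabla|}P_{[[k]]}$ acting on $Q_lP_kf$ and use the convolution kernel $K_k^{0,0}$ from \eqref{def:KklM}, or $K_{-1}$ for $k=-1$. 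The trivial bound is
\[
2^{j\alpha}\|Q_jP_{[[k]]}e^{\pm it|\nabla|}Q_lP_kf\|_{L^2}\le 2^{j\alpha}\|Q_lP_kf\|_{L^2}.
\]
This is used directly whenever $2^j\lesssim 2^l$, i.e. $l\ge j-3$, giving $\lesssim 2^{l\alpha}\|Q_lP_kf\|_{L^2}$. It is also used whenever $2^j\lesssim 1+t$, giving the factor $(1+t)^\alpha$. This already settles case (1): for $j\le\log_2t+100$ and $l<j-3$ we have $2^{j\alpha}\lesssim(1+t)^\alpha$, and for $l\ge j-3$ we bound by $2^{l\alpha}$.

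For case (2), $j\ge\log_2t+100$, the terms with $l\ge j-3$ are handled as above. For $l<j-3$, the supports are separated: if $x\in\supp\psi_j$ and $y\in\supp\psi_l$, then $|x-y|\ge 2^{j-4}>2t$, exactly as in Case 1 of Lemma \ref{lem:Technical lemma 2}. In this regime Lemma \ref{lem:estimate of KklM}(4) (and Lemma \ref{lem:estimate of K-1}(4) for $k=-1$) gives rapid decay of the kernel:
\[
|K_k(t,x-y)|\lesssim_N 2^{3k}(2^k|x-y|)^{-N}\lesssim 2^{3k}(2^{k+j})^{-N},
\]
with $2^{-3j}$ decay when $k=-1$. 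We then apply Schur's test (or Young's inequality) to the truncated kernel $\psi_j(x)K_k(t,x-y)\psi_{[[l]]}(y)$. For $k\ge0$, the kernel's $L^1$ norm over the region $|x-y|\gtrsim 2^j$ is $\lesssim (2^{k+j})^{3-N}$, which gives
\[
2^{j\alpha}\|\cdot\|_{L^2}\lesssim 2^{j\alpha}2^{-(N-3)j}\|Q_lP_kf\|_{L^2}\lesssim 2^{l\alpha}2^{-(j-l)}\|Q_lP_kf\|_{L^2}.
\]
Summing over $l<j-3$ gives a bound by $\sum_l 2^{l\alpha}\|Q_lP_kf\|_{L^2}$.

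For $k=-1$ the kernel only decays like $|x-y|^{-3}$, so its $L^1$ norm on $|x-y|\ge 2^j$ diverges logarithmically. This is the main obstacle. We will use the Hilbert–Schmidt norm of the truncated kernel instead:
\[
\|\psi_jK\psi_{[[l]]}\|_{L^2_{x,y}}\lesssim 2^{-3j}\,2^{3j/2}\,2^{3l/2}=2^{-\frac32(j-l)}.
\]
Then $2^{j\alpha}2^{-\frac32(j-l)}=2^{l\alpha}2^{-(\frac32-\alpha)(j-l)}$, and this is summable in $j-l$ precisely because $\alpha<\frac32$; this is where the hypothesis on $\alpha$ enters. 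For $k\ge0$ the same Hilbert–Schmidt argument, with the stronger decay $2^{3k}(2^{k+j})^{-N}$, also works uniformly in $k$. We will use it in both cases to avoid the $L^1$ issue, which also handles the inhomogeneous $P_{-1}$ piece.
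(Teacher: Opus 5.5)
Your proposal is correct and follows the paper's proof essentially verbatim. Both arguments decompose $P_kf=\sum_l Q_lP_kf$ with the truncated kernel $\psi_j(x)K_k(t,x-y)\psi_{[[l]]}(y)$, use the trivial $L^2$ bound when $l\ge j-3$ or $j\le\log_2t+100$, and in the separated case apply the Hilbert--Schmidt (H\"older in $L^2_xL^2_y$) estimate with $|K_k(t,x-y)|\lesssim|x-y|^{-3}\lesssim2^{-3j}$ uniformly in $k\ge-1$. One small correction: no summability in $j-l$ is needed, since the right-hand side already sums over $l$; you only need $2^{-(\frac32-\alpha)(j-l)}\le1$.
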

	\begin{proof}
		Although the proof procedure is somewhat analogous to that of Lemma \ref{lem:Technical lemma 2},
we still give details for the reader's convenience. Note that
		\begin{equation}
			\begin{split}
				Q_jP_k e^{\pm it|\nabla|}f(x)
				=&Q_jP_{[[k]]} e^{\pm it|\nabla|}\big(\sum_{l\ge-1}Q_{[[l]]}Q_l\big)P_kf(x)\\
				=&\sum_{l\ge-1}(2\pi)^{-3}\int_{\R^3}\psi_j(x)K_k(t,x-y)\psi_{[[l]]}(y)Q_lP_kf(y)dy\\
				=&\sum_{l\ge-1}(2\pi)^{-3} I_{j,k,l}(t,x),
			\end{split}
		\end{equation}
		where $K_k(t,x) = K^{0,0}_k(t,x)$ for $k \ge 0$, $K^{0,0}_k$ is defined in \eqref{def:KklM} and $K_{-1}$ is defined in \eqref{def:K-1}.

\vskip 0.2 true cm
	\noindent	\textbf{Case 1.} $j\ge \log_2t+100$ and $l\le j-3$
\vskip 0.1 true cm
 In this case, due to $-1\le l\le j-3$, for $x\in\supp \psi_j$ and $y\in\supp \psi_{[[l]]}$, then it holds
		\begin{equation*}
			|x-y|\ge|x|-|y|\ge {\f{5}{8}}\cdot2^j-\f{16}{5}\cdot2^l\ge2^{j-4}>2t,
		\end{equation*}
		where we have used $\supp{\psi}_j\subseteq [\f{5}{8}\cdot2^j,\f{8}{5}\cdot2^j]$ for $j\ge 2$
and $\supp{\psi}_{[[l]]}\subseteq[0,\f{16}{5}\cdot2^l]$. Owing to Lemma \ref{lem:estimate of KklM} with $l=M=0$ and Lemma \ref{lem:estimate of K-1}, we have that for any $k\ge-1$,
		\begin{equation}
			|K_k(t,x-y)|\ls |x-y|^{-3}\ls 2^{-3j}.
		\end{equation}
		Therefore, by H\"{o}lder's inequality and $\alpha\in(0,\f32)$, one arrives at
		\begin{equation}\label{2.61}
			\begin{split}
				2^{j\alpha}\|I_{j,k,l}(t,x)\|_{L^2(\R^3)}
				\ls&
				2^{j\alpha}\|\psi_j(x)K_k(t,x-y)\psi_{[[l]]}(y)\|_{L^2_xL^2_y}\|Q_lP_kf\|_{L^2(\R^3)}\\
				\ls&
				2^{(\alpha-\f32) j+\f32 l}\|Q_lP_kf\|_{L^2(\R^3)}\\
				\ls&
				2^{l\alpha}\|Q_lP_kf\|_{L^2(\R^3)}.
			\end{split}
		\end{equation}
\vskip 0.2 true cm
	\noindent	\textbf{Case 2.} $j\le \log_2t+100$ and $l\le j-3$

\vskip 0.1 true cm
		In this case,
		\begin{equation}\label{2.62}
			2^{j\alpha}\|I_{j,k,l}(t,x)\|_{L^2(\R^3)}=	2^{j\alpha}\|Q_jP_{[[k]]} e^{\pm it|\nabla|} Q_lP_kf\|_{L^2(\R^3)}
			\ls (1+t)^\alpha\|Q_lP_kf\|_{L^2(\R^3)}.
		\end{equation}
\vskip 0.1 true cm
	\noindent	\textbf{Case 3.} $l\ge j-3$

\vskip 0.1 true cm
		In this case, one has
		\begin{equation}\label{2.63}
			2^{j\alpha}\|I_{j,k,l}(t,x)\|_{L^2(\R^3)}=	2^{j\alpha}\|Q_jP_{[[k]]} e^{\pm it|\nabla|} Q_lP_kf\|_{L^2(\R^3)}
			\ls 2^{l\alpha }\|Q_lP_kf\|_{L^2(\R^3)}.
		\end{equation}

		Combining \eqref{2.61}--\eqref{2.63} yields Lemma \ref{lem:Technical lemma}.
	\end{proof}
	\begin{cor}\label{cor:Technical lemma}
		For $\beta_1\in(0,\f32)$, $\beta_2>0$, $t\ge 0 $, integers $j,\ k\ge -1$, it holds that
		\begin{equation}\label{YHCCC-33}
			\begin{split}
				\|\w{x}^{\beta_1}P_k e^{\pm it|\nabla|}f\|_{L^2(\R^3)}
				\ls&
				(1+t)^{\beta_1}\|\w{x}^{\beta_2} f\|_{L^2(\R^3)}
				+
				\|\w{x}^{\beta_1+\beta_2 } f\|_{L^2(\R^3)}\\
				\ls&
				(1+t)^{\beta_1+\beta_2 }\|f\|_{L^2(\R^3)}
				+
				\|\w{x}^{\beta_1+\beta_2 } f\|_{L^2(\R^3)}.
			\end{split}
		\end{equation}
	\end{cor}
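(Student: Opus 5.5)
The plan is to derive Corollary \ref{cor:Technical lemma} from Lemma \ref{lem:Technical lemma} by summing over the physical dyadic pieces $Q_j$. Write $\w{x}^{\beta_1}\approx\sum_{j\ge-1}2^{j\beta_1}\psi_j(x)$. Since the supports of the $\psi_j$ overlap only boundedly, we have
\[
\|\w{x}^{\beta_1}P_ke^{\pm it|\nabla|}f\|_{L^2(\R^3)}\ls\big\|2^{j\beta_1}\|Q_jP_ke^{\pm it|\nabla|}f\|_{L^2(\R^3)}\big\|_{\ell^2_j}\le\sum_{j\ge-1}2^{j\beta_1}\|Q_jP_ke^{\pm it|\nabla|}f\|_{L^2(\R^3)}.
\]
Because $\beta_1\in(0,\f32)$, Lemma \ref{lem:Technical lemma} applies with $\alpha=\beta_1$. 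I would split the $j$-sum at $j_0=\log_2t+100$.

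For $j\le j_0$, part (1) gives two kinds of terms. The first is $(1+t)^{\beta_1}\sum_{l<j-3}\|Q_lP_kf\|_{L^2}$. Inserting $1=2^{-l\beta_2}2^{l\beta_2}$ and using $\sum_l2^{-l\beta_2}<\infty$ (this needs $\beta_2>0$), together with the first inequality of Lemma \ref{lem:Qj prop}, bounds it by $(1+t)^{\beta_1}\|\w{x}^{\beta_2}f\|_{L^2}$ for each fixed $j$. The second is $\sum_{l\ge j-3}2^{l\beta_1}\|Q_lP_kf\|_{L^2}$, which I would bound by $2^{-(l-j)\beta_2}\,2^{-j\beta_2}\,2^{l(\beta_1+\beta_2)}\|Q_lP_kf\|_{L^2}$.

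\textbf{Main obstacle.} The sum over $j\le j_0$ introduces roughly $\log(2+t)$ copies of the first term, so a cruder argument loses a logarithm. I would remove it by distributing weight between $j$ and $l$ rather than crudely. For the first term, bound $(1+t)^{\beta_1}\le(1+t)^{\beta_1}2^{-(j_0-j)\epsilon}2^{(j_0-j)\epsilon}$. Alternatively and more simply, for $j\le j_0$ note $2^{j\beta_1}\|Q_jP_ke^{\pm it|\nabla|}f\|_{L^2}\ls2^{(j-j_0)\beta_1}(1+t)^{\beta_1}\|Q_jP_ke^{\pm it|\nabla|}f\|_{L^2}$. Then apply Cauchy--Schwarz in $j$, using the $\ell^2_j$ form above: $\sum_j\|Q_jg\|^2\le\|g\|^2$, so
\[
\sum_{j\le j_0}2^{2j\beta_1}\|Q_jP_ke^{\pm it|\nabla|}f\|_{L^2}^2\ls(1+t)^{2\beta_1}\|P_kf\|_{L^2}^2.
\]
This handles the near region directly by unitarity. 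It gives the second line of \eqref{YHCCC-33} immediately, and the first line as well when $\beta_2$ is not needed. To get the sharper $\|\w{x}^{\beta_2}f\|$ form, I would use Lemma \ref{lem:Technical lemma}(1) in $\ell^2_j$ and apply Schur's test to the kernel $\mathbf 1_{l<j-3}$ weighted by $2^{l\beta_2}$; the geometric factor $2^{-l\beta_2}$ makes the $l$-sum summable, and the $j$-count issue is absorbed by the $\ell^2$ structure after I exchange the order of summation.

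For the terms with $l\ge j-3$, Schur's test with the kernel $2^{(j-l)\beta_1}\mathbf 1_{l\ge j-3}$ is bounded on $\ell^2$ for $\beta_1>0$. This yields $\|(2^{l\beta_1}\|Q_lP_kf\|)_l\|_{\ell^2}\ls\|\w{x}^{\beta_1}P_kf\|_{L^2}\ls\|\w{x}^{\beta_1+\beta_2}f\|_{L^2}$, using the $A_2$ property from Lemma \ref{lem:Qj prop}.

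For $j\ge j_0$, part (2) gives $\sum_l2^{l\beta_1}\|Q_lP_kf\|_{L^2}\ls\sum_l2^{-l\beta_2}\|\w{x}^{\beta_1+\beta_2}P_kf\|_{L^2}$. But the outer $j$-sum is infinite, so here too I would redo Case 1 of Lemma \ref{lem:Technical lemma}'s proof keeping the decay $2^{(\beta_1-\f32)j+\f32l}$ in $j$, which is summable in $j$. The remaining contributions have $l\ge j-3$ and are handled by the same Schur argument. Finally, the second inequality in \eqref{YHCCC-33} follows from $\w{x}^{\beta_2}\le\w{x}^{\beta_1+\beta_2}$ on $|x|\ge1+t$ and $\w{x}^{\beta_2}\ls(1+t)^{\beta_2}$ on $|x|\le1+t$.
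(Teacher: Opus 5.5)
Your proof works, and its skeleton (split at $|x|\approx t$, a trivial bound inside, the kernel decay of Case~1 and the near-diagonal Case~3 outside) is what the paper's one-line proof points to. The difference is in how you sum.

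\textbf{How the paper sums.} The paper only names its ingredients, but they indicate $\ell^1$ sums through Lemma~\ref{lem:Qj prop}, using the weakened bounds in the \emph{statement} of Lemma~\ref{lem:Technical lemma} ($2^{l\alpha}$ instead of $2^{j\alpha}$ when $l\ge j-3$). Each dyadic sum then carries a counting factor, which is paid for with extra weight, e.g.\ $\sum_l (l+5)2^{l\beta_1}\|Q_lP_kf\|_{L^2}\ls\|\w{x}^{\beta_1+\beta_2}f\|_{L^2}$. This is where $\beta_2>0$ enters.

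\textbf{How you sum.} You use $\ell^2_j$ almost-orthogonality and Schur's test on the unweakened pairwise bounds:
\begin{itemize}
\item the kernel $2^{(\beta_1-\f32)(j-l)}\mathbf{1}_{l\le j-3}$ from Case~1;
\item the kernel $2^{(j-l)\beta_1}\mathbf{1}_{l\ge j-3}$, coming from $2^{j\beta_1}\|Q_jP_{[[k]]}e^{\pm it|\nabla|}Q_lP_kf\|_{L^2}\le 2^{j\beta_1}\|Q_lP_kf\|_{L^2}$;
\item unitarity on $j\le j_0$.
\end{itemize}
This gives the stronger bound
\[
\|\w{x}^{\beta_1}P_ke^{\pm it|\nabla|}f\|_{L^2(\R^3)}\ls(1+t)^{\beta_1}\|f\|_{L^2(\R^3)}+\|\w{x}^{\beta_1}f\|_{L^2(\R^3)}.
\]
It implies both lines of \eqref{YHCCC-33} at once, needs no $\beta_2$, and only uses $\w{x}^{2\beta_1}\in A_2$.

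You are also right that the statement of Lemma~\ref{lem:Technical lemma}(2) has no decay in $j$. It cannot be summed over $j\ge j_0$ as stated, so going back to Case~1 of its proof is genuinely necessary; the paper's omitted details hide this. Say explicitly that your Case~3 kernel comes from the pairwise bound above. The lemma's stated term $\sum_{l\ge j-3}2^{l\beta_1}\|Q_lP_kf\|_{L^2}$ corresponds to the kernel $\mathbf{1}_{l\ge j-3}$, which is not bounded on $\ell^2$.

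\textbf{A claim to delete.} You call the $\|\w{x}^{\beta_2}f\|$ form ``sharper''. You propose to reach it by Schur's test on the first term of Lemma~\ref{lem:Technical lemma}(1) over $j\le j_0$, with the $j$-count ``absorbed by the $\ell^2$ structure''. This fails.
\begin{itemize}
\item The column sums of $2^{-l\beta_2}\mathbf{1}_{l<j-3}$ over $-1\le j\le j_0$ are of size $j_0\approx\log_2 t$ at $l=-1$.
\item If $P_kf$ is concentrated near the origin, every $j\in[3,j_0]$ contributes about $(1+t)^{\beta_1}\|P_kf\|_{L^2}$. The $\ell^2_j$ norm is then about $\sqrt{\log t}\,(1+t)^{\beta_1}\|P_kf\|_{L^2}$.
\item The loss is built into that term, because Case~2 replaces $2^{j\alpha}$ by $(1+t)^{\alpha}$ for every $j$.
\end{itemize}
You do not need this step. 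Since $\|f\|_{L^2}\le\|\w{x}^{\beta_2}f\|_{L^2}$, your unitarity bound is already stronger than the first term of \eqref{YHCCC-33}.

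\textbf{Minor slip.} In your last sentence, the inequality needed on $|x|\ge 1+t$ is $(1+t)^{\beta_1}\w{x}^{\beta_2}\le\w{x}^{\beta_1+\beta_2}$. With $(1+t)^{\beta_1}\|f\|_{L^2}$ in hand, the second line of \eqref{YHCCC-33} is immediate anyway.
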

	\begin{proof}
		\eqref{YHCCC-33} follows from Lemma \ref{lem:Technical lemma}, Lemma \ref{lem:Qj prop} and decomposition of
$\R^3=\{|x|<t\}\cup\{|x|\ge t\}$, we omit the details.
	\end{proof}

\section{A priori estimates}\label{Section 4}
In this section, we establish the uniform energy estimates and the Strichartz estimates for problem \eqref{eq:wave}.

\subsection{Reformulation for nonlinear equation in \eqref{eq:wave}}

Since only small data solutions is considered in problem \eqref{eq:wave},
one can assume $Q^{00\gamma\delta}_{1ijkl}=Q^{00\gamma}_{2ijkl}=Q^{00}_{3ijkl}=0$ without loss of generality.
In this case, the system \eqref{eq:wave} takes the following form
\begin{equation}\label{eq:wave2}
	\begin{aligned}
		\square_{c_i} u^i =G^i(u,\p u,\p_x\p u) = \sum_{j=1}^m\sum_{\alpha,\beta=0}^3 \cQ^{\alpha\beta}_{ij} \partial_{\alpha\beta}^2 u^j+\mathcal{S}^i(u,\p u),
	\end{aligned}
\end{equation}
where
\begin{equation}\label{3.102}
\cQ^{00}_{ij}=0,\quad	\cQ^{\alpha\beta}_{ij}
	=\sum_{k,l=1}^m \big[\sum_{\gamma,\delta=0}^{3} Q^{\alpha\beta\gamma\delta}_{1ijkl}\partial_\gamma u^k \partial_\delta u^l
	+\sum_{\gamma=0}^{3} Q^{\alpha\beta\gamma}_{2ijkl}\partial_\gamma u^k  u^l
	+Q^{\alpha\beta}_{3ijkl} u^k  u^l\big]
\end{equation}
and
\begin{equation}
\mathcal{S}^i(u,\p u)=\sum_{j,k,l=1}^m\big[\sum_{\alpha,\beta,\gamma=0}^3S^{\alpha\beta\gamma}_{1ijkl}\partial_{\alpha} u^j\partial_\beta u^k  \partial_\gamma u^l+\sum_{\alpha,\beta=0}^3 S^{\alpha\beta}_{2ijkl}\partial_{\alpha}u^j\partial_\beta u^k  u^l
+\sum_{\alpha=0}^3 S^{\alpha}_{3ijkl}\partial_{\alpha} u^j  u^k  u^l\big].		
\end{equation}
Meanwhile, owing to the symmetric condition \eqref{eq:symmetric condition}, it holds that
\begin{equation}\label{eq:symmetric condition2}
	\cQ^{\alpha\beta}_{ij} =\cQ^{\beta\alpha}_{ij} =\cQ^{\alpha\beta}_{ji}.
\end{equation}

It follows from the spatial Fourier transformation and the equation (\ref{eq:wave2}) that
\begin{equation}\label{eq:u}
	\begin{aligned}
		u^i(t,x) &= \cos(c_it|\nabla|)u^i_0(x) + \frac{\sin(c_it|\nabla|)}{c_i|\nabla|}u^i_1(x) + \int_0^t \frac{\sin\big(c_i(t-s)|\nabla|\big)}{c_i|\nabla|} G^i(u,\p u,\p_x\p u)(s)ds. \\
	\end{aligned}
\end{equation}
Meanwhile,
	\begin{equation}\label{eq:pt Pku}
		\begin{aligned}
			&P_k \partial_tu^i(t,x) \\
			&= -c_i\sin(c_i t|\nabla|)|\nabla| P_ku^i_0(x) + \cos(c_i t|\nabla|) P_k u_1^i(x)
			+ \int_0^t \cos\big(c_i(t-s)|\nabla|\big) P_kG^i(u,\p u,\p_x\p u)(s)ds \\
			&= \frac{c_i}{2\sqrt{-1}}\big(e^{-\sqrt{-1}c_i t|\nabla|} - e^{\sqrt{-1}c_i t|\nabla|}\big) R\cdot\nabla P_k u^i_0(x) + \frac12\big(e^{\sqrt{-1}c_i t|\nabla|} + e^{-\sqrt{-1}c_i t|\nabla|}\big)  P_k u^i_1(x) \\
			&\quad+ \int_0^t \frac12\big(e^{\sqrt{-1}c_i(t-s)|\nabla|} + e^{-\sqrt{-1}c_i(t-s)|\nabla|}\big)P_k G^i(u,\p u,\p_x\p u)(s)ds
		\end{aligned}
	\end{equation}
and
\begin{equation}\label{eq:nabla Pku}
	\begin{aligned}
		&P_k\nabla u^i(t,x) \\
		&= \frac12\big(e^{\sqrt{-1}c_it|\nabla|} + e^{-\sqrt{-1}c_it|\nabla|}\big)  P_k\nabla u^i_0(x) + \frac{1}{2\sqrt{-1}c_i}(e^{\sqrt{-1}c_it|\nabla|} - e^{-\sqrt{-1}c_it|\nabla|})R P_k  u^i_1(x) \\
		 &\quad+ \frac{1}{2\sqrt{-1}c_i}\int_0^t (e^{\sqrt{-1}c_i(t-s)|\nabla|} - e^{-\sqrt{-1}c_i(t-s)|\nabla|}) R P_k G^i(u,\p u,\p_x\p u)(s)ds,
	\end{aligned}
\end{equation}
where $R = \f{\nabla}{|\nabla|}$ denotes the Riesz transformation.

Set $c=(c_1,\cdots,c_m)$.
Then one can abbreviate equations \eqref{eq:u}--\eqref{eq:nabla Pku} as:
\begin{equation}\label{eq:Pku}
P_k u(t,x) = |\nabla|^{-1}e^{\pm \sqrt{-1} ct|\nabla|}(\mathrm{Id},R)\cdot P_k\partial u(0) + \int_0^t  |\nabla|^{-1}e^{\pm \sqrt{-1}c(t-s)|\nabla|}  P_k G(u,\partial u,\p_x\p u)(s)ds,
\end{equation}
\begin{equation}\label{eq:partial Pku}
	P_k \partial u(t,x) = e^{\pm \sqrt{-1} ct|\nabla|}(\mathrm{Id},R)\cdot P_k\partial u(0) + \int_0^t  e^{\pm \sqrt{-1}c(t-s)|\nabla|} (\mathrm{Id},R) P_k G(u,\partial u,\p_x\p u)(s)ds.
\end{equation}

\subsection{Energy estimates}

\begin{lemma}[Energy estimate]\label{lem:Energy estimate}
	Let $\delta>0$ and integer $N \geq 2$. If $u$ is the solution of \eqref{eq:wave2}, where $\|\partial u(t)\|_{H^{N}(\mathbb{R}^3)} \leq \varepsilon_1$ for $t\ge 0$ and $\varepsilon_1>0$ is small, then one has
	\begin{equation}\label{3.4}
		\begin{aligned}
			\|\partial u(t)\|_{H^{N}(\mathbb{R}^3)} &\lesssim \|\partial u(0)\|_{H^{N}(\mathbb{R}^3)}
			+ \big(\sum_{k \geq -1} 2^{(1+\delta)k} \|P_k \partial^{\le 1} u\|_{L^2([0,t];L^\infty(\mathbb{R}^3))}\big)^2 \|\partial u\|_{L^\infty([0,t];H^{N}(\mathbb{R}^3))}.
		\end{aligned}
	\end{equation}
\end{lemma}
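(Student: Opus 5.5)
We run the standard quasilinear energy method on \eqref{eq:wave2}, with a symmetrized energy so that the top-order loss of derivatives cancels, and with every commutator and lower-order term estimated in $L^1_tL^2_x$ by placing two factors in $L^2_tL^\infty_x$ (Besov-type, with weight $2^{(1+\delta)k}$) and one factor in $L^\infty_tH^N_x$.

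\textbf{Energy functional.} For each multi-index $a$ with $|a|\le N$ we apply $\p_x^a$ to \eqref{eq:wave2}. With $v^i=\p_x^au^i$ this gives
\[
\square_{c_i}v^i-\sum_{j}\sum_{\alpha,\beta}\cQ^{\alpha\beta}_{ij}\p^2_{\alpha\beta}v^j=F^i_a,
\]
where
\[
F^i_a=[\p_x^a,\cQ^{\alpha\beta}_{ij}]\p^2_{\alpha\beta}u^j+\p_x^a\mathcal S^i .
\]
Multiplying by $\p_tv^i$, summing over $i$ and integrating over $\R^3$, we use $\cQ^{00}_{ij}=0$ and the symmetry \eqref{eq:symmetric condition2} to integrate by parts in the $\cQ$-terms. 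This yields
\[
\f{d}{dt}E_a(t)=\int \f12(\p\cQ)\,\p v\,\p v\,dx+\int F_a\cdot\p_tv\,dx,
\]
where
\[
E_a=\f12\sum_i\int\big(|\p_tv^i|^2+c_i^2|\nabla v^i|^2\big)dx+\sum_{i,j}\int\big(\cQ^{ab}_{ij}\p_av^i\p_bv^j+2\cQ^{0b}_{ij}\p_bv^j\p_tv^i\big)dx,
\]
with the indices $a,b$ in the second sum running over spatial directions only. Since $\|\cQ\|_{L^\infty}\ls\|\p^{\le1}u\|_{L^\infty}^2\ls\varepsilon_1^2$ by Sobolev, $E_a\approx\|\p v\|_{L^2}^2$. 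The low frequency part of $u$ itself (not $\p u$) enters $\cQ$ through the $u^ku^l$ terms. This is exactly why the statement uses $\p^{\le1}u$ in the Strichartz norm, so we never need $L^2$ control of $u$.

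\textbf{Bounding the error terms.} Integrating in time, we need
\[
\int_0^t\Big(\|\p\cQ\|_{L^\infty}+\|F_a\|_{L^2}\|\p u\|_{H^N}^{-1}\Big)\|\p u\|_{H^N}^2\,ds .
\]
The term $\p\cQ$ is a sum of products like $\p^2u\,\p u$, $\p^2u\,u$, $\p u\,\p u$, and $\p u\,u$. Each of these is bounded in $L^\infty$ by $\|\p^{\le1}u\|_{W^{1,\infty}}^2$ (with $\p^2u$ viewed as $\p$ of $\p^{\le1}u$, and $\p_t^2u$ replaced via the equation, up to harmless higher-order terms). Then
\[
\|\p^{\le1}u\|_{L^2_tW^{1,\infty}}\le\sum_k2^{k}\|P_k\p^{\le1}u\|_{L^2_tL^\infty}\le\sum_k2^{(1+\delta)k}\|P_k\p^{\le1}u\|_{L^2_tL^\infty},
\]
and Cauchy--Schwarz in time gives the square.

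For $F_a$ we use the Moser-type commutator and product estimates, written via the Littlewood--Paley trilinear decomposition indexed by $\cY_k$:
\[
\|[\p_x^a,\cQ]\p^2u\|_{L^2}+\|\p_x^a\mathcal S\|_{L^2}\ls\|\p^{\le1}u\|_{W^{1,\infty}}^2\|\p u\|_{H^N}.
\]
Here the top-order derivatives always fall on a factor of the form $\p u$ or $\nabla\p u$ measured in $H^{N-1}$ or $H^N$. The $u^ku^l$ and $\p u\,u\,u$ terms are the delicate ones, since $u$ itself is not in $H^N$. For these we put the high frequency on $\p_x u$ (any spatial derivative lands on $u$, because $|a|\ge1$ whenever a commutator is nontrivial), and the undifferentiated low-frequency $u$ factors go to $L^\infty$, controlled by the $k=-1$ piece of the Besov sum. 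The $\delta>0$ loss makes the $\ell^1$ summation over the frequency tuples in $\cY_k^2$ (high--high--low interactions) converge.

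\textbf{Main obstacle.} The main obstacle is precisely this product/commutator bookkeeping. We must ensure that no factor is ever undifferentiated $u$ at high frequency in $L^2$, and that the $\p_t^2u$ appearing inside $\p\cQ$ is handled by substituting the equation. After integration in time, summing over $|a|\le N$ and taking the square root gives \eqref{3.4}.
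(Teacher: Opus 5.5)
Your overall route is the paper's. You apply $\p_x^a$, multiply by $\p_t\p_x^au^i$, and use $\cQ^{00}_{ij}=0$ together with \eqref{eq:symmetric condition2} to turn the top-order $\cQ$-terms into divergences plus $(\p\cQ)\,\p v\,\p v$; this is the identity \eqref{3.7}, whose $\p_t$-boundary term is your energy correction. You then bound the commutator and semilinear terms by two $L^2_tL^\infty_x$ Besov factors times $\|\p u\|_{H^N}$. The paper does the commutator with the bilinear $\cX_k$ paraproduct and $B^{1+\delta}_{\infty,1}$ norms, where $\delta$ handles the high--high sum. Your integer-order Moser/Klainerman--Majda bound in $W^{1,\infty}$ is also valid and needs no $\delta$.

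However, the functional $E_a$ you wrote down is wrong, and for it your formula for $\frac{d}{dt}E_a$ is false. Sum over $i,j$ and $p,q\in\{1,2,3\}$. The mixed terms need no correction at all, since $\cQ^{0q}_{ij}=\cQ^{q0}_{ij}=\cQ^{0q}_{ji}$ gives
\[
\p_tv^i\big(\cQ^{0q}_{ij}+\cQ^{q0}_{ij}\big)\p_t\p_qv^j=\p_q\big(\cQ^{0q}_{ij}\p_tv^i\p_tv^j\big)-(\p_q\cQ^{0q}_{ij})\,\p_tv^i\p_tv^j .
\]
The spatial part needs a correction with coefficient $\f12$, not $1$. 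The correct energy is therefore
\[
E_a=\f12\sum_i\int\big(|\p_tv^i|^2+c_i^2|\nabla v^i|^2\big)dx+\f12\sum_{i,j}\int\cQ^{pq}_{ij}\,\p_pv^i\p_qv^j\,dx .
\]
In the space-time form \eqref{3.7}, the two time-boundary contributions $\cQ^{0q}_{ij}\p_tv^i\p_qv^j$ cancel rather than add.

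With your extra term $2\int\cQ^{0q}_{ij}\p_qv^j\p_tv^i\,dx$, the time derivative contains $\p_t^2v^i$. Substituting $\p_t^2v^i=c_i^2\Delta v^i+\cdots$ and integrating by parts leaves, up to sign and lower-order terms,
\[
\int(c_i^2-c_j^2)\,\cQ^{0q}_{ij}\,\p_q\p_pv^j\,\p_pv^i\,dx .
\]
This is not a divergence when the speeds differ, and it involves $N+2$ derivatives of $u$. Likewise, the doubled $\cQ^{pq}$ coefficient leaves an uncancelled top-order term $\int\cQ^{pq}_{ij}\,\p_t\p_pv^i\,\p_qv^j\,dx$. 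The fix is immediate, but this is exactly the step that prevents derivative loss, so it must be done correctly.

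A smaller point concerns the last step. Set $X=\sum_{k\ge-1}2^{(1+\delta)k}\|P_k\p^{\le1}u\|_{L^2_tL^\infty_x}$. Integrating $\frac{d}{dt}E\lesssim\|\p\cQ\|_{L^\infty}E+\|F_a\|_{L^2}E^{1/2}$ gives $E(t)\le E(0)+CX^2\sup_{[0,t]}E$. Taking square roots of that yields only the first power of $X$ in front of $\|\p u\|_{L^\infty_tH^N}$.

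To get $X^2$ as in \eqref{3.4}, you have two options:
\begin{enumerate}
\item integrate $\frac{d}{dt}E^{1/2}\lesssim\|\p\cQ\|_{L^\infty}E^{1/2}+\|F_a\|_{L^2}$ instead; or
\item observe that \eqref{3.4} is trivial when $CX^2\ge\f12$, and that otherwise absorption gives $\|\p u(t)\|_{H^N}\lesssim\|\p u(0)\|_{H^N}$.
\end{enumerate}
The paper's own passage from \eqref{4.29} to \eqref{3.4} (``Gronwall and H\"older'') is equally terse on this point.
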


\begin{proof}
	Acting $\partial_x^a$ with $a\in\N_0^3$ on two sides of \eqref{eq:wave2} yields
	\begin{equation}\label{3.5}
		\begin{aligned}
			\square_{c_i}\partial_x^a u^i &= I_1^{a,i} + I_2^{a,i} +  I_3^{a,i},\\	
			I_1^{a,i} &= \sum_{j=1}^m\sum_{\alpha,\beta=0}^3 \cQ^{\alpha\beta}_{ij} \p_x^a\partial_{\alpha\beta}^2 u^j, \\
			I_2^{a,i} &= \sum_{j=1}^m\sum_{\alpha,\beta=0}^3\sum_{\substack{b+c=a, \\ |b| \leq |a|-1}}  \p_x^c\cQ^{\alpha\beta}_{ij} \p_x^b\partial_{\alpha\beta}^2 u^j, \\
			I_3^{a,i} &= \p_x^a\mathcal{S}^i(u,\p u),\\
		\end{aligned}
	\end{equation}
	where $I_2^{a,i}$ vanishes for $|a|=0$.

	Note that $\int_{\R^3}(\p_t f)^2 + c^2|\nabla f|^2 dx \approx \|\partial f\|_{L^2(\mathbb{R}^3)}^2 $.
Multiplying \eqref{3.5} by $\partial_t \partial_x^a u^i$ and integrating the resulting equality over
	$[0,t] \times \mathbb{R}^3$, it follows from the summation over the index $i$ that
	\begin{equation}\label{3.6}
		\begin{aligned}
			\|\partial_x^a\partial  u(t)\|_{L^2(\mathbb{R}^3)}^2 &\lesssim \| \partial_x^a\partial u(0)\|_{L^2(\mathbb{R}^3)}^2
+ \sum_{i=1}^{m}\big| \int_0^t \int_{\mathbb{R}^3} \partial_t \partial_x^a u^i I_1^{a,i} dx d\tau \big| \\
			&\quad +\sum_{i=1}^{m} \int_0^t \|\partial_t \partial_x^a u^i(\tau)\|_{L^2(\mathbb{R}^3)} \big(\|I_2^{a,i}\|_{L^2(\mathbb{R}^3)}+\|I_3^{a,i}\|_{L^2(\mathbb{R}^3)}\big) d\tau.
		\end{aligned}
	\end{equation}
	Firstly, we deal with the last term in the first line of \eqref{3.6}. Note that
	\begin{equation}\label{3.7}
		\begin{aligned}
			\cQ^{\alpha\beta}_{ij} \partial_t \partial_x^a u^i  \p_x^a\partial_{\alpha\beta}^2 u^j
			=&\partial_\alpha (\cQ^{\alpha\beta}_{ij} \partial_t \partial_x^a u^i  \p_x^a\partial_{\alpha} u^j)
			-\p_\alpha \cQ^{\alpha\beta}_{ij}\partial_t \partial_x^a u^i \partial_\beta \partial_x^a u^j\\
			&-\frac{1}{2} \partial_t (\cQ^{\alpha\beta}_{ij} \partial_t \partial_x^a u^i  \p_x^a\partial_{\alpha} u^j)
			+\frac{1}{2} \p_t \cQ^{\alpha\beta}_{ij}  \partial_\alpha \partial_x^a u^i \partial_\beta \partial_x^a u^j ,
		\end{aligned}
	\end{equation}
	where the Einstein summation convention for $i,j = 1,\cdots,m,\ \alpha,\beta=0,1,2,3$ and the symmetric condition \eqref{eq:symmetric condition2} are used in \eqref{3.7}. On the other hand, noting that if $u\in H^2(\R^3)\subset S'_h(\R^3)$, then it follows from Bernstein's inequality that
	\begin{equation*}
		\|u\|_{L^\infty(\R^3)}
		\ls
		\sum_{k\in\Z}2^{\f32 k}\|\pk u\|_{L^2(\R^3)}
		\ls
		\sum_{k\in\Z}2^{\f12 k}\|\pk\nabla u\|_{L^2(\R^3)}
		\ls
		\|\nabla u\|_{H^{1}(\R^3)}.
	\end{equation*}
	Hence, it holds
	\begin{equation}\label{3.8}
		\begin{aligned}
			\|\cQ^{\alpha\beta}_{ij}(t)\|_{L^\infty}
			\ls
			\|\partial u(t)\|_{L^\infty(\mathbb{R}^3)}^2 +\|u(t)\|_{L^\infty}^2
			\ls
			\|\partial u(t)\|_{H^2}^2
			\ls \ve_1^2.\\
		\end{aligned}
	\end{equation}
	It follows from \eqref{initial:data} (or \eqref{initial:data2}), \eqref{3.8} and integration over $[0,t] \times \mathbb{R}^3$
for \eqref{3.7} that
	\begin{equation*}
		\begin{aligned}
		\sum_{i=1}^{m}\big| \int_0^t \int_{\mathbb{R}^3} \partial_t \partial_x^a u^i I_1^{a,i} dx dt\big|
        &\lesssim \ve^2\|\partial_x^a\partial  u(0)\|_{L^2}^2 + \ve^2_1\|\partial_x^a\partial  u(t)\|_{L^2}^2 \\
		&\quad + \int_0^t  \|\partial_x^{\le 1}\partial^{\le 1}  u(\tau)\|_{L^\infty}^2 \|\partial_x^a\partial  u(\tau)\|_{L^2}^2 d\tau,
		\end{aligned}	
	\end{equation*}
	where the boundary term in \eqref{3.7} vanishes due to $\cQ^{\alpha\beta}_{ij}\partial_t \partial_x^a u^i \partial_\beta \partial_x^a u^j  \in L^1(\mathbb{R}^3)$.\\
	Next, we treat $\|I_2^{a,i}\|_{L^2(\mathbb{R}^3)}$ with $|a| \geq 1$ in \eqref{3.5}.
It is easy to find that
	\begin{equation*}
		\begin{aligned}
			\|I_2^{a,i}\|_{L^2}
			\lesssim& \big\| \|P_k I_2^{a,i}\|_{L^2}\big\|_{\ell_k^2} \\
			\lesssim& \sum_{j=1}^m\sum_{\alpha,\beta=0}^3\sum_{\substack{b+c=a, \\ |b| \leq |a|-1}} \big\| \sum_{(k_1,k_2) \in \mathcal{X}_k} \| P_k(P_{k_1} \partial_{\alpha\beta}^2 \partial_x^b u^j P_{k_2} \p_x^c\cQ^{\alpha\beta}_{ij} )\|_{L^2} \big\|_{\ell_k^2}.
		\end{aligned}
	\end{equation*}
Note that $\mathcal{Q}^{00}_{ij}=0$ and $|c|\ge 1$. When $\max\{k_1, k_2\} = k_1$, one then has that by Bernstein's inequality
	\begin{equation*}
		\begin{aligned}
			&\|P_k(P_{k_1} \partial_{\alpha\beta}^2 \partial_x^b u^j P_{k_2} \p_x^c\cQ^{\alpha\beta}_{ij} )\|_{L^2}\\
			&\lesssim \|P_{k_1} \partial\partial_x \partial_x^b u\|_{L^2} \|P_{k_2} \p_x^c\cQ^{\alpha\beta}_{ij} \|_{L^\infty} \\
			&\lesssim 2^{k_1(|b|+1)+k_2(|c|-1)} \|P_{k_1} \partial u\|_{L^2} \|P_{k_2}\p_x \cQ^{\alpha\beta}_{ij}\|_{L^\infty}  \\
			&\lesssim 2^{k_1|a|} \|P_{k_1} \partial u\|_{L^2} \|P_{k_2} \p_x \cQ^{\alpha\beta}_{ij}\|_{L^\infty}.
		\end{aligned}	
	\end{equation*}
	Therefore,
	\begin{equation*}
		\begin{aligned}
			\big\| \sum_{\substack{(k_1,k_2) \in \mathcal{X}_k, \\ \max\{k_1,k_2\}=k_1}} \| P_k(P_{k_1} \partial_{\alpha\beta}^2 \partial_x^b u^j P_{k_2} \p_x^c\cQ^{\alpha\beta}_{ij} )\|_{L^2}\big\|_{\ell_k^2}
			\lesssim \|\partial u\|_{H^{|a|}}  \| \cQ^{\alpha\beta}_{ij} \|_{B^{1+\delta}_{\infty,1}}.
		\end{aligned}
	\end{equation*}
	When $\max\{k_1, k_2\} = k_2$, we can analogously obtain
	\begin{equation}\label{3.11}
	\begin{aligned}
	\big\| \sum_{\substack{(k_1,k_2) \in \mathcal{X}_k, \\ \max\{k_1,k_2\}=k_2}} \|P_k(P_{k_1} \partial_{\alpha\beta}^2 \partial_x^b u^j P_{k_2} \p_x^c\cQ^{\alpha\beta}_{ij} )\|_{L^2}\big\|_{\ell_k^2}
	\lesssim \|\p_x\cQ^{\alpha\beta}_{ij} \|_{H^{|a|-1}} \| \partial u\|_{B^{1+\delta}_{\infty,1}} .
	\end{aligned}	
	\end{equation}
Note that both $\dot{B}^s_{p,r}(\R^3)\cap L^\infty(\R^3)$ and $B^s_{p,r}(\R^3)\cap L^\infty(\R^3)$ are algebras for $s > 0$, $p,r\in[1,\infty]$.
Then for $1\le |a|\le N$, it holds
\begin{equation*}
\begin{split}
\| \cQ^{\alpha\beta}_{ij} \|_{B^{1+\delta}_{\infty,1}}&\ls \| \p^{\le 1} u \|_{B^{1+\delta}_{\infty,1}}\| \p^{\le 1} u \|_{L^{\infty}}\ls \| \p^{\le 1} u \|_{B^{1+\delta}_{\infty,1}}^2,\\
\|\p_x\cQ^{\alpha\beta}_{ij} \|_{H^{|a|-1}}&\ls\|\p_x\cQ^{\alpha\beta}_{ij} \|_{L^{2}} +\|\cQ^{\alpha\beta}_{ij} \|_{\dot{H}^{|a|}}\\
&\ls\|\partial_x^{\le 1}\partial  u\|_{L^2}\|\partial^{\le 1}  u\|_{L^\infty}
+\|\partial^{\le 1}  u\|_{\dot{H}^{|a|}} \|\partial^{\le 1}  u\|_{L^\infty}\\
&\ls\|\partial u\|_{{H}^{|a|}} \|\partial^{\le 1}  u\|_{L^\infty}.
\end{split}
\end{equation*}
Hence,
\begin{equation*}
\sum_{i=1}^{m}\|I_2^{a,i}\|_{L^2}\ls \|\partial u\|_{H^{|a|}} \|\partial^{\le1} u\|_{B^{1+\delta}_{\infty,1}}^2.
\end{equation*}
Finally, we handle $\|I_3^{a,i}\|_{L^2(\mathbb{R}^3)}$ in \eqref{3.5}. For $|a|= 0$, one has
\begin{equation*}
\sum_{i=1}^{m}\|I_3^{0,i}\|_{L^2}\ls \|\p u\|_{L^2}\|\p^{\le 1} u\|_{L^\infty}^2.
\end{equation*}
For $1\le|a|\le N$, it holds
\begin{equation}\label{3.15}
\begin{split}
\sum_{i=1}^{m}\|I_3^{a,i}\|_{L^2}
&\ls \|\partial u\|_{\dot{H}^{|a|}} \|\partial^{\le1} u\|_{L^\infty}^2 + \|\partial^{\le 1} u\|_{\dot{H}^{|a|}} \|\partial^{\le1} u\|_{L^\infty}\|\partial u\|_{L^\infty}\\
&\ls 	\|\partial u\|_{{H}^{|a|}} \|\partial^{\le 1}  u\|_{L^\infty}^2.
\end{split}	
\end{equation}
Collecting \eqref{3.6}--\eqref{3.15} together with the smallness of  $\ve$ and $\ve_1$ yields
\begin{equation}\label{4.29}
\begin{aligned}
\|\partial  u(t)\|_{H^{|a|}(\mathbb{R}^3)}^2 &\lesssim \|\partial  u(0)\|_{H^{|a|}(\mathbb{R}^3)}^2
+\int_0^t   \|\partial^{\le1} u(\tau)\|_{B^{1+\delta}_{\infty,1}(\R^3)}^2   \|\partial  u(\tau )\|_{H^{|a|}(\mathbb{R}^3)}^2 d\tau.
\end{aligned}
\end{equation}
Then \eqref{3.4} is derived  from \eqref{4.29}, Gronwall's inequality and H\"older's inequality.
\end{proof}

\subsection{Strichartz estimate for Theorem \ref{thm:1}}

\begin{lemma}\label{lem:Estimate of Besov norm}
	For $t\ge0$ and $\delta>0$, it holds that
	\begin{equation}\label{ineq:Estimate of Besov norm}
	\sum_{k\ge -1}2^{(1+\delta) k}\|P_k\p^{\le 1} u\|_{L^{2}_tL^\infty_x}
	\ls
	\ln(e+t)
	\big[\|\p u(0)\|_{H_x^{2+3\delta}}+
	\big(\sum_{k \geq -1}2^k\|P_k\p^{\le 1} u\|_{L^2_tL^\infty_x}\big)^2\|\p u\|_{L^\infty_t H_x^{3+3\delta}}\big].
	\end{equation}
Moreover, if $G(u, \p u, \p^2 u)$ is independent of $u$ (see \eqref{eq:C independent u}), then we have
	\begin{equation}
		\sum_{k\ge -1}2^{(1+\delta) k}\|P_k\p u\|_{L^{2}_tL^\infty_x}
		\ls
		\ln^{\f12}(e+t)
		\big[\|\p u(0)\|_{H_x^{2+3\delta}}+
		\big(\sum_{k \geq -1}2^k\|P_k\p u\|_{L^2_tL^\infty_x}\big)^2\|\p u\|_{L^\infty_t H_x^{3+3\delta}}\big].
	\end{equation}
\end{lemma}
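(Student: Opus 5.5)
We use the Duhamel representations \eqref{eq:Pku}--\eqref{eq:partial Pku} frequency by frequency and apply the endpoint Strichartz bounds of Lemmas \ref{lem:strichartz L2Linfty}--\ref{lem:strichartz L2Linfty2}. The $\ln^{1/2}(1+2^kt)$ loss in \eqref{ineq:strichartz L2Linfty} and the $\ln(e+2^kt)$ loss in \eqref{ineq:strichartz L2Linfty 2} account for the two powers of the logarithm. Rescaling the speeds $c_i$ only changes constants. Throughout, $\ln(e+2^kt)\ls k_+ + \ln(e+t)$, so the extra $k_+$ is absorbed by spending a small amount of regularity, namely $2^{\delta k}k_+\ls 2^{2\delta k}$.

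\emph{Linear part.} For $P_k\p u$ we apply \eqref{ineq:strichartz L2Linfty} to $e^{\pm ict|\nabla|}(\Id,R)P_k\p u(0)$, which gives
\[
2^{(1+\delta)k}\|P_k\p u\|_{L^2_tL^\infty_x}\ls 2^{(2+\delta)k}\ln^{1/2}(e+2^kt)\|P_k\p u(0)\|_{L^2}.
\]
For $P_k u$ we use the factor $|\nabla|^{-1}$ and Lemma \ref{lem:strichartz L2Linfty2} when $k=-1$. When $k\ge0$ we use \eqref{ineq:strichartz L2Linfty} with Bernstein's inequality, which produces $2^{\delta k}\ln^{1/2}(e+2^kt)$. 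In both cases we then sum in $k$ by Cauchy--Schwarz: since $\sum_k 2^{(2+2\delta)k}\|P_k\p u(0)\|_{L^2}\ls\|\p u(0)\|_{H^{2+3\delta}}$, this yields the first term on the right.

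\emph{Inhomogeneous part.} We use Minkowski's inequality in $s$. The Duhamel term is $\int_0^t\mathbf 1_{s<\tau}e^{\pm ic(\tau-s)|\nabla|}P_kG(s)\,ds$, and for each fixed $s$ Lemma \ref{lem:strichartz L2Linfty} applies on $[s,t]$ after the time translation $\tau\mapsto\tau-s$. This gives
\[
\|P_k\p u\|_{L^2_tL^\infty_x}\ls\text{(linear)}+2^k\ln^{1/2}(e+2^kt)\|P_kG\|_{L^1_tL^2_x},
\]
and similarly for $P_ku$ with $\ln(e+2^kt)$. Note that the weight of $\p_x\p u$ in $G$ does not matter, since everything is measured in $L^2$. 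It remains to bound
\[
\sum_k 2^{(2+2\delta)k}\|P_kG\|_{L^1_tL^2_x}\ls \|G\|_{L^1_tH^{2+3\delta}_x}.
\]
Here $G$ is cubic, with one factor $\p^{\le1}\p u$ or $\p u$ and two factors $\p^{\le1}u$. We estimate it by paraproducts over $\cY_k$ exactly as for $I_2,I_3$ in Lemma \ref{lem:Energy estimate}: put the two low-frequency factors in $L^\infty$ and the highest-frequency factor in $H^{3+3\delta}$. Using $\|\p^{\le 1}\p u\|_{H^{2+3\delta}}\ls\|\p u\|_{H^{3+3\delta}}$, the time integral becomes
\[
\int_0^t\|\p^{\le1}u\|_{L^\infty}^2\|\p u\|_{H^{3+3\delta}}\,ds\le\big(\textstyle\sum_k2^k\|P_k\p^{\le1}u\|_{L^2_tL^\infty_x}\big)^2\|\p u\|_{L^\infty_tH^{3+3\delta}},
\]
where $\|P_k\p^{\le1}u\|_{L^\infty}$ is controlled by summing over $k$. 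When two factors sit at comparable high frequency, the factor $2^k$ attached to the $L^\infty$ sum is exactly what is needed to move derivatives onto the $H^{3+3\delta}$ factor. The weight $2^k$ cannot be dropped to $2^0$ there because of the $\p_x\p u$ term.

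\emph{Main obstacle: the $u$-dependent terms.} The delicate point is that when $G$ depends on $u$, the factor $u$ itself, and not only its derivatives, must be put in $L^2_tL^\infty_x$. Its low-frequency part $P_{-1}u$ is only controlled through Lemma \ref{lem:strichartz L2Linfty2}, which costs the full $\ln(e+t)$ rather than $\ln^{1/2}$; this is why the general case carries $\ln(e+t)$. When $G$ is independent of $u$ (see \eqref{eq:C independent u}), only $P_k\p u$ appears on both sides, so only \eqref{ineq:strichartz L2Linfty} is used. That gives the $\ln^{1/2}(e+t)$ version, with $\p^{\le1}u$ replaced by $\p u$ throughout.
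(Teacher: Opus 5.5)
Your proposal is correct and follows the paper's proof essentially step by step: Duhamel's formula \eqref{eq:Pku}--\eqref{eq:partial Pku} with Minkowski in time, the endpoint bounds of Lemmas \ref{lem:strichartz L2Linfty}--\ref{lem:strichartz L2Linfty2} (the full $\ln$ rather than $\ln^{1/2}$ comes from the $|\nabla|^{-1}$ term at low frequency), absorbing $\ln(e+2^kt)\lesssim 2^{\delta k}\ln(e+t)$, Cauchy--Schwarz in $k$ to reach $H^{2+3\delta}$, and a cubic product estimate, which the paper does with a direct Leibniz-type bound \eqref{4.36} rather than paraproducts. One small slip: since $\mathcal{Q}^{00}_{ij}=0$, the second-order factor is $\p_x^{\le1}\p u$, not $\p^{\le1}\p u$ (the $\p_t^2u$ part of the latter is not controlled by $\|\p u\|_{H^{3+3\delta}}$), and the $L^\infty$ factors in your time integral should read $\|\p_x^{\le1}\p^{\le1}u\|_{L^\infty}^2$, which is still bounded by $\bigl(\sum_k 2^k\|P_k\p^{\le1}u\|_{L^\infty}\bigr)^2$.
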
		
\begin{proof}
	From \eqref{eq:Pku} and \eqref{eq:partial Pku}, one has
	\begin{equation*}
	\begin{split}
	\|P_k\p^{\le 1} u\|_{L^{2}_tL^\infty_x}
	\ls
	&\sum_{\iota=0,1}\big\|\int_{0}^{s} |\nabla|^{-\iota}e^{\pm \sqrt{-1}c(s-\tau)|\nabla|} (\mathrm{Id},R) P_k G(u,\p u,\p_x\p u)(\tau)d\tau\big\|_{L^{2}_s([0,t])L^\infty_x}\\
	&+\sum_{\iota=0,1}\||\nabla|^{-\iota}e^{\pm \sqrt{-1}cs|\nabla|}(\mathrm{Id},R)\cdot P_k\partial u(0)\|_{L^{2}_sL^\infty_x}.
	\end{split}
	\end{equation*}
	Then it follows from Minkowski's inequality that
	\begin{equation*}
	\begin{split}
	\|P_k\p^{\le 1} u\|_{L^{2}_tL^\infty_x}
	\ls
	&\sum_{\iota=0,1}\int_{0}^{t}\big\| |\nabla|^{-\iota}e^{\pm \sqrt{-1}c(s-\tau)|\nabla|} (\mathrm{Id},R) P_k G(u,\p u,\p_x\p u)(\tau)d\tau\big\|_{L^{2}_s([\tau,t])L^\infty_x}d\tau\\
	&+\sum_{\iota=0,1}\||\nabla|^{-\iota}e^{\pm \sqrt{-1}cs|\nabla|}(\mathrm{Id},R)\cdot P_k\partial u(0)\|_{L^{2}_sL^\infty_x}.
	\end{split}
	\end{equation*}
	Applying Lemmas \ref{lem:strichartz L2Linfty}--\ref{lem:strichartz L2Linfty2} and the $L^2$ isometry property of $R = \f{\nabla}{|\nabla|}$
 yields
	\begin{equation}\label{inbootstrap1}
	\begin{split}
	2^{(1+\delta) k}\|P_k\p^{\le 1 } u\|_{L^{2}_tL^\infty_x}
	\ls
	&2^{(2+\delta) k}
	\int_{0}^{t}\ln(e+2^k c(t-\tau))\left\|(\mathrm{Id},R) P_k G(u,\p u,\p_x\p u)(\tau)\right\|_{L^2_x}d\tau\\
	&+2^{(2+\delta) k}\ln(e+2^k ct)\|(\mathrm{Id},R)\cdot P_k\partial u(0)\|_{L^2_x}\\
	\ls
	&2^{(2+\delta) k}\ln(e+2^k ct)
	\int_{0}^{t}\left\| P_kG(u,\p u,\p_x\p u)(\tau)\right\|_{L^2_x}d\tau\\
	&+2^{(2+\delta) k}\ln(e+2^k ct)\|P_k\p u(0)\|_{L^2_x}.
	\end{split}
	\end{equation}
	Owing to $\ln(e+2^k ct)\le (k+2)\ln2+\ln(c+1)+\ln(e+t) \ls 2^{\delta k}\ln(e+t)$ for all $k\ge-1$ and $t\ge0$,
one can arrive at
	\begin{equation}\label{3.12}
		2^{(1+\delta) k}\|P_k\p^{\le1} u\|_{L^{2}_tL^\infty_x}
		\ls
		2^{(2+2\delta) k}\ln(e+t)\big[\|P_k\p u(0)\|_{L^2_x}+
		\int_{0}^{t}\big\| P_k G(u,\p u,\p_x\p u)(\tau)\big\|_{L^2_x}d\tau\big].
	\end{equation}
Summing over $k$ in \eqref{3.12} yields
	\begin{equation}\label{inbootstrap2}
	\sum_{k\ge -1}2^{(1+\delta) k}\|P_k\p^{\le1} u\|_{L^{2}_tL^\infty_x}
	\ls \ln(e+t)
	\big[\|\p u(0)\|_{H_x^{2+3\delta}}
	+\int_{0}^{t}\left\|G(u,\p u,\p_x\p u)(\tau)\right\|_{H_x^{2+3\delta}}d\tau\big].
	\end{equation}
In addition,
	\begin{equation}\label{4.36}
		\begin{aligned}
			&\left\|G(u,\p u,\p_x\p u)(\tau)\right\|_{H_x^{2+3\delta}}\\
			&\ls
			\|\p_x^{\le 1}\p u\cdot\p^{\le 1} u\cdot\p^{\le 1} u\|_{L_x^2}
			+
			\|\p_x^{\le 1}\p u\cdot\p^{\le 1} u\cdot\p^{\le 1} u\|_{\dot{H}_x^{2+3\delta}}\\
			&\ls
			\|\p_x^{\le 1}\p u\|_{L_x^2} \|\p^{\le 1} u\|_{L_x^\infty}^2
			+
			\|\p_x^{\le 1}\p u\|_{\dot{H}_x^{2+3\delta}} \|\p^{\le 1} u\|_{L_x^\infty}^2
			+
			\|\p^{\le 1} u\|_{\dot{H}_x^{2+3\delta}} \|\p_x^{\le 1}\p u\|_{L_x^\infty}\|\p^{\le 1} u\|_{L_x^\infty}\\
			&\ls
			\|\p u\|_{{H}_x^{3+3\delta}} \|\p_x^{\le 1}\p^{\le 1} u\|_{L_x^\infty}^2.		
		\end{aligned}
	\end{equation}
	By \eqref{inbootstrap2} and \eqref{4.36}, we have
	\begin{equation}\label{YHCCC-401}
		\sum_{k\ge -1}2^{(1+\delta) k}\|P_k\p^{\le1} u\|_{L^{2}_tL^\infty_x}
		\ls \ln(e+t)
		\big[\|\p u(0)\|_{H_x^{2+3\delta}}
		+\big(\sum_{k \geq -1}2^k\|P_k\p^{\le1} u\|_{L^2_tL^\infty_x}\big)^2\|\p u\|_{L^\infty_t H_x^{3+3\delta}}\big].
	\end{equation}
	On the other hand, under condition \eqref{eq:C independent u}, it follows from
Lemma \ref{lem:strichartz L2Linfty} that \eqref{inbootstrap1} can be improved to
	\begin{equation}
		\begin{split}
			2^{(1+\delta) k}\|P_k\p u\|_{L^{2}_tL^\infty_x}
			\ls
			&2^{(2+\delta) k}\ln^{\f12}(e+2^k ct)
			\int_{0}^{t}\left\| P_k  G(\p u,\p_x\p u)(\tau)\right\|_{L^2_x}d\tau\\
			&+2^{(2+\delta) k}\ln^{\f12}(e+2^k ct)\|P_k\p u(0)\|_{L^2_x}.
		\end{split}
	\end{equation}
	Then, proceeding exactly as in the proof of estimates \eqref{3.12}--\eqref{YHCCC-401}, we have
	\begin{equation*}
		\sum_{k\ge -1}2^{(1+\delta) k}\|P_k\p u\|_{L^{2}_tL^\infty_x}
		\ls \ln^{\f12}(e+t)
		\big[\|\p u(0)\|_{H_x^{2+3\delta}}
		+\big(\sum_{k \geq -1}2^k\|P_k\p u\|_{L^2_tL^\infty_x}\big)^2\|\p u\|_{L^\infty_t H_x^{3+3\delta}}\big].
	\end{equation*}
	Thus, the proof of Lemma \ref{lem:Estimate of Besov norm} is complete.
\end{proof}	

\subsection{Strichartz estimate I for Theorem \ref{thm:2}}

\begin{lemma}\label{lem:Estimate of Besov norm 2}
	For $\mu\in(0,1),\ \delta\in (0,10^{-4}\mu)$ and $t\ge0$, it holds that
	\begin{equation}\label{ineq:Estimate of Besov norm 2}
	\begin{split}
	&\sum_{k\ge -1}2^{(1+\delta) k}\|(1+s+|x|)^{\f{\mu}{2}-10\delta}P_k\p^{\le 1} u\|_{L^{2}_tL^\infty_x}\\
	\ls&
	\sum_{k\ge -1}2^{(2 +\mu -18.9\delta) k}\|\w{x}^{\mu}P_k\partial u(0)\|_{L^2_x}\\
	&+
	\sum_{k\ge -1}2^{(3 +\mu -18.8\delta) k}\|(1+s+|x|)^{2.2\delta} P_{k}\p u\|_{L_t^{1+\f{1}{4\delta}}L^{\f{2+8\delta}{1-4\delta}}_x}\\
	&\quad\times
	\big(\sum_{l\ge-1}2^{(2 +\mu-18.8\delta) l} \|(1+s+|x|)^{\f{\mu}{2}-11\delta}P_{l}\p^{\le 1} u\|_{L_t^{2+8\delta}L^{2+\f{1}{2\delta}}_x}\big)^2.
	\end{split}
	\end{equation}
\end{lemma}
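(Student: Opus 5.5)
Start from the Duhamel representations \eqref{eq:Pku}--\eqref{eq:partial Pku} for $P_k u$ and $P_k\partial u$. Apply Minkowski's inequality in time exactly as in the proof of Lemma \ref{lem:Estimate of Besov norm}, so that both the free part and each Duhamel slice $\tau\mapsto e^{\pm\sqrt{-1}c(s-\tau)|\nabla|}P_kG(\tau)$ on $s\in[\tau,t]$ are estimated by the endpoint weighted Strichartz estimate of Theorem \ref{thm:Weighted Strichartz estimate}. Take the case $p=2$, $r=\infty$ of \eqref{thm:Weighted Strichartz estimate 1} for $\iota=0$, and the endpoint of \eqref{thm:Weighted Strichartz estimate 2} (equivalently Corollary \ref{cor:Localized Strichartz estimate 2} with $\iota=1$) for the $|\nabla|^{-1}$ part.

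The parameters are $\beta_1=\mu-20\delta$ and $\beta_2$ slightly above $\beta_1$, say $\beta_2=\mu-19.9\delta\in(\beta_1,\min\{\frac32\beta_1,1\})$, which is admissible since $\delta<10^{-4}\mu$. The factor $2^{(1+\beta_2)k}$, combined with the prefactor $2^{(1+\delta)k}$, gives the $2^{(2+\mu-18.9\delta)k}\|\langle x\rangle^{\mu}P_k\partial u(0)\|_{L^2}$ term after bounding $\langle x\rangle^{\beta_2}\le\langle x\rangle^{\mu}$. The time translation needs care. The weight is $(1+s+|x|)$, but after shifting $s\mapsto s-\tau$ the Strichartz estimate only provides $(1+(s-\tau)+|x|)^{\beta_1/2}$. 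So for the Duhamel term I will bound
\[
(1+s+|x|)\lesssim(1+\tau)\bigl(1+(s-\tau)+|x|\bigr),
\]
and put the extra factor $(1+\tau)^{\beta_1/2}$ onto the source. This requires $\|\langle x\rangle^{\beta_2}P_kG(\tau)\|_{L^2}(1+\tau)^{\beta_1/2}\lesssim\|(1+\tau+|x|)^{\mu-19.9\delta}P_kG(\tau)\|_{L^2}$. This holds since $\beta_1/2+\beta_2\le\mu-19.9\delta$, using $\langle x\rangle\le 1+\tau+|x|$ on both factors.

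It remains to control $\int_0^t 2^{(2+\mu-18.9\delta)k}\|(1+\tau+|x|)^{\mu-19.9\delta}P_kG(\tau)\|_{L^2}\,d\tau$, summed over $k$. Decompose $G$ paraproductwise as $P_k(P_{k_1}\cdot P_{k_2}\cdot P_{k_3})$ with $(k_1,k_2,k_3)\in\mathcal Y_k$. Each term has the form $\partial_x^{\le1}\partial u\cdot\partial^{\le1}u\cdot\partial^{\le1}u$; the quasilinear factor carries one extra derivative, which is absorbed by Bernstein into the exponent on the highest frequency. Since $2^k\lesssim 2^{\max k_i}$, distribute $2^{(2+\mu-18.9\delta)k}$ so that the factor $\partial u$ in $L^{1+\frac1{4\delta}}_tL^{\frac{2+8\delta}{1-4\delta}}_x$ receives $2^{(3+\mu-18.8\delta)k_1}$. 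The remaining two factors in $L^{2+8\delta}_tL^{2+\frac1{2\delta}}_x$ receive $2^{(2+\mu-18.8\delta)k_i}$; the small gains of $0.1\delta$ in the frequency exponents leave room to sum over the non-maximal indices. Split the weight as
\[
(1+\tau+|x|)^{\mu-19.9\delta}\le(1+\tau+|x|)^{2.2\delta}\,(1+\tau+|x|)^{\frac\mu2-11\delta}\,(1+\tau+|x|)^{\frac\mu2-11\delta},
\]
where $2.2\delta+\mu-22\delta\le\mu-19.9\delta$. Then apply H\"older in $x$ and in $t$. The check
\[
\frac{1-4\delta}{2+8\delta}+\frac{2\cdot 2\delta}{1+4\delta}=\frac12
\]
confirms the spatial exponents, and
\[
\frac{4\delta}{1+4\delta}+\frac{2}{2+8\delta}=1
\]
confirms the temporal ones. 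Summing over $k$ via the $\mathcal Y_k$ structure, with a positive power of $2^{-|k-\max k_i|}$ available in $\mathcal Y^2_k$, produces the product on the right-hand side of \eqref{ineq:Estimate of Besov norm 2}.

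The main obstacle is the bookkeeping of the weight and frequency exponents. It has to be checked that the shift of the weight in time costs no more than the available margin $\mu-19.9\delta$. For the low-frequency part $k=-1$ with $|\nabla|^{-1}$ the endpoint estimate must still hold: Lemma \ref{lem:Localized Strichartz estimate} provides it with $\iota=1$. In the high–high–low interactions ($\mathcal Y^2_k$), no derivative may be lost when $k\ll\max k_i$; there I would put the whole $2^{(2+\mu)k}$ on the two comparable high frequencies and use the slack of $0.1\delta$.
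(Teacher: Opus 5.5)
\textbf{The gap is in the time-translation step.} With $\beta_1=\mu-20\delta$ and $\beta_2=\mu-19.9\delta$ you get $\frac{\beta_1}{2}+\beta_2=\frac{3}{2}\mu-29.9\delta$, which is \emph{not} $\le\mu-19.9\delta$.

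So the multiplicative bound $1+s+|x|\le(1+\tau)(1+(s-\tau)+|x|)$ puts $(1+\tau)^{\frac{\mu}{2}-10\delta}\langle x\rangle^{\mu-19.9\delta}$ on $P_kG(\tau)$. That is essentially the weight $(1+\tau+|x|)^{\frac{3}{2}\mu-29.9\delta}$.

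The right-hand side of the lemma only gives the three factors of $G$ a total weight of $2.2\delta+2(\frac{\mu}{2}-11\delta)=\mu-19.8\delta$. Your trilinear H\"older step therefore cannot absorb this weight.

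No choice of parameters rescues the multiplicative route. Matching the left-hand weight forces $\beta_1\ge\mu-20\delta$, and Theorem \ref{thm:Weighted Strichartz estimate} forces $\beta_2>\beta_1$. Hence the source weight exceeds $\frac{3}{2}(\mu-20\delta)>\mu-19.8\delta$.

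Separately, the inequality $2.2\delta+\mu-22\delta\le\mu-19.9\delta$ that you state is reversed. What you need, and what is true, is $\mu-19.9\delta\le\mu-19.8\delta$.

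\textbf{How the paper avoids this.} The paper does not shift time. It writes $e^{\pm\sqrt{-1}c(s-\tau)|\nabla|}=e^{\pm\sqrt{-1}cs|\nabla|}e^{\mp\sqrt{-1}c\tau|\nabla|}$. It then applies Theorem \ref{thm:Weighted Strichartz estimate} on $[\tau,t]$ (the theorem allows any $t_0\ge0$), keeping the unshifted weight $(1+s+|x|)$, with datum $f_\tau=e^{\mp\sqrt{-1}c\tau|\nabla|}(\mathrm{Id},R)P_kG(\tau)$.

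Corollary \ref{cor:Technical lemma}, with exponents $\mu-19.9\delta$ and $0.1\delta$, together with Lemma \ref{lem:riesz L2} for $R$, then gives $\|\langle x\rangle^{\mu-19.9\delta}P_kf_\tau\|_{L^2}\lesssim\|(1+\tau+|x|)^{\mu-19.8\delta}P_kG(\tau)\|_{L^2}$. The weighted $L^2$ propagation bound trades $\langle x\rangle^{\beta_2}$ for roughly $(1+\tau+|x|)^{\beta_2+0.1\delta}$. It does not multiply it by a further power of $1+\tau$.

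\textbf{Repairing your route.} Your own approach works if you split the weight additively, $(1+s+|x|)^{a}\lesssim(1+\tau)^{a}+(1+(s-\tau)+|x|)^{a}$ with $a=\frac{\mu}{2}-10\delta$.
\begin{itemize}
\item The second piece is exactly your shifted Strichartz step, with source weight $\langle x\rangle^{\mu-19.9\delta}$.
\item For the first piece you still need a weighted endpoint estimate with tiny $\beta_1'<\beta_2'$, because the unweighted $L^2_tL^\infty_x$ bound loses $\ln^{1/2}(1+2^kt)$. This costs $(1+\tau)^{a}\langle x\rangle^{\beta_2'}$, which stays within the budget $\mu-19.8\delta$ once $\beta_2'\le\frac{\mu}{2}-9.8\delta$.
\end{itemize}
This variant avoids the weighted propagation corollary altogether. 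With either fix, the rest of your argument (the H\"older exponent checks and the summation over $\mathcal{Y}_k$) matches the paper.
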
	
\begin{proof}
	Denote $A_\nu(s,x)=(1+s+|x|)^{\nu}$. Due to $A_\nu(s,x)\approx A_\nu(cs,x) $ for any $c\in[\min\{c_i\},\max\{c_i\}]$,
then it follows from \eqref{eq:partial Pku} that
	\begin{equation}\label{3.16}
	\begin{split}
	&\|A_{\f{\mu}{2}-10\delta}(s,x)P_k\p^{\le 1} u\|_{L^{2}_tL^\infty_x}\\
	&\ls
	\sum_{\iota=0,1}\big\|\int_{0}^{s} A_{\f{\mu}{2}-10\delta}(cs,x)|\nabla|^{-\iota}e^{\pm \sqrt{-1}c(s-\tau)|\nabla|} (\mathrm{Id},R) P_k G(u,\p u,\p_x\p u)(\tau)d\tau\big\|_{L^{2}_s([0,t];L^\infty_x)}\\
	&\quad+\sum_{\iota=0,1}\|A_{\f{\mu}{2}-10\delta}(cs,x)|\nabla|^{-\iota}e^{\pm \sqrt{-1}cs|\nabla|}(\mathrm{Id},R) P_k\partial u(0)\|_{L^{2}_sL^\infty_x}.
	\end{split}
	\end{equation}
	Together with Minkowski's inequality, this yields
	\begin{equation*}
	\begin{split}
	&\|A_{\f{\mu}{2}-10\delta}(s,x)P_k\p^{\le 1} u\|_{L^{2}_tL^\infty_x}\\
	&\ls
	\sum_{\iota=0,1}\int_{0}^{t} \big\|A_{\f{\mu}{2}-10\delta}(cs,x)P_{[[k]]}|\nabla|^{-\iota}e^{\pm \sqrt{-1}cs|\nabla|} e^{\mp \sqrt{-1}c\tau|\nabla|} (\mathrm{Id},R)
	 P_kG(u,\p u,\p_x\p u)(\tau)\big\|_{L^{2}_s([\tau,t];L^\infty_x)}d\tau\\
	&\quad+\sum_{\iota=0,1}\|A_{\f{\mu}{2}-10\delta}(cs,x)P_{[[k]]}|\nabla|^{-\iota}e^{\pm \sqrt{-1}cs|\nabla|} (\mathrm{Id},R)P_k\partial u(0)\|_{L^{2}_sL^\infty_x}.
	\end{split}
	\end{equation*}
	Applying Theorem \ref{thm:Weighted Strichartz estimate} with $\beta_1={\mu-20\delta}$ and $\beta_2=\mu-19.9\delta$, one has
	\begin{equation}\label{3.19}
	\begin{split}
	&\|A_{\f{\mu}{2}-10\delta}(s,x)P_k\p^{\le 1} u\|_{L^{2}_tL^\infty_x}\\
	&\ls
	2^{(1+\mu-19.9\delta) k}
	\int_{0}^{t}\big\|\w{x}^{\mu-19.9\delta}P_ke^{\mp \sqrt{-1}c\tau|\nabla|} (\mathrm{Id},R) G(u,\p u,\p_x\p u)(\tau)\big\|_{L^2_x}d\tau\\
	&\quad+2^{(1+\mu-19.9\delta) k}
	\|\w{x}^{\mu-19.9\delta}P_k(\mathrm{Id},R)\partial u(0)\|_{L^2_x}.
	\end{split}
	\end{equation}
	Owing to the $L^2$ isometry property of $R = \f{\nabla}{|\nabla|}$ and $\w{x}^{2\mu-39.8\delta}\in A_2(\R^3)$ for $\mu\in(0,1)$,
 then by Lemma \ref{lem:riesz L2}, we obtain
	\begin{equation}\label{3.20}
	\|\w{x}^{\mu-19.9\delta}(\mathrm{Id},R)P_k\partial u(0)\|_{L^2_x}
	\ls
	\|\w{x}^{\mu-19.9\delta}P_k\partial u(0)\|_{L^2_x}.
	\end{equation}
On the other hand, by Corollary \ref{cor:Technical lemma} with $\beta_1 =\mu-19.9\delta$ and $\beta_2 = 0.1\delta$,
it holds that
\begin{equation*}
\begin{split}
&2^{(1+\mu-19.9\delta) k}
\int_{0}^{t}\left\|\w{x}^{\mu-19.9\delta}P_ke^{\mp \sqrt{-1}c\tau|\nabla|} (\mathrm{Id},R) G(u,\p u,\p_x\p u)(\tau)\right\|_{L^2_x}d\tau\\
&\ls 2^{(1+\mu-19.9\delta) k}
\int_{0}^{t}(1+c\tau)^{\mu-19.8\delta}\left\|(\mathrm{Id},R)
P_k G(u,\p u,\p_x\p u)(\tau)\right\|_{L^2_x}d\tau\\
&\quad+2^{(1+\mu-19.9\delta) k}
\int_{0}^{t}\left\|\w{x}^{\mu-19.8\delta}(\mathrm{Id},R)
P_k G(u,\p u,\p_x\p u)(\tau)\right\|_{L^2_x}d\tau.
\end{split}
\end{equation*}
	By Lemma \ref{lem:riesz L2}, we have
	\begin{equation}\label{3.22}
	\begin{split}
	&2^{(1+\mu-19.9\delta) k}
	\int_{0}^{t}\big\|\w{x}^{\mu-19.9\delta}P_ke^{\mp \sqrt{-1}c\tau|\nabla|} (\mathrm{Id},R) G(u,\p u,\p_x\p u)(\tau)\big\|_{L^2_x}d\tau\\
	&\ls
	2^{(1+\mu-19.9\delta) k}
	\int_{0}^{t}\left\|A_{\mu-19.8\delta}(\tau,x)
	P_k G(u,\p u,\p_x\p u)(\tau)\right\|_{L^2_x}d\tau.\\
	\end{split}
	\end{equation}
	Collecting \eqref{3.19}, \eqref{3.20} and \eqref{3.22} yields
	\begin{equation}\label{3.23}
		\begin{split}
			&\|A_{\f{\mu}{2}-10\delta}(s,x)P_k\p^{\le 1} u\|_{L^{2}_tL^\infty_x}\\
			&\ls2^{(1+\mu-19.9\delta) k}
			\big[\|\w{x}^{\mu-19.8\delta}P_k\partial u(0)\|_{L^2_x}
			+\int_{0}^{t}\big\|A_{\mu-19.8\delta}(\tau)P_k  G(u,\p u,\p_x\p u)(\tau)\big\|_{L^{2}_x}d\tau
			\big].
		\end{split}
	\end{equation}
	Summing over $k$ in \eqref{3.23}, one has
	\begin{equation}\label{YHCCC-101}
	\begin{split}
		&\sum_{k\ge -1}2^{(1+\delta) k}\|A_{\f{\mu}{2}-10\delta}(s,x)P_k\p^{\le 1} u\|_{L^{2}_tL^\infty_x}\\
		&\ls
	\sum_{k\ge -1}2^{(2+\mu -18.9\delta) k}
	\big[\|\w{x}^{\mu}P_k\partial u(0)\|_{L^2_x}
	+\int_{0}^{t}\left\|A_{\mu-19.8\delta}(\tau)
	 P_k  G(u,\p u,\p_x\p u)(\tau)\right\|_{L^{2}_x}d\tau\big].\\
	\end{split}
	\end{equation}
	Due to $(1+\tau+|x|)^{\nu}\approx(1+\tau)^{\nu}+\w{x}^{\nu}$ and $\w{x}^{2\mu-39.6\delta}\in A_2(\R^3)$,
then applying H\"older's inequality and Lemma \ref{lem:weighted bernstein}, we arrive at
	\begin{equation}\label{3.25}
	\begin{split}
		&\|A_{\mu-19.8\delta}(\tau,x)P_k G(u,\p u,\p_x\p u)\|_{L^2_x}\\
		&\ls
		\sum_{(k_1,k_2,k_3)\in \cY_k}\|A_{\mu-19.8\delta}(\tau,x)P_{k_1}\p^{\le 1} u\cdot P_{k_2}\p^{\le 1} u\cdot P_{k_3}\p_x^{\le 1}\p u\|_{L^2_x}\\
		&\ls
		\sum_{(k_1,k_2,k_3)\in \cY_k}
		\|A_{\f{\mu}{2}-11\delta}P_{k_1}\p^{\le 1} u\|_{L^{2+\f{1}{2\delta}}_x }
		\cdot\|A_{\f{\mu}{2}-11\delta}P_{k_2}\p^{\le 1} u\|_{L^{2+\f{1}{2\delta}}_x}
		\cdot 2^{k_3}\|A_{2.2\delta} P_{k_3}\p u\|_{L^{\f{2+8\delta}{1-4\delta}}_x}.
	\end{split}	
	\end{equation}
 Recalling the definition of $\cY_k$, one has
\begin{equation}\label{3.27}
\begin{split}
&\sum_{k\ge -1}2^{(2 +\mu -18.9\delta) k}\|A_{\mu-19.8\delta}
P_k G(u,\p u,\p_x\p u)\|_{L^2_x}\\
&\ls
\sum_{k_1,k_2,k_3\ge -1} 2^{(2 +\mu -18.8\delta) \max\{k_i\}}
\|A_{\f{\mu}{2}-11\delta}P_{k_1}\p^{\le 1} u\|_{L^{2+\f{1}{2\delta}}_x}
\cdot\|A_{\f{\mu}{2}-11\delta}P_{k_2}\p^{\le 1} u\|_{L^{2+\f{1}{2\delta}}_x}  \\
&\qquad\qquad\quad\cdot 2^{k_3}\|A_{2.2\delta} P_{k_3}\p u\|_{L^{\f{2+8\delta}{1-4\delta}}_x}\\
&\ls
\sum_{k\ge -1}2^{(3 +\mu-18.8\delta) k}\|A_{2.2\delta} P_{k}\p u\|_{L^{\f{2+8\delta}{1-4\delta}}_x}
\big(\sum_{l\ge-1}2^{(2 +\mu -18.8\delta) l} \|A_{\f{\mu}{2}-11\delta}P_{l}\p^{\le 1} u\|_{L^{2+\f{1}{2\delta}}_x}\big)^2.
\end{split}
\end{equation}
Then it follows from  Minkowski's inequality and H\"older's inequality that
\begin{equation}\label{YHCCC-102}
\begin{split}
	&\sum_{k\ge -1}2^{(2 +\mu -18.9\delta) k}\|A_{\mu-19.8\delta}
	P_k G(u,\p u,\p_x\p u)\|_{L^1_tL^2_x}\\
	&\ls
	\sum_{k\ge -1}2^{(3 +\mu -18.8\delta) k}\|A_{2.2\delta} P_{k}\p u\|_{L_t^{1+\f{1}{4\delta}}L^{\f{2+8\delta}{1-4\delta}}_x}
	\big(\sum_{l\ge-1}2^{(2 +\mu -18.8\delta) l} \|A_{\f{\mu}{2}-11\delta}P_{l}\p^{\le 1} u\|_{L_t^{2+8\delta}L^{2+\f{1}{2\delta}}_x}\big)^2.
\end{split}
\end{equation}
Substituting \eqref{YHCCC-102} into \eqref{YHCCC-101} yields \eqref{ineq:Estimate of Besov norm 2}.
\end{proof}

\subsection{Treatment on the cubic nonlinearity}
To estimate
	\[
	\sum_{k\ge -1}2^{(3 +\alpha -18.8\delta) k}\|A_{2.2\delta} P_{k}\p u\|_{L_t^{1+\f{1}{4\delta}}L^{\f{2+8\delta}{1-4\delta}}_x}
	\quad \text{and} \quad
	\sum_{l\ge-1}2^{(2 +\alpha -18.8\delta) l} \|A_{\f{\alpha}{2}-11\delta}P_{l}\p^{\le 1} u\|_{L_t^{2+8\delta}L^{2+\f{1}{2\delta}}_x}
	\]
	in Lemmas \ref{lem:Estimate of Besov norm 3}--\ref{lem:Estimate of Besov norm 4} below, it is required to deal with
	$\int_0^t \|G(u,\p u, \p_x\p u)(s)\|_{L^2_x} ds$. To this end, we now establish the following conclusion.
\begin{lemma}\label{lem:weight and Exponent Allocation for the Cubic Nonlinear Term}
	For $s>0,\ \beta\in(0,\f34),\ \delta>0$ and $t\ge0$, it holds that
	\begin{equation}\label{YHCCC-35}
		\begin{aligned}
			&\sum_{k\ge -1} 2^{sk}\int_{0}^{t}\big\|(1+\tau+|x|)^{2\beta}P_k G(u,\partial u,\p_x\p u)(\tau)\big\|_{L^2_x}d\tau\\
			&\ls
			\big( \sum_{k\ge -1}\|(1+s+|x|)^{\beta}P_k\p^{\le 1} u\|_{L^{2}_tL^\infty_x}\big)^2
			\|\p u\|_{L^\infty_tH^{s+1+\delta}_x}.
		\end{aligned}
	\end{equation}
\end{lemma}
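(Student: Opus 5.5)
The plan is to handle each $k$ by a paraproduct (trilinear Littlewood--Paley) decomposition, put the weight $A_{2\beta}=A_\beta\cdot A_\beta$ on the two factors measured in $L^\infty_x$, and put the high-derivative factor in $L^\infty_tH^{s+1+\delta}_x$. Since $G$ is a finite sum of cubic terms, it suffices to treat a typical term $F_1F_2F_3$ with $F_1,F_2\in\{\p^{\le1}u\}$ and $F_3\in\{\p_x^{\le1}\p u\}$. The worst case is the quasilinear term, where $F_3=\p_x\p u$ costs one extra derivative; this is the origin of the index $s+1$.

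Write
\[
P_kG=\sum_{(k_1,k_2,k_3)\in\cY_k}P_k\big(P_{k_1}F_1\,P_{k_2}F_2\,P_{k_3}F_3\big).
\]
The weight $\w{x}^{4\beta}$ lies in $A_2(\R^3)$ because $4\beta<3$, so $P_k$ is bounded on $L^2(\w{x}^{4\beta})$. The weight $(1+\tau)^{2\beta}$ is constant in $x$ and commutes with $P_k$. Using $A_{2\beta}(\tau,x)\approx(1+\tau)^{2\beta}+\w{x}^{2\beta}$, we may therefore move $A_{2\beta}$ inside $P_k$. Hölder's inequality then gives
\[
\|A_{2\beta}P_kG\|_{L^2_x}\ls\sum_{\cY_k}\|A_\beta P_{k_a}F_a\|_{L^\infty_x}\,\|A_\beta P_{k_b}F_b\|_{L^\infty_x}\,\|P_{k_c}F_c\|_{L^2_x},
\]
where the factor placed in $L^2$ is the one carrying the maximal frequency. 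For $(k_1,k_2,k_3)\in\cY_k$ we have $2^k\ls 2^{\max k_i}$, so the factor $2^{sk}$ can be moved onto the largest-frequency piece.

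The next step is to integrate in $\tau$ and apply Hölder: $L^2_t\cdot L^2_t\cdot L^\infty_t$. For the $L^2$ factor we bound
\[
2^{s\max k_i}\|P_{k_{\max}}F\|_{L^2}\ls 2^{(s+1)k_{\max}}\|P_{k_{\max}}\p u\|_{L^2}.
\]
The extra $2^{k}$ appears when $F=\p_x\p u$; the case $F=u$ at low frequency is handled by Bernstein's inequality, or by absorbing it into the $\p^{\le1}u$ factor.

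Summing over $k_{\max}$ with the factor $2^{-\delta k_{\max}}$ gives
\[
\sup_{k'}2^{(s+1+\delta)k'}\|P_{k'}\p u\|_{L^2}\ls\|\p u\|_{H^{s+1+\delta}}.
\]
The $\delta$ loss is what makes the $\ell^1$ summation over $k$ converge: for fixed $k_{\max}$, the set of $k$ with $k\le k_{\max}+O(1)$ produces at most $\sum_{k\le k_{\max}}2^{s(k-k_{\max})}\ls1$ when $s>0$. This is why $s>0$ is assumed.

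The two $L^\infty$ factors are summed freely in $k_a,k_b$ without weights, which yields $\big(\sum_k\|A_\beta P_k\p^{\le1}u\|_{L^2_tL^\infty_x}\big)^2$.

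The main obstacle is the case where the maximal frequency falls on a factor from $\p^{\le1}u$ (positions 1 or 2) rather than on $\p_x\p u$. Then the $L^2$ factor is, say, $P_{k_1}\p^{\le1}u$, whose derivative count is lower, so we get
\[
2^{sk_1}\|P_{k_1}\p^{\le1}u\|_{L^2}\ls\|\p u\|_{H^{s}}
\]
for $k_1\ge0$. The remaining factor $P_{k_3}\p_x\p u$ must go into $L^\infty$ with weight, which would be bounded by $2^{k_3}\|A_\beta P_{k_3}\p u\|$. This requires $P_{k_3}\p u$, not $\p^{\le1}u$ with the extra derivative.

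To fix this, note that $k_3\le k_{\max}+4$, so we can transfer $2^{k_3}\le 2^{k_{\max}+4}$ onto the $L^2$ factor and still land in $H^{s+1}$. The $L^\infty$ factor then involves only $P_{k_3}\p u\subset\p^{\le1}u$.

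A second point needs care: the undifferentiated $u$ in the $L^2$ factor at $k_1=-1$ is not controlled by $\|\p u\|_{H}$. Here I would swap roles, putting that low-frequency $u$ in weighted $L^\infty$ (it is part of $\p^{\le1}u$) and putting a differentiated factor in $L^2$. This is possible because every term of $G$ contains at least one derivative factor.
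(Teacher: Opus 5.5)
Your proposal is correct and follows essentially the paper's own argument: the paper's proof also moves $A_{2\beta}$ inside $P_k$ using $\w{x}^{4\beta}\in A_2$, uses the H\"older allocation $A_\beta L^\infty\times A_\beta L^\infty\times L^2$ with the top-frequency factor in $L^2$, and transfers $2^{k_3}\le 2^{k_{\max}}$ onto the $L^2$ factor when $\p_x\p u$ is not at the top frequency. The differences are cosmetic: the paper treats $-1\le k\le 3$ separately, always putting $P_{k_3}\p_x^{\le1}\p u$ in $L^2$ (this is your ``swap''), and treats $k\ge 4$ separately, where $k_{\max}\ge 0$ lets Bernstein control $u$; it also sums in $k$ by extracting a factor $2^{-\delta k}$ rather than through your geometric series in $2^{sk}$ that uses $s>0$.
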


\begin{proof}
	Set $A_\nu(s,x)=(1+s+|x|)^{\nu}$.	Due to $(1+\tau+|x|)^{\nu}\approx(1+\tau)^{\nu}+\w{x}^{\nu}$ and
$\w{x}^{4\beta}\in A_2(\R^3)$, then
	\begin{equation}\label{3.39}
		\begin{split}
			\|A_{2\beta}(\tau,x)P_k G(u,\p u,\p_x\p u)\|_{L^2_x}
			\ls
			\sum_{(k_1,k_2,k_3)\in \cY_k}\|A_{2\beta}(\tau,x)P_{k_1}\p^{\le 1} u\cdot P_{k_2}\p^{\le 1} u\cdot P_{k_3}\p_x^{\le 1}\p u\|_{L^2_x}.
		\end{split}	
	\end{equation}
When $-1\le k \le 3$, it follows from H\"older's inequality that
	\begin{equation}\label{3.98}
		\begin{split}
			\|A_{2\beta}P_k G(u,\partial u,\p_x\p u)\|_{L^2_x}
			\ls
			&
			\sum_{(k_1,k_2,k_3)\in\cY_k} \|A_{\beta}P_{k_1}\p^{\le 1} u\|_{L^\infty_x} \|A_{\beta}P_{k_2}\p^{\le 1} u\|_{L^\infty_x} \|P_{k_3}\p_x^{\le 1}\p u\|_{L^2_x}\\
			\ls&
			\big(\sum_{k\ge-1} \|A_{\beta}P_{k}\p^{\le 1} u\|_{L^\infty_x}\big)^2
			\|\p u\|_{H^{1+\delta}_x}.
		\end{split}
	\end{equation}
When $k\ge 4$, recalling the definition of $\cY_k$, we have $\max_{i=1,2,3}\{k_i\}\ge k-4\ge 0$.
Therefore, applying H\"older's inequality and Lemma \ref{lem:weighted bernstein} yields
\begin{equation}\label{3.99}
\begin{split}
&2^{sk}\|A_{2\beta}P_k G(u,\partial u,\p_x\p u)\|_{L^2_x}\\
&\ls
2^{-\delta k}\sum_{\substack{(k_1,k_2,k_3)\in\cY_k, \\ \max\{k_i\}=k_1}} 2^{(s+\delta)k_1 }\|P_{k_1}\p^{\le 1} u\|_{L^2_x} \|A_{\beta}P_{k_2}\p^{\le 1} u\|_{L^\infty_x} 2^{k_3}\|A_\beta P_{k_3}\p u\|_{L^\infty_x}\\
&\quad+2^{-\delta k}\sum_{\substack{(k_1,k_2,k_3)\in\cY_k, \\ \max\{k_i\}=k_3}} \|A_{\beta}P_{k_1}\p^{\le 1} u\|_{L^\infty_x} \|A_{\beta}P_{k_2}\p^{\le 1} u\|_{L^\infty_x} 2^{(s+1+\delta)k_3}\|P_{k_3}\p u\|_{L^2_x}\\
&\ls
2^{-\delta k}\sum_{\substack{(k_1,k_2,k_3)\in\cY_k, \\ \max\{k_i\}=k_1}} \big(2^{(s+1+\delta)k_1 }\|P_{k_1}\p u\|_{L^2_x}+2^{(s+\delta)k_1 }\|P_{k_1}\p_x u\|\big) \|A_{\beta}P_{k_2}\p^{\le 1} u\|_{L^\infty_x} \|A_\beta P_{k_3}\p u\|_{L^\infty_x}\\
&\quad+2^{-\delta k}\sum_{\substack{(k_1,k_2,k_3)\in\cY_k, \\ \max\{k_i\}=k_3}} \|A_{\beta}P_{k_1}\p^{\le 1} u\|_{L^\infty_x} \|A_{\beta}P_{k_2}\p^{\le 1} u\|_{L^\infty_x} 2^{(s+1+\delta)k_3}\|P_{k_3}\p u\|_{L^2_x}\\
&\ls
 2^{-\delta k}\big(\sum_{k\ge-1} \|A_{\beta}P_{k}\p^{\le 1} u\|_{L^\infty_x}\big)^2
\|\p u\|_{H^{s+1+2\delta}_x}.
\end{split}
\end{equation}
Summing over $k$ in \eqref{3.98} and \eqref{3.99}, by Minkowski's inequality and H\"older's inequality, one has
\begin{equation}\label{YHCCC-34}
\begin{aligned}
&\sum_{k\ge -1} 2^{sk}\int_{0}^{t}\big\|(1+\tau+|x|)^{2\beta}P_k G(u,\partial u,\p_x\p u)(\tau)\big\|_{L^2_x}d\tau\\
&\ls
\big(\sum_{k\ge -1}\|(1+s+|x|)^{\beta}P_k\p^{\le 1} u\|_{L^{2}_tL^\infty_x}\big)^2
\|\p u\|_{L^\infty_tH^{s+1+2\delta}_x}.
\end{aligned}
\end{equation}
Replacing $\delta$ by $\f\delta2$ in \eqref{YHCCC-34} yields  \eqref{YHCCC-35}.
\end{proof}

\subsection{Strichartz estimate II for Theorem \ref{thm:2}}
\begin{lemma}\label{lem:Estimate of Besov norm 3}
	For $\mu\in(0,1),\ \delta\in(0,10^{-4}\mu)$ and $t\ge0$, it holds that
	\begin{equation}\label{ineq:Estimate of Besov norm 3}
	\begin{split}
	&\sum_{k\ge -1}2^{(3 +\mu -18.8\delta) k}\|(1+s+|x|)^{2.2\delta} P_{k}\p u\|_{L_t^{1+\f{1}{4\delta}}L^{\f{2+8\delta}{1-4\delta}}_x}\\
	&\ls
	\sum_{k\ge -1}2^{(3+\mu -5.8\delta) k}
	\|\w{x}^{6\delta}P_k\partial u(0)\|_{L^2_x}
	+\big(\sum_{k\ge -1}\|(1+s+|x|)^{3\delta}P_{k}\p^{\le 1} u\|_{L^2_tL^\infty_x}\big)^2
	\|\p u\|_{L^\infty_tH^{4+\mu}_x}.
	\end{split}
	\end{equation}
\end{lemma}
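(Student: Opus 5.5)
We follow the pattern of Lemma \ref{lem:Estimate of Besov norm 2}, now using the non-endpoint part (1) of Theorem \ref{thm:Weighted Strichartz estimate} with $p=1+\f{1}{4\delta}$. The corresponding $r$ satisfies $\f1r=\f12-\f{4\delta}{1+4\delta}=\f{1-4\delta}{2+8\delta}$, which matches the Lebesgue exponent in \eqref{ineq:Estimate of Besov norm 3}.

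\textbf{Step 1: linear estimate.} We need a time weight exponent $\f1p\beta_1=2.2\delta$, so $\beta_1=2.2\delta(1+\f1{4\delta})=0.55+8.8\delta$. This is not small, so we cannot take $\beta_1$ comparable to $\delta$. Instead we take $\beta_2$ slightly larger than $\beta_1$, for instance $\beta_2=\beta_1+0.1\delta<\min\{\f32\beta_1,1\}$. Then the spatial weight is $\f2p\beta_2=\f{8\delta}{1+4\delta}\beta_2\le 8\delta(0.55+9\delta)<6\delta$. The frequency loss is $\f2p(1+\beta_2)\le 8\delta\cdot 1.56<12.5\delta$.

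Write $P_k\p u$ by \eqref{eq:partial Pku}, use $A_\nu(s,x)\approx A_\nu(cs,x)$, and apply Minkowski in $\tau$. Theorem \ref{thm:Weighted Strichartz estimate}(1) then gives
\[
\|A_{2.2\delta}P_k\p u\|_{L^p_tL^r_x}\ls 2^{12.5\delta k}\Big[\|\w{x}^{6\delta}P_k\p u(0)\|_{L^2}+\int_0^t\|\w{x}^{6\delta}P_ke^{\mp ic\tau|\nabla|}(\mathrm{Id},R)G(\tau)\|_{L^2}d\tau\Big].
\]
Lemma \ref{lem:riesz L2} removes $R$, since $\w{x}^{12\delta}\in A_2$. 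Corollary \ref{cor:Technical lemma} with $\beta_1=6\delta$ and small $\beta_2$ bounds the Duhamel integrand by $\|A_{6\delta+}(\tau,x)P_kG(\tau)\|_{L^2}$. Multiplying by $2^{(3+\mu-18.8\delta)k}$ gives at most $2^{(3+\mu-5.8\delta)k}$ in front of both terms, which is consistent with the data term in \eqref{ineq:Estimate of Besov norm 3}.

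\textbf{Step 2: nonlinear term.} The remaining quantity is $\sum_k 2^{(3+\mu-5.8\delta)k}\int_0^t\|A_{6\delta+}P_kG\|_{L^2}d\tau$. We apply Lemma \ref{lem:weight and Exponent Allocation for the Cubic Nonlinear Term} with $2\beta$ equal to the weight; the margin $\beta=3\delta$ is enough after adjusting the tiny losses. We take $s=3+\mu-5.8\delta$, so the regularity $s+1+\delta'$ is at most $4+\mu$ once $\delta'<5.8\delta$. This yields $\big(\sum_k\|A_{3\delta}P_k\p^{\le1}u\|_{L^2_tL^\infty_x}\big)^2\|\p u\|_{L^\infty_tH^{4+\mu}}$, which is the claimed bound.

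\textbf{Main obstacle.} The delicate part is the exponent bookkeeping in Step 1. The Corollary \ref{cor:Technical lemma} weight must match $2\beta=6\delta$. If we instead get $6\delta+0.1\delta$, we need a slightly larger $\beta$ than $3\delta$, or else we must shrink $\beta_2$ so that $\f2p\beta_2\le 5.9\delta$; the bound $8\delta\cdot0.55\approx4.4\delta$ leaves room for this. We also need $\beta_2<\min\{\f32\beta_1,1\}$, which holds because $\beta_1\approx0.55$.
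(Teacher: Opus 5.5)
Your proposal is correct and follows the paper's proof step by step: Duhamel plus Minkowski, Theorem~\ref{thm:Weighted Strichartz estimate}(1) with $p=1+\f{1}{4\delta}$, removal of $R$ via the $A_2$ bound, Corollary~\ref{cor:Technical lemma} to reach the weight $A_{6\delta}$, and finally Lemma~\ref{lem:weight and Exponent Allocation for the Cubic Nonlinear Term} with $\beta=3\delta$ and $s=3+\mu-5.8\delta$. The differences are only bookkeeping. The paper takes $\beta_2=\f58+2.5\delta$, so the spatial weight is exactly $5\delta$ (with loss $2^{13\delta k}$), and then applies Corollary~\ref{cor:Technical lemma} with $(5\delta,\delta)$ to land on $A_{6\delta}$ directly; this is precisely the fix you describe in your last paragraph. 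Note also that $\beta_1=2.2\delta(1+\f{1}{4\delta})=0.55+2.2\delta$, not $0.55+8.8\delta$, though this slip is harmless because a larger time weight only strengthens the estimate.
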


\begin{proof}
Although the proof is similar to that of Lemma \ref{lem:Estimate of Besov norm 2}, we still give the full details for the reader's convenience due to the different treatment of the indices for the cubic terms. Set $A_\nu(s,x)=(1+s+|x|)^{\nu}$. Then from \eqref{eq:partial Pku} and Minkowski's inequality, we have
	\begin{equation*}
	\begin{split}
	&\|A_{2.2\delta}(s,x)P_k\p u\|_{L_t^{1+\f{1}{4\delta}}L^{\f{2+8\delta}{1-4\delta}}_x}\\
	&\ls
	\int_{0}^{t} \big\| A_{2.2\delta}(cs,x)P_{[[k]]}e^{\pm \sqrt{-1}cs|\nabla|} e^{\mp \sqrt{-1}c\tau|\nabla|} (\mathrm{Id},R)
	 P_k G(u,\partial u,\p_x\p u)(\tau)\big\|_{L^{1+\f{1}{4\delta}}_s([\tau,t];L^{\f{2+8\delta}{1-4\delta}}_x)}d\tau\\
	&\quad+\| A_{2.2\delta}(cs,x)P_{[[k]]}e^{\pm \sqrt{-1}cs|\nabla|}(\mathrm{Id},R) P_k\partial u(0)\|_{L_s^{1+\f{1}{4\delta}}L^{\f{2+8\delta}{1-4\delta}}_x}.
	\end{split}
	\end{equation*}
	Applying Theorem \ref{thm:Weighted Strichartz estimate} with $\beta_1 = \f{2.2}{4}+2.2\delta\in(0,1)$ and $\beta_2 = \f58+2.5\delta\in(\beta_1,\min\{\f32\beta_1,1\})$, one can obtain
	\begin{equation}\label{3.34}
	\begin{split}
	&\|A_{2.2\delta}(s,x)P_k\p u\|_{L_t^{1+\f{1}{4\delta}}L^{\f{2+8\delta}{1-4\delta}}_x}\\
	&\ls
	2^{ 13\delta k}
	\int_{0}^{t}\big\|\w{x}^{5\delta}P_ke^{\mp \sqrt{-1}c\tau|\nabla|} (\mathrm{Id},R) G(u,\partial u,\p_x\p u)(\tau)\big\|_{L^2_x}d\tau\\
	&\quad+2^{ 13\delta k}
	\|\w{x}^{5\delta}P_k(\mathrm{Id},R)\partial u(0)\|_{L^2_x}.
	\end{split}
	\end{equation}
	Note that by Lemma \ref{lem:riesz L2},
	\begin{equation}\label{3.35}
	\|\w{x}^{5\delta}(\mathrm{Id},R)P_k\partial u(0)\|_{L^2_x}
	\ls
	\|\w{x}^{5\delta}P_k\partial u(0)\|_{L^2_x}.
	\end{equation}
	On the other hand, by Corollary \ref{cor:Technical lemma} with $\beta_1=5\delta$ and $\beta_2=\delta$, one has
	\begin{equation*}
	\begin{split}
	&2^{ 13\delta k}
	\int_{0}^{t}\big\|\w{x}^{5\delta}P_ke^{\mp \sqrt{-1}c\tau|\nabla|} (\mathrm{Id},R) G(u,\partial u,\p_x\p u)(\tau)\big\|_{L^2_x}d\tau\\
	&\ls
	2^{ 13\delta k}
	\int_{0}^{t}(1+c\tau)^{6\delta}\big\|(\mathrm{Id},R) P_kG(u,\partial u,\p_x\p u)(\tau)\big\|_{L^2_x}d\tau\\
	&\quad+
	2^{ 13\delta k}
	\int_{0}^{t}\big\|\w{x}^{6\delta}(\mathrm{Id},R)P_k G(u,\partial u,\p_x\p u)(\tau)\big\|_{L^2_x}d\tau.
	\end{split}
	\end{equation*}
	This, together with Lemma \ref{lem:riesz L2}, yields
	\begin{equation}\label{3.37}
	\begin{split}
	&2^{ 13\delta k}
	\int_{0}^{t}\big\|\w{x}^{5\delta}P_ke^{\mp \sqrt{-1}c\tau|\nabla|} (\mathrm{Id},R) G(u,\partial u,\p_x\p u)(\tau)\big\|_{L^2_x}d\tau\\
	&\ls
	2^{ 13\delta k}
	\int_{0}^{t}\big\|A_{6\delta}(\tau,x)P_k G(u,\partial u,\p_x\p u)(\tau)\big\|_{L^2_x}d\tau.
	\end{split}
	\end{equation}
	Collecting \eqref{3.34}, \eqref{3.35} and \eqref{3.37}, we arrive at
	\begin{equation}\label{3.38}
		\begin{split}
		&\|A_{2.2\delta}P_k\p u\|_{L_t^{1+\f{1}{4\delta}}L^{\f{2+8\delta}{1-4\delta}}_x}
		\ls
		2^{ 13\delta k}
		\big[	\|\w{x}^{6\delta}P_k\partial u(0)\|_{L^2_x}	+
		\int_{0}^{t}\big\|A_{6\delta}(\tau,x)P_k G(u,\partial u,\p_x\p u)(\tau)\big\|_{L^2_x}d\tau\big].
		\end{split}
		\end{equation}
	Summing over $k$ in \eqref{3.38} yields
	\begin{equation}\label{YHCCC-36}
	\begin{split}
		&\sum_{k\ge -1}2^{(3+\mu-18.8\delta) k}\|A_{2.2\delta}P_k\p u\|_{L_t^{1+\f{1}{4\delta}}L^{\f{2+8\delta}{1-4\delta}}_x}\\
		&\ls
		\sum_{k\ge -1}2^{(3+\mu -5.8\delta) k}\big[
		\|\w{x}^{6\delta}P_k\partial u(0)\|_{L^2_x}
		+
		\int_{0}^{t}\big\|A_{6\delta}(\tau,x)P_k G(u,\partial u,\p_x\p u)(\tau)\big\|_{L^2_x}d\tau\big].
	\end{split}	
	\end{equation}
	In addition, it follows from Lemma \ref{lem:weight and Exponent Allocation for the Cubic Nonlinear Term} that
	\begin{equation}\label{YHCCC-37}
	\begin{split}
	\sum_{k\ge -1}2^{(3+\mu -5.8\delta) k}
	\|A_{6\delta}P_k G(u,\partial u,\p_x\p u)\|_{L^1_tL^2_x}
	\ls
	\big(\sum_{k\ge -1}\|A_{3\delta}P_{k}\p^{\le 1} u\|_{L^2_tL^\infty_x}\big)^2
	\|\p u\|_{L^\infty_tH^{4+\mu}_x}.
	\end{split}
	\end{equation}
	Then \eqref{ineq:Estimate of Besov norm 3} follows by substituting \eqref{YHCCC-37} into \eqref{YHCCC-36}.
\end{proof}

\subsection{Strichartz estimate III for Theorem \ref{thm:2}}
\begin{lemma}\label{lem:Estimate of Besov norm 4}
	For $\mu\in(0,1),\ \delta\in(0,10^{-4}\mu)$ and $t\ge0$, it holds that
	\begin{equation}\label{ineq:Estimate of Besov norm 4}
		\begin{split}
			&\sum_{k\ge -1}2^{(2 +\mu -18.8\delta) k}\|(1+s+|x|)^{\f{\mu}{2}-11\delta} P_{k}\p^{\le 1} u\|_{L_t^{2+8\delta}L^{2+\f{1}{2\delta}}_x}\\
			&\ls
			\sum_{k\ge -1}2^{(3+2\mu -2\delta) k}
			\|\w{x}^{\mu}P_k\partial u(0)\|_{L^2_x}
			+
			\big(\sum_{k\ge -1}\|(1+s+|x|)^{\f{\mu}{2}-10\delta}P_{k}\p^{\le 1} u\|_{L^2_tL^\infty_x}\big)^2
			\|\p u\|_{L^\infty_tH^{4+2\mu}_x}.
		\end{split}
	\end{equation}
\end{lemma}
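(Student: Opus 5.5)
The plan is to repeat the scheme of Lemma \ref{lem:Estimate of Besov norm 3}, using a different admissible pair. Set $p=2+8\delta$ and $r=2+\f{1}{2\delta}$. A direct computation gives $\f1p+\f1r=\f{1}{2(1+4\delta)}+\f{2\delta}{1+4\delta}=\f12$.

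\textbf{Choice of parameters.} We want the weight $A_{\f\mu2-11\delta}$ to be of the form $A_{\f{1}{p}\beta_1}$, so we take
\[
\beta_1=(2+8\delta)\big(\tfrac\mu2-11\delta\big)=\mu-22\delta+4\mu\delta-88\delta^2\in(0,1).
\]
We take $\beta_2=\beta_1+\eta$, with $\eta$ a small multiple of $\delta$ (for instance $\eta=0.1\delta$). This gives $\beta_2\in(\beta_1,\min\{\f32\beta_1,1\})$, since $\delta<10^{-4}\mu$. It also gives $\f2p\beta_2\le\mu-21\delta$. For the $u$-component we need the range condition of Theorem \ref{thm:Weighted Strichartz estimate} (2), namely $p<2+2\beta_2-\beta_1$. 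This holds because $2\beta_2-\beta_1\approx\mu\gg8\delta$, and then the corresponding condition on $r$ follows automatically.

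\textbf{Step 1: linear estimate.} Starting from \eqref{eq:Pku}--\eqref{eq:partial Pku}, we use $A_\nu(s,x)\approx A_\nu(cs,x)$ and Minkowski's inequality in $\tau$, exactly as in \eqref{3.16}. We then apply Theorem \ref{thm:Weighted Strichartz estimate}: part (1) for the $\p u$-component and part (2) for the $u$-component. This yields
\[
\|A_{\f\mu2-11\delta}P_k\p^{\le1}u\|_{L^p_tL^r_x}\ls 2^{\f2p(1+\beta_2)k}\Big[\|\w{x}^{\f2p\beta_2}P_k\p u(0)\|_{L^2}+\int_0^t\|\w{x}^{\f2p\beta_2}P_ke^{\mp ic\tau|\nabla|}(\mathrm{Id},R)G(\tau)\|_{L^2}d\tau\Big].
\]
The $|\nabla|^{-1}$ part only carries the smaller factor $2^{(\f{2+2\beta_2}{p}-1)k}$, so it is harmless. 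The Riesz transforms are removed by Lemma \ref{lem:riesz L2}, using that $\w{x}^{2\cdot\f2p\beta_2}\in A_2$.

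\textbf{Step 2: moving the weight onto the nonlinearity.} We apply Corollary \ref{cor:Technical lemma}, with its $\beta_1$ equal to $\f2p\beta_2$ and its $\beta_2$ equal to a tiny multiple of $\delta$. This converts $\w{x}^{\f2p\beta_2}e^{\mp ic\tau|\nabla|}$ into the space-time weight $A_{\mu-20\delta}(\tau,x)$ acting on $P_kG$. For the data term we simply use $\w{x}^{\f2p\beta_2}\le\w{x}^\mu$.

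\textbf{Step 3: summation in $k$.} We multiply by $2^{(2+\mu-18.8\delta)k}$ and sum over $k\ge-1$. The total frequency power is $2^{(2+\mu-18.8\delta+\f{2+2\beta_2}{p})k}\le2^{(3+2\mu-2\delta)k}$. This immediately gives the data term on the right-hand side of \eqref{ineq:Estimate of Besov norm 4}.

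\textbf{Step 4: the nonlinear term.} We apply Lemma \ref{lem:weight and Exponent Allocation for the Cubic Nonlinear Term} with $2\beta=\mu-20\delta$, so that $\beta=\f\mu2-10\delta\in(0,\f34)$. We take $s=3+2\mu-2\delta$ (or slightly less) and the $\delta$ of that lemma equal to $\delta$. The lemma then bounds the term by
\[
\big(\textstyle\sum_k\|A_{\f\mu2-10\delta}P_k\p^{\le1}u\|_{L^2_tL^\infty_x}\big)^2\,\|\p u\|_{L^\infty_tH^{s+1+\delta}_x},
\]
and $s+1+\delta\le4+2\mu$, which is exactly the claimed bound.

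\textbf{Main obstacle.} The delicate point is the exponent bookkeeping. Three requirements must hold simultaneously:
\begin{itemize}
\item the constraint $\beta_2<\f32\beta_1$ and the restricted range of $p$ for the $|\nabla|^{-1}$ part;
\item after Corollary \ref{cor:Technical lemma}, the weight on $G$ must not exceed $2\beta=\mu-20\delta$, since this is what keeps two factors of $A_{\f\mu2-10\delta}$ available in the $L^2_tL^\infty_x$ norms;
\item the frequency loss $2\beta_2/p$ must stay below $\mu$, so that the Sobolev index closes at $4+2\mu$.
\end{itemize}
I would first check these inequalities for $\eta=0.1\delta$. If they fail, I would adjust $\beta_2$ and the loss parameter in Corollary \ref{cor:Technical lemma}, both of which can be taken arbitrarily small relative to $\delta$.
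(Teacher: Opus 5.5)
Your proposal is correct and follows the paper's proof step for step. You use the same pair $(p,r)=(2+8\delta,2+\f{1}{2\delta})$ with $\beta_1=(2+8\delta)(\f\mu2-11\delta)$ in Theorem~\ref{thm:Weighted Strichartz estimate}, apply Corollary~\ref{cor:Technical lemma} to place the weight $A_{\mu-20\delta}$ on $P_kG$, and close with Lemma~\ref{lem:weight and Exponent Allocation for the Cubic Nonlinear Term} at $\beta=\f\mu2-10\delta$, $s=3+2\mu-2\delta$. The only difference is cosmetic: the paper takes $\beta_2=(2+8\delta)(\f\mu2-10.5\delta)$ so that $\f2p\beta_2=\mu-21\delta$ exactly, while your $\beta_2=\beta_1+0.1\delta$ also satisfies every constraint you list.
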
	

\begin{proof}
	Set $A_\nu(s,x)=(1+s+|x|)^{\nu}$. It follows from \eqref{eq:Pku}, \eqref{eq:partial Pku} and Minkowski's inequality that
	\begin{equation*}
		\begin{split}
			&\|A_{\f{\mu}{2}-11\delta}(s,x)P_k\p^{\le 1} u\|_{L_t^{2+8\delta}L^{2+\f{1}{2\delta}}_x}\\
			&\ls
			\sum_{\iota = 0,1}\int_{0}^{t} \big\| A_{\f{\mu}{2}-11\delta}(cs,x)P_{[[k]]}|\nabla|^{-\iota}e^{\pm \sqrt{-1}c(s-\tau)|\nabla|}  (\mathrm{Id},R)
			P_k G(u,\partial u,\p_x\p u)(\tau)\big\|_{L^{2+8\delta}_s([\tau,t];L^{2+\f{1}{2\delta}}_x)}d\tau\\
			&\quad+\sum_{\iota = 0,1}\| A_{\f{\mu}{2}-11\delta}(cs,x)P_{[[k]]}|\nabla|^{-\iota}e^{\pm \sqrt{-1}cs|\nabla|}(\mathrm{Id},R) P_k\partial u(0)\|_{L_s^{2+8\delta}L^{2+\f{1}{2\delta}}_x}.
		\end{split}
	\end{equation*}
	Applying Theorem \ref{thm:Weighted Strichartz estimate} with $\beta_1 = (\f{\mu}{2}-11\delta)(2+8\delta)=\mu-22\delta+4\mu\delta-88\delta^2\in(0,1)$ and $\beta_2 = (\f{\mu}{2}-10.5\delta)(2+8\delta)=\mu-21\delta+4\mu\delta-84\delta^2\in(\beta_1,\min\{\f32\beta_1,1\})$,
	and noting $2<2+8\delta<2+2\beta_2-\beta_1$, we can obtain
	\begin{equation}\label{3.134}
		\begin{split}
			&\|A_{\f{\mu}{2}-11\delta}(s,x)P_k\p^{\le 1} u\|_{L_t^{2+8\delta}L^{2+\f{1}{2\delta}}_x}\\
			&\ls
			2^{ (1+\mu-21\delta) k}
			\int_{0}^{t}\big\|\w{x}^{\mu-21\delta}P_ke^{\mp \sqrt{-1}c\tau|\nabla|} (\mathrm{Id},R) G(u,\partial u,\p_x\p u)(\tau)\big\|_{L^2_x}d\tau\\
			&\quad+2^{ (1+\mu-21\delta) k}
			\|\w{x}^{\mu-21\delta}P_k(\mathrm{Id},R)\partial u(0)\|_{L^2_x}.
		\end{split}
	\end{equation}
	In addition, by Lemma \ref{lem:riesz L2}, one has
	\begin{equation}\label{3.135}
		\|\w{x}^{\mu-21\delta}(\mathrm{Id},R)P_k\partial u(0)\|_{L^2_x}
		\ls
		\|\w{x}^{\mu-21\delta}P_k\partial u(0)\|_{L^2_x}.
	\end{equation}
	On the other hand, by Corollary \ref{cor:Technical lemma} with $\beta_1 =\mu-21\delta$ and $\beta_2 = \delta$,
	\begin{equation*}
		\begin{split}
			&2^{ (1+\mu-21\delta) k}
			\int_{0}^{t}\big\|\w{x}^{\mu-21\delta}P_ke^{\mp \sqrt{-1}c\tau|\nabla|} (\mathrm{Id},R) G(u,\partial u,\p_x\p u)(\tau)\big\|_{L^2_x}d\tau\\
			&\ls
			2^{ (1+\mu-21\delta) k}
			\int_{0}^{t}(1+c\tau)^{\mu-20\delta}\big\|(\mathrm{Id},R) P_kG(u,\partial u,\p_x\p u)(\tau)\big\|_{L^2_x}d\tau\\
			&\quad+
			2^{ (1+\mu-21\delta) k}
			\int_{0}^{t}\big\|\w{x}^{\mu-20\delta}(\mathrm{Id},R)P_k G(u,\partial u,\p_x\p u)(\tau)\big\|_{L^2_x}d\tau.
		\end{split}
	\end{equation*}
	Together with Lemma \ref{lem:riesz L2}, this yields
	\begin{equation}\label{3.137}
		\begin{split}
			&2^{ (1+\mu-21\delta) k}
			\int_{0}^{t}\big\|\w{x}^{\mu-21\delta}P_ke^{\mp \sqrt{-1}c\tau|\nabla|} (\mathrm{Id},R) G(u,\partial u,\p_x\p u)(\tau)\big\|_{L^2_x}d\tau\\
			&\ls
			2^{ (1+\mu-21\delta) k}
			\int_{0}^{t}\left\|A_{\mu-20\delta}(\tau,x)P_k G(u,\partial u,\p_x\p u)(\tau)\right\|_{L^2_x}d\tau.
		\end{split}
	\end{equation}
	Combining \eqref{3.134}, \eqref{3.135} and \eqref{3.137}, we have
	\begin{equation}\label{3.138}
		\begin{split}
			&\|A_{\f{\mu}{2}-11\delta}P_k\p^{\le 1} u\|_{L_t^{2+8\delta}L^{2+\f{1}{2\delta}}_x}\\
			&\ls
			2^{ (1+\mu-21\delta) k}
			\big[\|\w{x}^{\mu-20\delta}P_k\partial u(0)\|_{L^2_x}+
			\int_{0}^{t}\big\|A_{\mu-20\delta}(\tau,x)P_k G(u,\partial u,\p_x\p u)(\tau)\big\|_{L^2_x}d\tau\big].
		\end{split}
	\end{equation}
Summing over $k$ in \eqref{3.138} yields
	\begin{equation}\label{YHCCC-38}
			\begin{split}
				&\sum_{k\ge -1}2^{(2+\mu-18.8\delta) k}\|A_{\f{\mu}{2}-11\delta}P_k\p^{\le 1} u\|_{L_t^{2+8\delta}L^{2+\f{1}{2\delta}}_x}\\
				&\ls
				\sum_{k\ge -1}2^{(3+2\mu -2\delta) k}\big[
				\|\w{x}^{\mu}P_k\partial u(0)\|_{L^2_x}
				+
				\int_{0}^{t}\big\|A_{\mu-20\delta}(\tau,x)P_k G(u,\partial u,\p_x\p u)(\tau)\big\|_{L^2_x}d\tau\big].
			\end{split}	
	\end{equation}
	In addition, it follows from Lemma \ref{lem:weight and Exponent Allocation for the Cubic Nonlinear Term} that
	\begin{equation}\label{YHCCC-39}
		\begin{split}
			\sum_{k\ge -1}2^{(3+2\mu -2\delta) k}
			\|A_{2\mu}P_k G(u,\partial u,\p_x\p u)\|_{L^1_tL^2_x}
			\ls
			\big(\sum_{k\ge -1}\|A_{\f{\mu}{2}-10\delta}P_{k}\p^{\le 1} u\|_{L^2_tL^\infty_x}\big)^2
			\|\p u\|_{L^\infty_tH^{4+2\mu}_x}.
		\end{split}
	\end{equation}
	Substituting \eqref{YHCCC-39} into \eqref{YHCCC-38} yields \eqref{ineq:Estimate of Besov norm 4}.
\end{proof}

\section{Proof of Theorem \ref{thm:1}}\label{Section 5}

Suppose that for $\delta\in(0,\f{1}{3})$, $t>0$ and integer $N\ge 4$,
\begin{equation}\label{bootstrap1}
\|\p u\|_{L^\infty([0,t];H^N)}\le \varepsilon_1\le1,	
\end{equation}
\begin{equation}\label{bootstrap2}
	\sum_{k\ge-1}2^{(1+\delta) k}\ln^{-1}(e+t)\|P_k\p^{\le 1} u\|_{L^{2}([0,t];L^\infty) }\le \varepsilon_1\le1.
\end{equation}
We shall improve the constant $\varepsilon_1$ to $\f{\varepsilon_1}{2}$ in \eqref{bootstrap1} and \eqref{bootstrap2}.
Firstly, it follows from Lemma \ref{lem:Energy estimate} that
\begin{equation}\label{improve1}
\begin{split}
\|\p u\|_{L^\infty([0,t];H^N)}
&\ls\|\p u(0)\|_{H^N(\R^3)}+\big(\sum_{k\ge-1}2^{(1+\delta) k}\|P_k\p^{\le1 } u\|_{L^{2}([0,t];L^\infty)}\big)^2\|\p u\|_{L^\infty([0,t];H^N)}\\
&\ls \varepsilon +\varepsilon_1^3\ln^2(e+t).
\end{split}	
\end{equation}
In addition, by Lemma \ref{lem:Estimate of Besov norm}, one has
\begin{equation}\label{improve2}
\sum_{k\ge-1}2^{(1+\delta) k}\ln^{-1}(e+t)\|P_k\p^{\le1 } u\|_{L^{2}([0,t];L^\infty)}\ls \varepsilon +\varepsilon_1^3\ln^2(e+t).
\end{equation}
From \eqref{improve1} and \eqref{improve2}, there is $C_1 \ge 1$ such that
\begin{equation}\label{improve3}
\|\p u\|_{L^\infty([0,t];H^N)}\le C_1(\varepsilon +\varepsilon_1^3\ln^2(e+t)),
\end{equation}
\begin{equation}\label{improve4}
	\sum_{k\ge-1}2^{(1+\delta) k}\ln^{-1}(e+t) \|P_k\p^{\le1 } u\|_{L^{2}([0,t];L^\infty) }\le C_1( \varepsilon +\varepsilon_1^3\ln^2(e+t)).
\end{equation}
Choosing $\varepsilon_1 = 4C_1 \varepsilon,\ T_\ve = e^{\kappa_0\ve^{-1}}-e$ and $\kappa_0 = \f{1}{4C_1}$ in (\ref{improve3}) and (\ref{improve4}),
one obtains
\begin{equation*}
\|\p u\|_{L^\infty([0,t];H^N)}\le \f{\varepsilon_1}{2}	
\end{equation*}
and
\begin{equation*}
\sum_{k\ge-1}2^{(1+\delta) k}\ln^{-1}(e+t)\|P_k\p^{\le 1} u\|_{L^{2}([0,t];L^\infty)}\le \f{\varepsilon_1}{2}.
\end{equation*}
This, together with the local existence of classical solution to \eqref{eq:wave} and the continuity argument, ensures that
	\eqref{eq:wave} admits a unique solution $u\in C([0,T_\ve];H^{N+1}(\R^3))\cap C^1([0,T_\ve];H^{N}(\R^3))$.

In addition, if $G(u, \p u, \p^2 u)$ is independent of $u$ (see \eqref{eq:C independent u}),
then we suppose that for integer $N\ge 4$, $\delta\in(0,\f{1}{3})$ and $t\ge0$,
	\begin{equation}\label{bootstrap11}
	\|\p u\|_{L^\infty([0,t];H^N)}\le \varepsilon_1\le1,	
	\end{equation}
	\begin{equation}\label{bootstrap12}
		\sum_{k\ge-1}2^{(1+\delta) k}\ln^{-\f12}(e+t)\|P_k\p u\|_{L^{2}([0,t];L^\infty)}\le \varepsilon_1\le1.
	\end{equation}
By the same argument as above, the lifespan $T_\varepsilon$ can be improved to $T_\varepsilon=e^{\kappa_0 \varepsilon^{-2}}-e$.

\section{Proof of Theorem \ref{thm:2}}\label{Section 6}

\subsection{Uniform energy estimates}\label{Sec.6.1}

Suppose that for $\mu\in(0,1)$, $\delta=10^{-6}\mu$, $t>0$ and integer $N\ge \lceil4+2\mu\rceil$,
\begin{equation}\label{bootstrap3}
\|\p u\|_{L^\infty([0,t];H^N)}\le \varepsilon_2\le1,	
\end{equation}
\begin{equation}\label{bootstrap4}
\sum_{k\ge-1}2^{(1+\delta) k}\|(1+s+|x|)^{\f{\mu}{2}-10\delta}P_k\p^{\le 1} u\|_{L^{2}([0,t];L^\infty)}\le \varepsilon_2\le1,
\end{equation}
\begin{equation}\label{bootstrap5}
\sum_{k\ge-1}2^{(3 +\mu -18.8\delta) k} \|(1+s+|x|)^{2.2\delta}P_{k}\p u\|_{L^{1+\f{1}{4\delta}}([0,t];L^{\f{2+8\delta}{1-4\delta}})}\le \varepsilon_2\le1,
\end{equation}
\begin{equation}\label{bootstrap6}
	\sum_{k\ge-1}2^{(2 +\mu -18.8\delta) k} \|(1+s+|x|)^{\f{\mu}{2}-11\delta}P_{k}\p^{\le 1} u\|_{L^{2+8\delta}([0,t];L^{2+\f{1}{2\delta}})}\le \varepsilon_2\le1.
\end{equation}
We shall improve the constant $\varepsilon_2$ in \eqref{bootstrap3}--\eqref{bootstrap6} to $\f{\varepsilon_2}{2}$.
Firstly, by Lemma \ref{lem:Energy estimate}, one has
\begin{equation}\label{improve21}
\begin{split}
\|\p u\|_{L^\infty_tH^N_x}
&\ls \|\p u(0)\|_{H^N_x}+\big(\sum_{k\ge-1}
2^{(1+\delta) k}\|(1+s+|x|)^{\f{\mu}{2}-10\delta}P_k\p^{\le 1} u\|_{L^{2}_t L^\infty_x}\big)^2\|\p u\|_{L^\infty_tH^N_x}\\
&\ls \varepsilon +\varepsilon_2^3.
\end{split}	
\end{equation}
In addition, it follows from Lemma \ref{lem:Estimate of Besov norm 2} that
\begin{equation}\label{improve22}
	\begin{split}
		&\sum_{k\ge -1}2^{(1+\delta) k}\|(1+s+|x|)^{\f{\mu}{2}-10\delta}P_k\p^{\le 1} u\|_{L^{2}_tL^\infty_x}\\
		\ls&
		\sum_{k\ge -1}2^{(2 +\mu -18.9\delta) k}\|\w{x}^{\mu}P_k\partial u(0)\|_{L^2_x}\\
		&+
		\sum_{k\ge -1}2^{(3 +\mu -18.8\delta) k}\|(1+s+|x|)^{2.2\delta} P_{k}\p u\|_{L_t^{1+\f{1}{4\delta}}L^{\f{2+8\delta}{1-4\delta}}_x}\\
		&\quad\times
		\big(\sum_{l\ge-1}2^{(2 +\mu-18.8\delta) l} \|(1+s+|x|)^{\f{\mu}{2}-11\delta}P_{l}\p^{\le 1} u\|_{L_t^{2+8\delta}L^{2+\f{1}{2\delta}}_x}\big)^2.
	\end{split}
\end{equation}
Due to Lemma \ref{lem:weighted bernstein} and $\w{x}^{2\mu}\in A_2(\R^3)$, one has
\begin{equation*}
	\sum_{k\ge -1}2^{(2 +\mu -18.9\delta) k}\|\w{x}^{\mu}P_k\partial u(0)\|_{L^2_x}
	\ls
	\sum_{|a|\le 3}\|\w{x}^{\mu}\p^a_x\partial u(0)\|_{L^2_x}\ls \ve.
\end{equation*}
Therefore,
\begin{equation}\label{5.7}
\sum_{k\ge-1}2^{(1+\delta) k}\|(1+s+|x|)^{\f{\mu}{2}-10\delta}P_k\p^{\le 1} u\|_{L^{2}([0,t];L^\infty)}\ls \ve+\ve^3_2.
\end{equation}
Thirdly, we estimate $\sum_{k\ge -1}2^{(3 +\mu -18.8\delta) k}\|(1+s+|x|)^{2.2\delta} P_{k}\p u\|_{L_t^{1+\f{1}{4\delta}}L^{\f{2+8\delta}{1-4\delta}}_x}$.
By Lemma \ref{lem:Estimate of Besov norm 3}, we have
\begin{equation}\label{improve23}
\begin{split}
	&\sum_{k\ge -1}2^{(3 +\mu -18.8\delta) k}\|(1+s+|x|)^{2.2\delta} P_{k}\p u\|_{L_t^{1+\f{1}{4\delta}}L^{\f{2+8\delta}{1-4\delta}}_x}\\
	&\ls
	\sum_{k\ge -1}2^{(3+\mu -5.8\delta) k}
	\|\w{x}^{6\delta}P_k\partial u(0)\|_{L^2_x}
	+
	\big(\sum_{k\ge -1}\|(1+s+|x|)^{3\delta}P_{k}\p^{\le 1} u\|_{L^2_tL^\infty_x}\big)^2
	\|\p u\|_{L^\infty_tH^{4+\mu}_x}.
\end{split}
\end{equation}
Note that by Lemma \ref{lem:weighted bernstein} and $\w{x}^{12\delta}\in A_2(\R^3)$,
\begin{equation*}
\sum_{k\ge -1}2^{(3+\mu -5.8\delta) k}
\|\w{x}^{6\delta}P_k\partial u(0)\|_{L^2_x}
\ls
\sum_{|a|\le 4}\|\w{x}^{\mu}\p^a_x\partial u(0)\|_{L^2_x}\ls \ve.
\end{equation*}
Hence,
\begin{equation}\label{5.10}
\sum_{k\ge -1}2^{(3 +\mu -18.8\delta) k}\|(1+s+|x|)^{2.2\delta} P_{k}\p u\|_{L_t^{1+\f{1}{4\delta}}L^{\f{2+8\delta}{1-4\delta}}_x}\ls \ve+\ve^3_2.
\end{equation}
Finally, we estimate $\sum_{k\ge-1}2^{(2 +\mu-18.8\delta) k} \|(1+s+|x|)^{\f{\mu}{2}-11\delta}P_{k}\p^{\le 1} u\|_{L_t^{2+8\delta}L^{2+\f{1}{2\delta}}_x}$.
It follows from Lemma \ref{lem:Estimate of Besov norm 4} that
\begin{equation*}
	\begin{split}
		&\sum_{k\ge -1}2^{(2 +\mu -18.8\delta) k}\|(1+s+|x|)^{\f{\mu}{2}-11\delta} P_{k}\p^{\le 1} u\|_{L_t^{2+8\delta}L^{2+\f{1}{2\delta}}_x}\\
		&\ls
		\sum_{k\ge -1}2^{(3+2\mu -2\delta) k}
		\|\w{x}^{\mu}P_k\partial u(0)\|_{L^2_x}
		+
		\big(\sum_{k\ge -1}\|(1+s+|x|)^{\f{\mu}{2}-10\delta}P_{k}\p^{\le 1} u\|_{L^2_tL^\infty_x}\big)^2
		\|\p u\|_{L^\infty_tH^{4+2\mu}_x}.
	\end{split}
\end{equation*}
By Lemma \ref{lem:weighted bernstein} and $\w{x}^{2\mu}\in A_2(\R^3)$, one has
\begin{equation*}
	\sum_{k\ge -1}2^{(3+2\mu-2\delta) k}
	\|\w{x}^{\mu}P_k\partial u(0)\|_{L^2_x}
	\ls
	\sum_{|a|\le 5}\|\w{x}^{\mu}\p^a_x\partial u(0)\|_{L^2_x}\ls \ve.
\end{equation*}
Therefore,
\begin{equation}\label{5.11}
	\sum_{k\ge -1}2^{(2 +\mu -18.8\delta) k}\|(1+s+|x|)^{\f{\mu}{2}-11\delta} P_{k}\p^{\le 1} u\|_{L_t^{2+8\delta}L^{2+\f{1}{2\delta}}_x}\ls \ve+\ve^3_2.
\end{equation}
It is pointed out that the condition $N \ge \lceil 4+2\mu \rceil$ is employed in \eqref{5.11}.
From \eqref{improve21}, \eqref{5.7}, \eqref{5.10} and \eqref{5.11}, there is $C_2 \ge 1$ such that
\begin{equation}\label{improve24}
\|\p u\|_{L^\infty([0,t];H^N)}\le C_2(\varepsilon +\varepsilon_2^3),
\end{equation}
\begin{equation}\label{improve25}
\sum_{k\ge-1}2^{(1+\delta) k}\|(1+s+|x|)^{\f{\mu}{2}-10\delta}P_k\p^{\le 1} u\|_{L^{2}([0,t];L^\infty)}
\le C_2(\varepsilon +\varepsilon_2^3),
\end{equation}
\begin{equation}\label{improve26}
\sum_{k\ge-1}2^{(3 +\mu -18.8\delta) k} \|(1+s+|x|)^{2.2\delta}P_{k}\p u\|_{L^{1+\f{1}{4\delta}}([0,t];L^{\f{2+8\delta}{1-4\delta}})}
\le C_2(\varepsilon +\varepsilon_2^3),
\end{equation}
\begin{equation}\label{improve27}
	\sum_{k\ge-1}2^{(2 +\mu -18.8\delta) k} \|(1+s+|x|)^{\f{\mu}{2}-11\delta}P_{k}\p^{\le 1} u\|_{L^{2+8\delta}([0,t];L^{2+\f{1}{2\delta}})}
	\le C_2(\varepsilon +\varepsilon_2^3).
\end{equation}
Choosing $\varepsilon_2 = 4C_2 \varepsilon_0$ and $\ve_0 = \left(\f{1}{64C^3_2}\right)^{\f12}$ in \eqref{improve24}--\eqref{improve27}, we can obtain
\begin{equation*}
\|\p u\|_{L^\infty([0,t];H^N)}\le \f{\varepsilon_2}2,	
\end{equation*}
\begin{equation*}
\sum_{k\ge-1}2^{(1+\delta) k}\|(1+s+|x|)^{\f{\mu}{2}-10\delta}P_k\p^{\le 1} u\|_{L^{2}([0,t];L^\infty)}
\le \f{\varepsilon_2}2,	
\end{equation*}
\begin{equation*}
\sum_{k\ge-1}2^{(3 +\mu -18.8\delta) k} \|(1+s+|x|)^{2.2\delta}P_{k}\p u\|_{L^{1+\f{1}{4\delta}}([0,t];L^{\f{2+8\delta}{1-4\delta}})}
\le \f{\varepsilon_2}2,	
\end{equation*}
\begin{equation*}
\sum_{k\ge-1}2^{(2 +\mu -18.8\delta) k} \|(1+s+|x|)^{\f{\mu}{2}-11\delta}P_{k}\p^{\le 1} u\|_{L^{2+8\delta}([0,t];L^{2+\f{1}{2\delta}})}
\le\f{\varepsilon_2}2.
\end{equation*}
This, together with the local existence of classical solution to \eqref{eq:wave} and the continuity argument, ensures that
\eqref{eq:wave} admits a unique solution $u\in C([0,+\infty);H^{N+1}(\R^3))\cap C^1([0,+\infty);H^{N}(\R^3))$.

\subsection{Scattering of solution $u$}

From \eqref{improve24}--\eqref{improve27}, we have
\begin{equation}\label{YHCCC-103}
\|\p u\|_{L^\infty([0,\infty);H^N)}\le C\varepsilon,	
\end{equation}
\begin{equation}\label{5.14}
\sum_{k\ge-1}2^{(1+\delta) k}\|(1+t+|x|)^{\f{\mu}{2}-10\delta} P_k\p^{\le 1} u\|_{L^{2}([0,\infty);L^\infty)}\le C\varepsilon,
\end{equation}
\begin{equation}\label{6.25}
\sum_{k\ge-1}2^{(3 +\mu -18.8\delta) k} \|(1+s+|x|)^{2.2\delta}P_{k}\p u\|_{L^{1+\f{1}{4\delta}}([0,\infty);L^{\f{2+8\delta}{1-4\delta}})}
\le C\varepsilon,
\end{equation}
\begin{equation}\label{6.26}
\sum_{k\ge-1}2^{(2 +\mu -18.8\delta) k} \|(1+s+|x|)^{\f{\mu}{2}-11\delta}P_{k}\p^{\le 1} u\|_{L^{2+8\delta}([0,\infty);L^{2+\f{1}{2\delta}})}
\le C\varepsilon.
\end{equation}
Define
\begin{equation*}
\begin{split}
u_0^{\infty,i}(x) &= u_0^i(x) + \int_0^\infty \frac{\sin(-c_is|\nabla|\big)}{c_i|\nabla|} G^i(u,\p u,\p_x\p u)(s)ds, \\
u_1^{\infty,i}(x) &= u_1^i(x) + \int_0^\infty \cos(-c_is|\nabla|\big) G^i(u,\p u,\p_x\p u)(s)ds. \\
\end{split}
\end{equation*}
Then
\begin{equation*}
	u^{\infty,i}(t,x) = \cos(c_it|\nabla|)u_0^i(x) + \frac{\sin(c_it|\nabla|)}{c_i|\nabla|}u_1^i(x) + \int_0^\infty \frac{\sin\big(c_i(t-s)|\nabla|\big)}{c_i|\nabla|} G^i(u,\p u,\p_x\p u)(s)ds
\end{equation*}
is a solution of $\square_{c_i} u^i = 0$ for $i = 1,\cdots,m$ with initial data $(u_0^\infty,u_1^\infty)$ at $t = 0$.
By applying \eqref{eq:partial Pku} without $P_k$, we have
\begin{equation*}
\begin{split}
&\|\p(u(t)-u^\infty(t))\|_{H^{N-1}(\R^3)}\\
\ls &\|\int_t^\infty  e^{\pm \sqrt{-1}c(t-s)|\nabla|} (\mathrm{Id},R)  G(u,\partial u,\p_x\p u)(s)ds\|_{H^{N-1}(\R^3)}\\
\ls &\int_t^\infty  \|G(u,\partial u,\p_x\p u)(s)\|_{H^{N-1}(\R^3)}ds\\
\ls & \|\p u\|_{L^\infty([t,\infty);H^N)}\big(\sum_{k\ge-1}2^{(1+\delta) k}\|(1+t+|x|)^{\f{\mu}{2}-10\delta}P_k\p^{\le 1} u\|_{L^{2}([t,\infty); L^\infty)}\big)^2.
\end{split}	
\end{equation*}
Together with \eqref{5.14}, this yields
\begin{equation*}
\lim_{t \to +\infty} \|\partial (u(t,\cdot) - u^\infty(t,\cdot))\|_{H^{N-1}(\mathbb{R}^3)} = 0.	
\end{equation*}

\subsection{Decay properties of solution $u$ as $t \to \infty$}

By \eqref{eq:Pku} and \eqref{eq:partial Pku}, one arrives at
\begin{equation}\label{YHCCC-41}
\begin{split}
&\|P_k\p^{\le 1} u(t)\|_{L^\infty_x}\\
&\ls
\sum_{\iota=0,1}\int_{0}^{t}\big\|P_{[[k]]}|\nabla|^{-\iota}e^{\pm \sqrt{-1}ct|\nabla|}e^{\mp \sqrt{-1}c\tau|\nabla|} (\mathrm{Id},R)
P_kG(u,\p u,\p_x\p u)(\tau)\big\|_{L^\infty_x}d\tau\\
&\quad+\sum_{\iota=0,1}\|P_{[[k]]}|\nabla|^{-\iota}e^{\pm \sqrt{-1}ct|\nabla|} (\mathrm{Id},R)P_k\partial u(0)\|_{L^\infty_x}.
\end{split}
\end{equation}
Applying Lemma \ref{cor:3.8} with $\delta=10^{-6}\mu$ and $\theta = \frac{\mu-21\delta}{1+2\delta}$, it follows from \eqref{YHCCC-41} that
\begin{equation}\label{6.34}
\begin{split}
&\|P_k\p^{\le 1} u(t)\|_{L^\infty_x}\\
&\ls
2^{(\f32+\mu) k}(1+t)^{-\theta}
\int_{0}^{t}\big\|\w{x}^{\mu-21\delta}P_ke^{\mp \sqrt{-1}c\tau|\nabla|} (\mathrm{Id},R) G(u,\p u,\p_x\p u)(\tau)\big\|_{L^2_x}d\tau\\
&\quad+2^{(\f32+\mu) k}(1+t)^{-\theta}
\|\w{x}^{\mu}P_k(\mathrm{Id},R)\partial u(0)\|_{L^2_x}.
\end{split}
\end{equation}
Note that by Lemma \ref{lem:riesz L2},
\begin{equation}\label{6.35}
\|\w{x}^{\mu}(\mathrm{Id},R)P_k\partial u(0)\|_{L^2_x}
\ls
\|\w{x}^{\mu}P_k\partial u(0)\|_{L^2_x}.
\end{equation}
On the other hand, by Corollary \ref{cor:Technical lemma}, one has
\begin{equation*}
\begin{split}
&\int_{0}^{t}\big\|\w{x}^{\mu-21\delta}P_ke^{\mp \sqrt{-1}c\tau|\nabla|} (\mathrm{Id},R) G(u,\partial u,\p_x\p u)(\tau)\big\|_{L^2_x}d\tau\\
&\ls\int_{0}^{t}(1+c\tau)^{\mu-20\delta}\big\|(\mathrm{Id},R) P_kG(u,\partial u,\p_x\p u)(\tau)\big\|_{L^2_x}d\tau\\
&\quad+\int_{0}^{t}\big\|\w{x}^{\mu-20\delta}(\mathrm{Id},R)P_k G(u,\partial u,\p_x\p u)(\tau)\big\|_{L^2_x}d\tau.
\end{split}
\end{equation*}
Together with Lemma \ref{lem:riesz L2}, this yields
\begin{equation}\label{6.37}
\begin{split}
&\int_{0}^{t}\big\|\w{x}^{\mu-21\delta}P_ke^{\mp \sqrt{-1}c\tau|\nabla|} (\mathrm{Id},R) G(u,\partial u,\p_x\p u)(\tau)\big\|_{L^2_x}d\tau\\
&\ls\int_{0}^{t}\big\|A_{\mu-20\delta}(\tau,x)P_k G(u,\partial u,\p_x\p u)(\tau)\big\|_{L^2_x}d\tau.
\end{split}
\end{equation}
Collecting \eqref{6.34}, \eqref{6.35} and \eqref{6.37}, we have
\begin{equation}\label{6.38}
\begin{split}
&(1+t)^{\mu^-}\|P_k\p^{\le 1} u(t)\|_{L^\infty_x}\\
&\ls 2^{(\f32+\mu) k}\big[\|\w{x}^{\mu}P_k\partial u(0)\|_{L^2_x}+
\int_{0}^{t}\big\|A_{\mu-20\delta}(\tau,x)P_k G(u,\partial u,\p_x\p u)(\tau)\big\|_{L^2_x}d\tau\big].
\end{split}
\end{equation}
By summing over $k$ in \eqref{6.38}, utilizing Lemma \ref{lem:weighted bernstein}, Lemma \ref{lem:weight and Exponent Allocation for the Cubic Nonlinear Term} and \eqref{YHCCC-103}--\eqref{5.14}, one arrives at
\begin{equation*}
\begin{split}
&(1+t)^{\mu^-}\|\p^{\le 1} u(t)\|_{L^\infty_x}\\
&\ls\sum_{k\ge -1}2^{(\f32+\mu) k}\big[
\|\w{x}^{\mu}P_k\partial u(0)\|_{L^2_x}
+\int_{0}^{t}\big\|A_{\mu-20\delta}(\tau,x)P_k G(u,\partial u,\p_x\p u)(\tau)\big\|_{L^2_x}d\tau\big]\\
&\ls\sum_{|a|\le 3}\|\w{x}^{\mu}\p^a_x\partial u(0)\|_{L^2_x}+\ve^3\\
&\ls\ve.
\end{split}	
\end{equation*}
Then the proof of Theorem \ref{thm:2} is completed.

\appendix
\section{Introduction to $A_2$ weights}\label{section a}
For the reader's convenience, in this section we give a brief introduction to some $A_2$ weight inequalities.
For further properties of $A_p$ weights, we refer the reader to \cite[Chapter 7]{Grafakos} and \cite[Chapter V]{Stein}
\begin{definition}
A non-negative weight function $w$ is said to be of class $A_2$ if
\begin{equation}\label{def:A_2 weight}
[w]_{A_2}=\sup_{\text{all cubes $Q$ in } \mathbb{R}^3}\big(\frac{1}{|Q|} \int_Q w(x) dx\big)\big(\frac{1}{|Q|} \int_Q w(x)^{-1} dx\big)<\infty. 	
\end{equation}

\end{definition}
\begin{lemma}\label{lem:w{x} in A2}
	$|x|^\alpha,\ \w{x}^{\alpha} \in A_2(\R^3)$ if and only if $\alpha\in(-3,3)$.
\end{lemma}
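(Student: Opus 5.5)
The plan is to check the defining condition \eqref{def:A_2 weight} directly, by elementary averaging over cubes. Throughout, write $w_\alpha(x)$ for either $|x|^\alpha$ or $\w{x}^\alpha$, and note that $w_\alpha^{-1}=w_{-\alpha}$. This makes the problem symmetric under $\alpha\mapsto-\alpha$: it suffices to bound $\frac{1}{|Q|}\int_Q w_{\alpha}$ for every $\alpha\in(-3,3)$, and then apply the same bound with $-\alpha$ in place of $\alpha$.

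\emph{Sufficiency.} Let $Q$ be a cube with center $c$ and side length $L$, and split into two cases.

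First suppose $|c|\ge 2\sqrt3\,L$. Then every $x\in Q$ satisfies $\f12|c|\le|x|\le 2|c|$. Hence $|x|^\alpha\approx|c|^\alpha$ on $Q$, and likewise $\w{x}^\alpha\approx\w{c}^\alpha$. Each of the two averages in \eqref{def:A_2 weight} is then comparable to $w_\alpha(c)^{\pm1}$, so their product is $\ls 1$.

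Now suppose $|c|<2\sqrt3\,L$. Then $Q\subset B(0,CL)$, and since $\alpha>-3$, polar coordinates give
\[
\frac{1}{|Q|}\int_Q |x|^\alpha dx\ls L^{-3}\int_0^{CL}r^{\alpha+2}dr\ls L^{\alpha}.
\]
Because $-\alpha>-3$ as well, the same computation gives $\frac1{|Q|}\int_Q|x|^{-\alpha}dx\ls L^{-\alpha}$. The product is therefore $\ls 1$, which settles the case $|x|^\alpha$.

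For $\w{x}^\alpha$ in this second case, the size of the cube matters. If $L\le1$, then $Q\subset B(0,C)$, where $\w{x}\approx1$; both averages are $\approx1$ and the product is bounded. If $L\ge1$, use $\w{x}\approx\max\{1,|x|\}$. The average of $\w{x}^{\pm\alpha}$ over $B(0,CL)$ is then bounded by $L^{-3}\bigl(1+\int_1^{CL}r^{\pm\alpha+2}dr\bigr)\ls L^{\pm\alpha}$, using $\pm\alpha>-3$ once more. The product is again $\ls1$.

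\emph{Necessity.} For $|x|^\alpha$, membership in $A_2$ forces both $w$ and $w^{-1}$ to be locally integrable, since each average in \eqref{def:A_2 weight} must be finite. Local integrability of $|x|^\alpha$ near the origin requires $\alpha>-3$, and that of $|x|^{-\alpha}$ requires $\alpha<3$.

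For $\w{x}^\alpha$, local integrability holds for every $\alpha$, so a large-cube argument is needed instead; this is the only step requiring a little care. By symmetry, assume $\alpha\ge3$, and take $Q=[-L,L]^3$ with $L\to\infty$. The average of $\w{x}^\alpha$ over $Q$ is $\gtrsim L^\alpha$, since the region $|x|\ge L/2$ occupies a fixed fraction of $Q$. The average of $\w{x}^{-\alpha}$ is $\gtrsim L^{-3}\int_{B(0,1)}1\,dx\approx L^{-3}$; when $\alpha=3$ it is in fact $\gtrsim L^{-3}\ln L$. The product is therefore $\gtrsim L^{\alpha-3}$, and $\gtrsim \ln L$ when $\alpha=3$. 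This is unbounded as $L\to\infty$ in both cases, so $\w{x}^\alpha\notin A_2$. The case $\alpha\le-3$ follows by applying the same argument to $w^{-1}=\w{x}^{-\alpha}$, since the quantity in \eqref{def:A_2 weight} is unchanged when $w$ is replaced by $w^{-1}$.
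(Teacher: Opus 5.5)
Your proof is correct and uses the same two-case split as the paper's argument for $\w{x}^\alpha$. Cubes that are small relative to their distance from the origin carry an essentially constant weight, and cubes lying inside a ball $B(0,CL)$ are handled by integrating radially using $\pm\alpha>-3$. Your version is also more complete than the paper's. You prove the $|x|^\alpha$ case directly instead of citing Stein, and you give the large-cube computation for necessity that the paper only asserts. Your separate subcase $L\le 1$ is genuinely needed: the paper's Case 2 bounds $\int_Q\w{x}^{\pm\alpha}dx\ls(3r)^{3\pm\alpha}$ fail for small $r$ whenever the exponent is positive. The product of the two averages is still bounded there, exactly as your subcase shows.
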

\begin{proof}
 For $|x|^\alpha$, the related result is established in \cite[Chapter V, p.~218]{Stein}.

For $\langle x \rangle^\alpha$,
let $Q$ be a cube with center $x_0$ and diagonal length $2r$ (so $Q \subseteq B(x_0, r)$), we consider the following two cases.

 \vskip 0.2 true cm
 \noindent\textbf{Case 1.} $r \leq |x_0|/2$
 \vskip 0.1 true cm
 For all $x \in Q$, we have $\frac{1}{2}|x_0| \leq |x| \leq \frac{3}{2}|x_0|$, hence $\langle x \rangle \approx \langle x_0 \rangle$. Thus
 \[
 \f{1}{|Q|}\int_Q \langle x \rangle^\alpha \, dx \approx \langle x_0 \rangle^\alpha , \quad \f{1}{|Q|}\int_Q \langle x \rangle^{-\alpha} \, dx \approx \langle x_0 \rangle^{-\alpha},
 \]
 and their product is bounded by a uniform constant, which fulfills the $A_2$ condition.

 \vskip 0.2 true cm
 \noindent\textbf{Case 2.} $r \geq |x_0|/2$
\vskip 0.1 true cm
 In this case, $Q \subset B(x_0, r) \subset B(0, 3r)$ holds.
 Due to $\alpha \in (-3, 3)$, one then has
 \[
 \int_Q \langle x \rangle^\alpha \, dx \lesssim (3r)^{\alpha+3}, \quad \int_Q \langle x \rangle^{-\alpha} \, dx \lesssim (3r)^{-\alpha+3}.
 \]
 Multiplying these gives an upper bound $C r^6$ and subsequently being divided by $|Q|^2 \approx r^6$ yield the desired uniform bound.

 If $\alpha \notin (-3, 3)$, it is easy to verify $[\w{x}^\alpha]_{A_2} = \infty$, which completes the proof of Lemma \ref{lem:w{x} in A2}.
\end{proof}

\begin{definition}\label{def:7.4.1}
Let $0 < \delta, A < \infty$. A function $K(x,y)$ defined for $x,y \in \mathbb{R}^n$ with $x \neq y$ is called a standard kernel (with constants $\delta$ and $A$) if
\begin{equation}\label{eq:7.4.1}
|K(x,y)| \leq \frac{A}{|x-y|^n}, \quad x \neq y,
\end{equation}
and whenever $|x-x'| \leq \frac12\max\left(|x-y|,|x'-y|\right)$, it holds
\begin{equation}\label{eq:7.4.2}
|K(x,y) - K(x',y)| \leq \frac{A|x-x'|^\delta}{\left(|x-y|+|x'-y|\right)^{n+\delta}};
\end{equation}
when $|y-y'| \leq \frac12\max\left(|x-y|,|x-y'|\right)$, one has
\begin{equation}\label{eq:7.4.3}
|K(x,y) - K(x,y')| \leq \frac{A|y-y'|^\delta}{\left(|x-y|+|x-y'|\right)^{n+\delta}}.
\end{equation}
The class of all kernels that satisfy \eqref{eq:7.4.1}-\eqref{eq:7.4.3} is denoted by $SK(\delta,A)$.
\end{definition}

\begin{definition}\label{def:7.4.2}
Let $0 < \delta, A < \infty$ and $K\in SK(\delta,A)$. A Calder\'{o}n-Zygmund operator $T$ associated with $K$ is defined as
\begin{equation}\label{eq:7.4.5}
T(f)(x) = \int_{\mathbb{R}^n} K(x,y)f(y)\,dy,
\end{equation}
which fulfills
\begin{equation}\label{eq:7.4.4}
\|T(f)\|_{L^2} \leq B\|f\|_{L^2}.
\end{equation}
All Calder\'{o}n--Zygmund operators $T$ form the space $CZO(\delta,A,B)$.
\end{definition}
\begin{lemma}\label{lem:A2}
Let $A,B,\beta > 0$ and $T\in CZO(\beta,A,B)$. Then there is a constant $C = C(\beta,[w]_{A_2})$ such that
for all $w \in A_2$ and $f \in L^2(w)$,
\[
\|T(f)\|_{L^2(w)} \leq C\,(A+B)\|f\|_{L^2(w)}.
\]
\end{lemma}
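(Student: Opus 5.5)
The plan is the classical Coifman–Fefferman argument: bound the sharp maximal function of $Tf$ pointwise by a slightly enlarged maximal function of $f$, then transfer this to $L^2(w)$ through the Fefferman–Stein inequality and weighted maximal bounds. By density it suffices to treat $f\in L^\infty$ with compact support; such $f$ lie in $L^2(w)$ and are dense there. The operator then extends by continuity to all of $L^2(w)$, and this extension agrees with the integral \eqref{eq:7.4.5} for a.e.\ $x$ outside the support.

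\textbf{Step 1: pointwise sharp-function bound.} Fix $0<\eta<1$ and $1<s<2$. Write $M^\#_\eta g=(M^\#(|g|^\eta))^{1/\eta}$ and $M_s f=(M|f|^s)^{1/s}$, where $M$ is the Hardy–Littlewood maximal operator. The goal is
\[
M^\#_\eta(Tf)(x)\le C(A+B)\,M_sf(x).
\]
Fix a cube $Q\ni x$ and split $f=f\chi_{2\sqrt n Q}+f\chi_{(2\sqrt nQ)^c}=f_1+f_2$. Take the constant $c_Q=Tf_2(x_Q)$, where $x_Q$ is the center of $Q$.
\begin{itemize}
\item For the local part $f_1$: Kolmogorov's inequality combined with the weak $(1,1)$ bound for $T$ gives $\big(\frac1{|Q|}\int_Q|Tf_1|^\eta\big)^{1/\eta}\lesssim (A+B)\frac1{|Q|}\int|f_1|\lesssim(A+B)Mf(x)$. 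The weak $(1,1)$ bound, with constant $\lesssim A+B$, is the standard Calderón–Zygmund decomposition using \eqref{eq:7.4.3} and \eqref{eq:7.4.4}.
\item For the far part $f_2$: the smoothness estimate \eqref{eq:7.4.2}, summed over the dyadic annuli $2^{k}Q$, gives $|Tf_2(y)-Tf_2(x_Q)|\lesssim A\sum_k 2^{-k\beta}\frac{1}{|2^kQ|}\int_{2^kQ}|f|\lesssim A\,Mf(x)$ for every $y\in Q$.
\end{itemize}
Since $Mf\le M_sf$, Step 1 follows.

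\textbf{Step 2: transfer to $L^2(w)$.} Every $A_2$ weight is an $A_\infty$ weight. The Fefferman–Stein inequality for $A_\infty$ weights gives
\[
\|M_\eta(Tf)\|_{L^2(w)}\le C\|M^\#_\eta(Tf)\|_{L^2(w)},
\]
valid whenever the left-hand side is finite. Since $|Tf|\le M_\eta(Tf)$ a.e., Step 1 gives $\|Tf\|_{L^2(w)}\lesssim (A+B)\|M_sf\|_{L^2(w)}$. By the reverse Hölder property (openness of $A_p$), $w\in A_{2/s}$ for some $s>1$ depending only on $[w]_{A_2}$. Muckenhoupt's theorem then gives $\|M_s f\|_{L^2(w)}=\|M(|f|^s)\|_{L^{2/s}(w)}^{1/s}\lesssim\|f\|_{L^2(w)}$. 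All constants depend only on $\beta$, $n$ and $[w]_{A_2}$, which yields the stated bound $C(A+B)$.

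\textbf{Main obstacle.} The delicate point is the a priori finiteness $\|M_\eta(Tf)\|_{L^2(w)}<\infty$ needed in Fefferman–Stein. I would first try to get it from the decay of $Tf$ for bounded compactly supported $f$. Far from the support, \eqref{eq:7.4.1} gives $|Tf(x)|\lesssim A\|f\|_{L^1}|x|^{-n}$. Near the support, $Tf\in L^2$, and $w\in A_{2/s}$ lets one use Hölder on bounded sets.

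The difficulty is that $|x|^{-2n}w$ need not be integrable for a general $A_2$ weight. If this fails, I would replace $w$ by the truncated weights $\min(w,N)$, which are still $A_2$ with uniformly bounded constants. For these, finiteness is immediate from $Tf\in L^2$ and the $L^2$-boundedness of $M_\eta$. Monotone convergence as $N\to\infty$ then finishes the argument.
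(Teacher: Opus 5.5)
Your argument is correct, but it takes a different route from the paper's proof. The paper gives no argument of its own; it cites \cite[Theorem 7.4.6]{Grafakos}. There the bound comes from a good-$\lambda$ inequality comparing the maximal truncated operator $T^{(*)}$ with the Hardy--Littlewood maximal function for $A_\infty$ weights, of the form $w(\{T^{(*)}f>3\lambda,\ Mf\le\gamma\lambda\})\le C\gamma^{\epsilon}\,w(\{T^{(*)}f>\lambda\})$. This gives $\|T^{(*)}f\|_{L^2(w)}\lesssim\|Mf\|_{L^2(w)}$, and Muckenhoupt's theorem finishes. That route needs the weak-type $(1,1)$ bound for $T^{(*)}$ (via Cotlar's inequality). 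In exchange it also controls the maximal truncations, so it yields weighted bounds and a.e.\ convergence for the truncated integrals.

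You instead follow the Coifman--Fefferman (\'Alvarez--P\'erez) sharp-function route. You prove the pointwise bound $M^\#_\eta(Tf)\lesssim(A+B)\,Mf$ using only the weak $(1,1)$ bound for $T$ itself (through Kolmogorov) and the regularity of $K$ in its first variable for the far part. You then transfer it with the weighted Fefferman--Stein inequality. This avoids truncations and Cotlar entirely and produces a reusable pointwise estimate. Its only delicate point is the a priori finiteness, which your truncation of the weight handles correctly.

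Two small remarks. First, the detour through $M_s$ and the openness of $A_2$ is unnecessary: Step 1 already produces $Mf$, and $M$ is bounded on $L^2(w)$ for $w\in A_2$. Second, the uniform $A_2$ bound for $v=\min(w,N)$, which you assert without proof, is one line. Since $\frac{1}{|Q|}\int_Q v\le\min\big(\frac{1}{|Q|}\int_Q w,\,N\big)$ and $v^{-1}\le w^{-1}+N^{-1}$, one gets $[v]_{A_2}\le[w]_{A_2}+1$. Combined with $\int (M_\eta Tf)^2 v\le N\|M_\eta Tf\|_{L^2}^2<\infty$ and monotone convergence, this settles finiteness.

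Your worry about $|x|^{-2n}w$ is also unfounded away from the origin. From $w\in A_{2-\epsilon}$ one gets $w(B(0,R))\lesssim R^{n(2-\epsilon)}$ for $R\ge 1$, so $\int_{|x|\ge 1}|x|^{-2n}w\,dx<\infty$. The truncation argument is simpler anyway.
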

\begin{proof}
See \cite[Theorem 7.4.6]{Grafakos}.
\end{proof}
\begin{lemma}\label{lem:riesz L2}
Let $w \in A_2$ and $\Rj{j}$ be the $j$-th Riesz transformation for $j = 1, 2, 3$. There exists a constant $C = C (\Atwo{w})> 0$ such that
for any $f \in L^2(w)$,
\begin{equation}\label{YHCCC-42}
\begin{aligned}
\Ltw{\Rj{j}f} \leq C (\Atwo{w}) \Ltw{f}.
\end{aligned}
\end{equation}
\end{lemma}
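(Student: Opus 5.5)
The plan is to realize each Riesz transform $\Rj{j}$ as a Calder\'on--Zygmund operator in the sense of Definition~\ref{def:7.4.2} and then invoke Lemma~\ref{lem:A2}.

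\textbf{Step 1: the kernel.} For Schwartz $f$ and $x\notin\supp f$, the operator $\Rj{j}=\p_j/|\nabla|$ (with the paper's normalization, possibly up to a factor $\pm i$) is given by
\[
\Rj{j}f(x)=c\int_{\R^3}K_j(x,y)f(y)\,dy,\qquad K_j(x,y)=\frac{x_j-y_j}{|x-y|^{4}},
\]
where $c$ is a dimensional constant. This follows from the fact that $|\nabla|^{-1}$ has kernel $c'|x-y|^{-2}$ in $\R^3$; differentiating this kernel in $x_j$ gives the formula above.

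\textbf{Step 2: size and smoothness.} Clearly $|K_j(x,y)|\le|x-y|^{-3}$, which is \eqref{eq:7.4.1} with $n=3$. The gradient satisfies $|\nabla_xK_j|+|\nabla_yK_j|\lesssim|x-y|^{-4}$. Under the condition $|x-x'|\le\frac12\max(|x-y|,|x'-y|)$, every point on the segment joining $x$ and $x'$ lies at distance comparable to $|x-y|+|x'-y|$ from $y$. The mean value theorem then gives \eqref{eq:7.4.2} with $\delta=1$, and \eqref{eq:7.4.3} follows in the same way. Hence $K_j\in SK(1,A)$ for an absolute constant $A$.

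\textbf{Step 3: $L^2$ boundedness.} By Plancherel, $\|\Rj{j}f\|_{L^2}\le\|f\|_{L^2}$, since the symbol $\xi_j/|\xi|$ has modulus at most $1$. This is \eqref{eq:7.4.4} with $B=1$. Therefore $\Rj{j}\in CZO(1,A,1)$.

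\textbf{Step 4: conclusion and the main obstacle.} Lemma~\ref{lem:A2} now gives $\Ltw{\Rj{j}f}\le C([w]_{A_2})(A+1)\Ltw{f}$ for Schwartz $f$, which is \eqref{YHCCC-42}. The only delicate point is extending this from Schwartz functions to all $f\in L^2(w)$. Schwartz functions, in fact $C_0^\infty$, are dense in $L^2(w)$ because $w\in L^1_{loc}$. For the extension one argues as follows. Let $f_n\to f$ in $L^2(w)$. Since $w^{-1}\in L^1_{loc}$ and $w\in A_2$, the space $L^2(w)$ embeds into $L^1_{loc}$, so $f_n\to f$ in the sense of distributions. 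The inequality already proved shows that $\Rj{j}f_n$ is Cauchy in $L^2(w)$, and its limit must coincide with the distributional $\Rj{j}f$. This identification fixes the meaning of $\Rj{j}f$ on $L^2(w)$ and yields \eqref{YHCCC-42} with $C=C([w]_{A_2})$.
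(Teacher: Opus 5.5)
Your proposal is correct and takes the same approach as the paper. The paper's entire proof is that $R_j$ is a Calder\'on--Zygmund operator, followed by an appeal to Lemma~\ref{lem:A2}; your Steps 1--3 verify the kernel size, smoothness and $L^2$-boundedness hypotheses that the paper leaves implicit. The density argument in Step 4 is not needed, because Lemma~\ref{lem:A2} as stated already applies to every $f\in L^2(w)$. Your identification of the limit with a ``distributional $R_j f$'' is also looser than necessary: defining $R_j$ on $L^2(w)$ by continuous extension is enough.
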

\begin{proof}
 Since the Riesz transformation is a Calder\'on-Zygmund operator, \eqref{YHCCC-42} is shown by Lemma \ref{lem:A2}.
\end{proof}

\begin{lemma}\label{lem:weighted bernstein}
	Let $w \in A_2$, and $k\in \Z$. Then there exists a constant $C = C (\Atwo{w})> 0$ such that for any $f \in L^2(w)$,
	\[
	\Ltw{\pk f} \leq C (\Atwo{w}) 2^{-k}\Ltw{\pk \nabla f}
	\]
	and
	\begin{equation}\label{YHCCC-43}
	\begin{split}
	\Ltw{\pk \nabla f} \leq C (\Atwo{w}) 2^{k}\Ltw{\pk f}.
	\end{split}
	\end{equation}
\end{lemma}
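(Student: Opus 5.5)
The key observation is that $\pk$ and $\pk\nabla$ can each be written as a Calderón--Zygmund-type operator applied to the other, modulo the factor $2^{\mp k}$. Once that is in place, Lemma \ref{lem:A2} transfers the unweighted bounds to $L^2(w)$. We may also rescale: the dilation $x\mapsto 2^{k}x$ maps $A_2$ weights to $A_2$ weights with the same constant $[w]_{A_2}$, since the $A_2$ condition is dilation invariant. It therefore suffices to prove both estimates for $k=0$ with a constant depending only on $[w]_{A_2}$. The general case then follows because $\pk f(x)=(\dot P_0 f(2^{-k}\cdot))(2^kx)$ and $\nabla$ produces exactly the factor $2^{k}$.

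For $k=0$ and the second inequality, write $\dot P_0\nabla f=T(\dot P_0 f)$ with $T=\nabla\dot P_{[[0]]}$. Its kernel $\nabla\check{\dot\psi}_{[[0]]}$ is a Schwartz function. Hence it satisfies $|K(x-y)|\lesssim \langle x-y\rangle^{-4}\le |x-y|^{-3}$ and has gradient bounded by $\langle x-y\rangle^{-5}\lesssim |x-y|^{-4}$. These bounds give the standard kernel conditions \eqref{eq:7.4.1}--\eqref{eq:7.4.3} with $\delta=1$, by the mean value theorem in the regime $|x-x'|\le\frac12\max(\cdot)$. The operator $T$ is also $L^2$ bounded. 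Lemma \ref{lem:A2} then gives $\|\dot P_0\nabla f\|_{L^2(w)}\lesssim\|\dot P_0 f\|_{L^2(w)}$.

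For the first inequality, write $\dot P_0 f=\sum_{j}T_j(\dot P_0\p_j f)$ with $T_j=-\,\dot P_{[[0]]}\p_j|\nabla|^{-2}$, which is valid since $\dot P_{[[0]]}\dot P_0=\dot P_0$. The symbol $-i\xi_j|\xi|^{-2}\dot\psi_{[[0]]}(\xi)$ is smooth and compactly supported away from the origin. Its kernel is therefore Schwartz, and the same argument applies.

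The main point requiring care is confirming that a Schwartz convolution kernel belongs to $SK(1,A)$ uniformly. Near the diagonal the required bounds are weaker than the actual boundedness of the kernel, and away from the diagonal the rapid decay is enough. One must also check that the constant from Lemma \ref{lem:A2} depends only on $[w]_{A_2}$ and not on $k$. This is exactly why the argument rescales to $k=0$ rather than estimating kernels for each $k$ directly.
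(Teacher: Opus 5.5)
Your proposal is correct and follows essentially the same route as the paper: reduce to $k=0$ via the dilation invariance $[w(2^{-k}\cdot)]_{A_2}=[w]_{A_2}$, express $\dot P_0 f$ and $\dot P_0\nabla f$ as Calder\'on--Zygmund operators with Schwartz kernels applied to each other, and invoke Lemma \ref{lem:A2}. Your two refinements make rigorous what the paper only asserts: you insert $\dot P_{[[0]]}$ so the operator acts on $\dot P_0\nabla f$ rather than on $\nabla f$ (the latter is what the paper's identity $\dot P_0 f=\dot P_0|\nabla|^{-1}R\cdot\nabla f$ literally yields), and you verify the standard-kernel conditions explicitly.
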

\begin{proof}
	When $k=0$, it follows from Bernstein's inequality that
	\begin{equation*}
		\|\dot{P}_0 f\|_{L^2(\R^3)}=\|\dot{P}_0 |\nabla|^{-1}R\cdot\nabla f\|_{L^2(\R^3)}\ls  \|\dot{P}_0\nabla f\|_{L^2(\R^3)}.
	\end{equation*}
Since $\dot{P}_0|\nabla|^{-1}R$ is a Calder\'{o}n-Zygmund operator, then by Lemma \ref{lem:A2}, one has

	\[
	\Ltw{\dot{P}_0 f} \leq C_1(\Atwo{w}) \cdot \Ltw{\dot{P}_0 \nabla f}.
	\]
In addition, due to $\Atwo{w(2^{-k}\cdot)}=\Atwo{w}$, then we arrive at
	\begin{equation}\label{YHCCC-44}
	\begin{split}
		\Ltw{\pk f}  = &2^{-\f32 k}\|\dot{P}_0(f(2^{-k}\cdot))\|_{L^2(w(2^{-k}\cdot))}\\
		\le& 2^{-\f32 k}C_1(\Atwo{w(2^{-k}\cdot)})\|\dot{P}_0\nabla(f(2^{-k}\cdot))\|_{L^2(w(2^{-k}\cdot))}\\
		\le& C (\Atwo{w}) 2^{-k}\Ltw{\pk \nabla f}.
	\end{split}
	\end{equation}
Analogously, the proof of \eqref{YHCCC-43} can be completed as for \eqref{YHCCC-44}.
\end{proof}

\vskip 0.2 true cm
{\bf \color{blue}{Conflict of interest}}
\vskip 0.2 true cm

{\bf On behalf of all authors, the corresponding author states that there is no conflict of interest.}

\vskip 0.2 true cm

{\bf \color{blue}{Data availability}}

\vskip 0.2 true cm

{\bf Data sharing is not applicable to this article as no new data were created.}

\end{document}